\theoremstyle{plain}
\newtheorem{theorem}{Theorem}[section] 
\newtheorem{corollary}[theorem]{Corollary} 
\newtheorem{definition}[theorem]{Definition}
\newtheorem{condition}{Condition}
\newtheorem{example}[theorem]{Example}
\newtheorem{lemma}[theorem]{Lemma}
\newtheorem{proposition}[theorem]{Proposition}
\newtheorem{remark}[theorem]{Remark}
\def\BState{\State\hskip-\ALG@thistlm}
\def\iid{\overset{\textnormal{iid}}{\sim}} 
\let\dolarger\relsize} 
\def\dolarger#1{\larger[#1]}} 
\newcommand*\@@bigtimes[2]{\vphantom{\prod} 
  \vcenter{\hbox{\dolarger{4}$\m@th#1\mkern-2mu\times\mkern-2mu$}}} 
\newcommand*\bigtimes{\mathop{\mathpalette\@@bigtimes\relax}\displaylimits} 
\def\iid{\overset{\textnormal{iid}}{\sim}} 
\def\N{\mathbb{N}}\def\R{\mathbb{R}}\def\Z{\mathbb{Z}}
\def\Acal{\mathcal{A}}\def\Bcal{\mathcal{B}}\def\Ccal{\mathcal{C}}\def\Ecal{\mathcal{E}}\def\Fcal{\mathcal{F}}\def\Hcal{\mathcal{H}}\def\Lcal{\mathcal{L}}\def\Ncal{\mathcal{N}}\def\Pcal{\mathcal{P}}\def\Rcal{\mathcal{R}}\def\Scal{\mathcal{S}}\def\Vcal{\mathcal{V}}\def\Wcal{\mathcal{W}}\def\Xcal{\mathcal{X}}\def\Zcal{\mathcal{Z}}
\def\vol{\textnormal{vol}}
\def\eps{\varepsilon}
\newcommand{\epsltrue}{{\eps_{l}^{0}}}
\newcommand{\Oomega}{{\Omega_\eps(\kappa)}}
\newcommand{\aeps}{{A(\eps)}}
\newcommand{\nes}{N_\eps+\alpha_\eps\alpha_n(\eps)}
\newcommand{\nnes}{N_\eps+N_\eps^c+\alpha_\eps}
\newcommand{\pne}{p_N(\eps)}
\newcommand{\peps}{{P(\eps)}}
\numberwithin{equation}{section}
\title{\bf Nonparametric Bayesian intensity estimation for covariate-driven inhomogeneous point processes}
\author{Matteo Giordano, Alisa Kirichenko and Judith Rousseau \\ \\ University of Turin, University of Warwick and University of Oxford}
\date{} 
\begin{document}

\maketitle

\abstract{
This work studies nonparametric Bayesian estimation of the intensity function of an inhomogeneous Poisson point process in the important case where the intensity depends on covariates, based on the observation of a single realisation of the point pattern over a large area. It is shown how the presence of covariates allows to borrow information from far away locations in the observation window, enabling consistent inference in the growing domain asymptotics. In particular, optimal posterior contraction rates under both global and point-wise loss functions are derived. The rates in global loss are obtained under conditions on the prior distribution resembling those in the well established theory of Bayesian nonparametrics, combined with concentration inequalities for functionals of stationary processes to control certain random covariate-dependent loss functions appearing in the analysis. The local rates are derived with an ad-hoc study that builds on recent advances in the theory of Pólya tree priors, extended to the present multivariate setting with a novel construction that makes use of the random geometry induced by the covariates.
}

\bigskip

\noindent\textbf{AMS subject classifications.} Primary: 62G20; secondary: 62F15, 60G55.

\bigskip

\noindent\textbf{Keywords.} Cox process; Frequentist analysis of Bayesian procedures; Gaussian priors; Mixture priors; Poisson process; Pólya tree priors

\tableofcontents

\section{Introduction} \label{sec:Intro}

 A central problem in the statistical analysis of spatial point patterns is to infer the relationship between the point distribution and a collection of covariates of interest. Among the numerous application areas are: the environmental sciences (e.g.~the influence of meteorological conditions on the occurrence of wildfires, \cite{BGMMM20}), geology (e.g.~the prediction of mineral deposit locations from terrain features, \cite{BCST12}), forestry (e.g.~the dependence of biodiversity on the interaction between different plant species, \cite{IMW09}), ecology (e.g.~the preference of plants for specific habitats, \cite{G08}), and epidemiology (e.g.~the raised disease incidence caused by harmful environmental factors, \cite{D90}). Further applications and a general treatment of the theory of point processes can be found in the monographs \cite{DVJ88,MW04,D14}.

	Consider data arising as the realisation of an inhomogeneous point process $N$ over a finite observation window $\Wcal\subset\R^D$, $D\in\N$. The key object determining the occurrence of points is the (first-order) intensity function, namely a map $\lambda:\Wcal\to[0,\infty)$ with the property that, denoting by $N(B)$ the random number of points within any subset $B\subset \Wcal$, 
$$
	E[N(B)]=\int_B \lambda(x)dx.
$$
Hereafter, $N$ will be taken to be of Poisson type. Additionally, assume that a multi-dimensional covariate $Z(x)\in \Zcal\subseteq\R^d$, $d\in\N$, is observed for all $x\in \Wcal$. The relationship between the point pattern and the covariate is then modelled by postulating that
\begin{equation}
\label{Eq:RhoOfZ}
	\lambda(x)=\rho(Z(x)), \qquad x\in \Wcal,
\end{equation}
for some function $\rho :\Zcal\to[0,\infty)$. For example, in the environmental application of \cite{BGMMM20}, the points represent the locations of wildfires across Canada, and two covariates $Z^{(1)}(x)$ and $Z^{(2)}(x)$ are employed reflecting the average temperature and precipitation measurements at each location $x\in \Wcal$, respectively. When the covariate field $Z:=(Z(x), \ x\in \Wcal)$ is itself modelled as being  random, the resulting point process $N$ with intensity \eqref{Eq:RhoOfZ} is `doubly stochastic', and defines an instance of Cox process, \cite{C55}.

	Under this framework, the problem of intensity estimation entails the recovery of the unknown function $\rho$ in \eqref{Eq:RhoOfZ} from an observed realisation of the point process $N$ and the covariate $Z$. Here, we shall consider the nonparametric Bayesian approach to such task. This consists in  assigning to $\rho$ prior distributions $\Pi $ in function spaces and then forming, via Bayes' theorem, the corresponding posteriors $\Pi(\cdot|N,Z)$, which represent the updated beliefs about $\rho$ given the data and furnish point estimates and credible sets for uncertainty quantification; see \cite[Chapter 1]{GvdV17} for an overview. Our main goal is to provide theoretical guarantees, in the form of asymptotic concentration results for the posteriors around the `ground truth' $\rho_0$, under the frequentist assumption that the observed point pattern has been generated according to an intensity given by \eqref{Eq:RhoOfZ} with $\rho = \rho_0$.

%
%
%

\subsection{Related literature}

The spatial statistic literature has largely focused on the parametric approach to covariate-based intensity estimation, both in the frequentist, e.g.~\cite{B78,D90,W07}, and Bayesian literature, e.g.~\cite{RMC09,YL11,ISR12}; see also \cite{MW04,D14} and references therein. For example, the log-Gaussian Cox model, in which \eqref{Eq:RhoOfZ} takes the form $\lambda(x)=\exp(\beta^TZ(x))$ for some $\beta\in\R^d$ and $Z$ a multivariate Gaussian random field, is often used.

	The existing nonparametric frequentist  approaches to intensity estimation are typically built on kernel methods, \cite{K98,BD89,CvL18}. In the present framework with covariates, \cite{G08} proved asymptotic consistency of a covariate-based kernel estimator in the `increasing domain asymptotics' (i.e.~as $\vol(\Wcal)\to\infty$), under the assumption that $Z$ is a stationary and ergodic random field. In their result, the incorporation of ergodic covariates allows to combine the information carried by (potentially distant) locations in the observation window with the same covariate values, thus overcoming the non-vanishing variance issue, and the resulting lack of consistency at the boundary, that afflicts non-covariate-based kernel estimators, cf.~\cite[Section 1]{G08}. Notably, the assumptions on $Z$ maintained in \cite{G08} also circumvent the conditions underpinning the consistency results in \cite{BCST12,BGMMM20}. These are based on the analysis of the induced point pattern in covariate space and require $Z$ to have non-vanishing gradient, a condition that is challenging to verify in the presence of a random multi-dimensional covariate; see Section \ref{Subsec:ErgCov} for further discussion.

	To our knowledge, nonparametric Bayesian procedures for intensity estimation have so far been confined to models without covariates. An early methodological contribution based on weighted gamma process priors  is by \cite{L82}. Computational aspects of posterior inference with Gaussian process priors, combined with various link functions, were investigated in \cite{MSW98,AMM09,PM13} among the others. Several classes of nonparametric priors including gamma, extended gamma and beta processes were employed in \cite{KG97}. Kernel mixture priors were considered in \cite{KS07}, while procedures with spline-based priors and piecewise constant priors were devised in \cite{DMGK01} and \cite{HA98}, respectively.

	The study of the frequentist asymptotic properties of posterior distributions for inhomogeneous Poisson processes has been initiated only more recently, following seminal developments in Bayesian nonparametrics in the early 2000s, \cite{GGvdV00}. Minimax-optimal posterior contraction rates (in $L^2$-distance) for H\"older-smooth intensities were obtained by \cite{BSvZ15} using spline priors with uniform coefficients, assuming that repeated observations of the point pattern over a fixed domain are available. In similar settings, posterior contraction results for Gaussian process priors were proved by \cite{KvZ15,GS13}, and later by \cite{GL19,NM19}. Approaches based on piecewise-constant priors were studied in \cite{GvdMSS20}. Finally, \cite{DRRS17} investigated general Aalen models, developing a novel $L^1$-testing theory founded upon the connection between intensity models and density estimation. The general result was then employed to obtain optimal $L^1$-posterior contraction rates under smoothness or shape constraints. 

%
%
%

\subsection{Our contributions}

	Here, we shall provide the first analysis of the frequentist asymptotic properties of posterior distributions for covariate-driven inhomogeneous Poisson point processes. As in \cite{G08}, we work in the growing domain asymptotics, which is natural for spatial statistics, cf.~\cite[Section 2]{G08}, assuming that $\vol(\Wcal)\to\infty$ and that a single realisation of the processes $N$ and $Z$ is observed over $\Wcal$. With the goal of providing a comprehensive investigation and appealing to a wider range of applications, we shall study the problem under various representative assumptions for the covariates and employ several classes of priors of interest. Our results and proof strategies also provide a template to extend the analysis beyond the considered settings. Below is an overview of our main contributions.

\begin{itemize}
\item In the first part of the paper, we study posterior contraction in global loss functions. We prove a general result (Theorem \ref{Theo:GenRandCov}) in an `empirical' (i.e.~covariate-dependent) $L^1$-distance, holding under a minimal stationarity assumption on $Z$ and abstract prior conditions that resemble the well-understood ones for density estimation \citep{GGvdV00}. The theorem is based on the established testing approach \citep{GvdV17}, building on \cite{DRRS17} to construct suitable tests for the empirical $L^1$-distance.

\item We illustrate the general theory for the empirical $L^1$-distance considering both bounded and unbounded covariate spaces. In the former case, we obtain posterior contraction rates for Gaussian and hierarchical Gaussian priors (Propositions \ref{Theo:GPRandLoss} and \ref{th:Gauss:adapt:lambda}). For unbounded covariates, we employ location mixture of Gaussians priors (Proposition \ref{Theo:MixRandLoss}).

\item In Section \ref{Subsec:ErgCov}, we refine the above analysis to derive posterior contraction rates in standard (non-random) $L^1$-distances under additional ergodicity assumptions on the covariates. These are common in spatial statistics, cf.~\cite[Sec.~10.2]{DVJ88}, \cite[Sec.~3]{G08} and \cite[Sec.~2.3]{C15}, and allow to combine the previous results for the empirical $L^1$-distance with concentration inequalities that stem from recent work of \cite{DG20AHL,DG20ALEA}. In our main result, Theorem \ref{Theo:GaussWav}, we show that for the important case where $Z$ is a stationary and ergodic Gaussian random field (i.e.~when $N$ is a Gaussian-Cox process), truncated Gaussian wavelet priors achieve minimax-optimal $L^1$-posterior contraction rates. Outside of the frequentist consistency results of \cite{G08} and of \cite{BCST12,BGMMM20}, where  convergence rates are not investigated, we are not aware of other comparable studies. We further consider piecewise constant covariates arising as Poisson random tessellations, as well as categorical covariates, for which we obtain parametric rates of convergence.

\item In the second part of the paper, we turn to the study of local posterior contraction rates. We design procedures in the spirit of the P\'olya tree priors for probability density functions (e.g.~\cite[Chapter 3]{GvdV17}), whose asymptotic properties of concentration, adaptation and uncertainty quantification have recently been studied in \cite{C17,CM21}. See also \cite{M17}. Here, we provide a novel construction tailored to intensity functions defined on multi-dimensional covariate spaces, carefully accommodating the random geometry induced by the covariates in the tree-generating partition and the marginal prior distributions. We then prove that the resulting posteriors adapts to the local regularity of the ground truth, with adaptive optimal point-wise contraction rates under a local smoothness assumption. Drawing a connection with the first part of the paper, we further show that the P\'olya tree prior also achieves optimal $L^1$-contraction rates under global regularity conditions.
\end{itemize}

	The rest of the paper is organised as follows. Section \ref{Sec:StatProbl} provides preliminaries, notation, and a precise description of the statistical problem. Section \ref{Sec:GlobLoss} presents the results in global loss functions. The local and global analysis of the Pólya tree priors is developed in  Section \ref{sec:localrates}. In Section \ref{sec:Summary}, we provide a summary of the paper, further discussion and an outlook on several related research questions. All the proofs, alongside auxiliary results and further background material are contained in the Supplement, \cite{Supplement}.

\section{Covariate-driven Poisson processes and Bayesian inference}
\label{Sec:StatProbl}

%
%
%

\subsection{Preliminaries and notation}
\label{Subsec:Prelim}

Throughout, $\Wcal\equiv \Wcal_n\subset\R^D$, $D\in \N$, is a  nonempty compact set, which we will often refer to as the `observation window'. The subscript $n$ serves as an indication that we will consider asymptotic properties as  $\vol( \Wcal_n)\to\infty$.

	Given a measure space $(\Xcal,\mathfrak{X},\mu)$ and a normed vector space $(\Vcal,\|\cdot\|_\Vcal)$, we will denote by $L^p(\Xcal,\mu;\Vcal)$, $1\le p\le \infty$, the usual Lebesgue spaces of $\Vcal$-valued functions defined on $\Xcal$ with integrable $p^\textnormal{th}$-power, equipped with norm
$$
	\|f\|^p_{L^p(\Xcal,\mu;\Vcal)}:=\int_{\Xcal}\|f(x)\|_\Vcal^p d\mu(x), 
	\qquad 1\le p<\infty,
$$
replaced by the essential supremum of $\|f\|_\Vcal$ over $\Xcal$ if $p=\infty$. When $\Vcal=\R$, we will use the shorthand notation $L^p(\Xcal,\mu)\equiv L^p(\Xcal,\mu;\R)$; further, if $\Xcal\subseteq\R^m$ for some $m\in\N$ and $\mu$ equals the Lebesgue measure $dx$ on $\R^m$, we will write $L^p(\Xcal;\Vcal)\equiv L^p(\Xcal,dx;\Vcal)$ and $L^p(\Xcal)\equiv L^p(\Xcal,dx;\R)$.

	For $\Xcal\subseteq\R^m$, let $C(\Xcal)$ be the space of continuous real-valued functions defined on $\Xcal$, equipped with the supremum norm $\|\cdot\|_\infty$. For $\alpha>0$, let $C^\alpha(\Xcal)$ be the usual H\"older space of $\lfloor \alpha \rfloor$-times continuously differentiable functions on $\Xcal$, with $(\alpha - \lfloor \alpha \rfloor)$-H\"older continuous $\lfloor \alpha \rfloor^{\textnormal{th}}$ derivative, equipped with norm $\|\cdot\|_{C^\alpha}$. For $\alpha\in\N$, let $H^\alpha(\Xcal)$ be the usual Sobolev space of functions with square-integrable $ \alpha^{\textnormal{th}}$ derivative, with norm $\|\cdot\|_{H^\alpha}$. For non-integer $\alpha>0$, $H^\alpha(\Xcal)$ can be defined via interpolation, e.g.~\cite{LM72}. When no confusion may arise, we will omit the dependence of the function spaces on the underlying domain, writing for example $C^\alpha$ for $C^\alpha(\Xcal)$.

	We will use the symbols $\lesssim, \ \gtrsim$ and $\simeq$ for one- and two-sided inequalities holding up to multiplicative constants. The minimum and maximum between $a,b\in\R$ will be denoted by $a\land b$ and $a\vee b$. For two real sequences $(a_n)_{n\ge1}$, $(b_n)_{n\ge1}$, we will write $a_n=o(b_n)$ if $a_n/b_n\to0$ as $n\to\infty$, and $a_n=O(b_n)$ if $a_n/b_n\lesssim 1$ for all sufficiently large $n$. The $\varepsilon$-covering number of a set $\Theta$ with respect to a semi-metric $\Delta$, denoted by $\Ncal(\varepsilon;\Theta,\Delta)$, is the minimal number of balls of $\Delta$-radius $\varepsilon>0$ needed to cover $\Theta$. 

%
%
%

\subsection{The observation model}
\label{Subsec:Obs}

On the ambient space $\R^D, \ D\in\N$, consider covariates given by a (jointly measurable) random field $Z := (Z(x), \ x\in\R^D)$ with values in a (measurable) subset $\Zcal\subseteq\R^d$, $d\in\N$, and an increasing sequence of compact observation windows $(\Wcal_n)_{n\ge1}$ satisfying $\Wcal_n\subseteq \Wcal_{n+1}\subset \R^D$. Throughout, we will assume that $Z^{(n)}$ has almost surely bounded sample paths and that it can be viewed as a Borel measurable map in the space $L^\infty(\Wcal_n;\Zcal)$, with law denoted by $P_{Z^{(n)}}$. We also maintain that $Z$ is stationary and denote by $\nu $ its invariant distribution; thus, $Z(x)\sim \nu $ for each $x\in\R^D$, cf.~Remark \ref{Rem:Stationarity}.

	We assume that we observe, for some $n\in\N$, a realisation of the covariates on $\Wcal_n$, denoted by $Z^{(n)}:=(Z(x), \ x\in \Wcal_n)$, and of a random point process on $\Wcal_n$ arising, conditionally given $Z^{(n)}$, as an inhomogeneous Poisson point process with first-order intensity function
\begin{equation*}
	\lambda_\rho^{(n)}(x) := \rho(Z(x)), \qquad x\in\Wcal_n,
\end{equation*}
 for some unknown (measurable and) bounded  function $\rho : \Zcal \to [0,\infty)$. Formally, we may represent the point pattern as
\begin{equation}
\label{Eq:PointProc}
	N^{(n)} \overset{d}{=} (X_1,\dots, X_{N_n}), 
	\qquad N_n|Z^{(n)}\sim\textnormal{Po}(\Lambda^{(n)}_\rho),
	\qquad
	X_i|Z^{(n)} \iid \frac{\lambda^{(n)}_\rho(x)dx}{\Lambda^{(n)}_\rho},
\end{equation}
with $\Lambda^{(n)}_\rho := \int_{\Wcal_n}\lambda^{(n)}_\rho(x) dx$. In other words, $N^{(n)}$ is a Cox process, \cite{C55}, directed by the random measure $\lambda^{(n)}_\rho(x)dx$.

	The joint law $P^{(n)}_{\rho}$ of the data vector $D^{(n)}:=(N^{(n)},Z^{(n)})$ arising as above is absolutely continuous with respect to the law $P^{(n)}_1$ corresponding to the standard Poisson case, with likelihood given by
\begin{equation}
\label{Eq:Likelihood}
	L_n(\rho):=\frac{dP^{(n)}_\rho}{dP^{(n)}_1}(D^{(n)}) \propto 
	\exp\left\{\int_{\Wcal_n} \log(\rho(Z(x))) d N^{(n)}(x) 
	-\int_{\Wcal_n}\rho(Z(x))dx\right\};
\end{equation}
see e.g.~\cite[Theorem 1.3]{K98}. We will write $E_\rho^{(n)}$ for the expectation with respect to $P_\rho^{(n)}$.

\begin{remark}[Continuous observations of the covariates]
The availability of the covariate value $Z(x)$ at each location $x\in \Wcal_n$ is a standard methodological assumption in the spatial statistics literature, e.g.~\cite{G08,BCST12,BGMMM20}, and is often realistic, cf.~the geological application in \cite[Section 8]{BCST12}. In other practical scenarios, the covariates may need to be interpolated from discrete measurements $Z(x_1),\dots,Z(x_K)$ over a finite grid $x_1,\dots,x_K\in\Wcal_n$. Provided that the grid is sufficiently fine, the numerical approximation error is then typically disregarded. Below, among various concrete instances, we will also consider specific discrete covariate fields, arising as piecewise constant processes associated to Poisson random tessellations; see Section \ref{Subsec:PoissRates}.
\end{remark}

\begin{remark}[Stationarity of the covariates]\label{Rem:Stationarity}
Stationarity is an often-used (testable, \cite{BS17}) assumption for spatially correlated data, e.g.~\cite{S99,R00,C15}. In the present setting, it entails the mild requirement that the statistics of the covariates remain homogeneous across the observation window. Intuitively, this is necessary for any `accumulation of information' to even be possible in the growing domain asymptotics, where only a single realisation of the point pattern and the covariate is available. Some related discussion can be found in Section \ref{Sec:Infill}.
\end{remark}

\begin{remark}[Deterministic covariates]
In the present paper, we are concerned with random covariates. However, some of the results to follow hold, conditionally given $Z^{(n)}$, under minimal assumptions, and can be extended with minor modifications to the case where $Z:\R^D\to\R^d$ is a fixed deterministic field. This will be explicitly pointed out where relevant (cf.~the discussion after Theorem \ref{Theo:GenRandCov} and before Theorem \ref{Theo:LocalRates}).
\end{remark}

%
%
%

\subsection{Nonparametric Bayesian inference on $\rho$}
\label{Subsec:BayesApproach}

We are interested in the problem of estimating the unknown covariate-based intensity function $\rho:\Zcal\to[0,\infty)$ based on data $D^{(n)}=(N^{(n)},Z^{(n)})$ from model \eqref{Eq:PointProc}. We consider the nonparametric Bayesian approach, assigning to $\rho$ prior distributions $\Pi $ on measurable collections $\Rcal\subset L^\infty(\Zcal)$ of non-negative functions defined on $\Zcal$. By Bayes'~formula (e.g.~\cite[p.7]{GvdV17}), the posterior distribution of $\rho|D^{(n)}$ is given by
$$
	\Pi(A|D^{(n)}) = \frac{\int_A L_n(\rho)d\Pi(\rho)}
	{\int_{\Rcal} L_n(\rho')d\Pi(\rho')},
	\qquad A\subseteq\Rcal \ \textnormal{measurable},
$$
with $L_n$ the likelihood in \eqref{Eq:Likelihood}. Nonparametric Bayesian intensity estimation has so far been investigated, methodologically and theoretically, only in models without covariates; see in \cite{L82,MSW98,AMM09,BSvZ15,KvZ15} and references therein.

	Our primary focus is on the asymptotic posterior concentration properties, assuming data $D^{(n)}\sim P^{(n)}_{\rho_0 }$ generated by some fixed true $\rho_0 \in\Rcal$, and studying under what conditions $\Pi(\cdot|D^{(n)})$ concentrates around $\rho_0 $ in the infinitely informative data limit. As the amount of information is determined by the volume of $\Wcal_n$, we will work in the `growing domain' regime wherein $\vol(\Wcal_n):=\int_{\Wcal_n}dx\to\infty$. Without loss of generality and for notational convenience, we will write $\vol(\Wcal_n)=n$; in other words the subscript $n$ has no particular meaning other than indicating that $\vol(\Wcal_n) \to\infty$, and all the asymptotic (in $n$) results below should be thought of as being in terms of the quantity $\vol(\Wcal_n)$.

%
%
%
%
%

\section{Posterior contraction rates in global loss}
\label{Sec:GlobLoss}

In this section we present our concentration results with respect to global ($L^1$-type) loss functions. A precise quantification of the speed of convergence is provided in the form of posterior contraction rates, namely positive sequences $\epsilon_n\to0$ such that, for $M>0$ large enough,
$$
	E^{(n)}_{\rho_0}\left[\Pi(\rho : \Delta(\rho,\rho_0) > M\epsilon_n |D^{(n)} )\right]\to0
$$
as $n\to\infty$, where $\Delta$ is a semi-metric between intensities in $\Rcal$.

%
%
%

\subsection{Posterior contraction rates in a covariate-dependent $L^1$-distance}
\label{Subsec:GenTheo}

We first derive a general result based on an `empirical' (covariate-dependent) loss function, holding under standard assumptions on the prior distribution resembling the well-understood ones for density estimation (\cite{GGvdV00}). This will constitute a key building block towards the more refined posterior contraction rates obtained in Sections \ref{Subsec:GaussRates} and \ref{Subsec:PoissRates} below.

	Recalling that $Z$ is stationary with invariant distribution $\nu $, for non-negative valued functions $\rho,\rho_0\in L^1(\Zcal,\nu)$, set $M_\rho:=\int_\Zcal\rho(z)d\nu(z)$, define the probability density function $\bar\rho(z):=\rho(z) / M_\rho, $ $z\in\Zcal$, 
and let $M_{\rho_0}$ and $\bar\rho_0$ be similarly constructed. For positive numbers $(\epsilon_n)_{n\ge1}$, define the neighbourhoods
\begin{equation*}
\begin{split}
	B_{n,0}(\rho_0) &:= \left\{ \rho\in\Rcal: \textnormal{KL}_\nu 
	(\bar \rho_0, \bar \rho) \le \epsilon_n^2 ,\ |M_\rho-M_{\rho_0}|
	\le \epsilon_n \right\};\\
	B_{n,2}(\rho_0) &:= B_{n,0}(\rho_0)
	\cap \left\{ \rho\in\Rcal : \int_\Zcal \bar \rho_0(z)
	\log^2\left( \frac{ \bar\rho_0(z) }{\bar \rho(z) } \right) d\nu(z)\le\epsilon_n^2  
	\right\},
\end{split}
\end{equation*}
where $\textnormal{KL}_\nu(\bar\rho_0, \bar\rho) := \int_\Zcal \bar\rho_0(z) \log (\bar\rho_0(z)/\bar\rho(z))d\nu(z) $ is the Kullback-Leibler divergence between the probability density functions $\bar\rho_0$ and $\bar\rho$.

\begin{theorem}\label{Theo:GenRandCov}
Let $\rho_0\in L^\infty(\Zcal)$ be non-negative valued. Consider data $D^{(n)}=(N^{(n)},Z^{(n)})\sim P^{(n)}_{\rho_0}$ from the observation model \eqref{Eq:PointProc} with $\rho=\rho_0$ and $Z$ a stationary, almost surely locally bounded, random field with invariant measure $\nu $. Assume that the prior $\Pi $ satisfies for some positive sequence $\epsilon_n\to0$ such that $n\epsilon_n^2\to\infty$,
\begin{equation}
\label{Eq:SmallBall}
	\Pi( B_{n,2}(\rho_0) ) \ge e^{-C_1 n\epsilon_n^2},
\end{equation}
for some $C_1>0$. Further assume that there exist measurable sets $\Rcal_n\subseteq\Rcal$ such that
\begin{equation}
\label{Eq:Sieves}
	\Pi(\Rcal_n^c)\le e^{-C_2 n\epsilon_n^2}, 
	\qquad C_2 := 2+2\|\rho_0\|_{L^\infty(\Zcal)}+C_1,
\end{equation}
and
\begin{equation}
\label{Eq:MetricEntropy}
	\log \Ncal(\epsilon_n;\Rcal_n,\|\cdot\|_{L^\infty(\Zcal)})\le C_3 n\epsilon_n^2,
\end{equation}
for some $C_3>0$. Then, for all sufficiently large $M>0$, as $n\to\infty$,
\begin{align}
\label{Eq:GenRandCov}
	E_{\rho_0}^{(n)}
	\Bigg[\Pi\Big(\rho \in\Rcal_n: \frac{1}{n} \|\lambda_\rho^{(n)} - \lambda_{\rho_0}^{(n)}\|_{L^1(\Wcal_n)} &> M\epsilon_n
	\Big| D^{(n)}\Big)\Bigg]
	=O(1/(n\epsilon_n^2)).
\end{align}
\end{theorem}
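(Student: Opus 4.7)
The proof would follow the classical testing-based approach to posterior contraction rates (\cite{GGvdV00,GvdV17}), specialised to Poisson processes along the lines of \cite{DRRS17}, and adapted to the present covariate-driven setting by systematically conditioning on $Z^{(n)}$: given the covariate realisation, the model \eqref{Eq:PointProc} reduces to an ordinary inhomogeneous Poisson process on $\Wcal_n$ with intensity $\lambda_\rho^{(n)}(x) = \rho(Z(x))$. The three main ingredients are tests in the empirical loss, a lower bound on the evidence, and a combining decomposition using the sieves $\Rcal_n$.

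\emph{Tests.} Conditional on $Z^{(n)}$, I would invoke the $L^1$-testing construction of \cite{DRRS17}: splitting $N^{(n)}$ into the total count $N_n$ (used for discrimination via $|M_\rho - M_{\rho_0}|$) and the conditional positions, which given $N_n$ are i.i.d.\ draws from the probability density $\lambda_\rho^{(n)}/\Lambda_\rho^{(n)}$ and can be treated by Birgé's classical $L^1$-density test, yields, for each alternative $\rho_1$ with $n^{-1}\|\lambda_{\rho_1}^{(n)} - \lambda_{\rho_0}^{(n)}\|_{L^1(\Wcal_n)} > Mv_n$, a test with conditional type-I and type-II errors of order $e^{-c M^2 nv_n^2}$. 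Since $\|\rho-\rho_1\|_{L^\infty(\Zcal)}$ dominates the empirical $L^1$-distance uniformly in $Z^{(n)}$, the entropy bound \eqref{Eq:MetricEntropy} provides an $L^\infty(\Zcal)$-cover of $\Rcal_n$ of cardinality $e^{C_3 nv_n^2}$, and a union bound over centres aggregates the point-wise tests into a single $\phi_n$ with $E_{\rho_0}^{(n)}[\phi_n]\to 0$ exponentially and $E_\rho^{(n)}[1-\phi_n\mid Z^{(n)}] \le e^{-c' nv_n^2}$ uniformly over alternatives in $\Rcal_n$ exceeding $Mv_n$ in the empirical loss, for $M$ large enough relative to $C_3$.

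\emph{Evidence lower bound.} Define $\Omega_n := \{\int L_n(\rho)/L_n(\rho_0)\,d\Pi(\rho) \ge e^{-K_0 nv_n^2}\Pi(B_{n,2}(\rho_0))\}$ for suitable $K_0>0$. Conditionally on $Z^{(n)}$, the log-likelihood ratio decomposes as $\log L_n(\rho)/L_n(\rho_0) = -\textnormal{KL}_n(\rho_0,\rho|Z^{(n)}) + \xi_\rho$, with
$$
\textnormal{KL}_n(\rho_0,\rho|Z^{(n)}) := \int_{\Wcal_n}\{\rho - \rho_0 + \rho_0\log(\rho_0/\rho)\}(Z(x))\,dx
$$
a $Z^{(n)}$-measurable drift and $\xi_\rho$ a mean-zero compensated Poisson stochastic integral with conditional variance $\int_{\Wcal_n}\log^2(\rho/\rho_0)\rho_0(Z(x))dx$. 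A short algebraic identity rewrites the per-unit-volume Poisson KL as $M_\rho - M_{\rho_0} + M_{\rho_0}\log(M_{\rho_0}/M_\rho) + M_{\rho_0}\textnormal{KL}_\nu(\bar\rho_0,\bar\rho)$, with an analogous expression for its second variation; combined with stationarity this converts the $B_{n,2}(\rho_0)$-bounds into first-moment bounds of order $nv_n^2$ for both the drift and the conditional variance of $\xi_\rho$. Applying Jensen's inequality to the $\Pi$-average of $\log L_n(\rho)/L_n(\rho_0)$ over $B_{n,2}(\rho_0)$, then conditional Chebyshev to the resulting stochastic-integral component at threshold of order $nv_n^2$, would deliver $P_{\rho_0}^{(n)}(\Omega_n^c) = O(1/(nv_n^2))$.

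\emph{Combining and main obstacle.} Decomposing $\Pi(A_n|D^{(n)})$ with $A_n = \{\rho\in\Rcal_n: n^{-1}\|\lambda_\rho^{(n)} - \lambda_{\rho_0}^{(n)}\|_{L^1(\Wcal_n)} > Mv_n\}$ into the test contribution, the sieve-complement contribution and the $\Omega_n^c$ piece, taking expectation under $P_{\rho_0}^{(n)}$ and invoking Fubini in the form $E_{\rho_0}^{(n)}[(1-\phi_n)L_n(\rho)/L_n(\rho_0)] = E_\rho^{(n)}[1-\phi_n]$ yields exponentially small bounds $e^{(K_0+C_1-c')nv_n^2}$ and $e^{(K_0+C_1-C_2)nv_n^2}$ from the test and sieve (the latter balanced by the choice $C_2 \ge 2+2\|\rho_0\|_{L^\infty(\Zcal)}+C_1$ in \eqref{Eq:Sieves}), leaving the dominant $O(1/(nv_n^2))$ contribution from $\Omega_n^c$. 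The hard part is the evidence step: under only stationarity of $Z$ (no mixing or ergodicity), the $Z^{(n)}$-dependent drift $\textnormal{KL}_n(\rho_0,\rho|Z^{(n)})$ cannot be concentrated beyond its first moment, so the argument must be arranged so that the polynomial $1/(nv_n^2)$ rate arises solely from the conditional Chebyshev applied to the mean-zero compensated-Poisson part at its natural scale, with the drift absorbed into the exponent of the lower bound at the cost of a multiplicative constant feeding into $K_0$.
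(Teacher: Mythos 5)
Your overall architecture matches the paper's: tests for alternatives separated in the empirical $L^1$-loss aggregated over an $\|\cdot\|_{L^\infty(\Zcal)}$-cover of $\Rcal_n$ via \eqref{Eq:MetricEntropy}, an evidence lower bound driven by $B_{n,2}(\rho_0)$ and stationarity, and the sieve decomposition with Fubini balanced by the choice of $C_2$ — and you correctly locate the dominant $O(1/(nv_n^2))$ term in the evidence step. The test construction is a mild variant: you route through the count-plus-positions decomposition and Birgé's density test as in \cite{DRRS17}, whereas the paper's Lemma \ref{Lem:Tests} tests directly with the indicator $1_{\{N^{(n)}(A)-\Lambda_{\rho_0}^{(n)}(A)\ge \eta_n\}}$ on the Scheffé-type set $A=\{\lambda_{\rho_1}^{(n)}\ge\lambda_{\rho_0}^{(n)}\}$, using a Chernoff bound for Poisson counts; both work, the paper's being self-contained and avoiding the two-part separation argument (note also that, conditionally on $Z^{(n)}$, the relevant total mass is $\Lambda_\rho^{(n)}=\int_{\Wcal_n}\rho(Z(x))dx$, not $nM_\rho$ — under mere stationarity these differ).

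The one step that does not go through as you describe it is your resolution of the drift problem in the evidence lower bound. You correctly observe that under stationarity alone the conditional drift $\textnormal{KL}_n(\rho_0,\rho\,|\,Z^{(n)})=\int_{\Wcal_n}\lambda_{\rho_0}^{(n)}h(\lambda_\rho^{(n)}/\lambda_{\rho_0}^{(n)})dx$ is a random, non-negative quantity of which only the first moment (of order $nv_n^2$, by $B_{n,2}$ and the fact that $Z(x)\sim\nu$ for every $x$) is available. But then it cannot be ``absorbed into the exponent at the cost of a multiplicative constant'': Markov's inequality at threshold $K_0nv_n^2$ only gives a failure probability of order $1/K_0$, a constant, so arranging for the polynomial rate to come ``solely from the conditional Chebyshev applied to the compensated-Poisson part'' leaves an $O(1)$ event on which $D_n$ may be too small. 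The paper avoids this by never splitting conditionally: it defines $\textnormal{KL}_n$ and $\textnormal{V}_{2,n}$ relative to the unconditional expectation $E_{\rho_0}^{(n)}$, bounds both by multiples of $nv_n^2$ over $B_{n,2}(\rho_0)$ (Lemma \ref{Lem:KLControl}), and applies the standard evidence lower bound (Lemma 8.21 of \cite{GvdV17}) once, so that the downward fluctuation of the drift is part of the variance being controlled by Chebyshev rather than a separate event. To repair your plan you would need to place the drift's fluctuation inside the second-moment bound — i.e., use the second condition defining $B_{n,2}(\rho_0)$ to control the variation term — rather than treat it as a deterministic shift of the exponent.
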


	Note that the boundedness assumption on $\rho_0$ implies $\rho_0\in L^1(\Zcal,\nu)$. Theorem \ref{Theo:GenRandCov} establishes sufficient conditions on $\Pi$ to obtain posterior contraction in the covariate-dependent $L^1$-metric
\begin{equation}
\label{Eq:RandLoss}
	\frac{1}{n} \|\lambda_\rho^{(n)} - \lambda_{\rho_0}^{(n)}\|_{L^1(\Wcal_n)}
	=\frac{1}{n} \int_{\Wcal_n}|\rho(Z(x))-\rho_0(Z(x))|dx.
\end{equation}
Recalling the convention $\vol(\Wcal_n)=n$, this entails a decay rate for the average distance between samples $\rho\sim\Pi(\cdot|D^{(n)})$ and the ground truth $\rho_0$, over the covariate surface $Z^{(n)}=(Z(x),\ x\in\Wcal_n)$. In Section \ref{Subsec:GaussRates} and \ref{Subsec:PoissRates} below, we will show how, for concrete instances of random covariate fields, Theorem \ref{Theo:GenRandCov} can be leveraged to derive optimal posterior contraction rates in standard $L^1$-metrics.

\begin{remark}[Small ball probability lower bound]\label{Rem:GenSmallBall}
Inspection of the proof of Theorem \ref{Theo:GenRandCov} (Section \ref{Sec:ProofGenRandCov} in the Supplement) shows that if $B_{n,2}(\rho_0) $ is replaced by $B_{n,0}(\rho_0)$ in Assumption  \eqref{Eq:SmallBall} then, by substituting $C_2$ in \eqref{Eq:Sieves} with an slowly increasing sequence $C_n\to\infty$, Theorem \ref{Theo:GenRandCov} remains valid with the constant $M>0$ in \eqref{Eq:GenRandCov} replaced by any sequence $M_n\to\infty$. Furthermore, a sufficient condition for \eqref{Eq:SmallBall} to hold is to have a similar lower bound for the sup-norm neighbourhood $\{ \rho\in\Rcal:\|\rho - \rho_0\|_{L^\infty(\Zcal)} \leq \epsilon_n \}$. This also allows to prove a version of Theorem \ref{Theo:GenRandCov} for deterministic or non-stationary designs, but we refrain to pursue such extensions here.
\end{remark}

\begin{remark}[Empirical loss function]\label{Rem:GenSmallBall}
The empirical loss function \eqref{Eq:RandLoss} naturally appears in our analysis via the testing approach to posterior contraction rates, \cite{GN16,GvdV17}, which we pursue by constructing tests with exponentially decaying Type-II error probabilities for alternatives separated in the empirical $L^1$-metric, building on ideas of \cite{DRRS17}, cf.~Lemma \ref{Lem:Tests} in the Supplement. Given such tests, the proof of Theorem \ref{Theo:GenRandCov} follows by standard arguments under the prior conditions \eqref{Eq:SmallBall} - \eqref{Eq:MetricEntropy}, similar to those typically used in the density estimation literature, \cite{GGvdV00}.
\end{remark}

	In the next sections, we present applications of Theorem \ref{Theo:GenRandCov} to two different families of prior distributions corresponding to different scenarios for the covariate process $Z$, namely when: (i) $Z$ takes values in a bounded set $\Zcal$; and (ii) $\Zcal$ is unbounded. Further see Section \ref{sec:finiteZ} for the case of finite covariate spaces.

%
%
%

\subsubsection{Bounded covariate spaces; Gaussian process priors}
\label{Subsec:RandGPRates}

We first consider the scenario where $\Zcal$ is compact. For example, this is the case if the random covariate field is given by a transformation $Z(x):=\Phi(\tilde Z(x))$ of a stationary process $\tilde Z:=(\tilde Z(x), \ x\in\R^D)$ with values in $\R^d$ via a bijective (differentiable) function $\Phi:\R^d\to\Zcal$; cf.~Section \ref{Subsec:GaussRates} below. Without loss of generality, take $\Zcal=[0,1]^d$. An example of transformation $\Phi$ is given by
\begin{equation}
\label{Eq:BoundedPhi}
	\Phi(\tilde z):=(\phi(\tilde z_1),\dots,\phi(\tilde z_d)),
	\qquad \tilde z\equiv (\tilde z_1,\dots,\tilde z_d)\in\R^d,
\end{equation}
where $\phi:\R\to[0,1]$ is a smooth cumulative distribution function. We then assign to $\rho:[0,1]^d\to[0,\infty)$ a  prior distribution $\Pi $ constructed as the law of the random function
\begin{equation}
\label{Eq:GPPrior}
	\rho_W(z) := \eta(W( z)), \qquad z\in[0,1]^d,
\end{equation}
where $W:=(W(z), \ z\in[0,1]^d)$ is a centred Gaussian process with almost surely bounded sample paths and $\eta:\R\to(0,\infty)$ is a fixed, strictly increasing and bijective link function.

	In applying Theorem \ref{Theo:GenRandCov} with the above prior, we allow for a large class of Gaussian processes $W$, whose law $\Pi_W$ we require to satisfy the following mild smoothness condition. See e.g.~\cite[Chapter 2]{GN16} or \cite[Chapter 11]{GvdV17} for background information on Gaussian processes and measures.

\begin{condition}\label{Cond:GPCondition}
	For $\alpha>0$, let $\Pi_W$ be a centred Gaussian Borel probability
measure on the Banach space $C([0,1]^d)$, with reproducing kernel Hilbert space (RKHS) $\Hcal_W$ continuously embedded into the Sobolev space $H^{\alpha+d/2}([0,1]^d)$.
\end{condition}

\begin{proposition}\label{Theo:GPRandLoss}
Assume that $\rho_0 =\rho_{w_0}$ for some $w_0\in C([0,1]^d)$ and $\eta:\R\to(0,\infty)$ a fixed, smooth, strictly increasing, uniformly Lipschitz and bijective function. Consider data $D^{(n)}\sim P^{(n)}_{\rho_0}$ from the observation model \eqref{Eq:PointProc} with $\rho=\rho_0$ and $Z$ a stationary random field with values in $[0,1]^d$. Let the prior $\Pi $ be given by \eqref{Eq:GPPrior} with $W$ a Gaussian process on $[0,1]^d$ satisfying Condition \ref{Cond:GPCondition} for some $\alpha>0$ and RKHS $\Hcal_W$. For positive numbers $(\epsilon_n)_{n\ge1}$ such that $\epsilon_n\to0$ and $\epsilon_n \gtrsim n^{-
\alpha/(2\alpha+d)}$, assume that there exists a sequence $(w_{0,n})_{n\ge1}\subset\Hcal_W$ satisfying
\begin{equation*}
	\|w_0 - w_{0,n}\|_\infty\lesssim \epsilon_n;
	\qquad \|w_{0,n}\|^2_{\Hcal_W}\lesssim n\epsilon_n^2.
\end{equation*}
Then, for all sufficiently large $M>0$, as $n\to\infty$,
$$
	E_{\rho_0}^{(n)}
	\Bigg[\Pi\Big(\rho : 
	\frac{1}{n} \|\lambda_\rho^{(n)} - \lambda_{\rho_0}^{(n)}\|_{L^1(\Wcal_n)} > M\epsilon_n
	\Big| D^{(n)}\Big)\Bigg]
	\to 0.
$$
\end{proposition}

\begin{remark}[Gaussian priors in the intensity estimation literature]
Gaussian priors, transformed via positive link functions, are a popular methodological choice for nonparametric Bayesian intensity estimation, \cite{MSW98,AMM09,PM13}. These priors have been shown to yield minimax-optimal posterior contraction rates in non-covariate based models, \cite{KvZ15,GS13,GL19,NM19}. For instance, the exponential link $\eta(\cdot)=\exp(\cdot)$ is used in the celebrated log-Gaussian Cox model, \cite{MSW98}, while more restrictive Lipschitz and (bounded) logistic-type instances were used in \cite{GS13} and \cite{AMM09,KvZ15}, respectively.
\end{remark}

	The proof of Proposition \ref{Theo:GPRandLoss} is in the supplementary Section \ref{Sec:ProofGPRandLoss}. Examples satisfying Condition \ref{Cond:GPCondition} are stationary Gaussian processes with polynomially-tailed spectral measures (e.g.~the popular Matérn processes, cf.~\cite[Section 11.4.4]{GvdV17}), as well as series priors defined on basis functions spanning the Sobolev scale, such as the wavelets employed below.

\begin{example}[Truncated Gaussian wavelet series priors]\label{Ex:GPWav}
Given an orthonormal tensor product wa-velet basis $(\psi_{lk},\ l\ge1, \ k=1,\dots, 2^{ld})$ of $L^2([0,1]^d)$, formed by $S$-regular (with $S\in\N$ sufficiently large) compactly supported and boundary corrected Daubechies wavelets (see e.g.~\cite[Chapter 4.3]{GN16} for details), consider the Gaussian  expansion
\begin{align}
\label{Eq:GaussWavPrior}
	W(z) 
	:= \sum_{l=1}^{L}\sum_{k=1}^{2^{ld}}2^{-l(\alpha+\frac{d}{2})} 
	g_{lk}\psi_{lk}(z), 
	\qquad z\in[0,1]^d,
	\qquad g_{lk}\iid N(0,1),
\end{align}
with $\alpha>0$ and $L\equiv L_n\in\N$ chosen so that $2^{L_n}\gtrsim n^\frac{1}{2\alpha +d}$. The law $\Pi_W$ of $W$ satisfies Condition \ref{Cond:GPCondition} with RKHS equal to a wavelet projection space and RKHS norm equivalent to $\|\cdot\|_{H^{\alpha+d/2}}$. If further $\rho_0=\rho_{w_0}$ for some $w_0\in C^\beta([0,1]^d)$, any $\beta>0$, then the conditions of Theorem \ref{Theo:GPRandLoss} can be verified with $\epsilon_n = n^{-(\alpha\wedge \beta)/(2\alpha+d)}$. See the proof of Theorem \ref{Cond:GPCondition} for additional details.
\end{example}

The rate $n^{-\beta/(2\beta+d)}$ is known to be minimax-optimal in models without covariates for $\beta$-regular ground truths, see \cite{BSvZ15}, and this conclusion can be extended to the present setting as well. Example \ref{Ex:GPWav} shows that the optimal rate can be achieved by a smoothness-matching prior (i.e.~with $\alpha=\beta$), as expected from the general theory of Gaussian priors, \cite{vdVvZ08}.  The following results, proved in Section \ref{Sec:ProofAdaptGP} of the Supplement, shows that adaptation to smoothness (up to an extra log-factor) is possible via a standard hierarchical construction.

\begin{proposition}\label{th:Gauss:adapt:lambda}
Assume that $\rho_0 =\rho_{w_0}$ for some $w_0\in C^\beta([0,1]^d)$, $\beta>0$, and $\eta:\R\to(0,\infty)$ a fixed, smooth, strictly increasing, uniformly Lipschitz and bijective function. Consider data $D^{(n)}\sim P^{(n)}_{\rho_0}$ from the observation model \eqref{Eq:PointProc} with $\rho=\rho_0$ and $Z$ a stationary random field with values in $[0,1]^d$. Let the prior $\Pi $ be given by \eqref{Eq:GPPrior} with $W$ the following hierarchical Gaussian wavelet expansion,
\begin{align*} 
	W(z) 
	&:= \sum_{l=1}^{L}\sum_{k=1}^{2^{ld}}g_{lk}\psi_{lk}(z), 
	\qquad z\in[0,1]^d,
	\qquad g_{lk}\iid N(0,1),\\
	L&\sim \Pi_L, 
	\qquad \Pi_L(L=l)\propto e^{-C_L2^{ld}l}, \qquad C_L>0.
\end{align*}
Set $\epsilon_n= n^{-\beta/(2\beta+d)}\log n$. Then, for all sufficiently large $M>0$, as $n\to\infty$, 
$$
	E_{\rho_0}^{(n)}
	\Bigg[\Pi\Big(\rho : 
	\frac{1}{n} \|\lambda_\rho^{(n)} - \lambda_{\rho_0}^{(n)}\|_{L^1(\Wcal_n)} 
	>M \epsilon_n
	\Big| D^{(n)}\Big)\Bigg]
	\to 0.
$$
\end{proposition}

\begin{remark}[Extensions]
	In this section, we have considered compact covariate spaces, which is the natural setting for the asymptotic analysis of Gaussian priors, e.g.~\cite[Chapter 11]{GvdV17}. Following our proofs, extensions to the unbounded case could be studied under a decay-to-zero condition for the ground truth, using metric entropy bounds for function spaces on unbounded domains as in \cite{nickl2007bracketing}. Also, adaptation could be pursued with other hierarchical priors than the one from Proposition \ref{th:Gauss:adapt:lambda}, e.g.~as in \cite{LvdV07,vWvZ15, G23}; further see \cite[Chapter 11.6]{GvdV17}. These are interesting avenues for future work. In the next section, adaptive rates for unbounded covariates will be obtained using mixtures of Gaussians priors.
\end{remark}

%
%
%

\subsubsection{Unbounded covariate spaces; nonparametric mixtures of Gaussians priors}\label{subsec:mixtprior}

We next consider the case of unbounded covariates, taking $\Zcal=\R^d$ for notational simplicity. Drawing from the connection between intensity and density estimation, we model the function $\rho:\R^d\to[0,\infty)$ as a nonparametric location mixture of Gaussians (with non-normalised mixing measure). Denote by $\varphi_\Sigma$ the probability density function of the centred $d$-variate normal distribution with covariance matrix $\Sigma\in\R^{d, d}$. Let the prior $\Pi $ be given by the law of the random function
\begin{equation}\label{prior:mixtG1}
	\rho (z) = \int_{\mathbb R^d} \varphi_{\Sigma}(z-\mu) dQ(\mu) 
	=( \varphi_\Sigma \ast Q)(z), 
	\qquad z\in\R^d, \qquad Q(\cdot)  = \sum_{j=1}^J q_j \delta_{\mu_j}(\cdot),
\end{equation}
where $ (q_1, \cdots, q_J)  := M(p_1, \cdots, p_J)$ with $\sum_{j=1}^J p_j=1$ and $M\sim \Pi_M$ (independently). Specifically, we consider two types of priors of the above form, where either:
\begin{enumerate}
\item[(i)] $Q$ is a Gamma process with finite base measure $\bar \alpha \Pi_\mu$, where $\bar \alpha >0$ and $\Pi_\mu$ is probability measure on $\R^d$. In this case, $J= \infty$.

\item[(ii)]  $J \sim \Pi_J$ is a Poisson or Geometric random variable, and conditionally given $J$, for some $\alpha_J>0$ such that $J\alpha_J<\bar\alpha$ for some $\bar\alpha>0$, and for some probability measure $\Pi_\mu$ on $\R^d$,
$$
	\mu_1,\dots,\mu_J\iid \Pi_\mu; \qquad
	\quad (p_1 , \cdots, p_J) \sim \mathcal D(\alpha_J, \cdots, \alpha_J),
$$
with $p_1, \cdots, p_J$ independent of $\mu_1,\dots,\mu_J$.

\end{enumerate} 
Further, independently of $Q$, the covariance matrix $\Sigma$ is assigned an inverse-Wishart   prior if $d>1$, and a square-rooted inverse-Wishart prior (under which $\Sigma^{1/2}$ is inverse-Wishart distributed) when $d=1$.

 	The following result shows that the resulting posterior distributions attain (up to a log factor) adaptive posterior contraction rates in the empirical $L^1$-metric \eqref{Eq:RandLoss}. The proof is given in Section \ref{Sec:ProofMixtures} of the Supplement.

\begin{proposition}\label{Theo:MixRandLoss}
Assume that $\rho_0\in C^\beta(\R^d), \ \beta>0$, satisfies $\inf_{z\in\R^d}\rho_0(z)>0$. Consider data $D^{(n)}\sim P^{(n)}_{\rho_0}$ from the observation model \eqref{Eq:PointProc} with $\rho=\rho_0$ and $Z$ a stationary, almost surely locally bounded, random field with absolutely continuous invariant measure $\nu $. Consider a location mixture of Gaussians prior $\Pi $ defined as in \eqref{prior:mixtG1}, where $Q$ is of type either (i) or (ii), and such that $\Pi_\mu$ and $\Pi_M$ have positive  and continuous densities
$$
	\pi_\mu (\mu) \propto e^{-c_1|\mu|^{c_2}},
	\qquad \mu\in\R^d;
	\qquad \pi_M(s) \lesssim  e^{-s^{a_M}},
	\qquad s>0,
$$
for constants $c_1,c_2, a_M>0$. Set $\epsilon_n=n^{-\beta/(2\beta+d)}(\log n)^t$, $t>0$. Then, for some sufficiently large $t>0$, as $n\to\infty$,
$$
	E_{\rho_0}^{(n)}
	\Bigg[\Pi\Big(\rho : \frac{1}{n} \|\lambda_\rho^{(n)} 
	- \lambda_{\rho_0}^{(n)}\|_{L^1(\Wcal_n)}> \epsilon_n
	\Big| D^{(n)}\Big)\Bigg]
	\to 0.
$$
\end{proposition}

\begin{remark}[Mixture priors in the literature]
Mixture of Gaussians priors are known to achieve (up to log-factors) adaptive minimax posterior contraction rates in density estimation; see \cite{KRvdV10,STG13,CDB17}, or further \cite{BRR22}, where the target is supported near a possibly unknown manifold. To our knowledge, in the context of point processes, mixture priors had been previously considered, in models without covariates, only with kernels with compact support, \cite{DRRS17,sulem2024bayesian}.
\end{remark}

\begin{remark}[Extensions]
	In Proposition \ref{Theo:MixRandLoss}, we consider an isotropic multivariate construction similar to the density estimation procedures of \cite{KRvdV10} for one-dimensional domains, and of \cite{STG13} in the multivariate case. Hybrid location-scale mixtures, e.g.~as in \cite{NR17,BRR22}, represent an interesting alternative for future research. Further, we note that in Proposition \ref{Theo:MixRandLoss} the ground truth $\rho_0$ is assumed to be uniformly bounded away from zero. Extensions to other tail behaviours could be derived along the lines of \cite[Condition (C2)]{KRvdV10}, or \cite{STG13,CDB17}. Finally, mixtures with compactly supported kernels (e.g.~of the Beta family) could be employed to tackle the case of bounded covariates, following \cite{DRRS17,sulem2024bayesian}.
\end{remark}

%
%
%

\subsection{$L^1$-posterior contraction rates under ergodicity}
\label{Subsec:ErgCov}

While the empirical $L^1$-loss \eqref{Eq:RandLoss} is useful for controlling `prediction errors', it is only indirectly related to the main target $\rho$. A common strategy in the existing literature to relate estimation of $\lambda_\rho^{(n)}=\rho\circ Z^{(n)}$ to inference on $\rho$ is to consider the point pattern on $\Zcal$ induced by transforming the original points via $Z^{(n)}$, cf.~\cite{BCST12,BGMMM20}. However, this typically requires $Z^{(n)}$ to have non-vanishing gradient, a condition that is violated in many applications where similar covariate values are repeatedly observed across the domain. In this case, modelling $Z$ as a stationary and ergodic random field is instead often realistic, \cite{DVJ88,G95,R00,C15}. In \cite{G08}, ergodicity of the covariates was the key enabling factor for their asymptotic consistency result for kernel estimators.

	Here, we adopt the latter perspective and assume, as in \cite{G08}, that $Z=(Z(x),\ x\in\R^D)$ is a stationary ergodic random field. We then make the following crucial observation on the empirical $L^1$-loss \eqref{Eq:RandLoss}. In view of the (spatial) ergodic theorem, for increasing (regularly-shaped) domains $\Wcal_n\to\R^D$, the covariate-dependent distance $n^{-1}\|\lambda^{(n)}_\rho - \lambda^{(n)}_{\rho_0}\|_{L^1(\Wcal_n)}=\textnormal{vol}(\Wcal_n)^{-1}\int_{\Wcal_n}|\rho(Z(x))-\rho_0(Z(x))|dx$ almost surely satisfies, as $n\to\infty$,
\begin{equation}
\label{Eq:ErgodicTheo}
\begin{split}
	\frac{1}{n}\|\lambda^{(n)}_\rho - \lambda^{(n)}_{\rho_0}\|_{L^1(\Wcal_n)}
	&\to \int_\Zcal | \rho(z) - \rho_0(z)|d\nu(z)=\|\rho - \rho_0\|_{L^1(\Zcal,\nu)},
\end{split}
\end{equation}
where $\nu $ is the stationary distribution of $Z$. Heuristically, combining this with the typical central limit scaling (of order $\vol(\Wcal_n)^{-1/2}= n^{-1/2}$) for the variance of spatial averages of `sufficiently mixing' ergodic random fields, motivates the expectation that Theorem \ref{Theo:GenRandCov} should imply posterior contraction also in the standard non-random metric $\|\cdot\|_{L^1(\Zcal,\nu)}$, at the same rate $\epsilon_n$ obtained in empirical $L^1$-loss. In Sections \ref{Subsec:GaussRates} and \ref{Subsec:PoissRates} below, we will pursue this argument for two important classes of random covariate fields, namely Gaussian processes and Poisson random tessellations, showing that, uniformly over sets $\{\rho\in \Rcal_n : n^{-1}\|\lambda^{(n)}_\rho - \lambda^{(n)}_{\rho_0}\|_{L^1(\Wcal_n)} \le M \epsilon_n\}$ of posterior concentration, 
\begin{equation}
\label{Eq:RandLossConc}
	\Big|\frac{1}{n}\|\lambda^{(n)}_\rho - \lambda^{(n)}_{\rho_0}\|_{L^1(\Wcal_n)}
	-	\|\rho - \rho_0\|_{L^1(\Zcal,\nu)}\Big|= o_{P_{Z^{(n)}}}(\epsilon_n),
	\qquad n\to\infty.
\end{equation}
To do so, we will employ precise concentration inequalities for spatial averages, derived from recent results of \cite{DG20AHL,DG20ALEA}, and more broadly from the literature on concentration of measure, \cite{AS94,BL97,GNO15}.

	Further, we will also consider the case of discrete covariate spaces with finite cardinality (i.e.~categorical covariates), for which simpler arguments will lead to near-parametric rates of convergence.
	
%
%
%

\subsubsection{Ergodic Gaussian covariates; Gaussian wavelet priors}
\label{Subsec:GaussRates}

We present our main result in global loss, in the setting where the random covariate field is a (transformation of a) stationary and ergodic Gaussian process. Specifically, we assume that $Z$ arises as described in the following condition, cf.~Remark \ref{Rem:GaussCov} below.

\begin{condition}\label{Cond:BoundGaussCov}
Let $\tilde Z^{(h)}:=(\tilde Z^{(h)}(x), \ x\in\R^D),\ h=1,\dots,d,$ be independent, almost surely locally bounded, centred and stationary Gaussian processes with integrable covariance functions $K^{(h)}\in L^1(\R^D)$, where $K^{(h)}(x):=\textnormal{Cov}(Z^{(h)}(x),$ $Z^{(h)}(0))$, $x\in\R^D$. Further assume without loss of generality that $K^{(h)}(0)=1$. Let the covariate process $Z=(Z(x), \ x\in \R^D)$ be given by $Z(x) := \Phi(\tilde Z(x))$, where $\Phi:\R^d\to[0,1]^d$ is defined as in  \eqref{Eq:BoundedPhi} with $\phi$ the standard normal cumulative distribution function.
\end{condition}

	Note that since $\tilde Z(x)$ has standard $d$-variate normal distribution for all $x\in\R^D$, the stationary distribution $\nu $ of $Z$ equals the Lebesgue measure on $[0,1]^d$.

	We proceed modelling the unknown $\rho:[0,1]^d\to[0,\infty)$ via truncated Gaussian wavelet expansions as in \eqref{Eq:GaussWavPrior}. For these, under Condition \ref{Cond:BoundGaussCov}, we derive exponential concentration inequalities for spatial averages based on multivariate extensions of well-known functional inequalities for Gaussian measures, jointly with a chaining argument from empirical process theory to achieve the required uniformity over the wavelet prior support. See Section \ref{Subsec:MultGPCov} in the Supplement for details. We then combine this with Theorem \ref{Theo:GenRandCov} to derive posterior contraction rates in the standard $L^1$-metric. To pursue this argument, pathologically-shaped domains must be ruled out, and we here assume that for some $r\in(0,1/2)$
\begin{align}
\label{Eq:SpatialWn}
	\left[-rn^{1/D},rn^{1/D}\right]^D\subseteq \Wcal_n,
	\qquad \left(\text{vol}(\Wcal_n)=n\right),
	\qquad n\in\N.
\end{align}
Equation \eqref{Eq:SpatialWn} guarantees that $\Wcal_n$ grows uniformly in all spatial directions, and is similar to the shape regularity condition underpinning the analysis in \cite{G08}. Due to some technical aspects of the proof, we also need to restrict the prior construction to smooth, uniformly Lipschitz, strictly increasing link functions $\eta$ with bounded and uniformly Lipschitz derivative $\eta'$, similar to those in \cite{GS13}. Further, we require the left tail of $\eta'$ to satisfy, for some $w_0<0$ and $a_\eta>0$,
\begin{equation}
\label{Eq:LinkTail}
	\eta'(w)\ge \frac{1}{|w|^{a_\eta}}, \qquad  w<w_0,
\end{equation} 
cf.~Example \ref{Ex:Link} below.

\begin{theorem}\label{Theo:GaussWav}
Let $\Wcal_n\subset\R^D$ be a measurable and bounded set satisfying \eqref{Eq:SpatialWn}. For some $\beta>1+d(1+a_\eta/2)$, assume that $\rho_0 \in C^\beta([0,1]^d)\cap H^\beta([0,1]^d)$ satisfies $\inf_{z\in[0,1]^d}\rho_0(z)>0$. Consider data $D^{(n)}\sim P^{(n)}_{\rho_0}$ from the observation model \eqref{Eq:PointProc} with $\rho=\rho_0$ and $Z$ a stationary random field satisfying Condition \ref{Cond:BoundGaussCov}. Let the prior $\Pi $ be given by \eqref{Eq:GPPrior} for $\eta:\R\to(0,\infty)$ as above and $W$ the truncated Gaussian wavelet series in \eqref{Eq:GaussWavPrior} with $\alpha=\beta$. Set $\epsilon_n=n^{-\beta/(2\beta+d)}$. Then, for all sufficiently large $M>0$, as $n\to\infty$,
$$
	E_{\rho_0}^{(n)}
	\Bigg[\Pi\Big(\rho : \|\rho - \rho_0\|_{L^1([0,1]^d)} > M\epsilon_n
	\Big| D^{(n)}\Big)\Bigg]
	\to 0.
$$
\end{theorem}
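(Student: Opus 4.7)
The plan is to combine Theorem \ref{Theo:GPRandLoss} (giving contraction in the empirical covariate-dependent $L^1$-loss) with a uniform concentration argument transferring this to the fixed $L^1([0,1]^d)$-loss, exploiting that under Condition \ref{Cond:BoundGaussCov} the invariant measure $\nu$ of $Z$ equals Lebesgue measure on $[0,1]^d$. For the first step, set $w_0:=\eta^{-1}\circ\rho_0 \in C^\beta([0,1]^d)$ (using $\inf\rho_0>0$ and the smoothness of $\eta$ and $\eta^{-1}$ on the compact range of $\rho_0$). Standard wavelet approximation then provides $w_{0,n}$ in the wavelet span at truncation level $L_n$ with $2^{L_n}\simeq n^{1/(2\beta+d)}$ satisfying $\|w_0-w_{0,n}\|_\infty\lesssim v_n$ and $\|w_{0,n}\|_{\Hcal_W}^2\lesssim nv_n^2$. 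Theorem \ref{Theo:GPRandLoss} then yields posterior contraction at rate $v_n$ in the empirical loss, with posterior mass asymptotically on a sieve $\Rcal_n=\{\rho_W : W \text{ in a polynomially-bounded wavelet ball}\}$ obeying \eqref{Eq:MetricEntropy}.

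The core of the proof is the second step, showing
\begin{equation*}
\sup_{\rho\in\Rcal_n}\left|\tfrac{1}{n}\|\lambda_\rho^{(n)}-\lambda_{\rho_0}^{(n)}\|_{L^1(\Wcal_n)}-\|\rho-\rho_0\|_{L^1([0,1]^d)}\right|=o(v_n)
\end{equation*}
with $P_{\rho_0}^{(n)}$-probability tending to one; the theorem then follows by chaining this with the first step. For fixed $\rho$, write $F(\rho):=n^{-1}\int_{\Wcal_n}|\rho(Z(x))-\rho_0(Z(x))|\,dx$; stationarity of $\tilde Z$ and $\nu=$~Leb$|_{[0,1]^d}$ give $\E F(\rho)=\|\rho-\rho_0\|_{L^1([0,1]^d)}$. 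A multivariate extension of the Gaussian functional inequalities of \cite{DG20AHL,DG20ALEA} (Section \ref{Subsec:MultGPCov} in the Supplement) then yields exponential concentration of $F(\rho)$ around its mean at rate proportional to the Lipschitz constant of $z\mapsto|\rho(z)-\rho_0(z)|$ divided by $n^{1/2}$; shape-regularity \eqref{Eq:SpatialWn} is used here to avoid boundary pathologies. Uniformity over $\Rcal_n$ is obtained by chaining: the metric entropy bound \eqref{Eq:MetricEntropy} controls the cardinality of a multiscale sup-norm net of $\Rcal_n$, and the tail condition \eqref{Eq:LinkTail} together with the polynomial sup-norm bound on wavelet sieve elements bounds the relevant Lipschitz constants by a power of $n$. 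The assumption $\beta>1+d(1+a/2)$ is exactly what makes the resulting chaining bound $o(v_n)$.

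The main obstacle, relative to Theorem \ref{Theo:GaussGene}, is that the Gaussian wavelet prior does not restrict $\|\nabla\rho\|_{L^\infty}$ in advance, so the Lipschitz constants entering the pointwise concentration inequality may a priori grow unboundedly with the fluctuations of $W$. The tail condition \eqref{Eq:LinkTail} supplies the quantitative control: where $W$ is very negative (so $\rho$ is near zero), $\eta'(W)\gtrsim|W|^{-a}$ keeps the derivatives of $\rho$ bounded by a polynomial in $\|W\|_\infty$, allowing uniform Lipschitz envelopes on the whole sieve. Reconciling these polynomial envelopes with the chaining argument at scales finer than $v_n$, while simultaneously accommodating the delicate KL/log-ratio estimates in the first step (where $\rho$ close to zero is equally troublesome), is the technical heart of the proof.
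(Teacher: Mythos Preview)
Your overall strategy is right, but the second step as stated has a genuine gap. You aim to show
\[
\sup_{\rho\in\Rcal_n}\Big|\tfrac{1}{n}\|\lambda_\rho^{(n)}-\lambda_{\rho_0}^{(n)}\|_{L^1(\Wcal_n)}-\|\rho-\rho_0\|_{L^1}\Big|=o(v_n),
\]
but this is too strong and does not follow from the concentration inequality you invoke. For fixed $\rho$, the sub-Gaussian scale of $F(\rho)-\E F(\rho)$ is $\|\nabla|\rho-\rho_0|\|_\infty/\sqrt n$. On the sieve $\Bcal_n\subset\{w:\|w\|_{H^{\beta+d/2}}\lesssim\sqrt n\,v_n\}$, the gradient $\|\nabla\rho_w\|_\infty\lesssim\|\nabla w\|_\infty\lesssim\sqrt n\,v_n$ (via $H^{\beta+d/2}\subset C^1$ and the Lipschitz property of $\eta$), so the pointwise fluctuation is only $O(v_n)$, not $o(v_n)$, even before any union bound over a net. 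You also misread the role of the tail condition \eqref{Eq:LinkTail}: it is a \emph{lower} bound on $\eta'$, which cannot help upper-bound $\|\nabla\rho\|_\infty$.

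The paper circumvents this by a self-normalisation. It does not compare the two losses additively, but shows that for all $\rho\in\Rcal_n$ with $\|\rho-\rho_{0,n}\|_{L^1}>K_5v_n$, the \emph{ratio} $n^{-1}\|\lambda_\rho^{(n)}-\lambda_{\rho_{0,n}}^{(n)}\|_{L^1(\Wcal_n)}\big/\|\rho-\rho_{0,n}\|_{L^1}$ lies in a fixed compact interval of $(0,\infty)$ with high probability. Concretely, one applies the concentration inequality to
\[
f_w:=\frac{|\rho_w-\rho_{0,n}|}{\|\rho_w-\rho_{0,n}\|_{L^1}}-1,
\]
and the point is that $\|\nabla f_w\|_\infty$ is bounded \emph{independently of} $\|w-w_{0,n}\|$: the numerator $\|\nabla|\rho_w-\rho_{0,n}|\|_\infty$ is bounded by $n^{(1+d+\kappa)/(2\beta+d)}\|w-w_{0,n}\|_{L^1}$ (via a Besov inverse inequality on $\Psi_{L_n}$), while the denominator satisfies $\|\rho_w-\rho_{0,n}\|_{L^1}\gtrsim(\sqrt n\,v_n)^{-a}\|w-w_{0,n}\|_{L^1}$, and \emph{this} is where the tail condition \eqref{Eq:LinkTail} enters (it lower-bounds $\eta'$ over $[-K_3\sqrt n\,v_n,K_3\sqrt n\,v_n]$). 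The $\|w-w_{0,n}\|_{L^1}$ factors cancel, yielding $\|\nabla f_w\|_\infty\lesssim n^{(d(1+a/2)+1+\kappa)/(2\beta+d)}$. One then only needs to show the centred average of $f_w$ stays below a fixed constant $K_6/2$, not below $v_n$; the condition $\beta>1+d(1+a/2)$ is precisely what makes $\|\nabla f_w\|_\infty\cdot v_n\sqrt{\log n}\to0$, so a union bound over an $O(v_n/\sqrt n)$-net (of cardinality $e^{O(nv_n^2\log n)}$) suffices.
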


	The proof can be found in Section \ref{Sec:ProofMainGaussWav} of the Supplement. Theorem \ref{Theo:GaussWav} establishes that the posterior contracts, in standard $L^1$-distance, around any $\beta$-H\"older regular ground truth at optimal rate $n^{-\beta/(2\beta+d)}$. The minimal smoothness requirement $\beta>1+d(1+a_\eta/2)$ is closely tied to the chaining technique developed in the proof and is likely an artefact of the employed argument.

\begin{example}[Link function]\label{Ex:Link}
Consider the link function
$$
	\eta(w)=h*g(w),
	\qquad w\in\R,
	\qquad g(w) = \frac{1}{1-w}1_{\{w<0\}}+(1+w)1_{\{w\ge0\}},
$$
where $h:\R\to[0,\infty)$ is smooth, compactly supported and such that $\int_{\R}h(u)du=1$. It can readily be checked that $\eta$ satisfies the requirements of Theorem \ref{Theo:GaussWav} with $a_\eta=2$. For general exponents, similar link functions can be constructed as well, cf.~Example 24 in \cite{GN20}.
\end{example}

\begin{remark}[Ergodic Gaussian covariates]\label{Rem:GaussCov}
Gaussian processes are ubiquitously employed in spatial statistics to model random fields, motivated by both `physical' considerations and methodological convenience, e.g.~\cite[Section 2.3]{C15}. They represent the main example for the covariate process in the kernel consistency result of \cite{G08}. The standard normal assumption in Condition \ref{Cond:BoundGaussCov} amounts to the common practice of standardising the covariates, cf.~\cite[Section 3.2.1]{G08}. The transformation through $\Phi$ can then be regarded as a convenient `pre-processing' step that compactifies the covariate space $\Zcal$; it implies no loss of information in view of the invertibility of $\Phi$.

	For stationary Gaussian processes, a sufficient condition for ergodicity is that the covariance function decays to zero, e.g.~\cite{C15}, entailing the intuitive (and often realistic) scenario where the covariates at distant locations are progressively less correlated. In particular, this is implied in Condition \ref{Cond:BoundGaussCov} by the integrability of the kernels $K^{(h)}$. The latter is in fact a slightly stronger requirement that is verified as long as $K^{(h)}$ is bounded and satisfies $K^{(h)}(x) \lesssim |x|^{-D+\kappa}$, any $\kappa>0$, for large $|x|$. This is closely related to some sufficient conditions guaranteeing strongly mixing properties, e.g.~\cite[Proposition 1.4]{DG20ALEA}.
\end{remark}

\begin{remark}[Extensions]
The specific choice of the `pre-processing' map $\Phi$ in Condition \ref{Cond:BoundGaussCov} is mostly for convenience; extensions to other bounded transformations requires mostly notational changes through the proofs.

	In Theorem \ref{Theo:GaussWav}, we focused for conciseness on non-adaptive priors with fixed regularity. Adaptation could be pursued with hierarchical procedures, along the lines of Proposition \ref{th:Gauss:adapt:lambda}.

	An inspection of the concentration inequalities employed in the proof of Theorem \ref{Theo:GaussWav} motivates extensions to general priors that yield asymptotic concentration over functions with uniformly bounded gradient. We provide additional details and a general formulation in Section \ref{Subsec:BoundGaussRates} of the Supplement.
\end{remark}

%
%
%

\subsubsection{Poisson random tessellations; Gaussian process priors}
\label{Subsec:PoissRates}

The second class of stationary ergodic covariate processes we consider are piecewise constant random fields originating from random tessellations. These naturally appear as models for discrete heterogeneous structures in a variety of applications, \cite{T02,MS07}. Throughout this section, we focus on the case $d=1$, that is univariate covariate processes.

\begin{definition}\label{Cond:PoissTess}
For $\Xi := (\xi_r)_{r\ge1}$ a standard Poisson point process on $\R^D$, let $(\Ccal_r)_{r\ge1}$ be the associated Voronoi tessellation, that is, the random partition of $\R^D$ given by
$$
	\Ccal_r := \left\{ x\in \R^D : |x - \xi_r| = \inf_{r\ge1 }|x - \xi_r|\right\}.
$$
For some probability measure $\nu $ supported on $\Zcal\subseteq\R$, let the covariate process $Z = (Z(x), \ x\in \R^D)$ be given by
$$
	Z(x) = \sum_{r\ge1}\zeta_r1_{\Ccal_r}(x), 
	\qquad \zeta_r\iid \nu,
	\qquad x\in\R^D.
$$
\end{definition}

	In other words, $Z$ is piecewise constant over the random cells $(\Ccal_r)_{r\ge1}$, with cell-wise values $\zeta_r$ randomly sampled from $\nu $, which is accordingly the stationary distribution. Ergodicity of $Z$ is implied by the standard Poisson process assumption on $\Xi$, which entails that large sets in the random tessellation occur with small probability. For covariate processes arising as in Definition \ref{Cond:PoissTess}, a combination of results in \cite{DG20AHL,DG20ALEA} yields exponential concentration inequalities for spatial averages, uniformly over sup-norm balls, cf.~Section \ref{Subsec:PoissTessCov} in the Supplement. We jointly employ these with Theorem \ref{Theo:GenRandCov} to derive posterior contraction rates in the metric $\|\cdot\|_{L^1(\Zcal,\nu)}$.

		To illustrate, assume that $\Zcal=[0,1]$ and, as in Section \ref{Subsec:RandGPRates}, model $\rho:[0,1]\to [0,\infty)$ with a prior constructed as in \eqref{Eq:GPPrior} for $W\sim\Pi_W$ a draw from a centred Gaussian Borel probability measure on $C([0,1])$. We  show below that, under suitable tuning, this yields minimax-optimal posterior contraction rates. The proof can be found in Section \ref{Subsec:SupplPoissRates} of the Supplement.

\begin{theorem}\label{Cor:GaussPoissTess}
Let $\Wcal_n\subset\R^D$ be a measurable and bounded set satisfying \eqref{Eq:SpatialWn}. Assume that $\rho_0 =\rho_{w_0}$ for some $w_0\in C^\beta([0,1]^d)$, $\beta>0$, and $\eta:\R\to(0,\infty)$ a fixed, smooth, strictly increasing, uniformly Lipschitz and bijective function satisfying $\sup_{w\in[0,1]}\eta(w)\le M_1, \ M_1>0$.
Consider data $D^{(n)}\sim P^{(n)}_{\rho_0}$ from the observation model \eqref{Eq:PointProc} with $\rho=\rho_0$ and $Z$ a stationary random field constructed as in Definition \ref{Cond:PoissTess} for some probability measure $\nu $ supported on $[0,1]$. Let the prior $\Pi $ be given by \eqref{Eq:GPPrior} with $W$ a Gaussian process on $[0,1]$ satisfying Condition \ref{Cond:GPCondition} with $\alpha=\beta$ and RKHS $\Hcal_W$. Set $\epsilon_n=n^{-\beta/(2\beta+1)}$, and further assume that there exists a sequence $(w_{0,n})_{n\ge1}\subset\Hcal_W$ satisfying
$$
	\|w_0 - w_{0,n}\|_\infty\lesssim \epsilon_n;
	\qquad \|w_{0,n}\|_{\Hcal_W}^2\lesssim n\epsilon_n^2.
$$
Then, for all sufficiently large $M_2>0$, as $n\to\infty$,
$$
	E_{\rho_0}^{(n)}
	\Bigg[\Pi\Big(\rho : \|\rho - \rho_0\|_{L^1([0,1],\nu)} > M_2\epsilon_n
	\Big| D^{(n)}\Big)\Bigg]
	\to 0.
$$
\end{theorem}

\begin{remark}[Bounded link function]
The restriction to bounded link functions in Theorem \ref{Cor:GaussPoissTess} is motivated by the concentration inequalities for Poisson random tessellations, which are uniform over sup-norm balls, cf.~Section \ref{Subsec:PoissTessCov} in the Supplement. A widely used example of such link function is the sigmoidal one; see e.g.~\cite{AMM09,KvZ15}.

		While required in our analysis, the boundedness of the link is a relatively mild assumption. In particular, as the constant $M_1$ in Theorem \ref{Cor:GaussPoissTess} only affects the factor $M_2$ premultiplying the rate $\epsilon_n$, the boundedness assumption can be removed by letting $M_1\to\infty$ arbitrarily slow, replacing $M_2$ with an arbitrarily slow sequence $M_n\to\infty$. 
\end{remark}

\begin{remark}[General Gaussian process priors]
As remarked after Proposition \ref{Theo:GPRandLoss}, Condition \ref{Cond:GPCondition} is verified by a large class of Gaussian priors, including -- but not limited to -- the previously employed wavelet expansions. Compared to Theorem \ref{Theo:GaussWav}, Theorem \ref{Cor:GaussPoissTess} thus allows for greater flexibility in the prior. This is mainly due to the difference in the concentration inequalities for ergodic Gaussian covariates and Poisson random tessellations. For the former, we have leveraged specific wavelet properties to obtain the required uniformity over the prior support. Extending Theorem \ref{Theo:GaussWav} to more general Gaussian priors is a major technical challenge. 
\end{remark}

%
%
%
%
%

\subsubsection{Finite covariate spaces} \label{sec:finiteZ}

We conclude this section with the case where the covariate process $Z$ only takes finitely-many distinct values, namely $\Zcal = \{z_1,\dots,z_K\}\subset(\R^d)^K$ for some fixed $K\in\N$. This results in a parametric model where the unknown function $\rho:\Zcal\to[0,\infty)$ is identified with the vector $(\rho(z_k))_{k=1}^K\in [0,\infty)^K$. The following result shows that any prior satisfying a mild polynomial tail condition achieves the parametric rate (up to a logarithmic factor). See Section \ref{Sec:ProofParametric} in the Supplement for the proof.

\begin{proposition}\label{prop:finiteCov}
For $\Zcal = \{z_1,\dots,z_K\}\subset(\R^d)^K$, assume that $\rho_0=(\rho_0(z_k))_{k=1}^K\in [0,\infty)^K$ satisfies $\min_{k=1,\dots,K}$ $ \rho_0(z_k) >0$. Consider data $D^{(n)}=(N^{(n)},Z^{(n)})\sim P^{(n)}_{\rho_0}$ from the observation model \eqref{Eq:PointProc} with $\rho=\rho_0$ and $Z$ a stationary ergodic random field taking values in $\mathcal Z$ with invariant measure $\nu $ such that $\min_{k=1,\dots,K} \nu(z_k) >0$. Let $\Pi $ be an absolutely continuous prior distribution on $[0,\infty)^K$, satisfying for all $k=1,\dots,K$ and some constant $b>0$,
\begin{equation}
\label{Eq:DiscrPriorCond}
	\Pi\left( \rho=(\rho(z_k))_{k=1}^K\in [0,\infty)^K: |\rho(z_k) | > u\right) 
	\leq C u^{-b}, \qquad u>0.
\end{equation}
Then, for all sufficiently large $M>0$, as $n\to\infty$,
$$
	E_{\rho_0}^{(n)}
	\Bigg[\Pi\left( \rho: \sum_{k=1}^K | \rho(z_k) - \rho_0(z_k) | \nu(z_k)  
	> M \frac{ \sqrt{\log n} }{ \sqrt{n }} \Bigg| D^{(n)}\right)\Bigg] \to 0.
$$

\end{proposition}

	For example, Proposition \ref{prop:finiteCov} applies to covariate processes arising  as the stratification (in a finite number of classes) of a stationary ergodic Gaussian processes, like the ones from Section \ref{Subsec:GaussRates}.

%
%
%
%
%

\section{Local and global analysis of Pólya tree-type priors}
\label{sec:localrates}

We now turn to studying a family of Pólya tree-type priors, constructed in the spirit of the density estimation procedures of \cite{CM21,M17}. We will show that these posses a remarkable ability to model heterogeneous intensities, achieving optimal point-wise rates that adapt to the local smoothness, and also optimal $L^1$-rates under a global regularity assumption. The design of spatially-adaptive procedures is a delicate problem in Bayesian nonparametrics; for example, Gaussian priors are known to be unsuited (even within hierarchical models) to recovering inhomogeneous or structured functions, \cite{RR23,GRSH22,agapiou2024laplace}.

%
%
%

\subsection{P\'olya tree priors for covariate-based intensity functions}

To construct P\'olya tree priors in the present setting, assume that the covariate space $\Zcal$ is compact; for notational simplicity, take $\Zcal=[0,1]^d$. For the purpose of deriving point-wise rates this poses no additional restrictions on the covariate process $Z$, since if $\Zcal$ were unbounded then transforming $Z$ via a smooth bijective map $\Phi:\R^d\to[0,1]^d$, constructed e.g.~as in Section \ref{Subsec:RandGPRates}, would imply that estimating $\rho$ at any point $z_0\in\R^d$ is in fact equivalent to estimating $\rho\circ\Phi^{-1}$ at $\Phi(z_0)\in[0,1]^d$.

%

\subsubsection{Prior construction}
\label{Sec:PTConstruction}

	Take a deterministic sequence of binary partitions $\Pcal^{(L_n)}:=(\Pcal_l,\ 1\le l\le L_n)$ of $[0,1]^d$, with $L_n\in\N$ to be chosen below, where $\Pcal_0:=[0,1]^d$ and each partition $\Pcal_l$ is obtained by splitting every set of $\Pcal_{l-1}$ into two subsets; see Example \ref{Ex:Diadic}. Accordingly, $|\Pcal_l|=2^l$. Throughout, we will use the following notations, borrowing common terminology for tree-type priors. For any $1\le l\le L_n$,  let $\Pcal_l:=(B_\eps,\ \eps\in\Ecal_l)$ be the partition at level $l$, with $\Ecal_l:=\{0,1\}^l$ the corresponding set of indices and $\mathcal E_{[n]}:=\cup_{l=1}^{L_n}\Ecal_l$. Each set $B_\eps\in\Pcal_l$ has two children $B_{\eps-},B_{\eps+}\in\Pcal_{l+1}$, with $\eps- := (\varepsilon,0),\eps+ := (\varepsilon, 1) \in\Ecal_{l+1}$. Also, for any $\eps\in\Ecal_l$, we denote the index of its parent bin by $P(\eps)\in\Ecal_{l-1}$ and that of its twin by $\aeps\in\Ecal_l$. In other words, $B_\eps$ and $B_{\aeps}$ are the children of $B_{P(\eps)}$.

	Given $\Pcal^{(L_n)}$, define for all $\varepsilon\in\Ecal_{[n]}$,
$\alpha_n(\eps) :=  \mu_n(B_{\eps}) / \mu_n(B_{P(\eps)})$, where $\mu_n(A):=n^{-1}\int_AZ(x)dx$, $A\subseteq\Wcal_n$ measurable, is the (normalised) push-forward of the Lebesgue measure under $Z$. Note that  $\alpha_n(\eps+) + \alpha_n(\eps-) =1$ by construction. For $\rho:[0,1]^d\to[0,\infty)$, we write in slight abuse of notation
\begin{align}
\label{Eq:BasicPT}
	\rho(\eps)\equiv \rho(B_\eps):=\int_{\Wcal_n}\rho_0(Z(x))1_{\{Z(x)\in B_\eps\}}d\mu_n(x);
	\qquad \rho^*:=\int_{\Wcal_n}\rho(Z(x))d\mu_n(x),
\end{align}
and let
$$
	y_{\eps+}
	:=\frac{\rho(\eps+)}{\rho(\eps)\alpha_n(\eps+)};
	\qquad y_{\eps-}
	:=\frac{\rho(\eps-)}{\rho(\eps) \alpha_n(\eps-)};
	\qquad \bar y_\eps:=y_\eps\alpha_n(\eps).
$$
Note that if $\rho$ is constant over $B_\eps$, then $y_{\eps+} =1= y_{\eps-}$; accordingly, as $l$ increases, $y_{\eps}\to1$ for all $\eps \in \Ecal_l$ provided that $\rho$ is continuous. In particular, the following is a piecewise constant approximation of $\rho$,
\begin{equation}
\label{prior:rho}
	\tilde \rho(z) = \frac{\rho(\eps_l(z))}{\mu_n(B_{\eps_l}(z))} = \rho^*\prod_{l\le L_n}y_{\varepsilon_{l}(z)},
	\qquad z\in [0,1]^d;
	\qquad \tilde\rho(\eps_{L_n}(z)) :=  \rho^* \prod\limits_{l \le L_n }\bar y_{\eps_l(z)}, 
\end{equation}
having denoted by $\eps_l(z) \in \Ecal_l$ the index of the bin at level $l$ containing $z$, that is $z \in B_{\eps_l(z)}$. 
Using this representation, we construct the following spike-and-slab prior. Fix $1\le L_0\le L_n $; for all $1\le l\le L_n$ and all $\eps\in \Ecal_l$, let $\bar y_{\eps+}:=1-\bar y_{\eps-}$, and draw $\bar y_{\eps-}$ according to
\begin{equation}
\label{SASprior}
\begin{split}
	\bar y_{\eps-} &\sim q_\eps \delta_{\alpha_n(\eps-)} 
	+(1-q_\eps)\text{Beta}(\alpha_\eps \alpha_n(\eps-),\alpha_\eps\alpha_n(\eps+)),
	\qquad  \varepsilon \in \Ecal_l, \qquad  l \geq  L_0,\\
	\bar y_{\eps-} &\sim \text{Beta}(\alpha_{\eps,1},\alpha_{\eps,2}), 
	\qquad 0<\alpha_{\eps,1},\alpha_{\eps,2}<\infty,
	\qquad \varepsilon \in \Ecal_l, \qquad l < L_0,\\
	\rho^*  & \sim \pi_\rho
	\qquad \text{independently of $(\bar y_\eps, \ \eps\in\Ecal_{[n]})$},
	\end{split}
\end{equation}
for constants $q_\eps,\alpha_\eps>0$ to be chosen depending only on the level of $\eps$, and with $\pi_\rho$ an absolutely continuous probability distribution on $[0,\infty)$ with positive density.

	In other words, the prior is built by putting a prior on the terms  $\bar y_\eps=\rho(\eps)/\rho( \peps)$ and by modelling $\rho $ as piecewise constant over the bins $B_{\eps}$ for $\eps \in \Ecal_{L_n}$. This is similar to construction by \cite{CM21}, which however is based on the definition $\rho(\eps)=\int_{B_\eps} \rho(z)dz$ as opposed to the covariate-dependent one in \eqref{Eq:BasicPT}. This makes the bins unbalanced, which then motivates `centring' the prior of $\bar y_\eps$ in \eqref{SASprior} at $\alpha_n(\eps)$.

\begin{example}[Dyadic partitions]\label{Ex:Diadic}
The simplest instance for the above sequence of partitions $\Pcal^{(L_n)}$ is a dyadic one, where the children are iteratively obtained by a deterministic splitting of the parent bins at the midpoint along each axis, alternatively. For example, for $d=2$, the first three partitions are then
\begin{align*}
	\Pcal_0&=\{[0,1]^2\};\qquad \Pcal_1=\{[0,1/2]\times[0,1],[1/2,1/2]\times[0,1]\};\\
	\Pcal_2&=\{[0,1/2]^2,[0,1/2]\times[1/2,1],[1/2,1]\times[0,1/2],[1/2,1]^2\}.
\end{align*}
In the result to follow, this will not be a strict requirement, and more general partitions (e.g.~quantile-based ones) could be used as well.
\end{example}

%
%
%

\subsubsection{Polya-tree posteriors} \label{Sec:PolyaTPost}

Write shorthand $(\rho^*, \bar y):= (\rho^*, \bar y_\eps,\ \eps \in \Ecal_{[n]})$. Under the parametrisation \eqref{prior:rho}, the likelihood equals
 \begin{equation}
\label{likeli}
	L_n(\rho^*, \bar y)
	=e^{-\rho^* G_n}\prod_{B_\eps\in \mathcal{P}_{L_n}}\rho(\eps)^{N_{\eps}}
	=(\rho^*)^{N_n}e^{-\rho^*n}
	\prod_{\eps\in \Ecal_{[n]}}\bar y_\eps^{N_\eps},
\end{equation}
where, for $N^{(n)}=\{X_1,\dots,X_{N_n}\}$ the point pattern \eqref{Eq:PointProc}, we have denoted by $G_n:=\int_{\Wcal_n}dx=n$ and by $N_\eps:=\sum_{i=1}^{N_n}1_{\{Z(X_i)\in B_\eps\}}$. Note that $L_n(\rho^*, \bar y)$ factorises into two terms depending only on $\rho^*$ and $\bar y$ respectively. Given prior \eqref{SASprior}, this leads to the product posterior distribution 
\begin{align}
\label{post}
	d\Pi(\rho^*, \bar y | D^{(n)}) 
	&= \pi( \rho^*|  D^{(n)}) d\rho^* \prod_{\eps \in \mathcal E_{[n]}} d\Pi(\bar y_\eps | D^{(n)}),
\end{align}
where the posterior density of $\rho^*|  D^{(n)}$ is given by
$$
	\pi( \rho^*|  D^{(n)}) 
	\propto 
	\pi_\rho (\rho^*)(\rho^*)^{N_n}e^{-\rho^* n}
	,
$$
and where the (independent) posterior distributions of $\bar y_\eps | D^{(n)}$ can be explicitly expressed as
\begin{equation}\label{post:polyaT}
\begin{split}
	d\Pi(\bar y_\eps | D^{(n)})&=\omega_1(\eps)\delta_{\alpha_n(\eps)} + \omega_2(\eps)
	\text{Beta}(N_\eps +\alpha_\eps \alpha_n(\eps), N_{\aeps}+ \alpha_\eps(1- \alpha_n(\eps)));\\
	\omega_1(\eps)&:=c_n(\eps)q_\eps \alpha_n(\eps)^{N_\eps} 
	(1-\alpha_n(\eps))^{N_{\aeps}} \delta_{\alpha_n(\eps)};\\
	\omega_2(\eps)&:=\frac{ c_n(\eps)(1 - q_\eps) B( \alpha_\eps \alpha_n(\eps) , \alpha_\eps(1- \alpha_n(\eps)) }{  B(N_\eps +\alpha_\eps \alpha_n(\eps) , N_{\aeps}+ \alpha_\eps(1- \alpha_n(\eps))},
\end{split}
\end{equation}
with $B(a,b) =  \Gamma (a+b)/( \Gamma(a)\Gamma(b))$ (the Beta function) and
$$ 
	c_n(\eps)^{-1} := q_\eps \alpha_n(\eps)^{N_\eps} (1-\alpha_n(\eps))^{N_{\aeps}}
	+(1 - q_\eps) \frac{ B( \alpha_\eps \alpha_n(\eps) , \alpha_\eps(1- \alpha_n(\eps)) }
	{  B(N_\eps +\alpha_\eps \alpha_n(\eps) , N_{\aeps}+ \alpha_\eps(1- \alpha_n(\eps))} .
$$

\begin{remark}[Conjugacy for $\bar y_\eps$]\label{Rem:ConjPT}
The expression for the posterior distribution of $\bar y_\eps|D^{(n)}$ follows from the well-known conjugacy of the Beta prior under the Binomial likelihood, and is akin to the expressions obtained in \cite[Theorem 1]{CM21} for density estimation. This property can be used as the basis for a Metropolis-within-Gibbs sampling algorithm for the product posterior $d\Pi(\rho^*,\bar y|D^{(n)})$, where the updates for $\bar y_\eps|D^{(n)}$ can be carried out straightforwardly via \eqref{post:polyaT}. In our analysis, conjugacy leads to some convenient simplifications, but it is not crucial. The key aspect is in fact the independence of the parameters $(\bar y_\eps, \ \eps\in\Ecal_n)$ under the posterior distribution.
\end{remark}

%
%
%

\subsection{Point-wise contraction rates for P\'olya tree priors} \label{sec:pointwise}

We present our main result in point-wise loss, showing that the P\'olya tree posterior adapts to the (possibly) spatially-varying smoothness of the ground truth, attaining optimal contraction rates towards $\rho(z_0)$ for any $z_0\in[0,1]^d$. The result holds under the following assumptions. We write $\eps^0_l:=\eps_l(z_0)$.

\begin{condition}\label{Cond:Diam}
	Assume that there exist constants $C_d>0$ and $0<c_1<1/2$ such that for all $l\geq L_0$ and all $\eps \in \mathcal E_l$, with probability tending to one (under the law of $Z$),
\begin{equation} 
\label{cond:split}
 	\text{diam}(B_\eps)\le C_d2^{-l/d};
	\qquad 
	c_1 \le  \alpha_n(\eps) \le 1-c_1;
	\qquad 
	\mu_n(B_\eps) \geq C_d^{-1}2^{-l},
\end{equation}
where $\text{diam}(B_\eps):=\max\{|x-y|,\ x,y\in B_\eps\}$ is the diameter of $B_\eps$. 
\end{condition}

\begin{condition}\label{Cond:Holder}
Let $\rho_0:[0,1]^d\to[0,\infty)$ satisfy the following.
\begin{itemize}
\item[(i)] $\rho_0$ is globally $\beta$-H\"older continuous, $\rho_0\in C^\beta([0,1]^d)$, and locally $\beta_0$-H\"{o}lder continuous in a neighbourhood of $z_0\in[0,1]^d$ for some $0<\beta\le\beta_0\le1$, that is
\begin{align*}
	|\rho_0(z_1)-\rho_0(z_2)|&\le 
	C_H|z_1-z_2|^\beta, \qquad \forall z_1,z_2\in[0,1]^d, \qquad C_H>0;\\
	|\rho_0(z)-\rho_0(z_0)| &\le  C_0|z-z_0|^{\beta_0}, \qquad \ \forall z :  |z-z_0| \le \delta_0,
	\qquad C_0,\delta_0>0.
\end{align*}

\item[(ii)] There exists a constant $0<c_0\le C_0$ such that, for all $1\le l \le L_n$,
$$
	c_0\mu_n(B_\epsltrue)\le \rho_0(\epsltrue)\le C_0\mu_n(B_\epsltrue).
$$

\end{itemize}
\end{condition}

	Condition \ref{Cond:Diam} entails that $\Pcal^{(L_n)}$ is sufficiently regular; in particular, the first inequality in \eqref{cond:split} is verified for the dyadic partitions of Example \ref{Ex:Diadic} (and similar ones). The  requirements on $\alpha_n(\eps) $ and $\mu_n(B_\eps)$ depend on the random field $Z$, but they are easy to check in practice since the latter is fully observed. We will revisit these assumptions in Section \ref{Subsec:LocErgCov} below for stationary ergodic covariates.

Condition \ref{Cond:Holder} concerns the ground truth, allowing in (i) its smoothness to vary across the domain. The point-wise contraction rates derived below are then entirely driven by the local regularity $\beta_0$. Condition \ref{Cond:Holder} (ii) is mild, holding in particular if $\rho_0$ is bounded and bounded away from zero near $z_0$.

\begin{theorem}\label{Theo:LocalRates}
For fixed $z_0\in[0,1]^d$ and some $0<\beta\le\beta_0\le1$, assume that $\rho_0$ satisfies Condition \ref{Cond:Holder}. Consider data $D^{(n)}\sim P^{(n)}_{\rho_0}$from the observation model \eqref{Eq:PointProc} with $\rho=\rho_0$ and $Z$ a stationary random field with values in $[0,1]^d$. Consider a P\'olya tree prior $\Pi $ constructed as in \eqref{SASprior}, for a sequence of partitions $\Pcal^{(L_n)}$ satisfying Condition \ref{Cond:Diam} for all $\varepsilon \in (\varepsilon_l^0, \ 1\le l \le L_n)$ with $2^{L_n} \le \delta n/\log n$ for some $\delta >0$ small enough. Further assume that the prior hyperparameters satisfy, for all $L_0 \le l\le L_n$,
\begin{itemize}
	\item[(i)] $0\le (1-q_{\epsltrue})\alpha_{\epsltrue} \le  2^{-lt}$ for some $t>0$, and $q_\epsltrue\geq c_2$ for some $c_2>0$;

	\item[(ii)]  $ \alpha_{\epsltrue}2^l=o(n)$ as $n\to\infty$.
\end{itemize} 
Set $\epsilon_n=(\log n/n)^{\beta_0/(2\beta_0+d)}$. Then, for all sufficiently large $M>0$, in $P^{(n)}_{\rho_0}$-probability as $n\to\infty$,
\[
	\Pi\left(\rho:|\rho_0(z_0)-\rho(z_0)| > M \epsilon_n\big |D^{(n)}\right) \to 0.
\]
\end{theorem}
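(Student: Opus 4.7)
The approach is to adapt the spike-and-slab Pólya-tree analysis of \cite{C17,CM21} to the Poisson-intensity-with-covariates setting, exploiting the product representation $\rho(z_0)=\rho^*\prod_{l\le L_n}Y_{\varepsilon_l^0}$ to localise everything to the chain of bins shrinking to $z_0$. Fix a cutoff level $l^*$ with $2^{l^*}\asymp(n/\log n)^{d/(2\beta_0+d)}$, which balances a local-approximation bias of order $2^{-l^*\beta_0/d}$ against a posterior fluctuation of order $(2^{l^*}\log n/n)^{1/2}$, both of order $v_n$. Split $\rho(z_0)-\rho_0(z_0)=[\rho(z_0)-\rho_0^{l^*}(z_0)\rho_0^*]+[\rho_0^{l^*}(z_0)\rho_0^*-\rho_0(z_0)]$; since $\rho_0^{l^*}(z_0)\rho_0^*$ is the $\mu_n$-average of $\rho_0$ over $B_{\varepsilon_{l^*}^0}$, Condition \ref{Cond:Holder}(i) together with the diameter bound in Condition \ref{Cond:Diam} give $|\rho_0^{l^*}(z_0)\rho_0^*-\rho_0(z_0)|\lesssim 2^{-l^*\beta_0/d}\asymp v_n$, which handles the bias.

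For the stochastic term I exploit conjugacy. Writing $N_\varepsilon$ for the number of observed points with $Z(x)\in B_\varepsilon$, the Poisson likelihood \eqref{Eq:Likelihood} evaluated at piecewise-constant $\rho$ on $\mathcal{P}^{(L_n)}$ factorises, via the conditional-multinomial representation \eqref{Eq:PointProc}, as a product over internal tree nodes times a Gamma-type factor in $\rho^*$. In the parametrisation \eqref{SSprior}--\eqref{prior:rho} this means that the spike-and-slab nodes remain a posteriori conditionally independent given $(N_\varepsilon)_\varepsilon$: at each node the slab posterior is the conjugate $\text{Beta}(\alpha_\varepsilon\alpha_n(\varepsilon-)+N_{\varepsilon-},\alpha_\varepsilon\alpha_n(\varepsilon+)+N_{\varepsilon+})$, and the spike posterior mass is an explicit function of $(N_\varepsilon,N_{\varepsilon-},q_\varepsilon,\alpha_\varepsilon,\alpha_n)$. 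This reduces everything to level-by-level computations along $(\varepsilon_l^0)_{l\le L_n}$.

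At shallow levels $l\le l^*$, Condition \ref{Cond:Diam} gives $\mu_n(B_{\varepsilon_l^0})\asymp 2^{-l}$ and hence $N_{\varepsilon_l^0}\asymp n2^{-l}$ in $P^{(n)}_{\rho_0}$-probability, and standard Beta-concentration puts $\bar Y_{\varepsilon_l^0-}$ within $(2^l\log n/n)^{1/2}$ of $N_{\varepsilon_l^0-}/N_{\varepsilon_l^0}$, itself within the same order of $\alpha_n(\varepsilon_l^0-)y^0_{\varepsilon_l^0-}$. Summing the log-contributions along $l\le l^*$ and combining with standard posterior contraction of $\rho^*$ around $\rho_0^*$ yields $|\log\rho(z_0)-\log(\rho^*\prod_{l\le l^*}y^0_{\varepsilon_l^0})|\lesssim\sqrt{2^{l^*}\log n/n}\asymp v_n$ with high posterior probability. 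At deep levels $l>l^*$ one needs the tail $\prod_{l^*<l\le L_n}Y_{\varepsilon_l^0}$ to equal $1$, i.e.~the spike component must be selected along the whole deep chain. Using the prior conditions $q_\varepsilon\ge c_2$ and $(1-q_\varepsilon)\alpha_\varepsilon\le 2^{-lt}$, together with the local Hölder smoothness which forces $\rho_0(\varepsilon\pm)/\rho_0(\varepsilon)$ to be close to $\alpha_n(\varepsilon\pm)$, the spike-to-slab posterior odds grow uniformly, and a union bound over $l^*<l\le L_n$ closes the argument.

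The hard part is this last spike-dominance step. It requires a Bayes-factor bound that is uniform in $l$ despite Poisson fluctuations of $(N_{\varepsilon_l^0-},N_{\varepsilon_l^0+})$ around the $Z$-dependent means $n\rho_0(\varepsilon_l^0\pm)$; the argument must be carried out on the $P_Z$-high-probability event of Condition \ref{Cond:Diam}, and the Poisson tail bounds must beat the summation $\sum_{l^*<l\le L_n}2^{-lt}$ coming from the Bayes-factor envelope --- hence the role of hyperparameter condition (i). The cutoff $2^{L_n}\le\delta n/\log n$ and condition (ii), in turn, are what let one apply tail bounds for the Beta posteriors uniformly across all levels $l\le L_n$.
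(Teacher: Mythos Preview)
Your overall strategy --- product decomposition along the chain $(\varepsilon_l^0)_l$, conjugacy giving independent spike--slab node posteriors, a bias/variance-balancing cutoff, and spike dominance at deep levels --- matches the paper's. But your shallow-level step has a gap. You invoke ``standard Beta-concentration'' for $\bar Y_{\varepsilon_l^0}$ at $l\le l^*$, yet the posterior there is a spike--slab \emph{mixture}, not a pure Beta. If the spike is drawn at a coarse level $l$ (near $L_0$) where $|y^0_{\varepsilon_l^0}-1|$ is of constant order, then $Y_{\varepsilon_l^0}=1$ and the single factor already contributes an $O(1)$ error to $\rho(z_0)$. So you also need the \emph{reverse} selection statement: at shallow levels with signal $|y^0_{\varepsilon_l^0}-1|\gg(2^l\log n/n)^{1/2}$, the posterior spike mass vanishes. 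You carefully argue spike dominance at deep levels but never this complementary slab-dominance direction; your ``hard part'' is really two-sided.

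The paper organises this symmetrically. Instead of a deterministic cutoff at $l^*$, it introduces the signal-based sets $\mathcal L(\gamma)=\{l:|y^0_{\varepsilon_l^0}-1|>\gamma\sqrt{\log n/(n\rho_0(\varepsilon_l^0))}\}$ and proves a two-sided sandwich $\mathcal L(\bar\gamma)\subseteq\mathcal S\subseteq\mathcal L(\underline\gamma)$ on the posterior active set $\mathcal S=\{l:Y_{\varepsilon_l^0}\ne1\}$ via two Bayes-factor lemmas (one per inclusion, both driven by a Stirling expansion of the Beta-binomial marginal). The error $|\rho(z_0)-\rho_0(z_0)|$ is then routed through $\rho_0^{\mathcal S}(z_0)=\prod_{l\in\mathcal S}y^0_{\varepsilon_l^0}$: on the sandwich event every inactive level satisfies $|y^0_{\varepsilon_l^0}-1|\le\bar\gamma\sqrt{2^l\log n/n}$, so setting those factors to $1$ costs only $O(v_n)$, while on active levels one has genuine Beta posteriors and the concentration you describe applies. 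Your depth-based split can be salvaged, but only after you import the missing slab-dominance lemma; the paper's signal-based split makes the two halves of the Bayes-factor computation manifestly parallel.
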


	The proof is provided in Section \ref{sec:prooflocal} of the Supplement. 
The requirements (i) and (ii) are verified with the choices $\alpha_{\epsltrue} = \alpha$ (fixed) and $q_{\epsltrue} = 2^{-t_0 l}$ for $l\geq L_0$ and any $t_0>0$. We can also choose $\alpha_{\epsltrue} =  \alpha l^{q_0}$ for some $q_0\geq 1$, in which case we need $L_n $ to verify $2^{L_n}=o( n/(\log n)^{q_0})$.

\begin{remark}[Towards uniform pointwise rates]\label{Rk:unifpointwise}
n inspection of the proof of Theorem \ref{Theo:LocalRates} shows that the results holds uniformly in $z_0$ as soon as $t>2d /( 2 \beta+ d)$ in assumption (i). See Remark \ref{Rem:Unif} in the Supplement for details. This also implies posterior contraction in sup-norm at rate  $(n/\log n)^{-\beta/(2\beta+/d)}$, with $\beta\le \beta_0$ the global H\"older smoothness level, as well as the same rate for the loss $|\lambda^{(n)}(x_0)-\lambda_{\rho_0}^{(n)}(x_0)|$, any $x_0\in\Wcal_n$.
\end{remark}

\begin{remark}[Pólya tree and spatially-adaptive priors in the literature]
The prior in Theorem \ref{Theo:LocalRates} differs from \cite{CM21} in two aspects: firstly, the underlying space is multi-dimensional, and secondly, it is based on the non-uniform design $\alpha_n( \cdot)$, which turns out to be key in our proofs. A further difference is that the constraint on the `sparsity parameters' $q_{\epsltrue}$ is significantly milder, allowing for any $t>0$ in condition (i) as opposed to $t  > 1/2 + 1/\log 2$ in \cite{CM21}. Thus, our result applies to less informative priors. The weakened sparsity constraints arise from some sharper bounds in the proof of Lemma \ref{lem:2}, and also from the fact that we consider concentration at $z_0$ and not uniformly over $[0,1]^d$. Finally, Theorem \ref{Theo:LocalRates} also extends results in \cite{RR23} by considering models different from nonlinear regression and, more importantly, by treating the multivariate case. In particular, it remains unclear if the `repulsive' prior construction of \cite{RR23} could be adapted to the multivariate context under Condition \ref{Cond:Diam}. Relatedly, `optional Pólya tree’ as in \cite{wong2010optional,castillo2022optional} could also be studied.
\end{remark}
	
%
%
%

\subsection{Tree-inducing partitions for stationary ergodic covariate processes} 
\label{Subsec:LocErgCov}

We here discuss the validity of Condition \ref{Cond:Diam} in the case where, similarly to Section \ref{Subsec:ErgCov}, $Z$ is assumed to be stationary and ergodic. For simplicity, assume $\Pcal^{(L_n)}$ to be dyadic, cf.~Example \ref{Ex:Diadic}; as previously observed, this satisfies the first inequality in \eqref{cond:split}. Turning to the bounds for $\mu_n(B_\eps)$ and $\alpha_n(\varepsilon)$, note that if the stationary distribution $\nu $ of $Z$ has a continuous density that is bounded and bounded away from zero, then for some $c_\nu>0$,
$$
 	c_\nu^{-1}2^{-l} \le \nu(B_\eps) \le c_\nu2^{-l};
	\qquad \frac{ 1 }{2 c_\nu^2} 
	\le  \frac{ \nu(B_\eps)  }{ \nu(B_{P(\eps)})},
	\qquad \eps\in\Ecal_{[n]}.
$$

	Under ergodicity, $\mu_n(B_\eps)\to\nu(B_\eps)$ almost surely, and we may expect the above to also hold with $\nu $ replaced by $\mu_n$, for large $n$. In particular, if $|\mu_n(B_\eps)  - \nu(B_\eps)| = o_{P_{Z^{(n)}}}(\nu(B_\eps))$, then Condition \ref{Cond:Diam} is verified with $c_1:= 1/(2c_\nu^2)$ and $C_d := \max (2, c_\nu) $, as established in the following result. Recall that Condition \ref{Cond:Diam} needs to be valid only for indices $\eps\in\cup_{l\le L_n} \eps_l(z_0) $ and for the indices of their neighbours. Let  $\bar{\mathcal E}_n(z_0)$ denote the set of all such indices, with cardinality $|\bar{\mathcal E}_n(z_0)|\le 2L_n = O(\log n) $.

\begin{proposition}\label{prop:ass:Z:diam}
Let $Z$ be a stationary random field with values in $[0,1]^d$ and with invariant measure $\nu$. Assume that there exists a constant  $C_Z<\infty$ such that, for all $n\in\N$,
\begin{equation}
\label{mixing}
	\sup_{\eps \in \bar{\mathcal E}_n(z_0)} \int_{\R^D}
	\textnormal{Corr}( 1_{B_\eps}(Z(0)), 1_{ B_\eps}(Z(x)))dx \le C_Z.
\end{equation} 
Then, for any arbitrary sequence $M_n\to\infty$ as $n\to\infty$, we have, for all sufficiently large $n$,
$$ 
	P_{Z^{(n)}}\left(  |\mu_n(B_\eps) - \nu(B_\eps) | 
	> \frac{ M_n \sqrt{ \nu(B_\eps)\log n  }}{\sqrt{ n }},\ 
	\forall \eps \in  \bar{\mathcal E}_n(z_0)\right) \lesssim  \frac{ C_Z }{ M_n^2 } .
$$
\end{proposition}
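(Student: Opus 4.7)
The plan is to prove a pointwise deviation inequality for each $\mu_n(B_\eps)$ via Chebyshev's inequality, controlling the variance through the mixing condition \eqref{mixing}, and then to conclude by a union bound over the $O(\log n)$ indices in $\bar{\mathcal E}_n(z_0)$ (using $|\bar{\mathcal E}_n(z_0)|\le 2L_n$ and $2^{L_n}\le \delta n/\log n$, so $L_n \lesssim \log n$).

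First, by stationarity of $Z$ together with the normalisation $\vol(\mathcal W_n)=n$, I would establish that $E[\mu_n(B_\eps)]=\nu(B_\eps)$. Setting $K_\eps(h):=\Covnorm\bigl(1_{B_\eps}(Z(0)),1_{B_\eps}(Z(h))\bigr)$, so that $K_\eps(0)=\nu(B_\eps)(1-\nu(B_\eps))\le \nu(B_\eps)$, I would expand
\[
\Varnorm(\mu_n(B_\eps))=\frac{1}{n^2}\int_{\mathcal W_n}\int_{\mathcal W_n} K_\eps(x-y)\,dx\,dy,
\]
and, after the change of variables $h=x-y$ and Fubini, bound the inner integral by $\int_{\R^D} K_\eps(h)\,dh$, yielding $\Varnorm(\mu_n(B_\eps))\le n^{-1}\int_{\R^D} K_\eps(h)\,dh$. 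Writing $K_\eps(h)=K_\eps(0)\,\Covnorm_{\mathrm{corr}}$ and invoking assumption \eqref{mixing}, this gives
\[
\Varnorm(\mu_n(B_\eps))\le \frac{C_Z\,\nu(B_\eps)(1-\nu(B_\eps))}{n}\le \frac{C_Z\,\nu(B_\eps)}{n},\qquad \eps\in\bar{\mathcal E}_n(z_0).
\]

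Then Chebyshev's inequality gives, for each fixed $\eps\in\bar{\mathcal E}_n(z_0)$,
\[
P_{Z^{(n)}}\!\left(|\mu_n(B_\eps)-\nu(B_\eps)|> \frac{M_n\sqrt{\nu(B_\eps)\log n}}{\sqrt{n}}\right)\le \frac{\Varnorm(\mu_n(B_\eps))}{M_n^2\nu(B_\eps)\log n/n}\le \frac{C_Z}{M_n^2 \log n}.
\]
A union bound over the $O(\log n)$ indices in $\bar{\mathcal E}_n(z_0)$ then delivers the advertised rate $\lesssim C_Z/M_n^2$.

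The main technical subtlety is the variance bound: \eqref{mixing} controls the integral of the correlation function, whereas the Fubini step strictly requires control of $\int |K_\eps|$ (unless the correlations are non-negative). In the mixing regimes envisaged for Condition \ref{Cond:Diam} the indicators $1_{B_\eps}(Z(0))$ and $1_{B_\eps}(Z(h))$ are typically positively associated and this point is benign, but making it fully rigorous may require either imposing non-negativity of correlations, replacing $K_\eps$ by $|K_\eps|$ in \eqref{mixing}, or equivalently working with the (non-negative) spectral measure of $1_{B_\eps}(Z(\cdot))$ and using Parseval. All other steps are routine second-moment estimates.
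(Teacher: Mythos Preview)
Your approach is essentially the same as the paper's: bound the variance of $\mu_n(B_\eps)$ via stationarity and the mixing assumption \eqref{mixing}, then apply Chebyshev and a union bound over the $O(\log n)$ indices. The paper's proof is in fact terser than yours --- it only writes out the variance computation and stops at $\Varnorm(\mu_n(B_\eps))\lesssim \nu(B_\eps)/n$, leaving the Chebyshev and union-bound steps implicit --- so your write-up is, if anything, more complete. Your remark about the sign of $K_\eps$ is apt: the paper's change-of-variables step tacitly uses non-negativity of the covariance (or of its integral) in exactly the way you flag, and does not comment on it.
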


	Since, if $2^l = o(n/\log n)$, we have $\sqrt{\nu(B_\eps) \log n }/\sqrt{ n } \lesssim 2^{-l/2} \sqrt{ \log n} / \sqrt{n}  = o(2^{-l})$, Proposition \ref{prop:ass:Z:diam} implies that Condition \ref{Cond:Diam} holds for all $l\leq L_n$ provided that $ 2^{L_n} = o(n/\log n)$. This is in accordance with the assumptions of Theorem \ref{Theo:LocalRates}.

\begin{example}[Ergodic Gaussian covariates]
Proposition \ref{prop:ass:Z:diam} (and therefore, also Theorem \ref{Theo:LocalRates}) applies when $Z$ arises according to Condition \ref{Cond:BoundGaussCov} as a (transformed) stationary ergodic Gaussian process. Indeed, in this case, for all $\eps \in \bar{\mathcal E}_n(z_0)$,
\begin{align*}
 	\text{Cov}( 1_{B_\eps}(Z(0)), 1_{ B_\eps}(Z(x))) 
	&\lesssim \int_{\Phi^{-1}(B_\eps)^2} \varphi  ( z_1)
	\left| \varphi_{(1-K(0,x)^2)} (z_2 - K(0,x)z_1) -\varphi (z_2) \right|dz_2 dz_1\\
	&\lesssim \frac{  |K(0,x)| }{1-K(0,x)^2 }
	\int_{\Phi^{-1}(B_\eps)^2}
	 \varphi( z_1)\varphi(z_2) dz_2 dz_1
	 \lesssim \nu(B_\eps)^2\frac{  |K(0,x)| }{1-K(0,x)^2 },
\end{align*}
where $\varphi$ is the standard normal probability density function. This shows that \eqref{mixing} is verified with constant $C_Z:=\sup_{x\in\R^D}|K(0,x)|(1-K(0,x)^2)^{-1}<\infty$.
\end{example}

%
%
%

\subsection{$L^1$-posterior contraction rates for Pólya tree priors} \label{PT:L1loss}

We conclude with a global analysis of Pólya tree priors, in the spirit of the investigations from first part of the paper. We first obtain  posterior contraction rates in the empirical $L^1$-loss from Theorem \ref{Theo:GenRandCov}, and then combine them with specific prior properties to show concentration in a standard $L^1$-distance.

\begin{theorem} \label{thm:L1:polya}
Assume that $\rho_0\in C^\beta([0,1]^d)$ for some $0<\beta\le1$. Consider data $D^{(n)}\sim P^{(n)}_{\rho_0}$from the observation model \eqref{Eq:PointProc} with $\rho=\rho_0$ and $Z$ a stationary random field with values in $[0,1]^d$ and stationary distribution $\nu$ with bounded density. Consider a P\'olya tree prior $\Pi $ constructed as in \eqref{SASprior}, for a sequence of partitions $\Pcal^{(L_n)}$ satisfying Condition \ref{Cond:Diam} 
for all $\varepsilon \in \Ecal_{[n]}$ with $2^{L_n} \le \delta n/\log n$ for some $\delta >0$ small enough. Further assume that the prior hyperparameters satisfy, for all $L_0 \le l\le L_n$ and all $\eps \in \mathcal E_l  $, 
 $ 2^{-lt_1}\le (1-q_{\varepsilon})\le  2^{-lt_2}$ for some $t_1\geq t_2\geq 2\beta /(2\beta+d) $, and $ \alpha_{\varepsilon} \leq C_2$ for some $C_2>0$. 
 Set $\epsilon_n=(\log n/n)^{\beta/(2\beta+d)}$. Then, for all sufficiently large $M>0$, in $P^{(n)}_{\rho_0}$-probability as $n\to\infty$,
\[
	\Pi\left(\rho:\|\lambda^{(n)}_\rho-\lambda^{(n)}_{\rho_0}\|_{L^1(\Wcal_n)}
	> M \epsilon_n\big |D^{(n)}\right) \to 0 ;\qquad
	\Pi\left(\rho:\|\rho-\rho_0\|_{L^1([0,1]^d,\nu)} > 
	2M \epsilon_n\big |D^{(n)}\right) \to 0.
\]
\end{theorem} 

Note that under the more stringent assumption on the prior $t_2 \geq 2d/(2\beta+d)$, the above $L^1$-rates could be deduced from a sup-norm result as described in Remark \ref{Rk:unifpointwise}. They are proved here directly using the global techniques from Section \ref{Sec:GlobLoss}, which allows to consider less informative priors.

%
%
%
%
%

\section{Summary and outlook}\label{sec:Summary}

In this article, we have developed the first asymptotic analysis of nonparametric Bayesian procedures for inference on the intensity function of a covariate-driven point process. We have considered various families of priors distributions, worked under global and local loss functions, and included several scenarios for the covariates, covering a range of potential real-world settings. Our main results in global loss (Theorems \ref{Theo:GaussWav} and \ref{Cor:GaussPoissTess}) show that in the presence of stationary ergodic covariates suitable Gaussian priors achieve optimal $L^1$-contraction rates. This is underpinned by a general theory for an empirical $L^1$-loss (Theorem \ref{Theo:GenRandCov}), which we have also applied to location mixture of Gaussians priors (Proposition \ref{Theo:MixRandLoss}). Finally, we have devised novel Pólya tree-type priors, with which we have achieved (adaptive) rates both in point-wise and $L^1$-loss (Theorems \ref{Theo:LocalRates} and \ref{thm:L1:polya}, respectively).

	Overall, our result hint at a general capability of Gaussian priors to achieve, in the context of covariate-based intensity estimation, optimal performances under global smoothness assumptions and global losses, in line with the broader statistical literature, \cite{vdVvZ08}. In view of Section \ref{subsec:mixtprior}, mixture priors offer a valuable alternative to hierarchical Gaussian priors for adaptation, and they are natural models in the case of unbounded covariates. However, it is unclear at present whether the asymptotic analysis for mixture priors could be extended to standard (non-random) $L^1$-distances under ergodicity. Remarkably, the Pólya-tree priors from Section \ref{Subsec:LocErgCov} achieve optimal rates in both local and global losses; thus they may be preferable when estimation (and prediction) at specific covariate values and locations is of primary interest. However,  due to their piecewise constant structure, these priors are limited to the smoothness range $(0,1)$. Constructing priors with similar spatially-adaptive properties for higher regularities is a formidable challenge for future research.

	Several possible extensions of our results have been discussed in remarks throughout the text. We conclude with an overview on some further overarching research questions.

%
%
%

\subsection{Infill asymptotics}\label{Sec:Infill}

In our analysis, we have worked in the growing domain asymptotics. This setting is natural for many spatial statistical applications where all the `events' of interest have already `occurred', and is widely adopted in the literature, \cite{J81,H85,J93,ZZ05,G08}. The other relevant regime is known as `infill asymptotics' and prescribes observation of an increasing number of points over a fixed bounded window, or equivalently of repeated independent realisations of the pattern (e.g.~\cite{S99}). Our proof strategy for $L^1$-rates (cf.~Section \ref{Subsec:ErgCov}) could be extended to the latter setting, by treating the individual realisations as a `sub-sampling' of a single process over a larger area. The employed concentration inequalities for spatial averages (cf.~Section \ref{Sec:ConcIneq} in the Supplement) should then be replaced with suitable empirical process techniques. Alternatively, extensions of the Hellinger testing theory for point processes developed in \cite{BSvZ15,KvZ15} could be pursued.

%
%
%

\subsection{Statistical guarantees for uncertainty quantification}

Another important question is the frequentist validity of the uncertainty quantification delivered by the considered procedures, as it is known that, in infinite-dimensional models, even consistent posterior distributions may produce credible sets with asymptotically vanishing frequentist coverage, e.g.~\cite{DF86,C93,L11}. The posterior contraction rates derived here may serve as a starting `localising' step towards tackling these issues via nonparametric and semi-parametric Bernstein-von Mises theorems, \cite{CN13,CN14,CR15}, or via the approach with inflated credible balls as in \cite{botond2,rouss:szabo20}. This represents an interesting direction for the literature on nonparametric Bayesian intensity estimation, as we are not aware of any such result even in for models without covariates.

%
%
%

\subsection{Implementation}\label{Sec:Impl}

Lastly, implementation of nonparametric Bayesian inference for covariate-driven point processes remains largely unexplored, presenting an interesting methodological open problem. Given full observations $Z^{(n)}$ of the covariates over the window, the likelihood $L_n(\rho)$ in \eqref{Eq:Likelihood} formally corresponds to that of an inhomogeneous Poisson process with intensity $\lambda^{(n)}_\rho = \rho\circ Z^{(n)}$. This suggests that a number of existing algorithms for models without covariates could feasibly be extended to the present setting.

	For Gaussian priors similar to the ones considered here, \cite{AMM09} constructed an `exact' Markov chain Monte Carlo (MCMC) sampler, based on data augmentation to avoid numerical computation of the integral in \eqref{Eq:Likelihood} and other approximations. For the mixture of Gaussian priors studied in Section \ref{subsec:mixtprior}, the vast methodological literature on density estimation offers a promising foundation; see e.g.~\cite[Chapter 5]{GvdV17}. Finally, for the Pólya-tree priors investigated in Section \ref{sec:localrates}, the simple conditionally independent structure \eqref{post} and the partial conjugacy property \eqref{post:polyaT} can serve as the basis of a Metropolis-within-Gibbs sampling algorithm, cf.~Remark \ref{Rem:ConjPT}.

%
%
%
%
%

\appendix

\addcontentsline{toc}{section}{Acknowledgments}
\section*{Acknowledgments}
We are grateful to the Associate Editor and three anonymous Referees for many insightful comments that led to a substantial improvement of the paper.

\addcontentsline{toc}{section}{Funding}
\section*{Funding}

This research project has been partially funded by  the European Research Council (ERC) under the European Union's Horizon 2020 research and innovation programme (grant agreement No. 834175). In addition, M.G.~has also been partially supported by MUR, PRIN project 2022CLTYP4.

\bibliographystyle{acm}

\bibliography{PointProcRef}

\appendix
\addcontentsline{toc}{section}{Supplementary Material}
\section*{Supplementary Material}

In this supplement, we provide all the proofs of the results from the main article. We also state and prove the necessary auxiliary results, and  include some additional background material.

\section{Proof of Theorem \ref{Theo:GenRandCov}}
\label{Sec:ProofGenRandCov}


Let $U_n := \{\rho \in\Rcal_n: \|\lambda^{(n)}_\rho - \lambda^{(n)}_{\rho_0}\|_{L^1(\Wcal_n)} \le Mn\epsilon_n\}$ be the event whose posterior probability is of interest. By Bayes' formula, with $l_n(\rho)$ the log-likelihood given in \eqref{Eq:LogLik} below,
$$
	\Pi(U_n^c | D^{(n)}) 
	=
	\frac{N_n}{D_n} := \frac{\int_{U_n^c} e^{l_n(\rho) - l_n(\rho_0)}d\Pi(\rho)}
	{\int_{\Rcal} e^{l_n(\rho) - l_n(\rho_0)}d\Pi(\rho)}.
$$
Using Lemma 8.21 of \cite{GvdV17}, jointly with Lemma \ref{Lem:KLControl} below, we have as $n\to\infty$
$$
	P_{\rho_0}^{(n)}
	\left( D_n\le e^{- K_1n\epsilon_n^2}\Pi( B_{n,2}(\rho_0) ) \right) = O(1/(n \epsilon_n^2) ), 
	\qquad K_1:= 1 + 2\|\rho_0\|_{L^\infty(\Zcal)},
$$
and similarly, for any $M_n\to\infty$,
$$
	P_{\rho_0}^{(n)}\left( D_n\le e^{- M_nn\epsilon_n^2}\Pi( B_{n,0}(\rho_0) ) \right) = O(1/M_n ).
	\qquad n\to\infty,
$$
where $B_{n,0}(\rho_0),B_{n,2}(\rho_0)\subset\Rcal$ are defined as before the statement of Theorem \ref{Theo:GenRandCov}. Hence, by assumption \eqref{Eq:SmallBall}, as $n\to\infty$,
\begin{equation*}
\begin{split}
	E_{\rho_0}^{(n)}[\Pi(U_n^c | D^{(n)}) ]
	&\le
	E_{\rho_0}^{(n)}\left[\Pi(U_n^c | D^{(n)}) 
	1_{\{ D_n > e^{-(C_1+K_1)n\epsilon_n^2}\}}\right]
	+O(1/(n\epsilon_n^2)).
\end{split}
\end{equation*}
Note that
\begin{equation*}
\begin{split}
	E_{\rho_0}^{(n)}&\left[\Pi(U_n^c | D^{(n)}) 
	1_{\{ D_n > e^{-(C_1+K_1)n\epsilon_n^2} \}}\right]\\
	&\le 
	E_{\rho_0}^{(n)}\left[\Pi(\Rcal_n^c | D^{(n)}) 
	1_{\{ D_n > e^{-(C_1+K_1)n\epsilon_n^2} \}}\right]\\
	&\quad +
	E_{\rho_0}^{(n)}
	\left[\Pi(\rho\in\Rcal_n :
	\|\lambda^{(n)}_\rho - \lambda^{(n)}_{\rho_0}\|_{L^1(\Wcal_n)} > Mn\epsilon_n  | D^{(n)}) 
	1_{\{ D_n > e^{-(C_1+K_1)n\epsilon_n^2} \}}\right],
\end{split}
\end{equation*}
with the first expectation being upper bounded by
\begin{equation*}
\begin{split}
	e^{(C_1+K_1)n\epsilon_n^2}
	\int_{\Rcal_n^c}
	E_{\rho_0}^{(n)}\left[ e^{l_n(\rho) - l_n(\rho_0)}\right] d\Pi(\rho)
	&= e^{(C_1+K_1)n\epsilon_n^2}
	\Pi(\Rcal_n^c)
	\le  e^{-n\epsilon_n^2} \le 1/(n\epsilon_n^2),
\end{split}
\end{equation*}
having used Fubini's theorem, the fact that  $E_{\rho_0}^{(n)}[ e^{l_n(\rho) - l_n(\rho_0)}] = E^{(n)}_\rho[1]=1$ and assumption \eqref{Eq:Sieves}. Next, recalling assumption \eqref{Eq:MetricEntropy}, 
by Lemma \ref{Lem:GlobalTests}, upon taking $M>\max\{((C_1+K_1+1)/K_{\rho_0})^{1/2},$ $((C_3+1)/K_{\rho_0})^{1/2},1\}$ with $K_{\rho_0}>0$ the constant in the statement of Lemma \ref{Lem:Tests}, for all $n$ large enough there exists a test $\phi_n$ such that
$$
	E_{\rho_0}^{(n)}[\phi_n|Z^{(n)}]\le  2e^{-(K_{\rho_0}M^2 - C_3) n\epsilon_n^2},
$$
and 
$$
	\sup_{\rho\in\Rcal_n : \|\lambda^{(n)}_\rho - \lambda^{(n)}_{\rho_0}\|_{L^1(\Wcal_n)}\ge M n\epsilon_n}
	E^{(n)}_\rho[1-\phi_n |Z^{(n)}]
	\le 2e^{-K_{\rho_0}M^2 n\epsilon_n^2}.
$$
It follows that
\begin{equation*}
\begin{split}
	&E_{\rho_0}^{(n)}
	\left[\Pi(\rho\in\Rcal_n :
	\|\lambda^{(n)}_\rho - \lambda^{(n)}_{\rho_0}\|_{L^1(\Wcal_n)} > Mn\epsilon_n  | D^{(n)}) 
	1_{\{ D_n > e^{-(C_1+K_1)n\epsilon_n^2} \}}\right]
	\\
	& \ \le
	E_{\rho_0}^{(n)}[E_{\rho_0}^{(n)}[\phi_n|Z^{(n)}]]\\
	&\ +
	E_{\rho_0}^{(n)}\left[\Pi(\rho\in\Rcal_n :
	\|\lambda^{(n)}_\rho - \lambda^{(n)}_{\rho_0}\|_{L^1(\Wcal_n)} > Mn\epsilon_n  | D^{(n)}) 
	1_{\{ D_n > e^{-(C_1+K_1)n\epsilon_n^2} \}}(1-\phi_n)\right]\\
	&\le (n\epsilon_n^2)^{-1}+e^{(C_1+K_1)n\epsilon_n^2}
	E_{\rho_0}^{(n)}
	\left[\int_{\{\rho\in\Rcal_n :
	\|\lambda^{(n)}_\rho - \lambda^{(n)}_{\rho_0}\|_{L^1(\Wcal_n)} > Mn\epsilon_n\}}
	e^{l_n(\rho) - l_n(\rho_0)}(1-\phi_n)d\Pi(\rho)
	\right].
\end{split}
\end{equation*}
Using (stochastic) Fubini's theorem and the tower property, the latter expectation equals
\begin{align*}
	E_{\rho_0}^{(n)}\left[\int_\Rcal 1_{\{\rho\in\Rcal_n :
	\|\lambda^{(n)}_\rho - \lambda^{(n)}_{\rho_0}\|_{L^1(\Wcal_n)} > Mn\epsilon_n\}}E_{\rho_0}^{(n)}\left[ 
	e^{l_n(\rho) - l_n(\rho_0)}(1-\phi_n)\Big|Z^{(n)}\right]d\Pi(\rho)\right]
\end{align*}
and since, for all $\rho\in\Rcal_n$ with $\|\lambda^{(n)}_\rho - \lambda^{(n)}_{\rho_0}\|_{L^1(\Wcal_n)} > Mn\epsilon_n$ and all $n$ large enough,
$$
	E_{\rho_0}^{(n)}\left[e^{l_n(\rho) - l_n(\rho_0)}(1-\phi_n)\Big|Z^{(n)}\right]
	= E_\rho^{(n)}[1-\phi_n|Z^{(n)}]
	\le 2e^{-K_{\rho_0} M^2 n\epsilon_n^2},
$$
we have
\begin{align*}
	E_{\rho_0}^{(n)}
	\Bigg[\Pi(\rho\in\Rcal_n :
	\|\lambda^{(n)}_\rho - \lambda^{(n)}_{\rho_0}\|_{L^1(\Wcal_n)} &> Mn\epsilon_n  | D^{(n)}) 
	1_{\{ D_n > e^{-(C_1+K_1)n\epsilon_n^2} \}}\Bigg]
	\\
	&\le
	(n\epsilon_n^2)^{-1} + 2e^{-(K_{\rho_0}M^2 - (C_1+K_1))n\epsilon_n^2}
	\le 2/(n\epsilon_n^2),
\end{align*}
concluding the proof.\qed

%
%
%

\subsection{Bounds on the KL-divergence and variation}

Recalling the likelihood in \eqref{Eq:Likelihood}, the log-likelihood function associated to data $D^{(n)}$ from model \eqref{Eq:PointProc} is given by
\begin{equation}
\label{Eq:LogLik}
	l_n(\rho):=\log \left[ \frac{dP^{(n)}_\rho}{dP^{(n)}_1}(D^{(n)}) \right]
	= \int_{\Wcal_n}\log(\lambda^{(n)}_\rho(x))d N^{(n)}(x)-\int_{\Wcal_n}\lambda^{(n)}_\rho(x)dx,
	\qquad \rho\in\Rcal,
\end{equation}
where $\Rcal\subset L^\infty(\Zcal)$ is a measurable collection of non-negative functions defined on $\Zcal$. The first auxiliary result for the Proof of Theorem \ref{Theo:GenRandCov} controls the Kullback-Leibler divergence and variation between intensities, defined respectively as
\begin{align*}
	\textnormal{KL}_n(\rho_1,\rho_2):=E_{\rho_1}^{(n)}[l_n(\rho_1)-l_n(\rho_2)];
	\quad
	\textnormal{V}_{2,n}(\rho_1,\rho_2):=E_{\rho_1}^{(n)}[(l_n(\rho_1)-l_n(\rho_2)
	-\textnormal{KL}_n(\rho_1,\rho_2))^2].
\end{align*}

\begin{lemma}\label{Lem:KLControl}
Let $\rho_0\in L^\infty(\Zcal)$ be non-negative valued. Let $B_{n,0}(\rho_0), B_{n,2}(\rho_0)\subset\Rcal$ be defined as before the statement of Theorem \ref{Theo:GenRandCov} for some positive sequence $(\epsilon_n)_{n\ge1}$. Then, 
\begin{equation}
\label{Eq:KLControl2}
	\sup_{\rho \in  B_{n,0}(\rho_0)}\textnormal{KL}_n(\rho_0 ,\rho) \le 2\|\rho_0\|_{L^\infty(\Zcal)} n\epsilon_n^2; 
	\qquad  \sup_{\rho \in  B_{n,2}(\rho_0)}	\textnormal{V}_{2,n}(\rho_0 ,\rho) \le 4\|\rho_0\|_{L^\infty(\Zcal)} n\epsilon_n^2
\end{equation}

\end{lemma}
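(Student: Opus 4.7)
My plan is to condition on the covariate field $Z^{(n)}$ and exploit that, conditionally on $Z^{(n)}$, the process $N^{(n)}$ is a Poisson point process on $\Wcal_n$ with intensity $\rho_0\circ Z$. From the log-likelihood \eqref{Eq:LogLik},
\begin{equation*}
l_n(\rho_0) - l_n(\rho) = \int_{\Wcal_n}\log\!\left(\frac{\rho_0(Z(x))}{\rho(Z(x))}\right)dN^{(n)}(x) - \int_{\Wcal_n}(\rho_0 - \rho)(Z(x))\,dx,
\end{equation*}
and the standard Poisson formulas yield the conditional mean $\int_{\Wcal_n}\log(\rho_0(Z(x))/\rho(Z(x)))\rho_0(Z(x))\,dx$ and conditional variance $\int_{\Wcal_n}\log^2(\rho_0(Z(x))/\rho(Z(x)))\rho_0(Z(x))\,dx$ of the stochastic integral. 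Taking the outer expectation and using the stationarity of $Z$, so that $E_{\rho_0}^{(n)}[h(Z(x))] = \int_\Zcal h\,d\nu$ for every $x$, together with $\vol(\Wcal_n) = n$, reduces the KL computation to
\begin{equation*}
\textnormal{KL}_n(\rho_0,\rho) = n\int_\Zcal\bigl[\rho_0\log(\rho_0/\rho) - \rho_0 + \rho\bigr]\,d\nu,
\end{equation*}
and analogously expresses the dominant Poisson contribution to $\textnormal{V}_{2,n}$.

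With $\rho = M_\rho\bar\rho$ and $\rho_0 = M_{\rho_0}\bar\rho_0$, the KL integrand decomposes as $M_{\rho_0}\textnormal{KL}_\nu(\bar\rho_0,\bar\rho) + M_\rho\psi(M_{\rho_0}/M_\rho)$, where $\psi(y) := y\log y - y + 1 \ge 0$ satisfies the elementary Taylor bound $\psi(1+t) \le t^2$ for $|t| \le 1/2$. Combining $M_{\rho_0} \le \|\rho_0\|_{L^\infty(\Zcal)}$ with the $B_{n,0}(\rho_0)$ defining constraints $\textnormal{KL}_\nu(\bar\rho_0,\bar\rho) \le v_n^2$ and $|M_\rho - M_{\rho_0}| \le v_n$ then yields the claimed $\textnormal{KL}_n(\rho_0,\rho) \le 2\|\rho_0\|_{L^\infty(\Zcal)}nv_n^2$. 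For $\textnormal{V}_{2,n}$, the law of total variance combined with the conditional Poisson variance gives, after averaging over $Z^{(n)}$ by stationarity, a dominant contribution $n\int_\Zcal\log^2(\rho_0/\rho)\rho_0\,d\nu$; bounding $\log^2(\rho_0/\rho) \le 2\log^2(M_{\rho_0}/M_\rho) + 2\log^2(\bar\rho_0/\bar\rho)$ and using the $B_{n,2}(\rho_0)$ constraint $\int\bar\rho_0\log^2(\bar\rho_0/\bar\rho)\,d\nu \le v_n^2$ together with the Taylor bound $\log^2(1+t) \lesssim t^2$ then delivers $4\|\rho_0\|_{L^\infty(\Zcal)}nv_n^2$.

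The most delicate step is the Taylor analysis of $\psi$ and $\log^2$ near $y=1$: the naive inequality $\psi(y)\le(y-1)^2$ produces an inverse factor $1/M_\rho$ in $M_\rho\psi(M_{\rho_0}/M_\rho) \le (M_{\rho_0}-M_\rho)^2/M_\rho$, which could in principle blow up the constant. Since $v_n\to 0$ while $M_{\rho_0}>0$, for $n$ sufficiently large $M_{\rho_0}/M_\rho$ lies uniformly in $[1/2, 2]$ over $B_{n,0}(\rho_0)$, and this inverse factor is then absorbed into the bound by enlarging the constant in front of $\|\rho_0\|_{L^\infty(\Zcal)}$. A secondary delicacy is the $\text{Var}(E_{\rho_0}^{(n)}[l_n(\rho_0)-l_n(\rho)\mid Z^{(n)}])$ contribution to $\textnormal{V}_{2,n}$, which is controlled by exploiting the non-negative structure $g := \rho\,h(\rho_0/\rho) \ge 0$ (with $h(u) := u\log u - u + 1$) and the uniform smallness of $\int g\,d\nu = \textnormal{KL}_n(\rho_0,\rho)/n$ ensured by the $B_{n,2}(\rho_0)$ membership, so that the residual term is absorbed into the overall constant without invoking mixing-type assumptions on $Z$.
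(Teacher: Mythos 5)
Your main computation follows the same route as the paper's: condition on $Z^{(n)}$, use the Poisson (Campbell) formulas for the mean and variance of $\int f\,dN^{(n)}$, take the outer expectation via stationarity of $Z$ and $\vol(\Wcal_n)=n$, and split the resulting integrals into a normalised-density part ($\textnormal{KL}_\nu(\bar\rho_0,\bar\rho)$, resp.\ $\int\bar\rho_0\log^2(\bar\rho_0/\bar\rho)\,d\nu$) plus a total-mass part; your $M_\rho\psi(M_{\rho_0}/M_\rho)$ is algebraically identical to the paper's $M_{\rho_0}h(M_\rho/M_{\rho_0})$. Your observation about the hidden inverse-mass factor in the Taylor bound is fair, and the paper's own proof is no sharper on this point (it asserts $M_{\rho_0}h(M_\rho/M_{\rho_0})\le M_{\rho_0}v_n^2$ with the same implicit dependence on $M_{\rho_0}$), so reading the constants $2$ and $4$ as depending on $M_{\rho_0}$ is harmless for the downstream application.

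The genuine gap is in your last step, the control of $\textnormal{Var}\bigl(E_{\rho_0}^{(n)}[l_n(\rho_0)-l_n(\rho)\mid Z^{(n)}]\bigr)$. You are right that this term is present, since $\textnormal{V}_{2,n}$ is an unconditional variance and $E[X\mid Z^{(n)}]=\int_{\Wcal_n}g(Z(x))\,dx$ with $g:=\rho_0\log(\rho_0/\rho)-\rho_0+\rho\ge 0$ is a nondegenerate functional of $Z^{(n)}$. But non-negativity of $g$ together with smallness of $\int_\Zcal g\,d\nu$ does not bound this variance: a non-negative random variable with mean $\mu$ can have variance of order $\mu^2$ or larger, and here $\mu=n\int g\,d\nu$ can be of order $nv_n^2\to\infty$, so even $\mu^2=(nv_n^2)\cdot nv_n^2$ already exceeds the target $nv_n^2$; moreover $\textnormal{Var}\bigl(\int_{\Wcal_n}g(Z(x))\,dx\bigr)=\int\int\textnormal{Cov}(g(Z(x)),g(Z(x')))\,dx\,dx'$ can scale like $n^2\,\textnormal{Var}(g(Z(0)))$ for strongly correlated fields, and the $B_{n,2}$ membership gives no control of $\|g\|_\infty$ or of second moments of $g$. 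Closing this would require either sup-norm control of $\rho$ plus decorrelation of $Z$, or a conditional formulation of the evidence lower bound. For what it is worth, the paper's own proof silently drops this term altogether — it computes only $E[\textnormal{Var}(X\mid Z^{(n)})]$ — so you have correctly identified a weak point of the lemma, but your proposed fix does not repair it.
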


\begin{proof}
Write shorthand $\lambda_\rho := \lambda^{(n)}_\rho$, and recall that, under $P^{(n)}_{\rho_0 }$, $N^{(n)}|Z^{(n)}$ is a Poisson process on $\Wcal_n$ with intensity $\lambda_{\rho_0}$, and hence for all integrable 
$f:\Wcal_n\to\R$, 
$$
	E^{(n)}_{\rho_0 }\left[\int_{\Wcal_n}f(x)dN^{(n)}(x)
	\Big| Z^{(n)}\right] = \int_{\Wcal_n}f(x) \lambda_{\rho_0}(x)dx
$$ 
(e..g, Proposition 2.7 in \cite{LP18}). Using this, we have
\begin{align*}
	E^{(n)}_{\rho_0 }\left[l_n(\rho_0 ) - l_n(\rho)
	\Big| Z^{(n)}\right]
	&=
	\int_{\Wcal_n}\lambda_{\rho_0}(x)\log\left( \frac{\lambda_{\rho_0}(x)}{\lambda_\rho(x)}
	\right)dx
	-\int_{\Wcal_n}(\lambda_{\rho_0}(x)-\lambda_\rho(x))dx\\
	&=
	\int_{\Wcal_n}\lambda_{\rho_0}(x) h\left( \frac{\lambda_\rho(x)}{\lambda_{\rho_0}(x)} \right)
	dx
\end{align*}
where $h(u) := u - 1 - \log u, \ u>0$. The function $h$ satisfies $h(u)\le 2(\sqrt{u} -1)^2$ for all $u\in[1,\infty)$ and $h(u)\le \log^2 u$ for all $u\in(0,1)$. Thus, recalling that $Z(x)\sim \nu $ for all $x\in\Wcal_n$, that $|\Wcal_n|=n$ and the notations $\bar\rho,\bar\rho_0, M_\rho$ and $M_{\rho_0}$ introduced before Theorem \ref{Theo:GenRandCov}, we obtain
\begin{align*}
	\textnormal{KL}_n(\rho_0, \rho)  
	&= |\Wcal_n|M_{\rho_0} \int_{\Zcal} \bar\rho_0(z) \frac{ \bar\rho_0(z) }{\bar \rho(z) } d\nu(z) 
	+|\Wcal_n| M_{\rho_0} h\left(\frac{M_\rho}{M_{\rho_0}}\right)\\
	&=M_{\rho_0}n\textnormal{KL}_\nu(\bar\rho_0, \bar\rho)  
	+M_{\rho_0} nh\left(\frac{M_\rho}{M_{\rho_0}}\right),
\end{align*}
and similarly 
$$
	\textnormal{V}_{2,n}(\rho_0, \rho) 
	\le  2 M_{\rho_0}n\left[ \int_\Zcal \bar \rho_0 (z)\log^2\left( \frac{ \bar\rho_0(z) }{\bar \rho(z) } \right) d\nu(z) 
	+ ( M_\rho -  M_{\rho_0})^2 \right].
$$
Therefore, for all $\rho\in B_{n,0}(\rho_0)$,
$$
	\textnormal{KL}_n(\rho_0, \rho) \le  2nM_{\rho_0} \epsilon_n^2 \le 2 \|\rho_0\|_{L^\infty(\Zcal)} n\epsilon_n^2
$$
while for all $\rho\in B_{n,2}(\rho_0)$
$$ 
	\textnormal{V}_{2,n}(\rho_0, \rho)  \le  4\|\rho_0\|_{L^\infty(\Zcal)} n\epsilon_n^2.
$$
\end{proof}

%
%
%

\subsection{Tests for alternatives separated in empirical $L^1$-distance}

The following lemma provides a construction of tests for simple alternatives separated with respect to the covariate-dependent loss function appearing in Theorem \ref{Theo:GenRandCov}

\begin{lemma}\label{Lem:Tests}
For all non-negative valued $\rho_1\in L^\infty(\Zcal)$, there exists a test $\phi_{\rho_1}$ satisfying, for all $n\in\N$, 

$$
	E_{\rho_0}^{(n)}[\phi_{\rho_1}|Z^{(n)}]
	\le2e^{-K_{\rho_0}\|\lambda^{(n)}_{\rho_1}-\lambda^{(n)}_{\rho_0}\|_{L^1(\Wcal_n)}
	\min\left\{1,\frac{1}{n}\|\lambda^{(n)}_{\rho_1}-\lambda^{(n)}_{\rho_0}\|_{L^1(\Wcal_n)}
	\right\}},
$$
and
\begin{align*}
	&\sup_{\rho:\|\lambda^{(n)}_\rho-\lambda^{(n)}_{\rho_1}\|_{L^1(\Wcal_n)}
	\le \frac{1}{2}\|\lambda^{(n)}_{\rho_1}-\lambda^{(n)}_{\rho_0}\|_{L^1(\Wcal_n)}}
	E_\rho^{(n)}[1-\phi_{\rho_1}|Z^{(n)}]\\
	&\qquad\qquad\qquad\ 
	\le 2e^{-K_{\rho_0}\|\lambda^{(n)}_{\rho_1}-\lambda^{(n)}_{\rho_0}\|_{L^1(\Wcal_n)}
	\min\left\{1,\frac{1}{n}\|\lambda^{(n)}_{\rho_1}-\lambda^{(n)}_{\rho_0}\|_{L^1(\Wcal_n)}
	\right\}},
\end{align*}
where $K_{\rho_0}:=\min\{1/6,1/(4\|\rho_0\|_{L^\infty(\Zcal)})\}/32$.
\end{lemma}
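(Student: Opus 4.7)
The plan is to construct a simple linear test whose statistic is a Poisson stochastic integral of a bounded function depending only on $\rho_0$ and $\rho_1$, and to analyse it via a Bennett--Bernstein-type deviation inequality for Poisson processes. Set
\[
	\psi := \mathrm{sign}\!\left(\lambda^{(n)}_{\rho_1} - \lambda^{(n)}_{\rho_0}\right) \in \{-1,+1\},
	\qquad T := \int_{\Wcal_n}\psi(x)\,dN^{(n)}(x),
	\qquad m_\rho := \int_{\Wcal_n}\psi\,\lambda^{(n)}_\rho\,dx,
\]
and write $S := \|\lambda^{(n)}_{\rho_1}-\lambda^{(n)}_{\rho_0}\|_{L^1(\Wcal_n)}$. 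By definition of $\psi$ one has the key identity $m_{\rho_1}-m_{\rho_0}=S$. I define the test $\phi_{\rho_1}:=\mathbf{1}\{T \ge m_{\rho_0}+S/4\}$, which does not involve any alternative $\rho$, only the fixed pair $(\rho_0,\rho_1)$ and the covariate field.

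The second step is to apply, conditionally on $Z^{(n)}$, a standard Bennett-type concentration inequality for centred Poisson integrals: for a bounded measurable $f$ with $\|f\|_\infty\le b$, both upper and lower tails of $\int f\,dN^{(n)}-\int f\,\lambda^{(n)}_\rho\,dx$ admit a bound of the form $\exp\{-t^2/(2(V+bt/3))\}$ with $V=\int f^2 \lambda^{(n)}_\rho\,dx$. With $f=\psi$, $b=1$ and $V\le \Lambda^{(n)}_\rho$, the Type~I error is directly controlled by
\[
	E^{(n)}_{\rho_0}[\phi_{\rho_1}\mid Z^{(n)}]
	\le \exp\!\left\{-\frac{S^2/32}{\Lambda^{(n)}_{\rho_0} + S/12}\right\}.
\]
For the Type~II error, I use the triangle inequality: for any $\rho$ with $\|\lambda^{(n)}_\rho-\lambda^{(n)}_{\rho_1}\|_{L^1(\Wcal_n)}\le S/2$, the bound $|m_\rho - m_{\rho_1}|\le S/2$ yields $m_\rho\ge m_{\rho_0}+S/2$, and the lower tail of $T$ centred at $m_\rho$ then gives an analogous bound with $\Lambda^{(n)}_{\rho_0}$ replaced by $\Lambda^{(n)}_\rho$.

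The third step is to uniformise the variance bound across the alternative neighbourhood: $\Lambda^{(n)}_{\rho_0}\le \|\rho_0\|_{L^\infty(\Zcal)}\,n$, while $\Lambda^{(n)}_\rho\le \Lambda^{(n)}_{\rho_0}+\|\lambda^{(n)}_\rho-\lambda^{(n)}_{\rho_0}\|_{L^1(\Wcal_n)}\le \|\rho_0\|_{L^\infty(\Zcal)}\,n+3S/2$, the last inequality again by the triangle inequality. Both tail bounds then fit the common template $\exp\{-(S^2/32)/(\|\rho_0\|_{L^\infty(\Zcal)}\,n+c\,S)\}$ for a small numerical constant $c$. A case split on whether $S\le \|\rho_0\|_{L^\infty(\Zcal)}\,n$ (so $n+cS\lesssim n$) or $S>\|\rho_0\|_{L^\infty(\Zcal)}\,n$ (so $n+cS\lesssim S$) converts this into the desired mixed bound $\exp\{-K_{\rho_0}\,S\min(1,S/n)\}$; careful tracking of the constants reproduces precisely the advertised $K_{\rho_0}=\min\{1/6,\,1/(4\|\rho_0\|_{L^\infty(\Zcal)})\}/32$, with the factor $1/6$ coming from the $bt/3$ term in Bennett and the factor $1/(4\|\rho_0\|_{L^\infty(\Zcal)})$ from the variance regime.

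The only real obstacle is the Type~II error: the natural variance bound depends on $\rho$, which varies over an $L^1$-ball around $\rho_1$. The idea to control $\Lambda^{(n)}_\rho$ using only $\|\rho_0\|_{L^\infty(\Zcal)}$ and $S$ via $L^1$-triangle inequality is what makes a single constant $K_{\rho_0}$ work uniformly over the whole neighbourhood of $\rho_1$ and delivers the two-regime rate of the lemma. Everything else is a routine tail computation.
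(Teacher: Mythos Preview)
Your approach is correct and shares the same skeleton as the paper's: build a linear test statistic from the sign of $\lambda^{(n)}_{\rho_1}-\lambda^{(n)}_{\rho_0}$, bound its tails via a Bennett/Poisson inequality, and control the variance under the alternative through the $L^1$-triangle inequality $\Lambda^{(n)}_\rho\le \|\rho_0\|_{L^\infty(\Zcal)}\,n+\tfrac32 S$. The one structural difference is that the paper integrates the \emph{indicator} $1_A$ of $A=\{\lambda^{(n)}_{\rho_1}\ge\lambda^{(n)}_{\rho_0}\}$ against $dN^{(n)}$, which captures only one half of the $L^1$-distance and therefore forces a case split on whether $A$ or $A^c$ carries the larger mass; you instead integrate $\psi=1_A-1_{A^c}$, which yields $m_{\rho_1}-m_{\rho_0}=S$ exactly and removes the case split, at the modest price of invoking Bennett for a Poisson stochastic integral of a $\{-1,1\}$-valued function rather than the elementary tail of a single Poisson count. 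Your constants are in fact slightly sharper; the remark that the factor $1/6$ ``comes from the $bt/3$ term in Bennett'' is not quite the right attribution (in the paper it arises from $4\cdot\tfrac32=6$ in the variance bound $\Lambda^{(n)}_\rho(A)\le 2\max\{\tfrac32 S,\|\rho_0\|_{L^\infty}n\}$), but since your resulting exponent dominates $K_{\rho_0}\,S\min\{1,S/n\}$ this is harmless.
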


\begin{proof}
We start with some preliminary observations. For $Y$ a Poisson random variable with parameter $\gamma>0$, the (exponential) Markov inequality yields, for any $a,y>0$, 
\begin{align*}
	\Pr(Y-\gamma \ge y) \le e^{-ay}E[ e^{aY-a\gamma}]
	=e^{-ay-a\gamma-\gamma+\gamma e^a}.
\end{align*}
The right hand side is minimised in $a$ by taking $a=\log(y+\gamma)-\log \gamma$. It follows that $\Pr(Y-\gamma \ge y) \le e^{-\gamma g(y/\gamma)}$, where $g(u):=(1+u)\log(1+u)-u$. As $g(u) \ge u^2/(2+2u/3)$ for all $u>0$, 
\begin{equation}
\label{Eq:PO}
	\Pr(Y-\gamma \ge y) \le \exp\left\{-\frac{y^2}{2\gamma+2y/3}\right\}
	\le \exp\left\{-\frac{y^2}{2(y+\gamma)}\right\}.
\end{equation}
For any measurable set $A\subseteq \Wcal_n$, let $N^{(n)}(A)$ the number of points belonging to $A$, satisfying, under $P^{(n)}_\rho$, $N^{(n)}(A)|Z^{(n)}\sim \textnormal{Po}(\Lambda^{(n)}_\rho(A))$, with $\Lambda^{(n)}_\rho(A) := \int_{A}\lambda_\rho^{(n)}(x)dx $. By \eqref{Eq:PO}, we obtain that for any positive sequence $(\eta_n)_{n\ge1}$,
\begin{equation}
\label{Eq:POSpec}
	P^{(n)}_{\rho_0 }\left(N^{(n)}(A)-\Lambda^{(n)}_{\rho_0}(A) \ge \eta_n
	\Big| Z^{(n)}\right)
	\le \exp\left\{-\frac{\eta_n^2}{2(\eta_n+\Lambda^{(n)}_{\rho_0}(A))}
	\right\}.
\end{equation}
Similarly, it holds that
\begin{equation*}
	P^{(n)}_{\rho_0 }\left(N^{(n)}(A)-\Lambda^{(n)}_{\rho_0}(A)\le -\eta_n
	\Big| Z^{(n)}\right)
	\le \exp\left\{-\frac{\eta_n^2}{2(\eta_n+\Lambda^{(n)}_{\rho_0}(A))}\right\}.
\end{equation*}

	We proceed constructing the tests. Write shorthand $\lambda_{\rho_1} = \lambda^{(n)}_{\rho_1}$ and $\Lambda_{\rho_1}(A) := \Lambda^{(n)}_{\rho_1}(A)$, and define the set  $A:=\{x\in \Wcal_n: \lambda_{\rho_1}(x) \ge \lambda_{\rho_0}(x)\}$. Then, $A^c=\{x\in \Wcal_n: \lambda_{\rho_0}(x)> \lambda_{\rho_1}(x)\}$ and
$$
	\|\lambda_{\rho_1} - \lambda_{\rho_0}\|_{L^1(\Wcal_n)}
	= \Lambda_{\rho_1}(A)-\Lambda_{\rho_0}(A)
	+\Lambda_{\rho_0}(A^c)-\Lambda_{\rho_1}(A^c).
$$
We first handle the case where $\Lambda_{\rho_1}(A)-\Lambda_{\rho_0}(A) \ge \Lambda_{\rho_0}(A^c)-\Lambda_{\rho_1}(A^c)$. Take the indicator $\phi_{\rho_1,A}:=$ $1_{\{N^{(n)}(A)-\Lambda_{\rho_0}(A) \ge \eta_n\}}$ with the choice $\eta_n:=\frac{1}{4}\|\lambda_{\rho_1} - \lambda_{\rho_0}\|_{L^1(\Wcal_n)}$. Then, by \eqref{Eq:POSpec},
\begin{equation*}
\begin{split}
	E^{(n)}_{\rho_0}[\phi_{\rho_1,A}|Z^{(n)}]
	&\le \exp\left\{-\frac{\eta_n^2}{4\max\{\eta_n, 
	\Lambda_{\rho_0}(A)\}}\right\}\\
	&\le 
	\exp\left\{-\frac{1}{16}\min\Big\{\|\lambda_{\rho_1} - \lambda_{\rho_0}\|_{L^1(\Wcal_n)},
	\frac{\|\lambda_{\rho_1} - \lambda_{\rho_0}\|_{L^1(\Wcal_n)}^2}{4\|\rho_0\|_{L^\infty(\Zcal)} n}\Big\}\right\}.
\end{split}
\end{equation*}

	Now consider any non-negative valued alternative $\rho\in L^\infty(\Zcal)$ such that $\|\lambda_\rho -\lambda_{\rho_1}\|_{L^1(\Wcal_n)}\le\frac{1}{2}\|\lambda_{\rho_1} - \lambda_{\rho_0}\|_{L^1(\Wcal_n)}$. It follows
\begin{align*}
	|\Lambda_\rho(A)-\Lambda_{\rho_1}(A)|
	&\le\|\lambda_\rho -\lambda_{\rho_1}\|_{L^1(\Wcal_n)}
	\le \frac{1}{2}\|\lambda_{\rho_1} - \lambda_{\rho_0}\|_{L^1(\Wcal_n)}
	\le \Lambda_{\rho_1}(A)-\Lambda_{\rho_0}(A).
\end{align*}
Therefore,
\begin{align*}
	E_\rho^{(n)}&[1-\phi_{\rho_1,A}|Z^{(n)}]\\
	&=P_\rho^{(n)}
	\left(N^{(n)}(A)-\Lambda_\rho(A)
	 <\eta_n-\Lambda_\rho(A)+\Lambda_{\rho_1}(A)
	+\Lambda_{\rho_0}(A)-\Lambda_{\rho_1}(A)\Big| Z^{(n)}\right)\\
	&\le P_\rho^{(n)}\left(N^{(n)}(A)-\Lambda_\rho(A)
	<\eta_n-(\Lambda_{\rho_1}(A)-\Lambda_{\rho_0}(A))\Big| Z^{(n)}\right).
\end{align*}
Recalling $\eta_n=\frac{1}{4}\|\lambda_{\rho_1} - \lambda_{\rho_0}\|_{L^1(\Wcal_n)}$, we have
$$
	\Lambda_{\rho_1}(A)-\Lambda_{\rho_0}(A)-\eta_n
	 \ge\frac{1}{4}\|\lambda_{\rho_1} - \lambda_{\rho_0}\|_{L^1(\Wcal_n)},
$$
so that in view of the display after \eqref{Eq:POSpec},
\begin{align*}
	E_\rho^{(n)}[1-\phi_{\rho_1,A}|Z^{(n)}]
	&\le P_\rho^{(n)}\left(N^{(n)}(A)-\Lambda_\rho(A)
	<-\frac{1}{4}\|\lambda_{\rho_1} - \lambda_{\rho_0}\|_{L^1(\Wcal_n)}
	\Big| Z^{(n)}\right)\\\
	&\le 
	\exp\left\{
	-\frac{1}{16}\min\Big\{
	\|\lambda_{\rho_1} - \lambda_{\rho_0}\|_{L^1(\Wcal_n)},
	\frac{\|\lambda_{\rho_1} - \lambda_{\rho_0}\|_{L^1(\Wcal_n)}^2}{4\Lambda_\rho(A)}\Big\}\right\}.
\end{align*}
Note that $\Lambda_\rho(A)\le\|\lambda_\rho \|_{L^1(\Wcal_n)}\le\|\lambda_\rho -\lambda_{\rho_1}\|_{L^1(\Wcal_n)}+\|\lambda_{\rho_1}\|_{L^1(\Wcal_n)}$, which is further bounded by
\begin{align*}
	\frac{1}{2}&\|\lambda_{\rho_1}-\lambda_{\rho_0}\|_{L^1(\Wcal_n)}
	+\|\lambda_{\rho_1} - \lambda_{\rho_0}\|_{L^1(\Wcal_n)}
	+\|\lambda_{\rho_0}\|_{L^1(\Wcal_n)}\\
	&=\frac{3}{2}\|\lambda_{\rho_1} - \lambda_{\rho_0}\|_{L^1(\Wcal_n)}
	+\Lambda_{\rho_0 }(\Wcal_n) 
	\le 2\max\Big\{\frac{3}{2}
	\|\lambda_{\rho_1} - \lambda_{\rho_0}\|_{L^1(\Wcal_n)},\|\rho_0\|_{L^\infty(\Zcal)} n\Big\}.
\end{align*}
Combined with the previous display, this implies
\begin{equation*}
\begin{split}
	&E_\rho^{(n)}[1-\phi_{\rho_1,A}|Z^{(n)}] \\
	&\ 
	\le \exp\left\{-\frac{1}{16}\min\Big\{\|\lambda_{\rho_1} - \lambda_{\rho_0}\|_{L^1(\Wcal_n)},
	\frac{1}{2}\min\Big\{\frac{1}{6}\|\lambda_{\rho_1} - \lambda_{\rho_0}\|_{L^1(\Wcal_n)},
	\frac{\|\lambda_{\rho_1} - \lambda_{\rho_0}\|_{L^1(\Wcal_n)}^2}{4\|\rho_0\|_{L^\infty(\Zcal)} n} \Big\}\Big\}\right\}\\
	&\ \le \exp\left\{-\frac{1}{32}\min\Big\{\frac{1}{6}
	\|\lambda_{\rho_1} - \lambda_{\rho_0}\|_{L^1(\Wcal_n)},
	\frac{\|\lambda_{\rho_1} - \lambda_{\rho_0}\|_{L^1(\Wcal_n)}^2}{4\|\rho_0\|_{L^\infty(\Zcal)}n}\Big\}\right\}.
\end{split}
\end{equation*}

	For the case $\Lambda_{\rho_0}(A^c)-\Lambda_{\rho_1}(A^c) \ge \Lambda_{\rho_1}(A)-\Lambda_{\rho_0}(A)$, define, again with $\eta_n:=\frac{1}{4}\|\lambda_{\rho_1} - \lambda_{\rho_0}\|_{L^1(\Wcal_n)}$, the test $\phi_{\rho_1,A^c}:= 1_{\{N^{(n)}(A)-\Lambda_{\rho_0}(A)\le-\eta_n\}}$. Arguing as above, we then obtain
\begin{align*}
	E^{(n)}_{\rho_0 }&[\phi_{\rho_1,A^c}|Z^{(n)}]
	\le  \exp\left\{-\frac{1}{16}\min\Big
	\{\|\lambda_{\rho_1} - \lambda_{\rho_0}\|_{L^1(\Wcal_n)},
	\frac{\|\lambda_{\rho_1} - \lambda_{\rho_0}\|_{L^1(\Wcal_n)}^2}{4\|\rho_0\|_{L^\infty(\Zcal)} n}
	\Big\}\right\},
\end{align*}
and, for any non-negative $\rho\in L^\infty(\Zcal)$ with  $\|\lambda_\rho -\lambda_{\rho_1}\|_{L^1(\Wcal_n)}\le\frac{1}{2}\|\lambda_{\rho_1} - \lambda_{\rho_0}\|_{L^1(\Wcal_n)}$,
\begin{align*}	
	&E_\rho^{(n)}[1-\phi_{\rho_1,A^c}|Z^{(n)}]
	\le \exp\left\{-\frac{1}{32}\min\Big\{\frac{1}{6}
	\|\lambda_{\rho_1} - \lambda_{\rho_0}\|_{L^1(\Wcal_n)},
	\frac{\|\lambda_{\rho_1} - \lambda_{\rho_0}\|_{L^1(\Wcal_n)}^2}{4\|\rho_0\|_{L^\infty(\Zcal)}n}\Big\}\right\}.
\end{align*}
The proof is then concluded, with $K_{\rho_0} = \min\{1/6,1/(4\|\rho_0\|_{L^\infty(\Zcal)})\}/32$, setting
\begin{align*}
	\phi_{\rho_1}
	&:=\phi_{\rho_1,A}1_{\{\Lambda_{\rho_1}(A)
	-\Lambda_{\rho_0}(A) \ge\Lambda_{\rho_0}(A^c)-\Lambda_{\rho_1}(A^c)\}}
	+\phi_{\rho_1,A^c}1_{\{\Lambda_{\rho_0}(A^c)-\Lambda_{\rho_1}(A^c)
	 \ge \Lambda_{\rho_1}(A)-\Lambda_{\rho_0}(A)\}}.
\end{align*}
\end{proof}

	The final auxiliary result employs the tests for simple alternatives of Lemma \ref{Lem:Tests} to construct tests to control the numerator of posterior distributions.

\begin{lemma}\label{Lem:GlobalTests}
Let $\Rcal_n\subseteq\Rcal$ be measurable sets satisfying condition \eqref{Eq:MetricEntropy} for some $C_3>0$ and a positive sequence $\epsilon_n\to0$ such that $n \epsilon_n^2\to\infty$. Then, for all $M>\max\{(C_3/$ $K_{\rho_0})^{1/2},1\}$, with $K_{\rho_0}>0$ the constant defined in the statement of Lemma \ref{Lem:Tests} and all $n$ large enough, there exists a test $\phi_n$ such that 
$$
	E_{\rho_0}^{(n)}[\phi_n|Z^{(n)}]
	\le 2e^{-(K_{\rho_0}M^2 - C_3)n\epsilon_n^2},
$$
and 
$$
	\sup_{\rho\in\Rcal_n : \|\lambda_\rho^{(n)} - \lambda_{\rho_0}^{(n)}\|_{L^1(\Wcal_n)}\ge M n\epsilon_n}
	E^{(n)}_\rho[1-\phi_n |Z^{(n)}]
	\le 2e^{-K_{\rho_0}M^2 n\epsilon_n^2}.
$$
\end{lemma}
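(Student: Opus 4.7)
The plan is to reduce the composite testing problem over the entire sieve $\Rcal_n$ to a finite union of the simple tests provided by Lemma \ref{Lem:Tests}, via a standard covering argument. First I would invoke assumption \eqref{Eq:MetricEntropy} to extract a minimal $v_n$-net $\{\rho_1,\dots,\rho_{N_n}\}\subset\Rcal_n$ in $\|\cdot\|_{L^\infty(\Zcal)}$, with $\log N_n\le C_3 nv_n^2$. The key conversion between norms is that, since $\textnormal{vol}(\Wcal_n)=n$,
$$
\|\lambda^{(n)}_{\rho}-\lambda^{(n)}_{\rho_j}\|_{L^1(\Wcal_n)}=\int_{\Wcal_n}|\rho(Z(x))-\rho_j(Z(x))|dx \le n\|\rho-\rho_j\|_{L^\infty(\Zcal)},
$$
so the sup-norm net at scale $v_n$ induces, for every realisation of $Z^{(n)}$, an $nv_n$-net for the intensities in the empirical $L^1$ loss.

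Next, I would restrict attention to the subcollection $\mathcal J\subset\{1,\dots,N_n\}$ of indices $j$ with $\|\lambda^{(n)}_{\rho_j}-\lambda^{(n)}_{\rho_0}\|_{L^1(\Wcal_n)}\ge (M-1)nv_n$, apply Lemma \ref{Lem:Tests} to each such $\rho_j$ to obtain a simple test $\phi_{\rho_j}$, and set $\phi_n:=\max_{j\in\mathcal J}\phi_{\rho_j}$. Observe that for indices in $\mathcal J$ the exponent in Lemma \ref{Lem:Tests} is at least $K_{\rho_0}(M-1)^2 nv_n^2$: indeed, if $\|\lambda^{(n)}_{\rho_j}-\lambda^{(n)}_{\rho_0}\|_{L^1(\Wcal_n)}\le n$, the min equals the ratio and the exponent is $\ge K_{\rho_0}((M-1)nv_n)^2/n$; otherwise the min equals $1$ and the exponent is $\ge K_{\rho_0}(M-1)nv_n\ge K_{\rho_0}(M-1)^2nv_n^2$ since $(M-1)v_n\le 1$ for large $n$. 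Then a union bound yields
$$
E_{\rho_0}^{(n)}[\phi_n|Z^{(n)}]\le N_n\cdot 2e^{-K_{\rho_0}(M-1)^2 nv_n^2}\le 2e^{-(K_{\rho_0}M^2-C_3)nv_n^2},
$$
for $M$ large enough that $(M-1)^2$ absorbs the gap to $M^2$.

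For the Type~II bound, given any $\rho\in\Rcal_n$ with $\|\lambda^{(n)}_\rho-\lambda^{(n)}_{\rho_0}\|_{L^1(\Wcal_n)}\ge Mnv_n$, I would pick the nearest net point $\rho_j$. By the triangle inequality, $\|\lambda^{(n)}_{\rho_j}-\lambda^{(n)}_{\rho_0}\|_{L^1(\Wcal_n)}\ge (M-1)nv_n$, so $j\in\mathcal J$ and $\phi_{\rho_j}$ contributes to $\phi_n$; moreover $\|\lambda^{(n)}_\rho-\lambda^{(n)}_{\rho_j}\|_{L^1(\Wcal_n)}\le nv_n\le \tfrac12\|\lambda^{(n)}_{\rho_j}-\lambda^{(n)}_{\rho_0}\|_{L^1(\Wcal_n)}$ provided $M$ is large (so that the slack $M-1\ge 2$ is guaranteed). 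The Type~II clause of Lemma \ref{Lem:Tests}, applied at $\rho_j$, then gives $E_\rho^{(n)}[1-\phi_n|Z^{(n)}]\le E_\rho^{(n)}[1-\phi_{\rho_j}|Z^{(n)}]\le 2e^{-K_{\rho_0}M^2 nv_n^2}$ by the same quadratic-exponent calculation as above.

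The main obstacle is purely bookkeeping: tracking how the triangle-inequality slack $(M-1)$ can be upgraded to $M$ in the exponent, which is where the requirement that $M$ exceed $(C_3/K_{\rho_0})^{1/2}$ (and implicitly also a small absolute constant to absorb the net radius and the factor $1/2$) comes in. The only genuinely non-routine input is verifying that the quadratic branch of the min in Lemma~\ref{Lem:Tests} governs throughout the relevant range, which amounts to a straightforward case split on whether the separation exceeds $n$ and uses only that $Mv_n\to 0$.
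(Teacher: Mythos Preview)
Your covering-and-union-bound strategy is exactly the paper's approach, and the case split on whether the empirical $L^1$ separation exceeds $n$ is handled the same way. The one step that does not go through as written is your displayed Type I bound
\[
N_n\cdot 2e^{-K_{\rho_0}(M-1)^2 nv_n^2}\le 2e^{-(K_{\rho_0}M^2-C_3)nv_n^2},
\]
since this would require $(M-1)^2\ge M^2$, which fails for every $M>1/2$; no ``large enough $M$'' can close that gap. The same issue arises in your Type II exponent, which your argument only yields as $K_{\rho_0}(M-1)^2 nv_n^2$. With your net you would end up proving the lemma with $(M-1)$ in place of $M$ throughout, and with the extra constraint $M\ge 3$ (needed so that $nv_n\le\tfrac12(M-1)nv_n$).

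The paper avoids this slack by a small change in where the net lives: it covers the alternative set $\{\rho\in\Rcal_n:\|\lambda^{(n)}_\rho-\lambda^{(n)}_{\rho_0}\|_{L^1(\Wcal_n)}\ge Mnv_n\}$ itself by sup-norm balls of radius $v_n/2$, taking the centres $\rho_l$ inside that set. Then each centre already satisfies $\|\lambda^{(n)}_{\rho_l}-\lambda^{(n)}_{\rho_0}\|_{L^1(\Wcal_n)}\ge Mnv_n$, so Lemma \ref{Lem:Tests} delivers the clean exponent $K_{\rho_0}M^2 nv_n^2$ directly; and the half-radius gives $\|\lambda^{(n)}_\rho-\lambda^{(n)}_{\rho_l}\|_{L^1(\Wcal_n)}\le nv_n/2\le\tfrac12 Mnv_n\le\tfrac12\|\lambda^{(n)}_{\rho_l}-\lambda^{(n)}_{\rho_0}\|_{L^1(\Wcal_n)}$ for every $M\ge 1$, which is precisely why the stated threshold $M>\max\{(C_3/K_{\rho_0})^{1/2},1\}$ suffices without any further absolute constant.
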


\begin{proof}
Writing $\lambda_\rho = \lambda^{(n)}_\rho$, cover the set $\{\rho\in\Rcal_n: \|\lambda_\rho - \lambda_{\rho_0}\|_{L^1(\Wcal_n)}\ge M n\epsilon_n\}$ by sup-norm balls of radius $\epsilon_n/2$ and centres $(\rho_l)_{l=1}^{\Ncal_n}$, where $\Ncal_n$ is the covering number by balls of such sup-norm radius. For each $\rho_l$, by Lemma \ref{Lem:Tests}, there exists a test $\phi_{\rho_l}$ satisfying
$$
	E_{\rho_0}^{(n)}[\phi_{\rho_l}|Z^{(n)}]
	\le2e^{-K_{\rho_0}\|\lambda^{(n)}_{\rho_l}-\lambda^{(n)}_{\rho_0}\|_{L^1(\Wcal_n)}
	\min\left\{1,
	\frac{1}{n}\|\lambda^{(n)}_{\rho_l}-\lambda^{(n)}_{\rho_0}\|_{L^1(\Wcal_n)}\right\}},
$$
and
\begin{align*}
	&\sup_{\rho:\|\lambda^{(n)}_\rho-\lambda^{(n)}_{\rho_l}\|_{L^1(\Wcal_n)}
	\le \frac{1}{2}\|\lambda^{(n)}_{\rho_l}-\lambda^{(n)}_{\rho_0}\|_{L^1(\Wcal_n)}}
	E_\rho^{(n)}[1-\phi_{\rho_1}|Z^{(n)}]\\
	&\qquad\qquad\qquad\ 
	\le 2e^{-K_{\rho_0}\|\lambda^{(n)}_{\rho_l}-\lambda^{(n)}_{\rho_0}\|_{L^1(\Wcal_n)}
	\min\left\{1,
	\frac{1}{n}\|\lambda^{(n)}_{\rho_l}-\lambda^{(n)}_{\rho_0}\|_{L^1(\Wcal_n)}\right\}}.
\end{align*}
If $\|\lambda_{\rho_l} - \lambda_{\rho_0}\|_{L^1(\Wcal_n)}\ge M n\epsilon_n$, we have for all $n$ large enough such that $\epsilon_n<1/M$,
\begin{align*}
	E_{\rho_0}^{(n)}[\phi_{\rho_l}|Z^{(n)}]
	\le2e^{-K_{\rho_0} Mn \epsilon_n\min\{1,M \epsilon_n \}}
	= 2e^{-K_{\rho_0}M^2n\epsilon_n^2},
\end{align*}
as well as
\begin{align*}
	\sup_{\rho:\|\lambda_\rho-\lambda_{\rho_l}\|_{L^1(\Wcal_n)}
	\le\frac{1}{2}\|\lambda_{\rho_l} - \lambda_{\rho_0}\|_{L^1(\Wcal_n)}}
	E^{(n)}_\rho[1-\phi_{\rho_l}|Z^{(n)}]
	&\le 2e^{-K_{\rho_0}M^2n\epsilon_n^2}.
\end{align*}

	Now set $\phi_n:=\max_{l=1,\dots,\Ncal_n}\phi_{\rho_l}$. Then, for all such $n$,
\begin{equation*}
	E_{\rho_0}^{(n)}[\phi_n|Z^{(n)}]
	\le \sum_{l=1}^{\Ncal_n}E_{\rho_0}^{(n)}[\phi_{\rho_l}|Z^{(n)}]
	\le 2\Ncal_ne^{-K_{\rho_0}M^2n\epsilon_n^2},
\end{equation*}
which, since $\Ncal_n \le \Ncal(\epsilon_n/2 ;\Rcal_n,\|\cdot\|_{L^\infty(\Zcal)})\le e^{C_3n\epsilon_n^2}$ by assumption, is bounded by
$$
	E_{\rho_0}^{(n)}[\phi_n|Z^{(n)}]\le 2e^{-(K_{\rho_0}M^2 - C_3)n\epsilon_n^2}.
$$
The first claim then follows upon taking $M^2>\max\{C_3/K_{\rho_0},1\}$. On the other hand, as for for each $\rho\in\Rcal_n$ with $\|\lambda_\rho - \lambda_{\rho_0}\|_{L^1(\Wcal_n)}>M n \epsilon_n$ there exists, by construction, a centre $\rho_l$ with 
$$
	\|\lambda_\rho - \lambda_{\rho_l}\|_{L^1(\Wcal_n)}
	\le n\|\rho - \rho_l\|_{L^\infty(\Zcal)} 
	\le \frac{1}{2} n\epsilon_n 
	\le \frac{1}{2} Mn\epsilon_n
	\le \frac{1}{2} \|\lambda_{\rho_l} - \lambda_{\rho_0}\|_{L^1(\Wcal_n)},
$$
we have
$$
	E_\rho^{(n)}[1 - \phi_n |Z^{(n)}] 
	\le E_\rho^{(n)}[1 - \phi_{\rho_l}|Z^{(n)}]\le 2e^{-K_{\rho_0}M^2n\epsilon_n^2}.
$$
It thus follows that for all $n$ large enough,
$$
	\sup_{\rho\in\Rcal_n : \|\lambda_\rho - \lambda_{\rho_0}\|_{L^1(\Wcal_n)}\ge Mn\epsilon_n}
	E_\rho^{(n)}[1 - \phi_n|Z^{(n)}] \le 2e^{-K_{\rho_0}M^2n\epsilon_n^2}.
$$
\end{proof}

%
%
%
%
%

\section{Proofs for Sections \ref{Subsec:RandGPRates} and \ref{subsec:mixtprior}}

\subsection{Proof of Proposition \ref{Theo:GPRandLoss}}
\label{Sec:ProofGPRandLoss}


%

The proof follows verifying the conditions of Theorem \ref{Theo:GenRandCov} with $\epsilon_n=n^{-\alpha/(2\alpha+d)}$ (or a sufficiently large multiple thereof) using standard techniques in the posterior contraction rate theory for Gaussian priors, \cite{vdVvZ08}. Starting with the small ball lower bound \eqref{Eq:SmallBall}, by construction (cf.~\eqref{Eq:GPPrior}), each intensity $\rho$ in the support of $\epsilon_n$ takes the form $\rho=\rho_w$ for some $w\in C([0,1]^d)$. Recalling the notation
$$
	\bar \rho(z) = \frac{\rho(z)}{M_\rho},
	\qquad z\in[0,1]^d,
	\qquad M_\rho=\int_{[0,1]^d}\rho(z)d\nu(z),
$$
standard computations (e.g.~as in the proof of Lemma 16 in \cite{G23}) imply, since the link $\eta$ is assumed to be uniformly Lipschitz, that if $\|w - w_0\|_\infty\lesssim \epsilon_n$ then
$$
	\max \left\{ \textnormal{KL}_\nu (\bar \rho_w,\bar \rho_{0}), 
	\int_{[0,1]^d} \bar \rho_{0}(z)\log^2\left( \frac{ \bar\rho_{0}(z) }{\bar \rho_w(z) } \right) d\nu(z),
	|M_{\rho_w} - M_{\rho_{0}}| \right\}
	\lesssim  \|w - w_0\|_\infty.
$$
Therefore, via Lemma \ref{Lem:GPSmallBall} below, for sufficiently large constants $K_1,K_2>0$, the prior probability in \eqref{Eq:SmallBall} is lower bounded by
$$
	\Pi_W(w:\|w - w_0\|_\infty\le K_1\epsilon_n)\ge e^{-K_2n\epsilon_n^2}.
$$
Turning to conditions \eqref{Eq:Sieves} and \eqref{Eq:MetricEntropy}, let 
$$
	\Rcal_n:=\left\{ \rho_w, \ w\in \Bcal_n\right\},
$$
with $\Bcal_n$ defined, for $H_1,H_2>0$ to be chosen, as in \eqref{Eq:GPSieves} below. By the first statement in Lemma \ref{Lem:GPSieves}, it follows that, for all sufficiently large $n$,
$$
	\Pi(\Rcal_n^c)\le \Pi_W(\Bcal_n^c)\le e^{-(2+2\|\rho_0\|_{L^\infty(\Zcal)}+K_2)n\epsilon_n^2},
$$
provided that $H_1,H_2$ are large enough. For such choices, the second statement in Lemma \ref{Lem:GPSieves} yields, in view of the assumed Lipschitzianity of $\eta$, that for some $K_3>0$,
$$
	\log \Ncal(\epsilon_n; \Rcal_n,\|\cdot\|_{L^\infty(\Zcal)})\le \log \Ncal(K_3\epsilon_n; \Bcal_n,\|\cdot\|_\infty)
	\lesssim n\epsilon_n^2,
$$
 concluding the proof.\qed

%

	The following lemma provides, for Gaussian process priors satisfying Condition \ref{Cond:GPCondition}, a lower bound for small ball probabilities in sup-norm used in the proof of Proposition \ref{Theo:GPRandLoss}.

\begin{lemma}\label{Lem:GPSmallBall}
Let $w_0,\ \Pi_W , \epsilon_n$ and $w_{0,n}$ be as in Proposition \ref{Theo:GPRandLoss}. Then, for all sufficiently large $L_1>0$ there exists $L_2>0$ such that
$$
	\Pi_W(w:\|w - w_0\|_\infty\le L_1\epsilon_n)\ge e^{-L_2n\epsilon_n^2}.
$$
\end{lemma}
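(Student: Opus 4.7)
The plan is to invoke the standard decentering inequality for Gaussian measures, combined with a centred small-ball estimate derived from Condition \ref{Cond:GPCondition} via metric entropy arguments. Recall (e.g.~\cite[Lemma I.28]{GvdV17}) that for a centred Gaussian Borel probability measure $\Pi_W(\cdot)$ on $C([0,1]^d)$ with RKHS $\Hcal_W$, and for any $h\in\Hcal_W$ and $\eps>0$,
$$
    \Pi_W(w:\|w-h\|_\infty\le\eps)
    \ge e^{-\|h\|_{\Hcal_W}^2/2}\Pi_W(w:\|w\|_\infty\le\eps).
$$
So the task reduces to (i) shifting from $w_0$ to the approximant $w_{0,n}\in\Hcal_W$ via the triangle inequality, and (ii) lower bounding the centred small-ball probability at radius proportional to $v_n$.

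For step (i), since $\|w_0-w_{0,n}\|_\infty\le L_3 v_n$ for some $L_3>0$ by hypothesis, any $w$ with $\|w-w_{0,n}\|_\infty\le (L_1-L_3)v_n$ satisfies $\|w-w_0\|_\infty\le L_1 v_n$. Choosing $L_1>L_3$ and writing $\tilde\eps_n:=(L_1-L_3)v_n$, the decentering inequality yields
$$
    \Pi_W(w:\|w-w_0\|_\infty\le L_1 v_n)
    \ge e^{-\|w_{0,n}\|_{\Hcal_W}^2/2}\Pi_W(w:\|w\|_\infty\le\tilde\eps_n),
$$
and the first exponent is $\lesssim nv_n^2$ by the approximation hypothesis.

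For step (ii), I would invoke the Kuelbs--Li/Li--Linde equivalence between the small-ball function $\varphi_0(\eps):=-\log\Pi_W(w:\|w\|_\infty\le\eps)$ and the metric entropy of the unit ball $\Hcal_W^1$ of $\Hcal_W$ in $\|\cdot\|_\infty$ (cf.~\cite[Theorem 1.2]{KL93} or \cite[Chapter 11]{GvdV17}), namely $\log\Ncal(\eps;\Hcal_W^1,\|\cdot\|_\infty)\lesssim \varphi_0(\eps/2)$ up to logarithmic corrections, with an essentially matching upper bound. Condition \ref{Cond:GPCondition} gives the continuous embedding $\Hcal_W\hookrightarrow H^{\alpha+d/2}([0,1]^d)$, which combined with the Sobolev embedding $H^{\alpha+d/2}([0,1]^d)\hookrightarrow C^\alpha([0,1]^d)$ (valid for any $\alpha>0$) implies $\Hcal_W^1$ is contained, up to a universal constant, in the unit ball of $C^\alpha([0,1]^d)$. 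The classical entropy estimate for H\"older balls (\cite[Theorem 4.3.36]{GN16}) then gives
$$
    \log\Ncal(\eps;\Hcal_W^1,\|\cdot\|_\infty)\lesssim \eps^{-d/\alpha}.
$$
The equivalence (in its convenient form for lower bounding small-ball probabilities, cf.~\cite[Proposition 11.19]{GvdV17}) then produces $\varphi_0(\tilde\eps_n)\lesssim \tilde\eps_n^{-d/\alpha}\asymp v_n^{-d/\alpha}$.

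Finally, combining the bounds from steps (i) and (ii) gives
$$
    -\log\Pi_W(w:\|w-w_0\|_\infty\le L_1 v_n)
    \lesssim \|w_{0,n}\|_{\Hcal_W}^2+v_n^{-d/\alpha}
    \lesssim nv_n^2+v_n^{-d/\alpha}.
$$
The assumption $v_n\ge n^{-\alpha/(2\alpha+d)}$ is exactly the condition $v_n^{-d/\alpha}\le nv_n^2$, so the right-hand side is $\lesssim nv_n^2$, and the claim follows with $L_2>0$ sufficiently large. The only nontrivial ingredient is the centred small-ball lower bound; I do not anticipate real difficulty, as the Sobolev embedding is explicitly granted by Condition \ref{Cond:GPCondition} and the remaining steps are textbook applications of Gaussian process machinery.
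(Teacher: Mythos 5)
Your overall architecture (triangle inequality to shift to $w_{0,n}$, Gaussian decentering, then an entropy-to-small-ball conversion) is exactly the paper's, and steps (i) and the final bookkeeping are fine. The problem is in step (ii), and it is a genuine error rather than a stylistic difference. First, by passing through the embedding $H^{\alpha+d/2}\hookrightarrow C^\alpha$ you replace the entropy bound $\log\Ncal(\eps;\Hcal_W^1,\|\cdot\|_\infty)\lesssim\eps^{-d/(\alpha+d/2)}$ (which is what Condition \ref{Cond:GPCondition} together with \cite[Theorem 4.3.36]{GN16} actually gives, and what the paper uses) by the strictly weaker bound $\eps^{-d/\alpha}$. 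Second, and more seriously, you then assert that an entropy bound $\log\Ncal(\eps;\Hcal_W^1,\|\cdot\|_\infty)\lesssim\eps^{-b}$ yields $\varphi_0(\eps)\lesssim\eps^{-b}$. That is not what the Kuelbs--Li/Li--Linde equivalence says: the correct conversion is $\varphi_0(\eps)\lesssim\eps^{-2b/(2-b)}$, valid only for $b<2$. With your $b=d/\alpha$ this is vacuous whenever $\alpha\le d/2$, and even for $\alpha>d/2$ it produces $\varphi_0(v_n)\lesssim v_n^{-2d/(2\alpha-d)}$, which one checks is \emph{never} $O(nv_n^2)$ at $v_n= n^{-\alpha/(2\alpha+d)}$ (the required inequality reduces to $2\alpha\le 2\alpha-d$). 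Your final exponent $d/\alpha$ happens to coincide with the truth only because the two mistakes cancel numerically.

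The repair is the paper's route: keep the Sobolev-ball entropy exponent $b=d/(\alpha+d/2)=2d/(2\alpha+d)$, which satisfies $b<2$ for every $\alpha>0$, and apply the Li--Linde conversion correctly to get $\varphi_0(\eps)\lesssim\eps^{-2b/(2-b)}=\eps^{-d/\alpha}$; then $\varphi_0(c\,v_n)\lesssim v_n^{-d/\alpha}\le n^{d/(2\alpha+d)}\le nv_n^2$ under the standing assumption $v_n\ge n^{-\alpha/(2\alpha+d)}$, and the lemma follows as you indicate. In short: do not discard the extra $d/2$ of smoothness in Condition \ref{Cond:GPCondition} — it is precisely what makes the small-ball exponent come out as $d/\alpha$ after the (correct) conversion.
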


\begin{proof}
By the triangle inequality, provided that $L_1$ is large enough, the probability of interest is lower bounded by
$$
	\Pi_W(w:\|w - w_{0,n}\|_\infty\le L_1\epsilon_n/2)
$$
which, using Corollary 2.6.18 of \cite{GN16}, since $\|w_{0,n}\|^2_{\Hcal_W}\lesssim n\epsilon_n^2$ by assumption, is greater than
$$
	e^{-\frac{1}{2}\|w_{0,n}\|_{\Hcal_W}^2}\Pi_W(w:\|w\|_\infty\le L_1\epsilon_n/2)
	\ge
	e^{-K_1n\epsilon_n^2}\Pi_W(w:\|w\|_\infty\le L_1\epsilon_n/2)
$$
for some $K_1>0$. Under Condition \ref{Cond:GPCondition}, the metric entropy estimate in Theorem 4.3.36 of \cite{GN16} yields, for some $K_2>0$, for all $\varepsilon>0$,
\begin{equation}
\label{Eq:SobolevMetricEntropy}
	\log \Ncal(\varepsilon;\{w:\|w\|_{\Hcal_W}\le 1\},\|\cdot\|_\infty)
	\le \log \Ncal(\varepsilon;\{w:\|w\|_{H^{\alpha+d/2}}\le K_2\},\|\cdot\|_\infty)
	\lesssim \varepsilon^{-d/(\alpha+d/2)}.
\end{equation}
Then, by Theorem 1.2 of \cite{LL99}, since $\epsilon_n\to0$,
$$
	\Pi_W(w:\|w\|_\infty\le L_1\epsilon_n/2)
	\ge e^{-K_3(L_1\epsilon_n)^{-d/\alpha}}
	\ge e^{-K_4n\epsilon_n^2}
$$
for some $K_3,K_4>0$, whence the claim follows with $L_2=K_1+K_4$.
\end{proof}

	Next, we construct, for the Gaussian priors of interests, sieves with bounded complexity and whose complementary sets have exponentially vanishing prior probability.

\begin{lemma}\label{Lem:GPSieves}
Let $\Pi_W $ and $\epsilon_n$ be as in Proposition \ref{Theo:GPRandLoss}. Define, for $H_1,H_2>0$, the sets
\begin{equation}
\label{Eq:GPSieves}
	\Bcal_n 
	=
		\{ w = w_1 + w_2: \|w_1\|_\infty \leq H_1\epsilon_n,
		\|w_2\|_{\Hcal_W} \leq H_2\sqrt n\epsilon_n\}.
\end{equation}
Then, for every $H_3>0$, there exists $H_1,H_2>0$ large enough such that, for all sufficiently large $n$,
$$
	\Pi_W(\Bcal_n^c) \leq e^{-H_3n \epsilon_n^2}.
$$
Furthermore, for every $H_1,H_2>0$,
$$
	\log \Ncal(\epsilon_n;\Bcal_n ,\|\cdot\|_\infty)\lesssim n\epsilon_n^2.
$$
\end{lemma}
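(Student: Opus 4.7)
The plan is to establish the two assertions separately, both relying on classical tools for Gaussian measures. For the tail bound, I would appeal to Borell's inequality: writing $\mathbb{U}_\infty$ for the unit sup-norm ball in $C([0,1]^d)$ and $\mathbb{B}_1$ for the unit ball of $\Hcal_W$, we may rewrite $\Bcal_n = H_1 v_n \mathbb{U}_\infty + H_2\sqrt{n}v_n \mathbb{B}_1$, and Borell's inequality (e.g.~Theorem 2.6.12 in \cite{GN16}) yields
\[
\Pi_W(\Bcal_n) \;\geq\; \Phi\!\left(\Phi^{-1}(\Pi_W(H_1 v_n \mathbb{U}_\infty)) + H_2 \sqrt{n}\, v_n\right).
\]
The plan is then to control the small-ball factor $\Pi_W(H_1 v_n \mathbb{U}_\infty)$ from below exactly as in the proof of Lemma \ref{Lem:GPSmallBall}: combining the metric entropy bound \eqref{Eq:SobolevMetricEntropy} for the RKHS unit ball with Theorem 1.2 of \cite{LL99} gives $\Pi_W(H_1 v_n \mathbb{U}_\infty) \geq e^{-K_1 (H_1 v_n)^{-d/\alpha}}$, and the assumption $v_n \geq n^{-\alpha/(2\alpha+d)}$ makes this exponent $\geq -K_2 n v_n^2$ for a constant $K_2$ depending on $H_1$.

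Using $\Phi^{-1}(e^{-x^2/2}) \geq -x$ together with the Gaussian tail inequality $1 - \Phi(t) \leq e^{-t^2/2}$, the above yields
\[
\Pi_W(\Bcal_n^c) \;\leq\; 1 - \Phi\!\left(-\sqrt{2K_2 n v_n^2} + H_2 \sqrt{n}\, v_n\right) \;\leq\; \exp\!\left\{-\tfrac{1}{2}\bigl(H_2-\sqrt{2K_2}\bigr)^2 n v_n^2\right\},
\]
valid once $H_2 > \sqrt{2K_2}$. For arbitrary $H_3>0$, taking $H_2$ large enough (and $H_1$ as prescribed) makes the right-hand side $\leq e^{-H_3 n v_n^2}$ for all $n$ large.

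For the metric entropy bound the strategy is a standard Minkowski-sum covering argument. Fix a minimal sup-norm cover of $H_2 \sqrt{n} v_n \mathbb{B}_1$ at radius $H_1 v_n$, with centres $(g_j)$; since every $w = w_1+w_2 \in \Bcal_n$ satisfies $\|w - g_j\|_\infty \leq \|w_1\|_\infty + \|w_2 - g_j\|_\infty \leq 2 H_1 v_n$ for a suitable $j$,
\[
\Ncal(2 H_1 v_n;\Bcal_n,\|\cdot\|_\infty) \;\leq\; \Ncal(H_1 v_n; H_2\sqrt{n} v_n \mathbb{B}_1,\|\cdot\|_\infty) \;=\; \Ncal\!\left(\tfrac{H_1}{H_2\sqrt{n}}; \mathbb{B}_1,\|\cdot\|_\infty\right).
\]
Invoking Condition \ref{Cond:GPCondition} via \eqref{Eq:SobolevMetricEntropy}, this is bounded by $\exp(K_3 n^{d/(2\alpha+d)})$, and since $v_n \geq n^{-\alpha/(2\alpha+d)}$ implies $n^{d/(2\alpha+d)} \leq n v_n^2$, we conclude $\log \Ncal(v_n;\Bcal_n,\|\cdot\|_\infty) \lesssim n v_n^2$, with the implicit constant depending only on $H_1,H_2$ (absorbing the conversion from radius $2H_1 v_n$ to radius $v_n$ through the same scaling).

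The main obstacle is the first part: one must ensure that the small-ball estimate inherited from the sup-norm metric entropy is sharp enough, under Condition \ref{Cond:GPCondition}, to match the $n v_n^2$ exponent required for Borell, and then track how the choice of $H_1, H_2$ enters the resulting bound so that the prefactor $H_3$ can be made arbitrarily large. The second part is essentially routine once one recognises that the RKHS ball is relatively compact in sup-norm under Condition \ref{Cond:GPCondition}, allowing the Minkowski-sum covering argument to go through.
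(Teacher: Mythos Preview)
Your proposal is correct and follows essentially the same route as the paper: Borell's isoperimetric inequality combined with the small-ball lower bound from Lemma~\ref{Lem:GPSmallBall} for the first claim, and the Sobolev-ball metric entropy estimate \eqref{Eq:SobolevMetricEntropy} for the second. One caveat worth flagging (present in the paper's own proof as well): the final step ``absorbing the conversion from radius $2H_1 v_n$ to radius $v_n$'' does not go through when $H_1>1$, since $\Bcal_n \supseteq H_1 v_n\,\mathbb{U}_\infty$ is then not coverable at radius $v_n$ by finitely many sup-norm balls; what both arguments actually deliver is $\log \Ncal(c v_n;\Bcal_n,\|\cdot\|_\infty)\lesssim n v_n^2$ for some $c=c(H_1)>0$, which is all that is needed downstream in the proof of Theorem~\ref{Theo:GPRandLoss}.
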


\begin{proof}
Borell's isoperimetric inequality (e.g.~\cite{GN16}, Theorem 2.6.12) gives, with $\phi$ the standard normal cumulative distribution function,
\begin{align*}
\label{Isop}
	\Pi_W(\Bcal_n)
	&\ge \phi (\phi^{-1}(\Pi_W(w: \|w\|_{\infty} \leq H_1\epsilon_n) +  H_2 \sqrt n\epsilon_n).
\end{align*}
Provided that $H_1>0$ is sufficiently large, using Lemma \ref{Lem:GPSmallBall} and the standard inequality $\phi^{-1}(u) \geq -\sqrt{2\log (1/u)}$ for $0<u<1$, we obtain
$$
	\Pi_W(\Bcal_n) \ge \phi( (H_2 - K_1) \sqrt n\epsilon_n )
$$ 
for some $K_1>0$. Further, taking $H_2$ large enough, the quantity $(H_2 - K_1) \sqrt n\epsilon_n$ can be made larger than $-\phi^{-1}(e^{-H_3n\epsilon_n^2})$, whence the first claim follows since then
$$
	\phi( (H_2 - K_1) \sqrt n\epsilon_n )\ge \phi( -\phi^{-1}(e^{-H_3n\epsilon_n^2})) = 1 - e^{-H_3n\epsilon_n^2}.
$$
The second claim follows by applying the metric entropy estimate \eqref{Eq:SobolevMetricEntropy}, recalling the assumed continuous embedding of $\Hcal_W$ into $H^{\alpha+d/2}([0,1]^d)$, and noting that by construction, for some $K_2>0$,
$$
	\log \Ncal(\epsilon_n;\Bcal_n,\|\cdot\|_\infty)
	\le \Ncal(K_2\epsilon_n;\{w:\|w\|_{H^{\alpha+d/2}}\le H_2 n\epsilon_n^2\},\|\cdot\|_\infty)
	\lesssim (n\epsilon_n^2)^{d/(\alpha+d/2)}\le n\epsilon_n^2.
$$
\end{proof}

%
%
%

\subsection{Proof of Proposition \ref{th:Gauss:adapt:lambda}}
\label{Sec:ProofAdaptGP}


%

We verify the assumptions \eqref{Eq:SmallBall}-\eqref{Eq:MetricEntropy} of Theorem \ref{Theo:GenRandCov}, following the standard pattern for randomly truncated series priors, e.g.~\cite{arbel13}. Let $L_n\in\N$ be such that $2^{L_n} \simeq n^{1/(2\beta +d)}$, and let $w_{0,n}:=\sum_{l\le L_n}\sum_{k\le2^{ld}}\langle w_0,\psi_{lk}\rangle_{L^2}\psi_{lk}$ be the projection of $w_0$ onto the wavelet approximation space 
\begin{equation}
\label{Eq:ApproxSpace}
	\Psi_{L_n}:=\textnormal{span}(\psi_{lk}, \ l\le L_n, \ k=1,\dots,2^{ld}).
\end{equation}
Note that $\textnormal{dim}(\Psi_{L_n})=O(2^{L_nd})$ as $n\to\infty$, and that since $w_0\in C^\beta([0,1]^d)$,
\begin{equation}
\label{Eq:ApproxProp}
	\|w_0 - w_{0,n}\|_\infty 
	\lesssim
	2^{-\beta L_n} 
	\simeq n^{-\beta/(2\beta +d)} = o(\epsilon_n).
\end{equation}
See e.g.~\cite[Chapter 4.3]{GN16} for details. Arguing as in the proof of Proposition \ref{Theo:GPRandLoss}, using the triangle inequality and the Sobolev embedding $H^{d/2+\kappa}([0,1]^d)\subset C([0,1]^d)$, with arbitrarily small $\kappa>0$, the probability in \eqref{Eq:SmallBall} is lower bounded by
\begin{align*}
	\Pi&( w:\|w - w_{0,n}\|_{H^{d/2+\kappa}} \leq K_1 \epsilon_n ) \\
	&\ge\Pi_L(L=L_n) \Pr\left( \sum_{l=1}^{L_n}\sum_{k=1}^{2^{ld}}2^{2l(d/2+\kappa)}(g_{lk} 
	- \langle w_0,\psi_{lk}\rangle_{L^2})^2 \le (K_1\epsilon_n)^2 \right),
\end{align*}
for some $K_1>0$. In view of the tail assumption on $\Pi_L $ and the choice of $L_n$, the first term in the right hand side is greater than a multiple of $e^{-C_LL_n 2^{L_nd}}\ge e^{-K_2 n^{d/(2\beta+d)}\log n}=e^{-K_2 n \epsilon_n^2}$ for some $K_2>0$. The second term is lower bounded by
\begin{align*}
	\Pr&\Big(\textnormal{dim}(\Psi_n)\max_{l\le L_n, k\le 2^{ld}}(g_{lk} 
	- \langle w_0,\psi_{lk}\rangle_{L^2})^2 \le (K_1\epsilon_n)^2 
	2^{-2L_n(d/2+\kappa)}\Big)\\
	&\ge \prod_{l\le L_n, k\le 2^{ld}}\Pr(|g_{lk} - \langle w_0,\psi_{lk}\rangle_{L^2}|\le n^{-K_3})\\
	&\ge \prod_{l\le L_n, k\le 2^{ld}}e^{-\frac{|\langle w_0,\psi_{lk}\rangle_{L^2} |^2}{2}}\Pr(|g_{lk}|\le n^{-K_3})
	= e^{-\frac{\|w_{0,n}\|_{L^2}^2}{2}}\prod_{l\le L_n, k\le 2^{ld}}\Pr(|g_{lk}|\le n^{-K_3})
\end{align*}
for a sufficiently large constant $K_3>0$. Using that $\|w_{0,n}\|_{L^2}\le \|w_0\|_{L^2}<\infty$, and that, since $g_{lk}\iid N(0,1)$, for all $n$ large enough we have $\Pr(|g_{lk}|\le n^{-K_3})\ge K_4 n^{-K_3}$ for some $K_4>0$, the last display is lower bounded by a multiple of
$$
	(n^{-K_3})^{\textnormal{dim}(\Psi_{L_n})} \ge e^{-K_5 2^{L_nd}\log n} \ge e^{-K_6 n \epsilon_n^2},
$$
with $K_5,K_6>0$, concluding the derivation of Condition \eqref{Eq:SmallBall}. Moving onto Conditions \eqref{Eq:Sieves} and \eqref{Eq:MetricEntropy}, set
$$	
	\Rcal_n:=\{w\in\Vcal_{L_n+K_7} : \|w\|_{H^{d/2+\kappa}}\le n^{K_8}\}
$$
for arbitrarily small $\kappa>0$ and for $K_7,K_8>0$ to be chosen below. In particular, taking $K_8>d/(2\beta+d)$, we obtain
\begin{align*}
	\Pi(\Rcal_n^c)
	&\le \Pi_L(L>L_n+K_7) + \Pr\left(\sum_{l=1}^{L_n+K_7}\sum_{k=1}^{2^{ld}}2^{2l(d/2+\kappa)}g_{lk} ^2
	> n^{2K_8} \right)\\
	&\le e^{-C_L (L_n+K_7) 2^{(L_n+K_7)d}}
	+\Pr\left(2^{2(L_n+K_7)(d/2+\kappa)}\sum_{l=1}^{L_n+K_7}\sum_{k=1}^{2^{ld}}g_{lk} ^2
	> n^{2K_8} \right)\\
	&\le e^{- K_9 2^{K_7d}n\epsilon_n^2}+\Pr\left(\sum_{l=1}^{L_n+K_7}\sum_{k=1}^{2^{ld}}(g_{lk}^2-1)
	> \frac{n^{2K_8}}{2} \right)\\
	&\le e^{- K_9 2^{K_7d}n\epsilon_n^2}+ e^{-K_{10}n^{4 K_8}/(\textnormal{dim}(\Vcal_{L_n+K_7})+n^{2 K_8})},
\end{align*}
with $K_9>0$, the last inequality following from an application of Theorem 3.1.9 in \cite{GN16}.
Upon choosing $K_7$ and $K_8$ sufficiently large, the last display can be made smaller than $e^{-C_2n \epsilon_n^2}$ for any $C_2>0$, proving Condition \eqref{Eq:Sieves} for the hierarchical Gaussian wavelet prior $\Pi $. Finally,  by the metric entropy estimate for balls in Euclidean spaces, e.g.~\cite[Proposition 4.3.34]{GN16}, the metric entropy in \eqref{Eq:MetricEntropy} is upper bounded, as required, by a multiple of
$$
	\textnormal{dim}(\Vcal_{L_n+K_7})\log n = 2^{(L_n+K_7)d}\log n\simeq n \epsilon_n^2.
$$
\qed

%
%
%

\subsection{Proof of Proposition \ref{Theo:MixRandLoss}}
\label{Sec:ProofMixtures}


%

We verify the conditions of Theorem \ref{Theo:GenRandCov}. For the small ball lower bound \eqref{Eq:SmallBall}, using Lemma 2 in \cite{STG13} in the case $d\geq 2$ and Lemma 1 in \cite{KRvdV10} if $d=1$, there exists $\rho_\beta\in C^\beta(\R^d)$ such that $\rho_\beta \geq \rho_0/2$ and, with $\Sigma := \sigma^2 I_d$ and $\sigma \equiv \sigma_n\to 0$ as $n\to\infty$,  
$$ 
	\|\rho_0 - \varphi_\Sigma\ast \rho_\beta\|_\infty 
	= O(\sigma^\beta).
$$
Moreover, let $a_n := a_0 (\log n)^{1/\tau}$ where $\tau>0$ is such that $\nu( \{ u:|u| > z\}  ) \leq e^{ - z^\tau }$ and $a_0>0$ is large enough. Denoting, in slight abuse of notation, by $\nu$ the probability density function of the invariant distribution $\nu$ of $Z$, for any $\rho$ satisfying $|\log\rho(z)|\le  n^{b}z^2$ for some $b>0$ and all $z\in\R^d$, we also have
\begin{equation}
\label{trunc1}
	\int_{\{z:|z|>a_n\}} \rho_0(z) \nu(z) |\log \rho(z) - \log \rho_0(z)| dz 
	\lesssim \nu(\{z : |z|>a_n\}) + n^{b}\int_{\{z:|z|>a_n\}} \nu(z)|z|^2dz = O(n^{-1}),
\end{equation}
and using the construction of Lemma B1 in \cite{STG13} if $d\geq 2$ and that of Lemma 4 in \cite{KRvdV10} if $d=1$,   for all $H>0$, there exists a discrete measure $Q_0(\cdot) = \sum_{j=1}^{K_n}p_{j,0}\delta_{\mu_j^*}(\cdot)$ on $[-a_n, a_n]^d$ with at most $K_n=O(\sigma^{-d} a_n^d|\log \sigma|^d) $ support points such that, when $\sigma$ is small enough,
$$ 
	|\varphi_\Sigma\ast\rho_\beta(z) - \varphi_\Sigma\ast Q_0(z)|
	\le  \sigma^H , \qquad  \forall |z|<2a_n.
$$
Let $P(\cdot) = \sum_{j=1}^{K_n} p_j \delta_{\mu_j}(\cdot)$ with $\sum_{j =1}^{K_n}|p_j- p_{j,0} |\le  \sigma^{b_1\beta}$, $|\mu_j - \mu_j^*| \le  \sigma^{b_1\beta}$, for some $b_1>0$ large enough. Then,
$$
	|\varphi_\Sigma\ast\rho_\beta(z) - \varphi_\Sigma\ast P(z)|\le  2\sigma^H , \qquad \forall |z|<2a_n,
$$
and writing $\rho_{P, \Sigma} := \varphi_\Sigma\ast P$, we have
\begin{align*}
	\textnormal{KL}_\nu(\bar \rho_0, \bar\rho_{Q, \Sigma}) &
	= -\int \rho_0(z) \log\left(1 + \frac{\rho_{Q, \Sigma}(z)-\rho_0(z)}{ \rho_0(z)} \right) d\nu(z) + M_{\rho_0} -  M_{\rho_{Q, \Sigma}} \\
	& \lesssim \int_{\{z:|z|\le  a_n\}} \frac{(\rho_{Q, \Sigma}(z)-\rho_0(z))^2}{ \rho_0(z)} d\nu(z) 
	+\int_{\{z:|z|>a_n\}}\rho_0(z)+ \rho_{P, \Sigma} (1+n^b|z|^2)d\nu (z) \\
& \lesssim \sigma^{2\beta}+ O(n^{-1}) .
\end{align*}
This, as in Theorem 4 of \cite{STG13}, shows that condition \eqref{Eq:SmallBall} is verified with $v_0\sigma^\beta (\log n)^{t_0} =  \epsilon_n $  for some $t_0, v_0>0$.

	We proceed verifying the sieve and metric entropy conditions, \eqref{Eq:Sieves} and \eqref{Eq:MetricEntropy} respectively. In \cite{STG13}, metric entropy estimates in $L^1$-metric are obtained. Set $\xi_n:=n^{-\beta/(2\beta+d)}$. Define the set, for $\epsilon_n = (\log n)^t \xi_n$, $t>0$,
$$
	\mathcal Q_n := \left\{  (Q, \Sigma),\ Q := A\sum_{h=1}^H p_h \delta_{\mu_h}, A \le  A_n, 
	 \max_{h\le  H}|\mu_h| \le  b_n, \ \sum_{h>H}p_h<\epsilon_n\sigma_n^d/A_n,
	 \sigma_n^2 \le  \text{eig}_j(\Sigma) \le  \bar\sigma_n^2 \right\}
$$
with the choices $A_n := n^{a_1}$, for some $a_1>0$, $H:= H_0\lfloor n\xi_n^2/\log n\rfloor$ with $H_0>0$,  $b_n  := n^\gamma$, for some $\gamma>0$, with $\sigma_n := u (n\xi_n^2)^{-1/d}$ for  $u>0$ a small constant, and with $\bar\sigma_n := e^{C n\xi_n^2\log n}$. Note that a similar set (with normalised measures $Q$) is constructed for the proof of Theorem 4 of  \cite{STG13}. Using Proposition 2 of \cite{STG13} gives, for all $K>0$, upon taking $\gamma, C>0$ large enough, 
$$ 
	\Pi(\mathcal Q^c_n) \le  \Pi(A>A_n) + e^{-2Kn\epsilon_n^2} 
	\le   e^{-Kn\epsilon_n^2},
$$
verifying Condition \eqref{Eq:Sieves}. Next, $u_n = o(1)$ and let $\hat A$ be an $u_n \sigma_n^d$-net of $[0, A_n]$, let $\hat R $ be a $u_n/A_n$-net of $[-b_n, b_n]^d$, let $\hat O_k$ be a $\delta_n$-net of the set of unitary matrices for  $\delta_n:= u_n\sigma_n^{d+2}/A_n$, and let $\hat P$ be a $\sigma_n^d u_n/A_n $-net of the $H$-dimensional simplex. Set 
$$
	\hat{\mathcal L} := ( \sigma_n^2(1 + u_n\sigma_n^d /A_n)^j,\ j=0, \cdots, J_n),
$$ 
where $J_n\in\N$ is chosen so that $J_n \simeq (\log A_n +\log\bar \sigma_n-\log \sigma_n)/(u_n\sigma_n^d)$. For any $(\Sigma, Q) \in \mathcal Q_n $, with $\Sigma = O^t \Lambda O$, let $\tilde \Sigma :=  O^t \hat\Lambda O$ where $\hat \Lambda $ is the closest element in Frobenius norm to $\Lambda$ in the set $( \text{diag}(\hat \lambda_1, \cdots, \hat\lambda_k), \ \hat \lambda_j \in \hat{\mathcal L})$. Set $\hat \Sigma = \hat O^t \hat\Lambda \hat O$, and let $\hat p$ be the closest element to $\tilde p := (p_h/\sum_{h'\le  H}p_{h'}, h\le  H)$ and $\hat \mu_h$ the closest element to $\mu_h $ in $\hat R$ for $h \le  H$. Then, for $\hat \rho := \rho_{\hat Q, \hat \Sigma}$ and $\hat Q := \sum_{h\le  H} \hat p_h \delta_{\hat \mu_h}$, we have that
\begin{align*}
	|\hat \rho - \rho_{Q, \Sigma} |(z) 
	&\le  \frac{|\hat A-A| }{ \sigma_n^d} + A\frac{\sum_{h>H}p_h}{\sigma_n^d} 
	+ A\sum_{h\le  H}\frac{ |p_h - \hat p_h|}{ \sigma_n^d} 
	+ A\sum_{h\le  H}\hat p_h |\varphi_{\hat \Sigma}(z-\hat \mu_h)  - \varphi_{\tilde \Sigma}(z-\hat \mu_h)| \\
 	& \quad\ + A\sum_{h\le  H}\hat p_h [| \varphi_{\tilde \Sigma}(z-\hat \mu_h)  
	- \varphi_{\Sigma}(z-\hat \mu_h)|+A|\varphi_{\Sigma}(z-\hat \mu_h)  - \varphi_{ \Sigma}(z-\mu_h)|]\\
 	& \le  3u_n	+A_n [\text{det}[\hat\Lambda\Lambda^{-1}]^{1/2}-1]\sigma_n^{-d} +	A_n\left\| \varphi_{\tilde \Sigma}(z) \left( e^{- z^t[\hat \Sigma^{-1} - \tilde\Sigma^{-1}]z/2} - 1\right) \right\|_\infty  \\
 	 & \ \!+\! A_n\left\| \varphi_{\tilde \Sigma}(z) \left( e^{- z^t[ \Sigma^{-1} - \tilde\Sigma^{-1}]z/2} - 1\right) 	\right\|_\infty + A_n\sigma_n^{-d}\max_h \left\| \varphi(z)  
	|e^{-|\mu_h - \hat \mu_h||z|/\sigma_n}-1 |\right\|_\infty \\
	& \quad  +  A_n\max_h\frac{|\mu_h - \hat \mu_h|^2}{\sigma_n^{2+d}}\\
 	& \le  5u_n + du_n + 4 A_n\delta_n \sigma_n^{-(2+d)}\bar\sigma_n^2
	+ u_n^2 \sigma_n^d /A_n \leq (10+d)u_n
\end{align*} 
where we have used the fact that $\varphi_{ \Sigma}(z-\mu_h) \le  \sigma_n^{-d}$. Further using the inequality
$$ 
	|e^{- z^t[\hat \Sigma^{-1} - \tilde\Sigma^{-1}]z/2} - 1| 
	\le  |e^{- z^t\tilde \Sigma^{-1/2}[I_d  -\tilde\Sigma^{1/2} \hat\Sigma^{-1}\tilde\Sigma^{1/2}]
	\tilde \Sigma^{-1/2}z/2} - 1|,
$$
together with 
$$
	|I_d  -\tilde\Sigma^{1/2} \hat\Sigma^{-1}\tilde\Sigma^{1/2}|
	\le  3 |\hat \Lambda||O\hat O^T-I_k|^2|\hat \Lambda^{-1}| + |\hat \Lambda|^{1/2}|O\hat O^T-I_k|
	|\hat \Lambda^{-1/2}|
	\le  4\delta_n \sigma_n^{-2}\bar\sigma_n^2 \leq 4u_n \sigma_n^d /A_n
$$
and with
$$
	|I_d  -\tilde\Sigma^{1/2} \Sigma^{-1}\tilde\Sigma^{1/2}| 
	=| I_d - \Lambda^{1/2}\hat\Lambda^{-1}\Lambda^{1/2}|\le  \sigma_n^d u_n/A_n,
$$
leads, as required, choosing $u_n = u_0 \xi_n$ 
$$
	\log N( \epsilon_n; \mathcal Q_n, \| \cdot\|_\infty) \lesssim d\log J_n + d(d-1)/2| \log \delta_n| 
	+ n\epsilon_n^2 \lesssim n\epsilon_n^2 .
$$
\qed

%
%
%
%
%

\section{Proofs and supplementary materials for Section \ref{Subsec:ErgCov}}
\label{Sec:AdditionalErgCov}

\subsection{Proof of Theorem \ref{Theo:GaussWav}}\label{Sec:ProofMainGaussWav}

For $W$ as in \eqref{Eq:GaussWavPrior} with $\alpha=\beta$, the support and RKHS $\Hcal_W$ of $W$ are given by the wavelet approximation space $\Psi_{L_n}$ defined in eq.~\eqref{Eq:ApproxSpace}, cf.~\cite[Lemma 11.43]{GvdV17}. Further, the RKHS norm satisfies $\| \cdot\|_{\Hcal_W}=\|\cdot\|_{H^{\beta+d/2}}$, following from the wavelet characterisation of Sobolev spaces, e.g.~\cite[Section 4.3]{GN16}. Thus, $\Pi_W$ satisfies Condition \ref{Cond:GPCondition}. Since the link function is bijective and smooth, and since by assumption $\rho\in C^\beta([0,1]^d)\cap H^\beta([0,1]^d)$ is bounded away from zero, we have $\rho_0= \rho_{w_0} = \eta\circ w_0$ for $w_0=\eta^{-1}\circ \rho_0\in C^\beta([0,1]^d)\cap H^\beta([0,1]^d)$. Let $w_{0,n}:=\sum_{l\le L_n}\sum_{k\le2^{ld}}\langle w_0,\psi_{lk}\rangle_{L^2}\psi_{lk}$ be the wavelet projection of $w_0$. Then, by standard wavelet properties,
$$
	\|w_0 - w_{0,n}\|_\infty \lesssim 2^{-L_n\beta}\simeq \epsilon_n; \qquad
	\|w_{0,n}\|_{\Hcal_W}
	\le 2^{L_nd/2}\|w_{0,n}\|_{H^\beta}
	\lesssim \sqrt n \epsilon_n.
$$
An application of Theorem \ref{Theo:GPRandLoss} (and its proof) then implies that, for sufficiently large $M>0$, as $n\to\infty$,
\begin{align}
\label{Eq:Start}
	E_{\rho_0}^{(n)}
	\Bigg[\Pi\Big(\rho \in\Rcal_n: \frac{1}{n} \|\lambda^{(n)}_\rho - \lambda^{(n)}_{\rho_0}
	\|_{L^1(\Wcal_n)} &\le M\epsilon_n
	\Big| D^{(n)}\Big)\Bigg]\to1,
\end{align}
where $\Rcal_n = \{\rho_w, \ w\in\Bcal_n\}$ with 
\begin{equation}
\label{Eq:WaveletSieves}
	\Bcal_n =\{ w = w_1 + w_2: w_1,w_2\in \Psi_{L_n},\ \|w_1\|_\infty \leq K_1\epsilon_n,
		\|w_2\|_{H^{\beta+d/2}} \leq K_2\sqrt n\epsilon_n\},
\end{equation}
for large enough constants $K_1,K_2>0$. Noting that for all $w_1\in \Psi_{L_n}$ with $\|w_1\|_\infty \leq K_1\epsilon_n$ it holds $\|w_1\|_{H^{\beta+d/2}}\le 2^{L_n(\beta+d/2)}\|w_1\|_{L^2}\lesssim\sqrt n \epsilon_n,$
we have $\Bcal_n\subset \{w : \|w\|_{H^{\beta+d/2}}\le K_3 \sqrt n \epsilon_n\}$ for sufficiently large $K_3>0$. 

	Now for constants $K_4, K_5>0$ to be chosen below, consider the ($Z^{(n)}$-dependent) event
$$
	\Acal_n:=\left\{\|\rho - \rho_{0}\|_{L^1}
	\le \frac{K_4}{n} \|\lambda^{(n)}_\rho - \lambda^{(n)}_{\rho_{0}}
	\|_{L^1(\Wcal_n)}, \ \forall \rho\in\Rcal_n 
	: \|\rho - \rho_{0}\|_{L^1}>K_5\epsilon_n\right\}
$$
Then, writing $P_Z$ for the law of $Z^{(n)}$,
\begin{equation}
\label{Eq:FirstSteps}
\begin{split}
	&E_{\rho_0}^{(n)}
	\Bigg[\Pi\Big(\rho\in\Rcal_n : \|\rho - \rho_0\|_{L^1} > K_5\epsilon_n
	\Big| D^{(n)}\Big)\Bigg]\\
	&= E_{\rho_0}^{(n)}\left[\Pi\Big(\rho\in\Rcal_n : \|\rho - \rho_0\|_{L^1} 
	> K_5\epsilon_n
	\Big| D^{(n)}\Big)1_{\Acal_n}\right]
	+E_{\rho_0}^{(n)}\left[\Pi\Big(\rho\in\Rcal_n : \|\rho - \rho_0\|_{L^1} 
	> K_5\epsilon_n
	\Big| D^{(n)}\Big)1_{\Acal^c_n}\right]\\
	&\ \le E_{\rho_0}^{(n)}\left[\Pi\Big(\rho\in\Rcal_n : \|\lambda_\rho^{(n)} - \lambda^{(n)}_{\rho_0}\|_{L^1(\Wcal_n)} 
	> (K_5/K_4)\epsilon_n
	\Big| D^{(n)}\Big)\right]
	+P_Z(\Acal_n^c)
	=o(1)+P_Z(\Acal_n^c),
\end{split}
\end{equation}
provided that $K_5/K_4>M$, having used \eqref{Eq:Start} and the fact that on the event $\Acal_n$, 
$$
	\{\rho\in\Rcal_n : \|\rho - \rho_0\|_{L^1} 
	> K_5\epsilon_n\}\subset
	\{\rho\in\Rcal_n : \|\lambda_\rho^{(n)} - \lambda^{(n)}_{\rho_0}\|_{L^1(\Wcal_n)} 
	> (K_5/K_4)\epsilon_n\}.
$$
Thus, there remains to show that $P_Z(\Acal_n)\to1$. Set $\rho_{0,n}:=\eta\circ w_{0,n}$. For all $\rho\in\Rcal_n$, recalling that $\vol(\Wcal_n)=n$, that $\eta$ is uniformly Lipschitz, and using the inequality $| |u| - |v||\le |u - v|$ for all $u,v\in\R$,
\begin{align*}
	&\left| \frac{1}{n} \|\lambda^{(n)}_\rho - \lambda^{(n)}_{\rho_0}\|_{L^1(\Wcal_n)} - 
	\frac{1}{n} \|\lambda^{(n)}_\rho - \lambda^{(n)}_{\rho_{0,n}}\|_{L^1(\Wcal_n)} \right|\\
	&\ = \left| \frac{1}{n}\int_{\Wcal_n}|\eta(  w(Z(x))) - \eta(w_0(Z(x)))|dx - 
	\frac{1}{n}\int_{\Wcal_n}|\eta(  w(Z(x)) - \eta(  w_{0,n}(Z(x)))|dx \right|\\
	 &\ \le
	 \frac{1}{n}\int_{\Wcal_n}| \eta(w_0(Z(x)))
	  - \eta(  w_{0,n}(Z(x)))| dx
	  \lesssim 
	  \|w_0 - w_{0,n}\|_\infty
	  \lesssim \epsilon_n.
\end{align*}
Likewise, for all $\rho\in\Rcal_n$, $| \|\rho- \rho_0\|_{L^1} - \|\rho - \rho_{0,n}\|_{L^1} |\lesssim \epsilon_n$. Hence, in view of \eqref{Eq:FirstSteps}, the claim of Theorem \ref{Theo:GaussWav} is proved if we show that for sufficiently large $K_4,K_5>0$, 
$$
	P_Z\left(\|\rho - \rho_{0,n}\|_{L^1}
	\le \frac{K_4}{n} \|\lambda^{(n)}_\rho - \lambda^{(n)}_{\rho_{0,n}}
	\|_{L^1(\Wcal_n)}, \ \forall \rho\in\Rcal_n : \|\rho - \rho_{0,n}\|_{L^1}>K_5\epsilon_n\right)\to 1.
$$
Provided that $K_4>1$, the probability on the left hand is greater than
\begin{align*}
	&P_Z\Bigg(\frac{1}{K_4}\|\rho - \rho_{0,n}\|_{L^1}
	\le \frac{1}{n} \|\lambda^{(n)}_\rho - \lambda^{(n)}_{\rho_{0,n}}
	\|_{L^1(\Wcal_n)} \le K_4\|\rho - \rho_{0,n}\|_{L^1},
	\ \forall \rho\in\Rcal_n: \|\rho - \rho_{0,n}\|_{L^1}>K_5\epsilon_n\Bigg)\\
	&\ \ge 1 - P_Z\Bigg(
	\sup_{\rho\in\Rcal_n: \|\rho - \rho_{0,n}\|_{L^1}>K_5\epsilon_n}
	\left| \frac{\frac{1}{n} \|\lambda^{(n)}_\rho - \lambda^{(n)}_{\rho_{0,n}}
	\|_{L^1(\Wcal_n)} 
	- \|\rho - \rho_{0,n}\|_{L^1}}{\|\rho - \rho_{0,n}\|_{L^1}}\right|
	> \min\left\{1-\frac{1}{K_4},K_4-1 \right\}\Bigg).
\end{align*}
Thus, for $K_6 := 1-1/K_4\in(0,1)$, there remains to show that
\begin{equation}
\label{Eq:ProbInt}
\begin{split}
	P_Z\Bigg(\sup_{w\in\Bcal_n: \|\rho_w - \rho_{0,n}\|_{L^1}>K_5\epsilon_n}
	\Bigg| \frac{1}{n}\int_{\Wcal_n} 
	\frac{|\eta(  w(Z(x))) - \eta(  w_{0,n}(Z(x)))|}
	{\|\rho_w - \rho_{0,n}\|_{L^1}} -1 dx\Bigg|
	> K_6 \Bigg)
	\to0.
\end{split}
\end{equation}

	We proceed with a chaining argument. Let $(w_j, \ j\le J_n)$ be a $K_7\epsilon_n/\sqrt n$-net for $\{w\in\Bcal_n:\|\rho_w - \rho_{0,n}\|_{L^1}>K_5\epsilon_n\}$ in $\|\cdot\|_\infty$-metric, with $K_7>0$ to be chosen below. Note that by the metric entropy bound for Euclidean balls (e.g., \cite[Theorem 4.3.34]{GN16}), 
\begin{equation}
\label{Eq:EuclMetrEntr}
	J_n 
	\le e^{K_8\textnormal{dim}(\Psi_{L_n})\log n} 
	\le e^{K_9 2^{L_nd}\log n} 
	\le e^{K_{10} n\epsilon_n^2\log n}
\end{equation}
for $K_8,K_9,K_{10}>0$. Thus, for all $w\in \Bcal_n\cap \{w:\|\rho_w - \rho_{0,n}\|_{L^1}>K_5\epsilon_n\}$, there exists $w_{j^*}\in (w_j, \ j\le J_n)$ such that $\|w - w_{j^*}\|_\infty\le K_7 \epsilon_n/\sqrt n$, as well as, since $\eta$ is uniformly Lipschitz, $\|\rho_w - \rho_{w_{j^*}}\|_\infty\le K_{\eta} \|  w -   w_{j^*}\|_\infty \le K_\eta K_7 \epsilon_n/\sqrt n$ for some $K_{\eta}>0$. It follows that for all $x\in \Wcal_n$,
\begin{align*}
	\Bigg| &\frac{|\eta(  w(Z(x))) - \eta(  w_{0,n}(Z(x)))|}
	{\|\rho_w - \rho_{0,n}\|_{L^1}}
	- \frac{|\eta(  w_{j^*}(Z(x))) - \eta(  w_{0,n}(Z(x)))|}
	{\|\rho_{w_{j^*}} - \rho_{0,n}\|_{L^1}}  \Bigg|\\
	&\le 
	\frac{\Big| |\eta(w(Z(x))) - \eta(  w_{0,n}(Z(x)))| -
	|\eta(  w_{j^*}(Z(x))) - \eta(  w_{0,n}(Z(x)))| \Big|}
	{\|\rho_w - \rho_{0,n}\|_{L^1}}\\
	&\quad + |\eta(  w_{j^*}(Z(x))) - \eta(  w_{0,n}(Z(x)))|
	\frac{\Big| \|\rho_{w_{j^*}} - \rho_{0,n}\|_{L^1} - \|\rho_w  
	- \rho_{0,n}\|_{L^1}\Big|}{\|\rho_w  
	- \rho_{0,n}\|_{L^1}\|\rho_{w_{j^*}} 
	- \rho_{0,n}\|_{L^1}}\\
	&\le \frac{1}{K_5\epsilon_n} \|\rho_w  - \rho_{w_{j^*}} \|_\infty
	+ \frac{1}{K_5^2\epsilon_n^2}\|\rho_{w_{j^*}} - \rho_{0,n}\|_\infty
	 \|\rho_w  - \rho_{w_{j^*}} \|_{L^1}\\
	&\le\frac{K_\eta K_7}{K_5\sqrt n} 
	+ \frac{2}{K_5^2\epsilon_n^2}\sup_{w\in\Bcal_n}\|w\|_\infty\|\rho_w  - \rho_{w_{j^*}}\|_\infty
	\le \frac{K_\eta K_7}{K_5\sqrt n} + \frac{K_{11}}{K_5^2\epsilon_n^2}
	K_3 \sqrt n \epsilon_n\frac{K_\eta K_7\epsilon_n}{\sqrt n}
	\le K_6/2,
\end{align*}
upon taking $K_7>0$ sufficiently small, having used the fact that $\Bcal_n\subset \{w : \|w\|_{H^{\beta+d/2}}\le K_3 \sqrt n \epsilon_n\}$ and the continuous embedding $H^{\beta+d/2}([0,1]^d)\subset C([0,1]^d)$ holding for $\beta>0$. The probability in \eqref{Eq:ProbInt} is thus upper bounded by
\begin{equation}
\label{Eq:AlmostThere}
\begin{split}
	 P_Z\Bigg(\sup_{w\in\Bcal_n: \|\rho_w - \rho_{0,n}\|_{L^1}>K_5\epsilon_n}
	\Bigg| &\frac{1}{n}\int_{\Wcal_n} 
	\frac{|\eta(  w_{j^*}(Z(x))) - \eta(  w_{0,n}(Z(x)))|}
	{\|\rho_{w_{j^*}} - \rho_{0,n}\|_{L^1}} -1 dx\Bigg| + \frac{K_6}{2}
	> K_6 \Bigg)\\
	&\le
	P_Z\Bigg(\max_{j=1,\dots,J_n}
	\Bigg| \frac{1}{n}\int_{\Wcal_n} 
	\frac{|\eta(  w_{j}(Z(x))) - \eta(  w_{0,n}(Z(x)))|}
	{\|\rho_{w_j} - \rho_{0,n}\|_{L^1}} -1 dx\Bigg|
	> \frac{K_6}{2} \Bigg)\\
	&\le
	J_n\sup_{w\in\Bcal_n:\|\rho_w - \rho_{0,n}\|_{L^1}>K_5\epsilon_n}
	P_Z\Bigg(
	\Bigg| \frac{1}{n}\int_{\Wcal_n} 
	f_w(Z(x)) dx\Bigg|
	> \frac{K_6}{2} \Bigg),
\end{split}
\end{equation}
where
$$
	f_w := \frac{ |\rho_w - \rho_{0,n}|}{\|\rho_w  
	- \rho_{0,n}\|_{L^1}} - 1, \qquad  w\in\Bcal_n:\|\rho_w - \rho_{0,n}\|_{L^1}>K_5\epsilon_n.
$$
An application of the empirical process concentration inequality in Proposition \ref{Prop:MultGaussSupConc}, with the class $\Fcal_n := \{f_w,0\}$, now yields that for some $K_{12}>0$,
\begin{equation}
\label{Eq:AlmostThere}
	P_Z\left(\Bigg| \frac{1}{n}\int_{\Wcal_n} 
	f_w(Z(x)) dx\Bigg|\ge \frac{K_{12}}{\sqrt n}\|\nabla f_w\|_{L^\infty([0,1]^d;\R^d)}(1 + y)\right)\le e^{-\frac{y^2}{2}},
	\qquad \forall y>0.
\end{equation}
Next, by Lemma \ref{Lem:GradientNorm}, we have $\| \nabla f_w\|_{L^\infty([0,1]^d;\R^d)} \lesssim n^\frac{d(1+a_\eta/2)+1+\kappa}{2\beta+d}$ for all $w\in\Bcal_n \cap\{w:\|\rho_w - \rho_{0,n}\|_{L^1}>K_5\epsilon_n\}$. Taking $y:= K_{13}\sqrt n \epsilon_n\sqrt{\log n}$ in \eqref{Eq:AlmostThere} with $K_{13}>0$ to be chosen below then yields, for some $K_{14}>0$,
$$
	P_Z\left(\Bigg| \frac{1}{n}\int_{\Wcal_n} 
	f_w(Z(x)) dx\Bigg|>K_{14} n^\frac{d(1+a_\eta/2)+1+\kappa}{2\beta+d} \epsilon_n\sqrt{\log n} \right)
	\le e^{-\frac{K_{13}^2}{2}n \epsilon_n^2\log n}.
$$
Finally, noting that, as $\beta > d(1+a_\eta/2)+1$ by assumption and $\kappa>0$ is arbitrarily small,
$$
	n^\frac{d(1+a_\eta/2)+1+\kappa}{2\beta+d}
	\epsilon_n\sqrt{\log n} = n^\frac{d(1+a_\eta/2)+1+\kappa -\beta}{2\beta+d}
	\sqrt{\log n}\to 0,
$$
we have
\begin{align*}
	\sup_{w\in\Bcal_n:\|\rho_w - \rho_{0,n}\|_{L^1}>K_5\epsilon_n}
	P_Z\Bigg(
	\Bigg| \frac{1}{n}\int_{\Wcal_n} 
	\frac{|\eta(w(Z(x))) - \eta(  w_{0,n}(Z(x)))|}
	{\|\rho_w - \rho_{0,n}\|_{L^1}} &-1 dx\Bigg|
	> \frac{K_6}{2} \Bigg)
	\le e^{-\frac{K_{13}^2}{2}n \epsilon_n^2\log n}.
\end{align*}
Combined with \eqref{Eq:EuclMetrEntr} and \eqref{Eq:AlmostThere} this gives, as required, that the probability in \eqref{Eq:ProbInt} is upper bounded by
$
	e^{K_{10} n\epsilon_n^2\log n} e^{-\frac{K_{13}^2}{2}n\epsilon_n^2\log n}\to 0
$, upon taking $K_{13}>\sqrt{K_{10}/2}$. 

\qed

%

	The following lemma is used in the proof of Theorem \ref{Theo:GaussWav}, providing the required gradient sup-norm bound for the application of the empirical process concentration inequality derived in Section \ref{Sec:EmpProcSup} below. Let $W^{1,\infty}([0,1]^d)$ be the Sobolev space of functions $f\in L^\infty([0,1]^d)$ with weak partial derivatives $\partial_h f\in L^\infty([0,1]^d)$, $h=1,\dots,d$. For $f\in W^{1,\infty}([0,1]^d)$, the weak gradient is given by $\nabla f:=(\partial_1 f,\dots,\partial_d f)\in L^\infty([0,1]^d;\R^d)$.

\begin{lemma}\label{Lem:GradientNorm}
Let $\eta:\R\to(0,\infty)$ be smooth, uniformly Lipschitz, strictly increasing link functions with bounded and uniformly Lipschitz derivative $\eta'$ satisfying the left tail condition \eqref{Eq:LinkTail} for some $a>0$. Let $\Bcal_n$ be the set in \eqref{Eq:WaveletSieves}, with $\Psi_{L_n}$ the wavelet approximation space in \eqref{Eq:ApproxSpace} at level $L_n\in\N$ such that $2^{L_n}\simeq n^{1/(2\beta+d)}$, with $\epsilon_n=n^{-\beta/(2\beta+d)}$ and with $\beta,K_1,K_2>0$. For $w\in \Bcal_n$, recall the notation $\rho_w=\eta\circ w$. Let $(w_{0,n})_{n\ge1}\subset \Psi_{L_n}$ be a fixed sequence satisfying $\|w_{0,n}\|_\infty\le K_3 \sqrt n \epsilon_n$ for some sufficiently large $K_3>0$, and define the functions
$$
	f_w := \frac{ |\rho_w - \rho_{w_{0,n}}|}{\|\rho_w  
	- \rho_{w_{0,n}}\|_{L^1}} - 1, \qquad  w\in\Bcal_n,
$$
Then, $f_w\in W^{1,\infty}([0,1]^d)$ and
$$
	\| \nabla f_w\|_{L^\infty([0,1]^d;\R^d)} \le K_4 n^\frac{d(1+a_\eta/2)+1+\kappa}{2\beta+d},
$$
for some sufficiently large $K_4>0$ and where $\kappa>0$ is arbitrarily small.
\end{lemma}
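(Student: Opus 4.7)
The plan is to control $\|\nabla f_w\|_\infty$ by bounding the numerator and denominator of $f_w = |g|/\|g\|_{L^1} - 1$, where $g := \rho_w - \rho_{w_{0,n}} = \eta(w) - \eta(w_{0,n})$, separately.

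First, since $w, w_{0,n} \in \Psi_{L_n}$ and $\eta$ is smooth, $g \in C^\infty([0,1]^d)$, and the weak chain rule $\nabla|g| = \mathrm{sign}(g)\nabla g$ a.e.~on $\{g \neq 0\}$ shows that $f_w \in W^{1,\infty}([0,1]^d)$ with
\[
\|\nabla f_w\|_{L^\infty([0,1]^d;\R^d)} \leq \frac{\|\nabla g\|_{L^\infty([0,1]^d;\R^d)}}{\|g\|_{L^1([0,1]^d)}}.
\]
(Implicitly, as in the application of the lemma in Section~\ref{sec:proofs}, I work on the set where $\|g\|_{L^1} \gtrsim v_n$, which is what makes $f_w$ meaningfully controllable.)

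For the numerator, I will use the identity $\nabla g = \eta'(w)\nabla(w - w_{0,n}) + [\eta'(w) - \eta'(w_{0,n})]\nabla w_{0,n}$ together with $\|\eta'\|_\infty < \infty$ and the Lipschitzness of $\eta'$ to reduce matters to estimating $\|\nabla(w - w_{0,n})\|_\infty$, $\|w - w_{0,n}\|_\infty$, and $\|\nabla w_{0,n}\|_\infty$. The key tools are Bernstein's inequality for wavelet expansions (cf.~\cite[Section 4.3]{GN16}) in the form $\|\nabla h\|_\infty \lesssim 2^{L_n}\|h\|_\infty$ and its inverse $\|h\|_\infty \lesssim 2^{L_n d}\|h\|_{L^1}$ for $h \in \Psi_{L_n}$, applied to $w - w_{0,n}$ and $w_{0,n}$. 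Combined with the hypothesis $\|w_{0,n}\|_\infty \leq K_3\sqrt n v_n$ and the Sobolev embedding $H^{\beta+d/2}([0,1]^d) \hookrightarrow L^\infty([0,1]^d)$ applied to the component $w_2$ in the decomposition $w = w_1 + w_2 \in \Bcal_n$ (so that $\|w_2\|_\infty \lesssim \|w_2\|_{H^{\beta+d/2}} \lesssim \sqrt n v_n$), these bounds give $\|w\|_\infty, \|w_{0,n}\|_\infty \lesssim \sqrt n v_n$ and yield an estimate of the form $\|\nabla g\|_\infty \lesssim 2^{L_n(d+1)}(\sqrt n v_n)^{\gamma}\|w - w_{0,n}\|_{L^1}$ for an explicit exponent $\gamma \geq 0$.

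For the denominator, the left-tail condition $\eta'(v) \geq 1/|v|^a$ for $v < v_0$, combined with the continuity and strict positivity of $\eta'$ on $[v_0, \infty)$, yields a global lower bound $\eta'(v) \gtrsim (1 + |v|)^{-a}$. Since $\|w\|_\infty \vee \|w_{0,n}\|_\infty \lesssim \sqrt n v_n$, the integral representation
\[
g(z) = (w(z) - w_{0,n}(z)) \int_0^1 \eta'(w_{0,n}(z) + t(w(z) - w_{0,n}(z)))\,dt
\]
gives the pointwise bound $|g(z)| \gtrsim (\sqrt n v_n)^{-a}|w(z) - w_{0,n}(z)|$, and integration yields $\|g\|_{L^1} \gtrsim (\sqrt n v_n)^{-a}\|w - w_{0,n}\|_{L^1}$. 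Combining with the numerator bound, the factor $\|w - w_{0,n}\|_{L^1}$ cancels and leaves $\|\nabla f_w\|_\infty \lesssim 2^{L_n(d+1)}(\sqrt n v_n)^{\gamma + a}$. Substituting $2^{L_n} \simeq n^{1/(2\beta+d)}$ and $\sqrt n v_n \simeq n^{d/(2(2\beta+d))}$ then matches the claimed exponent $(d(1+a/2) + 1 + \kappa)/(2\beta+d)$, with the arbitrarily small factor $n^{\kappa/(2\beta+d)}$ absorbing endpoint losses in the Sobolev embedding and Bernstein constants. The main obstacle is the careful bookkeeping of the Bernstein and inverse-Bernstein factors together with the tail-driven lower bound on $\|g\|_{L^1}$, ensuring in particular that the Lipschitz cross-term $[\eta'(w) - \eta'(w_{0,n})]\nabla w_{0,n}$ does not introduce a parasitic extra power of $\sqrt n v_n$ beyond that compensated by the $\eta'$-tail estimate.
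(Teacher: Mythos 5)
Your overall strategy coincides with the paper's: bound $\|\nabla f_w\|_\infty$ by $\|\nabla(\rho_w-\rho_{w_{0,n}})\|_\infty/\|\rho_w-\rho_{w_{0,n}}\|_{L^1}$, control the numerator via the product/chain rule together with inverse (Bernstein-type) estimates on $\Psi_{L_n}$ — the paper phrases these as the Besov embeddings $B^{1+d+\kappa}_{1\infty}\subset W^{1,\infty}$ and $L^1\subset B^0_{1\infty}$, which is where its $\kappa$ comes from — and lower-bound the denominator by a mean-value argument plus the left-tail condition \eqref{Eq:LinkTail}, using $\|w\|_\infty\vee\|w_{0,n}\|_\infty\lesssim\sqrt n v_n$ so that the relevant values of $\eta'$ are $\gtrsim(\sqrt n v_n)^{-a}$. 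Your denominator bound and your first numerator term are correct and essentially identical to the paper's.

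The gap is precisely the cross-term you flag as ``the main obstacle'' and then do not resolve. Under the lemma's stated hypotheses the only available control on $\nabla w_{0,n}$ is the Bernstein bound $\|\nabla w_{0,n}\|_{L^\infty}\lesssim 2^{L_n}\|w_{0,n}\|_\infty\lesssim 2^{L_n}\sqrt n v_n$, which gives
$$
\big\|[\eta'\circ w-\eta'\circ w_{0,n}]\,\nabla w_{0,n}\big\|_{L^\infty}
\lesssim 2^{L_n}\sqrt n v_n\,\|w-w_{0,n}\|_\infty
\lesssim 2^{L_n(1+d)}\sqrt n v_n\,\|w-w_{0,n}\|_{L^1},
$$
i.e.~your exponent $\gamma$ equals $1$, not $0$. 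Dividing by $\|g\|_{L^1}\gtrsim(\sqrt n v_n)^{-a}\|w-w_{0,n}\|_{L^1}$ then yields $\|\nabla f_w\|_\infty\lesssim n^{(1+3d/2+ad/2)/(2\beta+d)}$, which exceeds the claimed $n^{(1+d+ad/2+\kappa)/(2\beta+d)}$ by a factor $n^{(d/2-\kappa)/(2\beta+d)}$; an arbitrarily small $\kappa$ cannot absorb the fixed loss $d/2$. The missing ingredient is that $\|\nabla w_{0,n}\|_{L^\infty}=O(1)$ uniformly in $n$: in the application $w_{0,n}$ is the wavelet projection of the fixed function $w_0=\eta^{-1}\circ\rho_0\in C^\beta([0,1]^d)$ with $\beta>1$, and wavelet projections are uniformly bounded on $C^\beta$, so $\|w_{0,n}\|_{C^1}\lesssim\|w_0\|_{C^\beta}$. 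With that, the cross-term is $\lesssim\|w-w_{0,n}\|_\infty\lesssim 2^{L_nd}\|w-w_{0,n}\|_{L^1}$, dominated by the first term, and the stated exponent follows. (The paper's write-up also elides this point, silently absorbing $\|\nabla w_{0,n}\|_\infty$ into a constant; the bound holds for the $w_{0,n}$ actually used, but it does not follow from the hypothesis $\|w_{0,n}\|_\infty\le K_3\sqrt n v_n$ alone, which is all your argument invokes.)
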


\begin{proof}
The fact that $f_w\in W^{1,\infty}([0,1]^d)$ follows from the regularity of the wavelet basis and the assumed smoothness of the link function $\eta$. In particular, we have
\begin{align*}
	\| \nabla f_w\|_{L^\infty([0,1]^d;\R^d)}=\frac{\| \eta'\circ w \nabla w- \eta'\circ w_{0,n}\nabla w_{0,n}\|_{L^\infty([0,1]^d;\R^d)}}
	{\|\rho_w  - \rho_{0,n}  \|_{L^1}}.
\end{align*}
Fix $w\in\Bcal_n$. The numerator is bounded by
\begin{equation*}
\begin{split}
	\| \eta'&\circ w\|_{L^\infty}\| \nabla w- \nabla w_{0,n}\|_{L^\infty([0,1]^d;\R^d)} 
	+ \| \nabla w_{0,n}\|_{L^\infty([0,1]^d;\R^d)}\|\eta'\circ w - \eta'\circ w_{0,n}\|_{L^\infty}\\
	&\lesssim \| \nabla w - \nabla w_{0,n}\|_{L^\infty([0,1]^d;\R^d)} +
	\|  w -   w_{0,n}\|_{L^\infty}
\end{split}
\end{equation*}
having used that $\eta'$ is bounded and Lipschitz. For the wavelet-Besov spaces $B^\alpha_{pq}([0,1]^d)$, $\alpha\ge0$, $p,q\in[1,\infty]$, defined e.g.~as in \cite[p.370]{GN16}, recall the continuous embeddings $B^{1+d+\kappa}_{1\infty}([0,1]^d)\subset W^{1,\infty}$ $([0,1]^d)$ (e.g., \cite[p.360]{GN16}) and $L^1([0,1]^d)\subset B^0_{1\infty}([0,1]^d)$ (e.g., \cite[Proposition 4.3.11]{GN16}), implying that the last display is upper bounded by a multiple of
$$
	\|w - w_{0,n}\|_{B^{1+d+\kappa}_{1\infty}}
	\le  2^{J_n(1+d+\kappa)}\|w - w_{0,n}\|_{B^0_{1\infty}}
	\lesssim n^{\frac{1+d+\kappa}{2\beta+d}}\|w - w_{0,n}\|_{L^1}.
$$
For the denominator, note that for all $z\in[0,1]^d$,
\begin{align*}
	|  w(z) -   w_{0,n}(z)|
	&=|\eta^{-1}(\eta(  w(z))) - \eta^{-1}(\eta(  w_{0,n}(z))|
	=\frac{1}{\eta'(\eta^{-1}(\zeta))}| \eta(  w(z)) - \eta(  w_{0,n}(z))|
\end{align*}
for some $\zeta$ lying between $ \eta(  w(z))$ and $\eta(  w_{0,n}(z))$. As argued at the beginning of the proof of Theorem \ref{Theo:GaussWav}, $\Bcal_n\subset \{w:\|w\|_\infty\le K_3\sqrt n \epsilon_n\}$ provided that $K_3>0$ is large enough. Then, since $\|w\|_\infty,\|w_{0,n}\|_\infty\le K_{3}\sqrt n \epsilon_n$, and $\eta$ is increasing, necessarily $\eta(  w(z)), \eta(  w_{0,n}(z))\in[\eta(-K_{3}\sqrt n \epsilon_n),\eta(K_{3}\sqrt n \epsilon_n)]$ for all $z\in[0,1]^d$, whence
\begin{align*}
	|  w(z) -   w_{0,n}(z)|
	\le \frac{1}{\min_{u\in [\eta(-K_3\sqrt n \epsilon_n),\eta(K_3\sqrt n \epsilon_n)]}\eta'(\eta^{-1}(u))}
	| \eta(  w(z)) - \eta(  w_{0,n}(z))|.
\end{align*}
Since $\eta^{-1},\eta$ are increasing and $\eta'(v)>1/|v|^{a_\eta}$ for all $v<v_0$ by assumption, the right hand side is upper bounded by
\begin{align*}
	\frac{1}{\min_{v\in [-K_3\sqrt n \epsilon_n,K_3\sqrt n \epsilon_n]}\eta'(v)}
	| \eta(  w(z)) - \eta(  w_{0,n}(z))|
	&= \frac{1}{\eta'(-K_3\sqrt n \epsilon_n)}| \eta(  w(z)) - \eta(  w_{0,n}(z))|\\
	&\lesssim (\sqrt n \epsilon_n)^{a_\eta}| \eta(  w(z)) - \eta(  w_{0,n}(z))|.
\end{align*}
It follows that
\begin{align*}
	{\|\rho_w  - \rho_{0,n}  \|_{L^1}}
	\gtrsim \frac{1}{(\sqrt n \epsilon_n)^{a_\eta}} \|w - w_{0,n}\|_{L^1} = n^{-\frac{a_\eta d/2}{2\beta+d}}\|w - w_{0,n}\|_{L^1},
\end{align*}
which combined with the above bound for the numerator shows that for all $w\in\Bcal_n$,
$$
	\| \nabla f_w\|_{L^\infty([0,1]^d;\R^d)}
	\lesssim n^{\frac{1+d+\kappa}{2\beta+d}}n^{\frac{a_\eta d/2}{2\beta+d}}
	=n^\frac{d(1+a_\eta/2)+1+\kappa}{2\beta+d}.
$$
\end{proof}

%
%
%

\subsection{Ergodic Gaussian covariates; priors on functions with bounded gradient}
\label{Subsec:BoundGaussRates}

The foundational tool underlying the proof of Theorem \ref{Theo:GaussWav} is an exponential concentration inequality for spatial averages of (multivariate) stationary ergodic Gaussian processes, derived in Proposition \ref{Prop:MultGaussConcIneq} below. The key technical challenge in proving Theorem \ref{Theo:GaussWav} is then to lift such inequality to a suitable set of posterior concentration, cf.~\eqref{Eq:ProbInt}, which we have achieved through a chaining argument and a delicate use of the truncated wavelet structure of the prior.

	On the other hand, we note that the basic exponential inequality from Proposition \ref{Prop:MultGaussConcIneq} holds uniformly over sets of functions with uniformly bounded gradient. This lead us to consider the case of  general posterior distributions that asymptotically concentrate over such collections. For these, the abstract theory for the empirical $L^1$-loss developed in Section \ref{Subsec:GenTheo} can straightforwardly be combined with Proposition \ref{Prop:MultGaussConcIneq} to obtain posterior contraction rates in standard $L^1$-distances.

	Similarly to the setting of Theorem \ref{Theo:GaussWav}, we consider multivariate random covariate fields arising as (possibly transformed) stationary and ergodic Gaussian processes, under slightly more general conditions.

\begin{condition}\label{Cond:GenGaussCov}
Let $\tilde Z^{(h)}:=(\tilde Z^{(h)}(x), \ x\in\R^D),\ h=1,\dots,d,$ be independent, almost surely locally bounded, centred and stationary Gaussian processes with integrable covariance functions $K^{(h)}\in L^1(\R^D)$, where $K^{(h)}(x):=\textnormal{Cov}(Z^{(h)}(x),$ $Z^{(h)}(0))$, $x\in\R^D$. Further assume without loss of generality that $K^{(h)}(0)=1$, for $h=1,\dots,d$. Let the covariate process $Z=(Z(x), \ x\in \R^D)$ be given by $Z(x) := \Phi(\tilde Z(x))$, where $\Phi:\R^d\to \Zcal$ is a continuously differentiable map with uniformly bounded partial derivatives. Let $\nu $ be the stationary distribution of $Z$, given by the push-forward of the $d$-variate standard normal distribution under $\Phi$.
\end{condition}

	Note that Condition \ref{Cond:BoundGaussCov} is a special case of Condition \ref{Cond:GenGaussCov}, corresponding to a particular choice for the map $\Phi$. In the result to follow, more general transformations, including the identity $\Phi(z)=z$, are allowed.

\begin{proposition}\label{Theo:GaussGene}
Let $\Wcal_n\subset\R^D$ be a measurable and bounded set satisfying \eqref{Eq:SpatialWn}. Let $\rho_0\in C^1(\Zcal)$ be non-negative valued. Consider data $D^{(n)}\sim P^{(n)}_{\rho_0}$ from the observation model \eqref{Eq:PointProc} with $\rho=\rho_0$ and $Z$ a stationary random field satisfying Condition \ref{Cond:GenGaussCov}. Assume that the prior $\Pi $ is supported on $C^1(\Zcal)$ and satisfies \eqref{Eq:SmallBall} - \eqref{Eq:MetricEntropy} for some positive sequence $\epsilon_n\to0$ such that $n\epsilon_n^2\to\infty$. Further assume that, for some $M_1>\|\rho_0\|_{C^1}$, 
\begin{equation}
\label{Eq:BoundGradPost}
	E_{\rho_0}^{(n)}
	\left[\Pi\left(\rho:\| \nabla \rho\|_{L^\infty(\Zcal;\R^d)} 
	> M_1 \Big|D^{(n)}\right)\right]\to0
 \end{equation}
as $n\to\infty$. Then, for all sufficiently large $M_2>0$, as $n\to\infty$,
$$
	E_{\rho_0}^{(n)}
	\Bigg[\Pi\Big(\rho : \|\rho - \rho_0\|_{L^1(\Zcal,\nu)} > M_2 \epsilon_n
	\Big| D^{(n)}\Big)\Bigg]
	\to 0.
$$
\end{proposition}

\begin{proof}
Let $U_n := \{\rho\in\Rcal_n: \|\lambda^{(n)}_\rho - \lambda^{(n)}_{\rho_0}\|_{L^1(\Wcal_n)} \le Mn\epsilon_n,\ \| \nabla \rho\|_{L^\infty(\Zcal;\R^d)} \le M_1\}$, satisfying by assumption and by an application of Theorem \ref{Theo:GenRandCov}, for sufficiently large $M>0$, as $n\to\infty$, 
\begin{align*}
	E_{\rho_0}^{(n)}
	[\Pi(U_n| D^{(n)})]\to1.
\end{align*}
Let $V_n:=\{\rho\in C^1(\Zcal):\|\rho - \rho_0\|_{L^1(\Zcal,\nu)} \le M_2\nu_n\}$ for $M_2>0$ to be chosen below. Then,
\begin{align*}
 	\Pi( V_n^c | D^{(n)}) 
	&= \Pi( V^c_n \cap U_n | D^{(n)}) + o_{P^{(n)}_{\rho_0}}(1) 
	= \frac{ \int_{V^c_n \cap U_n} e^{l_n(\rho) - l_n(\rho_0)}d\Pi(\rho) }
	{ \int_{\mathcal R} e^{l_n(\rho) - l_n(\rho_0) } } + o_{P^{(n)}_{\rho_0}}(1).
\end{align*}
Denote by $D_n$ the denominator in the previous display. The proof of Theorem \ref{Theo:GenRandCov} shows that $P_{\rho_0}^{(n)}( D_n \leq e^{ - K_1 n \epsilon_n^2 } ) = o(1)$ for some constant $K_1>0$, so that by Fubini's theorem,
\begin{align*}
 	E_0^n& \left[\Pi( V_n^c | D^{(n)})\right] \\
	&\leq e^{  K_1 n \epsilon_n^2 }  
 	\int_{V_n^c \cap \{\rho\in\mathcal R_n:\|\nabla\rho\|_{L^\infty(\Zcal;\R^d)}\le M_1\}} P_{Z^{(n)}} (\|\lambda^{(n)}_\rho - \lambda^{(n)}_{\rho_0}
	\|_{L^1(\Wcal_n)} \le Mn\epsilon_n ) d\Pi(\rho)+ o(1).
\end{align*}
Fix any $\rho\in V_n^c \cap \{\rho\in\mathcal R_n:\|\nabla\rho\|_{L^\infty(\Zcal;\R^d)}\le M_1\}$. Then, if $\|\lambda^{(n)}_\rho - \lambda^{(n)}_{\rho_0}\|_{L^1(\Wcal_n)} \le Mn\epsilon_n $, necessarily 
$$
	\Delta^{(n)}(\rho) := \| \rho - \rho_0\|_{L^1(\mathcal Z)} 
	- \frac{1}{n}\|\lambda^{(n)}_\rho - \lambda^{(n)}_{\rho_0}
	\|_{L^1(\Wcal_n)} > (M_2-M)\epsilon_n
	\geq \frac{M_2}{2}\epsilon_n  
$$ 
upon taking $M_2 > 2M$. For all such $M_2$, it follows that
$$ 
	E_0^n \left[\Pi( V_n^c | D^{(n)})\right]  
	\leq e^{  K_1 n \epsilon_n^2 }  \int_{V_n^c \cap \{\rho\in\mathcal R_n:\|\nabla\rho\|_{L^\infty(\Zcal;\R^d)}\le M_1\}}
	 P_{Z^{(n)}} (\Delta^{(n)}(\rho) > M_2\epsilon_n/2 )d\Pi(\rho)+ o(1)  .  
$$ 
The concentration inequality in Proposition \ref{Prop:MultGaussConcIneq}, applied with $f := |\rho - \rho_0|- \|\lambda^{(n)}_\rho - \lambda^{(n)}_{\rho_0}\|_{L^1(\Wcal_n)}/n$, for $\rho\in C^1(\Zcal)$, whose (weak) gradient satisfies $ \| \nabla f  \|_{L^\infty(\Zcal;\R^d)}\le \| \nabla\rho  \|_{L^\infty(\Zcal;\R^d)}+\| \nabla \rho_0  \|_{L^\infty(\Zcal;\R^d)}$ now gives that
$$
 	\sup_{\rho\in V_n^c \cap \{\rho\in\mathcal R_n:\|\nabla\rho\|_{L^\infty(\Zcal;\R^d)}\le M_1\}}
	P_{Z^{(n)}} (\Delta^{(n)}(\rho) > M_2\epsilon_n/2 )\leq e^{- K_2 (M_2)^2 n\epsilon_n^2 }
$$
for some $K_2>0$. The claim then follows Taking $M_2>0$ large enough and combining the last two displays.
\end{proof}

The formulation of Proposition \ref{Theo:GaussGene} is slightly non-standard, as the assumption \eqref{Eq:BoundGradPost} involves the asymptotic behaviour of the posterior. A canonical way to verify this property is through the prior construction; in particular, if the sieves $\Rcal_n$ in \eqref{Eq:Sieves} satisfy
\begin{equation}\label{Eq:BoundSieves}
	\Rcal_n\subset\{\rho\in C^1(\Zcal) :\| \nabla \rho\|_{L^\infty(\Zcal;\R^d)} 
	\le M_1\},
\end{equation}
then, under the remaining assumptions of Proposition \ref{Theo:GaussGene} (which are standard), Theorem \ref{Theo:GenRandCov} implies that \eqref{Eq:BoundGradPost} is indeed valid. Examples of priors for which sieves satisfying \eqref{Eq:BoundSieves} can be constructed are basis expansions with uniform coefficients, e.g.~in the spirit of those considered in \cite{BSvZ15}; see also \cite{N20}. Alternatively, rescaled Gaussian process priors from recent inverse problems literature could also be employed, \cite{N23}.

	The statement of Proposition \ref{Theo:GaussGene} allows for some additional flexibility in cases where the property \eqref{Eq:BoundGradPost} may be obtained through a different argument, for example if posterior consistency (without rates or with sub-optimal ones) can be established in stronger H\"older- or Sobolev norms, cf.~\cite{DFM24,DFG24}. This also suggests to `enforce' condition \eqref{Eq:BoundGradPost} through `post-processing' posterior distributions achieving optimal posterior contraction rates in the empirical $L^1$-norm from Theorem \ref{Theo:GenRandCov}. For example, if an upper bound $\| \nabla \rho_0\|_{L^\infty(\Zcal;\R^d)}\le M_1$ is available, the (hierarchical) Gaussian priors and the mixture of Gaussians priors considered in Sections \ref{Subsec:RandGPRates} and \ref{subsec:mixtprior} respectively may be restricted to the ball $\Rcal_0:=\{\rho\in C^1(\Zcal):\| \nabla \rho\|_{L^\infty(\Zcal;\R^d)} \le M_1\}$, whereby the small ball probability lower bound \eqref{Eq:SmallBall} and the sieve condition \eqref{Eq:Sieves} need to be verified with $B_{n,2}(\rho_0)$ replaced by $B_{n,2}(\rho_0)\cap \Rcal_0$ and with $\Rcal_n$ replaced by $\Rcal_n\cap\Rcal_0$, respectively. In practice, incorporating such prior truncation step into inferential procedures often requires only minor methodological modifications; for instance, within a posterior sampling algorithm based on the unrestricted prior, it entails discarding the draws not belonging to $\Rcal_0$ (and normalising accordingly).
	
%
%
%

\subsection{Poisson random tessellations; Proof of Theorem \ref{Cor:GaussPoissTess} and extensions}
\label{Subsec:SupplPoissRates}

In Section \ref{Subsec:PoissRates}, we consider ergodic covariate processes arising as a Poisson random tessellation, cf.~Definition \ref{Cond:PoissTess}. For this class, using recent results in \cite{DG20AHL,DG20ALEA}, we derive exponential concentration inequalities  that hold uniformly over sup-norm balls, cf.~Section \ref{Subsec:PoissTessCov}. In  Proposition \ref{Theo:PoissTess} below, we show that, for general posterior concentrating over sets of uniformly bounded functions, these can be combined with Theorem \ref{Theo:GenRandCov} to derive posterior contraction rates in standard $L^1$-distances.

	Given the following result, Theorem \ref{Cor:GaussPoissTess} follows as an immediate corollary, upon noting that by the calculations in the proof of Proposition \ref{Theo:GPRandLoss}, the considered prior $\Pi $ satisfies the conditions \eqref{Eq:SmallBall} - \eqref{Eq:MetricEntropy} of Theorem \ref{Theo:GenRandCov} with $\epsilon_n=n^{-\beta/(2\beta+d)}$, and further that the asymptotic boundedness requirement \eqref{boundedrho} below is verified in view of the assumed upper bound on the link function.

\begin{proposition}\label{Theo:PoissTess}
Let $\Wcal_n\subset\R^D$ be a measurable and bounded set satisfying \eqref{Eq:SpatialWn}. Let $\rho_0\in L^\infty(\Zcal)$ be non-negative valued. Consider data $D^{(n)}\sim P^{(n)}_{\rho_0}$ from the observation model \eqref{Eq:PointProc} with $\rho=\rho_0$ and $Z$ a stationary random field constructed as in Condition \ref{Cond:PoissTess}. Assume that the prior $\Pi $ satisfies Conditions \eqref{Eq:SmallBall} - \eqref{Eq:MetricEntropy} for some positive sequence $\epsilon_n\to0$ such that $n\epsilon_n^2\to\infty$. Further assume that, for some $M_1>\|\rho_0\|_{L^\infty(\Zcal)}$, 
\begin{equation}
\label{boundedrho}
	E_{\rho_0}^{(n)}\left[\Pi\left(\rho:\| \rho\|_{L^\infty(\Zcal)} 
	> M_1 \Big|D^{(n)}\right)\right]\to0
 \end{equation}
as $n\to\infty$. Then, for all sufficiently large $M_2>0$, as $n\to\infty$,
$$
	E_{\rho_0}^{(n)}
	\Bigg[\Pi\Big(\rho : \|\rho - \rho_0\|_{L^1(\Zcal,\nu)} > M_2\nu_n
	\Big| D^{(n)}\Big)\Bigg]
	\to 0.
$$
\end{proposition}

\begin{proof}
The proof follows along the same line as the proof of Theorem \ref{Theo:GaussGene}, replacing the concentration inequality for integral functionals of stationary ergodic Gaussian process with the corresponding result for Poisson random tessellations. In particular, with $U_n := \{\rho\in\Rcal_n: \|\lambda^{(n)}_\rho - \lambda^{(n)}_{\rho_0}\|_{L^1(\Wcal_n)} \le Mn\epsilon_n,\ \| \rho\|_{L^\infty(\Zcal)} \le M_1\}$ and $V_n:=\{\rho\in \Rcal:\|\rho - \rho_0\|_{L^1(\Zcal,\nu)} \le M_2\nu_n\}$ for $M_2>0$ to be chosen below, arguing exactly as in the proof of Theorem \ref{Theo:GaussGene} yields
$$ 
	E_0^n \left[\Pi( V_n^c | D^{(n)})\right]  
	\leq e^{  K_1 n \epsilon_n^2 }  \int_{V_n^c \cap \{\rho\in\mathcal R_n:\|\rho\|_{L^\infty(\Zcal)}\le M_1\}}
	 P_{Z^{(n)}} (\Delta^{(n)}(\rho) > M_2\epsilon_n/2 )d\Pi(\rho)+ o(1),
$$ 
where $\Delta^{(n)}(\rho) := \| \rho - \rho_0\|_{L^1(\mathcal Z)} - \|\lambda^{(n)}_\rho - \lambda^{(n)}_{\rho_0}\|_{L^1(\Wcal_n)}/n$. The concentration inequality in Lemma \ref{Lem:PoissConcIneq}, applied with $f := |\rho - \rho_0| - \|\lambda^{(n)}_\rho - \lambda^{(n)}_{\rho_0}\|_{L^1(\Wcal_n)}/n$, for $\rho\in \Rcal$, satisfying $ \|  f  \|_{L^\infty(\Zcal)}\le \| \rho  \|_{L^\infty(\Zcal)}+\| \rho_0  \|_{L^\infty(\Zcal)}$ now gives that
$$
 	\sup_{\rho\in V_n^c \cap \{\rho\in\mathcal R_n:\|\rho\|_{L^\infty(\Zcal)}\le M_1\}}
	P_{Z^{(n)}} (\Delta^{(n)}(\rho) > M_2\epsilon_n/2 )\leq e^{- K_2 (M_2)^2 n\epsilon_n^2 }
$$
for some $K_2>0$. The claim then follows upon taking $M_2>0$ large enough and combining the last two displays.
\end{proof}

	Similarly to the statement of Proposition \ref{Theo:GaussGene}, the requirement \eqref{boundedrho} is formulated in terms of the posterior distribution. This assumption can naturally be verified through a suitable construction of the prior, e.g.~as in Theorem \ref{Cor:GaussPoissTess}. It also allows for some additional flexibility, along the lines discussed after the proof of Proposition \ref{Theo:GaussGene}.

%
%
%
%

\subsection{Proof of Proposition \ref{prop:finiteCov}}
\label{Sec:ProofParametric}

We directly apply Theorem \ref{Theo:GenRandCov}. Let $\epsilon_n = M \sqrt{\log n/n}$.  Since $\min_{k=1,\dots,K}\rho_0(k)>0$, the set $B_{n,2}(\rho_0)$ appearing in the statement of Theorem \ref{Theo:GenRandCov} satisfies 
$$
	B_{n,2}(\rho_0)\subset \left\{\rho: \max_{k=1,\dots,K}|\rho(z_k) 
	- \rho_0(z_k)| \le 1/\sqrt{n} \right\}
$$ for $n$ large enough. Also, $\Pi(B_{n,2}(\rho_0)) \geq e^{ -C_1 \log n}$ for some $C_1>0$. Let $\mathcal R_n := \{ \rho: \max_{k=1,\dots,K}|\rho(z_k)| \le n^{R_0}\}$ for $R_0>0$ to be chosen below. Then the prior condition \eqref{Eq:DiscrPriorCond} implies that 
$$
	\Pi( \mathcal R_n^c ) \leq K n^{-b R_0 } \leq e^{- C_2 \log n}
$$ 
for any $C_2 >0$, provided that $R_0$ is large enough. Finally 
$$
	\log \Ncal(\epsilon_n;\Rcal_n,\|\cdot\|_{L^\infty(\Zcal)})
	\le \log ( ( n^{R_0+1/2})^K) \leq K (R_0+1/2) \log n \leq n\epsilon_n^2
$$ 
provided that $M$ is large enough. Hence, an application of Theorem \ref{Theo:GenRandCov} gives
$$ 
	E_{\rho_0}^{(n)}
	\Bigg[\Pi\Bigg( \sum_{k=1}^K  | \rho(z_k) - \rho_0(z_k) | \mu_n(z_k)
	  > \epsilon_n
	\Bigg| D^{(n)}\Bigg)\Bigg] \to 0,
	\qquad \mu_n(z_k):=\frac{1}{n} \int_{\Wcal_n} 1_{ \{Z(x) = z_k \}}. 
$$

	Now define the ($Z^{(n)}$-dependent) event $\Acal_n = \{ \mu_n(z_k) \geq \nu(z_k)/2,\ k=1,\dots,K \}$.
	Since $Z$ is ergodic, for any measurable $A \subset \mathcal Z$, 
$\mu_n( A	) \rightarrow \nu(A)$ almost surely. 
Therefore $P_Z(\Acal_n)\to 1$. The claim then follows since on $\Acal_n$, if 
$\sum_{k=1}^K  | \rho(z_k) - \rho_0(z_k) |\mu_{n}(z_k)  \leq \epsilon_n$, then 
$$
	\sum_{k=1}^K  | \rho(z_k) - \rho_0(z_k) | \nu (z_k) \leq 2 \epsilon_n.
$$
\qed

%
%
%
%
%

\section{Concentration inequalities for spatial averages of ergodic processes}
\label{Sec:ConcIneq}

In this section we provide tools to uniformly control spatial averages (i.e., scaled integral functionals) of the stationary ergodic random fields considered in Sections \ref{Subsec:GaussRates} and  \ref{Subsec:PoissRates}.

%
%
%

\subsection{Concentration inequalities for multivariate Gaussian random fields}
\label{Subsec:MultGPCov}

\subsubsection{A sub-Gaussian concentration inequality for spatial averages}\label{subsec:concineqGaussian}

Consider a covariate process $Z=(Z(x),\ x\in\R^D)$ with values in $\Zcal\subseteq\R^d$ arising as described in Condition \ref{Cond:GenGaussCov} for some continuously differentiable map $\Phi:=(\Phi^{(1)},\dots,\Phi^{(d)}):\R^d\to\Zcal$ with uniformly bounded partial derivatives. Let $\textnormal{J}\Phi:=[\partial_{h'} \Phi^{(h)}]_{h,h'=1}^d\in L^\infty(\R^D;\R^{d,d})$ denote the Jacobian matrix associated to $\Phi$.

	For $\nu $ the stationary distribution of $Z$, denote the space of $\nu$-centred functions by
$$
	L^1_\nu(\Zcal) := \left\{ f \in L^1(\Zcal,\nu) : \int_{\Zcal} f(z)d\nu(z) = 0\right\},
$$
and, for a class of functions $\Fcal_n\subseteq L^1_\nu(\Zcal)$ and a sequence of measurable sets $\Wcal_n\subset \R^D$ satisfying $\vol(\Wcal_n)=n$ and the shape-regularity condition \eqref{Eq:SpatialWn}, consider the empirical process
\begin{align}
\label{Eq:EmpProc}
	X^{(n)}_f[Z]
	:=\frac{1}{n}\int_{\Wcal_n}f(Z(x))dx, 
	\qquad f\in\Fcal_n.
\end{align}
Note that by Fubini's theorem,
$$
	\textnormal{E}[X_f^{(n)}[Z] ]
	= \frac{1}{n}\int_{\Wcal_n}\textnormal{E}[\left[f(Z(x))\right] dx
	=\frac{1}{n}\int_{\Wcal_n}\left( \int_\Zcal f(z)d\nu(z) \right) dx = 0.
$$

\begin{proposition}\label{Prop:MultGaussConcIneq}
Let $Z$ be a stationary random field satisfying Condition \ref{Cond:GenGaussCov}. Then, for all $f\in W^{1,\infty}(\Zcal)\cap L^1_\nu(\Zcal)$, all $r>0$, and all $n\in\N$,
$$
	\Pr\left( \left| \frac{1}{n}\int_{\Wcal_n}f(Z(x))dx  \right| \ge  r \right)
	\le
	 4\exp\left\{-\frac{nr^2}
	{4d^3eC_{BL}C_KC_\Phi
	\|\nabla f\|_{L^\infty(\Zcal;\R^d)} ^2}
	 \right\},
$$
where $C_{BL}>0$ is the numerical constant appearing in the statement of Lemma \ref{Lem:MultFuncIneq} below, $C_K := \max_{h=1,\dots,d}\|K^{(h)}\|_{L^1(\R^D)}$ and $C_\Phi:=\|\textnormal{J}\Phi\|_{L^\infty(\R^d,\R^{d,d})}$.
\end{proposition}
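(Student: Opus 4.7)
The plan is to realise $X^{(n)}_f[Z]$ as a smooth functional of the latent centred stationary Gaussian field $\tilde Z = (\tilde Z^{(1)}, \dots, \tilde Z^{(d)})$ and then invoke a log-Sobolev-type concentration inequality for multivariate stationary Gaussian fields, namely Lemma \ref{Lem:MultFuncIneq}. Concretely, define
$$
F(\tilde z) := \frac{1}{n}\int_{\Wcal_n} (f\circ \Phi)(\tilde z(x))\, dx, \qquad \tilde z \in L^\infty(\Wcal_n;\R^d),
$$
so that $X^{(n)}_f[Z] = F(\tilde Z|_{\Wcal_n})$. Since $\tilde Z(x)$ is standard $d$-variate normal for every $x$, the push-forward under $\Phi$ equals $\nu(\cdot)$, and the centring assumption $f\in L^1_\nu(\Zcal)$ combined with Fubini's theorem gives $\Enorm[F(\tilde Z)] = 0$. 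Thus the probability in the claim is simply a deviation-from-the-mean estimate for $F(\tilde Z)$.

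\textbf{Controlling the Malliavin-type derivative.} The next step is to compute the Fréchet derivative of $F$ with respect to the field $\tilde z$. By the chain rule, the derivative at $\tilde z$ in the direction of coordinate $h$ at the spatial location $y$ equals
$$
D_y^{(h)} F(\tilde z) = \frac{1}{n}\, \mathbf{1}_{\Wcal_n}(y)\, \sum_{h'=1}^d (\partial_{h'} f)(\Phi(\tilde z(y)))\, (\partial_h \Phi^{(h')})(\tilde z(y)).
$$
Cauchy--Schwarz applied pointwise, together with $\|\nabla f\|_{L^\infty(\Zcal;\R^d)}$ and $C_\Phi = \|\mathrm{J}\Phi\|_{L^\infty(\R^d;\R^{d,d})}$, then yields a uniform bound of the form $|D_y^{(h)} F(\tilde z)|^2 \lesssim d\, \|\nabla f\|_{L^\infty(\Zcal;\R^d)}^2\, C_\Phi \, \mathbf{1}_{\Wcal_n}(y)/n^2$, where the factor of $d$ comes from the sum appearing in the chain rule.

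\textbf{Invoking the functional inequality and computing the variance proxy.} I would then plug this derivative bound into Lemma \ref{Lem:MultFuncIneq}, which, for the independent components $\tilde Z^{(h)}$ with integrable covariance kernels $K^{(h)}$, transfers the pointwise derivative estimate into a sub-Gaussian concentration around the mean with variance proxy of the form $\sum_{h=1}^d \int_{\R^D \times \R^D} D_x^{(h)} F\, D_y^{(h)} F\, K^{(h)}(x-y)\, dx\, dy$ (up to the constant $C_{BL}$). Using Young's inequality,
$$
\int_{\Wcal_n \times \Wcal_n} |K^{(h)}(x-y)|\, dx\, dy \le n\, \|K^{(h)}\|_{L^1(\R^D)} \le n\, C_K,
$$
the variance proxy collapses to an expression of order $d^3 \, C_{BL}\, C_K\, C_\Phi\, \|\nabla f\|_{L^\infty(\Zcal;\R^d)}^2 / n$, and Herbst's argument built into Lemma \ref{Lem:MultFuncIneq} gives the stated exponential tail estimate with an irrelevant multiplicative loss of at most $4$.

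\textbf{Main obstacle.} The delicate point will be the bookkeeping of constants when translating the pointwise derivative bound into the covariance-weighted $L^2$-norm implicit in Lemma \ref{Lem:MultFuncIneq}: different forms of multivariate Gaussian log-Sobolev inequalities produce different powers of $d$ and different couplings between the Jacobian $\mathrm{J}\Phi$ and the matrix-valued gradient of $f \circ \Phi$. Matching exactly the factor $4d^3 e\, C_{BL}\, C_K\, C_\Phi$ in the exponent amounts to choosing the correct form of the chain-rule bound and applying Cauchy--Schwarz on the sum over both the output components of $\Phi$ and the spatial coordinates in a compatible way; this is purely technical but easy to get wrong by a factor of $d$.
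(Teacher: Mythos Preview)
Your strategy is essentially the same as the paper's: realise $X^{(n)}_f[Z]$ as a functional of the underlying Gaussian field $\tilde Z$, compute its Gateaux derivatives via the chain rule (the paper isolates this as Lemma \ref{Lem:PartDer}, and your formula for $D_y^{(h)}F$ matches it exactly), bound the covariance-weighted bilinear form using $\|K^{(h)}\|_{L^1}$ and $\vol(\Wcal_n)=n$, and then pass from the functional inequality to a sub-Gaussian tail.

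The only substantive difference is in this last passage. Lemma \ref{Lem:MultFuncIneq} does \emph{not} have Herbst built in: it only records the Poincar\'e and log-Sobolev inequalities $\text{Var}[X]\le C_{BL}\sum_h\ldots$ and $\text{Ent}[X^2]\le C_{BL}\sum_h\ldots$. The paper does not then run Herbst directly on $e^{tF}$; instead it first derives $L^{2p}$-moment bounds $\Enorm[(X_f^{(n)})^{2p}]^{1/p}\le 2p\,C_{BL}\,(\text{variance proxy})$ via the entropy-method iteration (Lemma \ref{Lem:HigherMom}), then sums the series $\sum_p \Enorm[(X_f^{(n)})^{2p}]/(c^p p!)$ using Stirling to obtain an exponential-square-moment bound, and finally applies Markov. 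This is where the specific factor $4e$ in the exponent originates. Your Herbst route is a legitimate and more direct alternative (since you bound the derivative \emph{deterministically}, the right-hand side of log-Sobolev factors as a constant times $\Enorm[e^{tF}]$), but it would produce slightly different numerical constants, so if you want the exact $4d^3 e\,C_{BL}C_KC_\Phi$ you should follow the moment route.
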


\begin{proof}
Combining Lemma \ref{Lem:HigherMom} and \ref{Lem:PartDer} below gives the following bounds for the moments of the centred random variable $\frac{1}{n}\int_{\Wcal_n}f(Z(x))dx=X^{(n)}_f[Z]=X^{(n)}_f[\Phi\circ \tilde Z]$: for all $1\le p<\infty$,
\begin{align*}
	\textnormal{E}&[(X^{(n)}_f[Z])^{2p}]^{\frac{1}{p}}\\
	&\le  2pC_{BL}\sum_{h=1}^d
	 E\Bigg[\Bigg(\int_{\R^D}\int_{\R^D}
	 K^{(h)}(x-x')
	 \frac{1}{n}1_{\Wcal_n}(x)\Bigg|
	 \sum_{h'=1}^d\partial_{h'} f\left(\Phi\left(\tilde Z(x)\right)\right)
	 \partial_h\Phi^{(h')}\left(\tilde Z(x)\right)\Bigg|\\
	&\quad\times \frac{1}{n}1_{\Wcal_n}(x')\Bigg|
	 \sum_{h''=1}^d\partial_{h''} f\left(\Phi\left(\tilde Z(x')\right)\right)
	 \partial_h\Phi^{(h'')}\left(\tilde Z(x')\right)\Bigg|  dxdx'\Bigg)^p\Bigg]^\frac{1}{p}\\
	&\le 
	 \frac{2pd^2C_{BL}\|\nabla f\|^2_{L^\infty(\Zcal;\R^d)}
	 \|\textnormal{J}\Phi\|_{L^\infty(\R^d;\R^{d,d})}^2}{n^2}\sum_{h=1}^d
	 \int_{\Wcal_n}\int_{\Wcal_n}K^{(h)}(x-x')dxdx' \\
	&\le
	 \frac{2pd^2C_{BL}\|\nabla f\|^2_{L^\infty(\Zcal;\R^d)}
	 \|\textnormal{J}\Phi\|_{L^\infty(\R^d;\R^{d,d})}^2}{n^2}\sum_{h=1}^d
	 \|K^{(h)}\|_{L^1(\R^D)}\int_{\Wcal_n}dx'\\
	 &\le
	  \frac{2pd^3C_{BL}\|\nabla f\|^2_{L^\infty(\Zcal;\R^d)}
	 \|\textnormal{J}\Phi\|_{L^\infty(\R^d;\R^{d,d})}^2\max_{h=1,\dots,d}
	 \|K^{(h)}\|_{L^1(\R^D)}}{n}.
\end{align*}

	We proceed deriving an exponential moment bound. Using the previous display,
\begin{align*}
	\textnormal{E}&\left[\exp\left\{\frac{n}{4d^3eC_{BL}C_KC_\Phi\|\nabla f\|_{L^\infty(\Zcal;\R^d)} ^2}
	(X^{(n)}_f[Z])^2\right\} \right]\\
	&\qquad=\sum_{p=0}^\infty \frac{n^p}{(4d^3eC_{BL}C_KC_\Phi\|\nabla f\|_{L^\infty(\Zcal;\R^d)} ^2)^pp!} 
	E\left[ (X^{(n)}_f[Z])^{2p}\right]
	\le \sum_{p=0}^\infty \frac{p^p}{p!(2e)^p}.
\end{align*}
By Stirling's approximation, $p! >  \sqrt{2\pi p}(p/e)^pe^{1/(12 \log p + 1)}>\sqrt{2\pi p}(p/e)^p$, so that the latter series is upper bounded by
\begin{align*}
	\sum_{p=0}^\infty \frac{1}{\sqrt{2\pi p} 2^p}
	\le \sum_{p=0}^\infty \frac{1}{ 2^p} = 2.
\end{align*}
Conclude by Markov's inequality that, for all $r\ge 0$,
\begin{align*}
	&\Pr\left (X^{(n)}_f[Z] > r \right)\\
	&\ \le \textnormal{E}\left[\exp\left\{\frac{n}
	{4d^3eC_{BL}C_KC_\Phi
	\|\nabla f\|_{L^\infty(\Zcal;\R^d)} ^2}
	(X^{(n)}_f[Z])^2\right\} \right]
	 \exp\left\{-\frac{n}
	{4d^3eC_{BL}C_KC_\Phi
	\|\nabla f\|_{L^\infty(\Zcal;\R^d)} ^2}r^2\right\}\\
	&\ \le 2\exp\left\{-\frac{n}
	{4d^3eC_{BL}C_KC_\Phi
	\|\nabla f\|_{L^\infty(\Zcal;\R^d)} ^2}r^2\right\}.
\end{align*}
By similar computations, it also holds that 
\begin{align*}
	\Pr\left (X^{(n)}_f[Z] < -r \right)
	\le 2\exp\left\{-\frac{n}
	{4d^3eC_{BL}C_KC_\Phi
	\|\nabla f\|_{L^\infty(\Zcal;\R^d)} ^2}r^2\right\},
\end{align*}
which, combined with the previous display via a union bound, proves the claim.
\end{proof}

	The following lemma is the key technical tool for the proof of Lemma \ref{Prop:MultGaussConcIneq}. It provides certain Poincaré- and log-Sobolev-type inequalities for random variables arising as transformations $X[\tilde Z]$ of the multivariate Gaussian random field $\tilde Z$ introduced in Condition \ref{Cond:GenGaussCov} via measurable functionals $X: L^\infty_{\textnormal{loc}}(\R^D;\R^d)\to\R$. The result represents a multi-dimensional extension of Theorem 3.1 (i) in \cite{DG20AHL}. The inequalities are stated in terms of the partial Gateaux-derivatives $\partial_h X, \ h=1,\dots,d,$ of $X$, that is functionals $\partial_h X: L^\infty_{\textnormal{loc}}(\R^D;\R^d)\to L^1_{\textnormal{loc}}(\R^D)$ such that, for all compactly supported $\zeta \in L^\infty(\R^D)$,
 $$
 	\lim_{t\to0}\frac{X(\tilde z^{(1)},\dots,\tilde z^{(h)}+t\zeta,\dots,\tilde z^{(d)})}{t}
	=\int_{\R^D}\zeta(x)\partial_hX[\tilde z](x)dx,
	\ \ \ \forall \tilde z=(\tilde z^{(1)},\dots,\tilde z^{(d)})\in L^\infty_{\textnormal{loc}}(\R^D;\R^d).
 $$

\begin{lemma}\label{Lem:MultFuncIneq}
Let $\tilde Z$ be the $d$-variate Gaussian random field introduced in Condition \ref{Cond:GenGaussCov}. Then, the following Poincaré- and logarithmic Sobolev-type inequalities hold: for all measurable $X: L^\infty_{\textnormal{loc}}(\R^D;\R^d)\to\R$ for which the partial Gateaux-derivatives $\partial_hX :   L^\infty_{\textnormal{loc}}(\R^D;\R^d)\to L^1_{\textnormal{loc}}(\R^D)$ exist for all $h=1,\dots,d$, the random variable $X[\tilde Z]$ satisfies
\begin{align*}
	\textnormal{Var}[X[\tilde Z]]
	\le C_{BL} \sum_{h=1}^d \textnormal{E}\left[\int_{\R^D}\int_{\R^D}K^{(h)}(x-x')
	| \partial_h X[\tilde Z](x) |  |  \partial_h X[\tilde Z](x') | dxdx'\right],
\end{align*}
as well as
\begin{align*}
	\textnormal{Ent}[X[\tilde Z]^2] 
	\le C_{BL} \sum_{h=1}^d E\left[\int_{\R^D}\int_{\R^D}K^{(h)}(x-x')
	|\partial_h X[\tilde Z](x) |  | \partial_h X[\tilde Z](x') | dxdx'\right],
\end{align*}
where $\textnormal{Ent}[X[\tilde Z]^2] := \textnormal{E}\left[X[\tilde Z]^2\log\frac{X[\tilde Z]^2}{E[X[\tilde Z]^2]} \right]$ and $C_{BL}>0$ is a numerical constant.
\end{lemma}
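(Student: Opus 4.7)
The plan is to establish the multivariate Poincaré- and log-Sobolev-type inequalities by reducing to the univariate case via the tensorization property of variance and entropy, exploiting the independence of the component fields $\tilde Z^{(1)},\dots,\tilde Z^{(d)}$ assumed in Condition \ref{Cond:GaussCov}. The key observation is that if $\mathbb{P} = \bigotimes_{h=1}^{d} \mathbb{P}_h$ is a product law, then for any integrable functional $Y$ of $(\tilde Z^{(1)},\dots,\tilde Z^{(d)})$,
\begin{equation*}
  \textnormal{Var}[Y] \le \sum_{h=1}^{d} \mathbb{E}\bigl[\textnormal{Var}_h[Y \mid \tilde Z^{(-h)}]\bigr], \qquad \textnormal{Ent}[Y^2] \le \sum_{h=1}^{d} \mathbb{E}\bigl[\textnormal{Ent}_h[Y^2 \mid \tilde Z^{(-h)}]\bigr],
\end{equation*}
where $\tilde Z^{(-h)}$ denotes the collection of components other than the $h$-th, and $\textnormal{Var}_h, \textnormal{Ent}_h$ are the variance and entropy computed under $\mathbb{P}_h$. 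The tensorization of variance is classical (it follows from the orthogonal decomposition of $L^2$ under a product measure), while the tensorization of entropy is a standard consequence of the subadditivity of entropy (see, e.g., Proposition 5.6 in \cite{BGL14}).

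Next, I would fix $h$ and condition on $\tilde Z^{(-h)}$. Under the conditional law, $\tilde Z^{(h)}$ is still a centered stationary Gaussian process with covariance $K^{(h)}$, and the functional $\tilde z^{(h)} \mapsto X(\tilde z^{(1)},\dots,\tilde z^{(h)},\dots,\tilde z^{(d)})$ is a measurable map with Gateaux derivative in the $h$-th variable equal to $\partial_h X[\tilde Z]$ by definition. Applying the univariate Poincaré and log-Sobolev inequalities from Theorem 3.1(i) in \cite{DG20AHL} to this conditional functional yields
\begin{equation*}
  \textnormal{Var}_h[X[\tilde Z] \mid \tilde Z^{(-h)}] \le C_{BL}\, \mathbb{E}_h\!\left[\int_{\mathbb{R}^D}\!\!\int_{\mathbb{R}^D} K^{(h)}(x-x') \,|\partial_h X[\tilde Z](x)|\, |\partial_h X[\tilde Z](x')|\, dx\, dx' \,\Big|\, \tilde Z^{(-h)}\right]
\end{equation*}
and an analogous bound for $\textnormal{Ent}_h$, with the same numerical constant $C_{BL}$. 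Summing over $h$ and taking expectations recovers both claimed inequalities.

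The main obstacle is ensuring that the partial Gateaux derivative in the $h$-th direction, as defined in the lemma, indeed coincides with the Malliavin-type derivative used in the functional inequalities of \cite{DG20AHL}, where the derivative is taken with respect to the full Cameron–Martin structure of the underlying Gaussian process. This is largely bookkeeping: the Gateaux derivative along compactly supported $L^\infty$ perturbations is dense in the Cameron–Martin space in the requisite sense, and standard approximation arguments carry the inequality over. A minor additional technicality is that \cite{DG20AHL} requires some integrability/measurability of $\partial_h X[\tilde Z]$; these follow from the assumed existence of the Gateaux derivatives in $L^1_{\textnormal{loc}}(\mathbb{R}^D)$ together with the integrability $K^{(h)} \in L^1(\mathbb{R}^D)$ imposed in Condition \ref{Cond:GaussCov}. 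Everything else — the tensorization steps and the conditioning argument — is routine.
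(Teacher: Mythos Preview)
Your proposal is correct and takes a genuinely different route from the paper's proof. The paper does not invoke the univariate result of \cite{DG20AHL} as a black box; instead it reproduces the discretization argument from scratch in the multivariate setting. Concretely, the paper first applies the finite-dimensional Brascamp--Lieb inequality to the standard Gaussian vector $W=(W^{(1)},\dots,W^{(d)})$ and the linear map $F$ with block-diagonal structure (one block $F^{(h)}$ per component), obtaining the discrete analogue of the lemma for functionals depending only on local averages of $\tilde Z$ over a grid $\{Q_\varepsilon(y)\}_{y\in B_R\cap\varepsilon\Z^D}$. It then identifies the discrete partial derivatives with the Gateaux derivatives $\partial_h X$, and passes to the limit $\varepsilon\to 0$, $R\to\infty$ by approximation. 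The independence of the $\tilde Z^{(h)}$ enters only through the block-diagonal form of $FF^T$, which makes the right-hand side split as a sum over $h$.

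Your approach instead exploits independence at the outset via the tensorization of variance and entropy for product measures, reducing the problem to $d$ applications of the univariate Theorem 3.1(i) of \cite{DG20AHL}, one for each component conditionally on the others. This is more modular and conceptually transparent: it makes clear that the multivariate extension is purely a consequence of the product structure, with no new analytic input beyond the univariate case. The paper's approach, by contrast, is more self-contained and shows explicitly how the constant $C_{BL}$ arises from the discrete Brascamp--Lieb inequality. Your concern about matching the Gateaux derivative to the derivative notion in \cite{DG20AHL} is not really an obstacle here: the definition in the lemma is the componentwise specialization of exactly the same derivative used in \cite{DG20AHL}, so the conditional application goes through without further work.
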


\begin{proof}
We follow the proof of Theorem 3.1 (i) in \cite{DG20AHL}. The starting point is an application of the discrete Brascamp-Lieb inequality (e.g.~\cite{H08}): let $(W^{(h)}_k,\ h=1,\dots,d, \ k=1,\dots,M_h)$, be $M:=M_1+\dots+M_d$ independent standard normal random variables. Set $W^{(h)} := (W^{(h)}_1,\dots,W^{(h)}_{M_h})^T\sim N_{M_h}(0,I_{M_h})$ and $W := (W^{(1)T},\dots,W^{(d)T})^T\sim N_M(0,I_M).$
Consider matrices $F^{(h)}=[F^{(h)}_{kl}]_{k,l=1}^{M_h}\in \R^{M_h,M_h}$, and let $F\in \R^{M,M}$ be the block-diagonal matrix
$$
	F:=
	\begin{bmatrix}
	F^{(1)}  & \cdots  &\cdots    & \cdots  & 0 \\
	\vdots   & \ddots  &             &             & \vdots \\
	0          &  \cdots  & F^{(h)} & \cdots  & 0 \\
	\vdots  &              &             & \ddots  & \vdots  \\
	0         & \cdots    & \cdots  & \cdots  & F^{(d)}
	\end{bmatrix}.
$$
Consider the random vector $\tilde Z := FW \sim N_M(0,FF^T)$, which, due to the block-diagonal structure takes the form $\tilde Z=(\tilde Z^{(1)},\dots,\tilde Z^{(d)})$ with $Z^{(h)}:=F^{(h)}W^{(h)}\sim N_{M_h}(0,F^{(h)}F^{(h)T})$. Finally, for any differentiable function $X:\R^M\to\R$, consider the composition $X\circ F:\R^M\to \R$, associating to any $w=(w^{(1)}_1,\dots,w^{(1)}_{M_1},\dots,w^{(d)}_1,\dots,w^{(d)}_{M_d})^T\in\R^M$ the value
$$
	X\circ F(w) 
	=
	X\left(\sum_{l=1}^{M_1} F^{(1)}_{1l}w^{(1)}_l,\dots,
	\sum_{l=1}^{M_h} F^{(h)}_{1l}w^{(h)}_l,
	\dots,\sum_{l=1}^{M_h} F^{(h)}_{M_hl}w^{(h)}_l,
	\dots,\sum_{l=1}^{M_d} F^{(d)}_{M_dl}w^{(d)}_l\right).
$$
Then, by the Brascamp-Lieb inequality for standard Gaussian random vectors (e.g.~\cite{H08}), for a numerical constant $C_{BL}>0$,
\begin{align*}
	\max\{\textnormal{Var}[X(\tilde Z)],\textnormal{Ent}[X(\tilde Z)^2]\}
 	&\ \le C_{BL} \sum_{h=1}^d\sum_{k=1}^{M_h}
	\textnormal{E}\left[ \left(\frac{\partial (X\circ F)}{\partial w^{(h)}_k}(W) \right)^2 \right]\\
	&=C_{BL} \sum_{h=1}^d\textnormal{E}\left[\sum_{k=1}^{M_h} \left(\sum_{l=1}^{M_h}
	\partial_{M_1+\dots+M_{h-1}+l} X(\tilde Z)F_{lk} \right)^2 \right],
\end{align*}
where $\partial_{M_1+\dots+M_{h-1}+l} X:\R^M\to\R$ is the partial derivative of the function $X$ with respect to its $(M_1+\dots+M_{h-1}+l)^{\textnormal{th}}$ argument, with the sum $M_1+\dots+M_{h-1}$ being set equal to $0$ by convention if $h=1$. Denoting by
$$
	\nabla^{(h)}X
	:=\Big(\partial_{M_1+\dots+M_{h-1}+1} X,\dots,
	\partial_{M_1+\dots+M_{h-1}+M_h} X\Big)^T
	:\R^M \to R^{M_h},
$$
the expectation in the second to last display equals
\begin{align*}
	 \textnormal{E}&\left[ (\nabla^{(h)}X(\tilde Z))^T F^{(h)}F^{(h)T} \nabla^{(h)}X(\tilde Z) \right]\\
	 &\quad\le \sum_{k,l=1}^{M_h}
	 |(F^{(h)}F^{(h)T})_{kl}| \textnormal{E}\left[| \partial_{M_1+\dots+M_{h-1}+k}X(\tilde Z) | 
	 | \partial_{M_1+\dots+M_{h-1}+l}X(\tilde Z) |\right],
\end{align*}
which implies, recalling that $F^{(h)}F^{(h)T}$ is the covariance matrix of $\tilde Z^{(h)}=F^{(h)}W^{(h)}$, the inequality
\begin{equation}
\label{Eq:BL2}
\begin{split}
	\max&\{\textnormal{Var}[X(\tilde Z)],\textnormal{Ent}[X(\tilde Z)^2]\}\\
	&\le
	 C_{BL}\sum_{h=1}^d\sum_{k,l=1}^{M_h}
	|\textnormal{Cov}(\tilde Z^{(h)})_{kl}| \textnormal{E}\left[| \partial_{M_1+\dots+M_{h-1}+k}X(\tilde Z) | 
	| \partial_{M_1+\dots+M_{h-1}+l}X(\tilde Z) |\right].
\end{split}
\end{equation}

	We now extend the Brascamp-Lieb inequality \eqref{Eq:BL2} to the continuous setting. Let $\tilde Z$ be as in the statement of Lemma \ref{Lem:MultFuncIneq}. As in the proof of Theorem 3.1 (i) in \cite{DG20AHL}, we first consider functionals $X: L^\infty_{\textnormal{loc}}(\R^D;\R^d)\to\R$ that depend on their argument $\tilde z\in L^\infty_{\textnormal{loc}}(\R^D;\R^d)$ only through the spatial average of $\tilde z$ on the partition $\{Q_\varepsilon(y)\}_{y\in B_R\cap \varepsilon\Z^D}$ for some $\varepsilon,R>0$, where $Q_\varepsilon(y) := y+\varepsilon[-1/2,1/2)^D$ and $B_R:=\{y\in\R^D:|y|\le1\}$. That is, letting for any $\tilde z\in  L^\infty_{\textnormal{loc}}(\R^D;\R^d)$, any $y\in B_R\cap \varepsilon\Z^D$,
$$
	\tilde z_\varepsilon(y) :=(\tilde z^{(1)}_\varepsilon(y),\dots,\tilde z^{(d)}_\varepsilon(y))^T \in\R^d,
	\qquad \tilde z^{(h)}_\varepsilon(y):=\frac{1}{\varepsilon^D}\int_{Q_\varepsilon(y)}\tilde z^{(h)}(x)dx,
$$
we have $X[\tilde z]= X[\tilde z']$ whenever the associated collections of spatial averages $(\tilde z_\varepsilon(y))_{y\in B_R\cap \varepsilon\Z^D}$ and $(\tilde z'_\varepsilon(y))_{y\in B_R\cap \varepsilon\Z^D}$ coincide. In slight abuse of notation, write $X[\tilde z]= X( (\tilde z_\varepsilon(y))_{y\in B_R\cap \varepsilon\Z^D})$. Since, by assumption, $\tilde Z^{(h)}$, $h=1,\dots,d$, are independent centred stationary Gaussian processes, by construction the associated spatial averages $(\tilde Z^{(h)}_\varepsilon(y))_{y\in B_R\cap \varepsilon\Z^D}$, $h=1,\dots,d$, are independent (finite-dimensional) centred Gaussian random vector with covariance matrices $C^{(h)} := [C^{(h)}_{yy'}]_{y,y'\in B_R\cap \varepsilon\Z^D}$ given by
$$
	 C^{(h)}_{yy'}:= \textnormal{Cov}(\tilde Z^{(h)}_\varepsilon(y),\tilde Z^{(h)}_\varepsilon(y'))
	 =\frac{1}{\varepsilon^{2D}}\int_{Q_\varepsilon(y)}\int_{Q_\varepsilon(y')}
	 K^{(h)}(x-x')dxdx'.
$$
By the inequality \eqref{Eq:BL2}, it follows that
\begin{equation}
\label{Eq:ContBL}
\begin{split}
	&\max\{\textnormal{Var}[X[\tilde Z]],\textnormal{Ent}[X[\tilde Z]^2]\}\\
	&\ \ \le  C_{BL}\sum_{h=1}^d\sum_{y,y'\in B_R\cap \varepsilon\Z^D} |C^{(h)}_{yy'}|
	\textnormal{E}\left[ \left| \frac{\partial X }{\partial \tilde z_\varepsilon^{(h)}(y)}[\tilde Z]\right|  
	\left| \frac{\partial X }{\partial \tilde z_\varepsilon^{(h)}(y')}[\tilde Z]\right| \right]\\
	&\ \ =\frac{C_{BL}}{\varepsilon^{2D}} \sum_{h=1}^d 
	\sum_{y,y'\in B_R\cap \varepsilon\Z^D}
	 \int_{Q_\varepsilon(y)}\int_{Q_\varepsilon(y')}
	 K^{(h)}(x-x')
	\textnormal{E}\left[ \left| \frac{\partial X }{\partial \tilde z_\varepsilon^{(h)}(y)}[Z]\right|  
	\left| \frac{\partial X }{\partial \tilde z_\varepsilon^{(h)}(y')}[Z]\right| \right]dxdx'.
\end{split}
\end{equation}
We conclude the current step noting that for all $\tilde z\in  L^\infty_{\textnormal{loc}}(\R^D;\R^d)$, 
$$
	\partial_h X[\tilde z](\cdot)
	=\sum_{y\in B_R\cap \varepsilon\Z^D}
	\varepsilon^{-D}\frac{\partial X}{\partial \tilde z^{(h)}_\varepsilon(y)}[\tilde  z]
	1_{Q_\varepsilon(y)}(\cdot)\in L^1_{\textnormal{loc}}(\R^D),
	\qquad h=1,\dots,d.
$$
Indeed, for all compactly supported $\zeta\in L^\infty(\R^d)$,
\begin{align*}
	&\lim_{t\to0}\frac{X[\tilde z^{(1)},\dots,\tilde z^{(h)}+t\zeta,\dots,\tilde z^{(d)}] - X[\tilde z]}{t}\\
	&\ =
	\lim_{t\to0}\frac{X\left( (\tilde z^{(1)}_\varepsilon(y))_{y\in B_R\cap \varepsilon\Z^D},
	\dots,(\tilde z^{(h)}_\varepsilon(y)+ t\zeta_\varepsilon(y))_{y\in B_R\cap \varepsilon\Z^D},
	\dots,(\tilde z^{(d)}_\varepsilon(y))_{y\in B_R\cap \varepsilon\Z^D} \right) - X[\tilde z]}{t}\\
	&\ =
	\sum_{y\in B_R\cap \varepsilon\Z^D}\frac{\partial \tilde  X}{\partial \tilde z^{(h)}_\varepsilon(y)}[\tilde z]
	\zeta_\varepsilon(y)
	 =\int_{\R^D} \sum_{y\in B_R\cap \varepsilon\Z^D}\varepsilon^{-D}
	 \frac{\partial X}{\partial \tilde z^{(h)}_\varepsilon(y)}[\tilde z]1_{Q_\varepsilon(y)}(x)\zeta(x)dx.
\end{align*}
In particular,
$$
	\varepsilon^{-D} \frac{\partial X}{\partial \tilde z^{(h)}_\varepsilon(y)}[\tilde z] 
	 = \partial_h X[\tilde z](x),
	 \qquad \forall \ x\in Q_{\varepsilon}(y), \ y\in B_R\cap \varepsilon\Z^D.
$$
Combined with \eqref{Eq:ContBL}, this yields
\begin{align*}
	&\max\{\textnormal{Var}[X[\tilde Z]],\textnormal{Ent}[X[\tilde Z]^2]\}\\
	&\quad \le C_{BL}\sum_{h=1}^d 
	\sum_{y,y'\in B_R\cap \varepsilon\Z^D}
	 \int_{Q_\varepsilon(y)}\int_{Q_\varepsilon(y')}
	 K^{(h)}(x-x')\textnormal{E}\left[  | \partial_h X[\tilde Z](x) |  |  \partial_h X[\tilde Z](x') | \right]dxdx'\\
	 &\quad=C_{BL}\sum_{h=1}^d
	 \textnormal{E}\left[\int_{\R^D}\int_{\R^D}
	 K^{(h)}(x-x')
	 | \partial_h X[\tilde Z](x) |  |  \partial_h X[\tilde Z](x') | dxdx'\right].
\end{align*}

	For general functionals $X: L^\infty_{\textnormal{loc}}(\R^D;\R^d)\to\R$ as in the statement of Lemma \ref{Lem:MultFuncIneq}, the proof then follows via the same approximation argument as in the conclusion of the proof of Theorem 3.1 in \cite{DG20AHL}, approximating $\tilde Z$ by the collection $(\tilde Z_\varepsilon(y))_{y\in B_R\cap \varepsilon\Z^D}$, and letting $\varepsilon\to0$ and $R\to\infty$.
\end{proof}

	Leveraging the Poincaré- and log-Sobolev-type inequalities derived in Lemma \ref{Lem:MultFuncIneq}, we obtain in the next lemma bounds for the higher-order moments of functionals of the multivariate Gaussian random field $\tilde Z$. These follow from recasting the bounds in Proposition 1.10 (i) in \cite{DG20ALEA} in the present multivariate setting with integrable covariances.

\begin{lemma}\label{Lem:HigherMom}
Let $\tilde Z$ be the $d$-variate Gaussian random field introduced in Condition \ref{Cond:GenGaussCov}. Then, for all measurable $X: L^\infty_{\textnormal{loc}}(\R^D;\R^d)\to\R$ for which the partial Gateaux-derivatives $\partial_hX :   L^\infty_{\textnormal{loc}}(\R^D;\R^d)\to L^1_{\textnormal{loc}}(\R^D)$ exist for all $h=1,\dots,d$, the random variable $X[\tilde Z]$ satisfies for all $1\le p<\infty$,
\begin{align*}
	\textnormal{E}[(X[\tilde Z] &- \textnormal{E}[X[\tilde Z]])^{2p}]^\frac{1}{p}\\
	&\le  2pC_{BL} \sum_{h=1}^d
	\textnormal{E}\left[\left(\int_{\R^D}\int_{\R^D}
	 K^{(h)}(x-x') | \partial_h X[\tilde Z](x)|  | \partial_h X[\tilde Z] (x')|  dxdx'\right)^p\right]^\frac{1}{p},
\end{align*}
where $C_{BL}>0$ is the numerical constant appearing in the statement of Lemma \ref{Lem:MultFuncIneq}.
\end{lemma}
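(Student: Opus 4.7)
The result is the multivariate analogue of the polynomial moment bound in \cite[Proposition 1.10 (i)]{DG20ALEA}, derived from the functional inequalities established in Lemma \ref{Lem:MultFuncIneq}. I would first reduce to the centred functional $Y[\tilde z] := X[\tilde z] - \textnormal{E}[X[\tilde Z]]$, noting that $\textnormal{E}[Y[\tilde Z]] = 0$ and that, since a constant shift has zero Gateaux derivative, $\partial_h Y = \partial_h X$ for every $h = 1,\dots,d$. Write
\[
\Gamma[\tilde Z] := \sum_{h=1}^d \int_{\R^D}\int_{\R^D} K^{(h)}(x-x')\,|\partial_h Y[\tilde Z](x)|\,|\partial_h Y[\tilde Z](x')|\,dx\,dx',
\]
so that the target bound becomes $\textnormal{E}[Y^{2p}]^{1/p} \le 2pC_{BL}\,\textnormal{E}[\Gamma^p]^{1/p}$.

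The key observation is that the chain rule for Gateaux derivatives in the functional spaces considered gives $\partial_h(Y^k)[\tilde z](x) = kY^{k-1}[\tilde z]\partial_h Y[\tilde z](x)$ for integer $k \ge 1$ (extended to general powers for $|Y|^p$ by approximation). Crucially, since $Y^{k-1}[\tilde z]$ depends only on $\tilde z$ and not on the spatial variable $x$, it factors out of both integrals in Lemma \ref{Lem:MultFuncIneq}. Applying the Poincaré inequality from that lemma to the functional $\tilde z \mapsto Y[\tilde z]^p$ yields
\[
\textnormal{Var}(Y^p[\tilde Z]) \le C_{BL}p^2\,\textnormal{E}\!\left[Y^{2(p-1)}[\tilde Z]\,\Gamma[\tilde Z]\right],
\]
and Hölder's inequality with conjugate exponents $p/(p-1)$ and $p$ gives $\textnormal{E}[Y^{2(p-1)}\Gamma] \le \textnormal{E}[Y^{2p}]^{(p-1)/p}\textnormal{E}[\Gamma^p]^{1/p}$. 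In particular, the case $p=1$ yields $\textnormal{E}[Y^2] \le C_{BL}\textnormal{E}[\Gamma]$ directly.

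For general $p$, the cleanest route is the Aida–Stroock-type argument \cite{AS94} based on the log-Sobolev inequality from Lemma \ref{Lem:MultFuncIneq} applied to $Y^p$: using the same chain rule as above,
\[
\textnormal{Ent}(Y^{2p}[\tilde Z]) \le C_{BL}p^2\,\textnormal{E}\!\left[Y^{2(p-1)}[\tilde Z]\,\Gamma[\tilde Z]\right] \le C_{BL}p^2\,\textnormal{E}[Y^{2p}]^{(p-1)/p}\textnormal{E}[\Gamma^p]^{1/p}.
\]
Setting $u(p) := \textnormal{E}[|Y|^{2p}]^{1/(2p)}$ and expressing $\textnormal{Ent}(Y^{2p}) = 2p^2 u(p)^{2p-1} u'(p) + (\text{lower order})$ by the standard identity converts this into a differential inequality whose integration from $p=1$ (base case supplied by Poincaré) to general $p \ge 1$ produces the desired bound with the sharp linear-in-$p$ constant.

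\textbf{Main obstacle.} The delicate point is obtaining the exact constant $2pC_{BL}$ (i.e.~sub-Gaussian scaling in $p$), which requires the log-Sobolev, not merely the Poincaré, half of Lemma \ref{Lem:MultFuncIneq}, together with the Aida–Stroock integration trick. A secondary technicality is the rigorous justification of the chain rule for $|Y|^p$ in the functional setting for non-integer $p$, which is handled by the same finite-dimensional projection and mollification scheme used at the end of the proof of Lemma \ref{Lem:MultFuncIneq}: approximate $\tilde Z$ by its spatial averages $\tilde Z_\varepsilon$, apply the classical finite-dimensional chain rule to the smooth function $w \mapsto (X \circ F(w) - \textnormal{E}[X])^p$, and pass to the limit $\varepsilon \to 0$, $R \to \infty$ using the dominated convergence theorem.
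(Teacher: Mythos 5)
Your proposal is correct and follows essentially the same route as the paper's proof: the log-Sobolev half of Lemma \ref{Lem:MultFuncIneq} applied to powers of the functional, the chain rule $|\partial_h|X|^q|=q|X|^{q-1}|\partial_h X|$, H\"older with exponents $(q/(q-1),q)$, and the Poincar\'e inequality for the $p=1$ base case. The only cosmetic difference is that you phrase the entropy-to-moment conversion as a differential inequality for $u(p)=\textnormal{E}[|Y|^{2p}]^{1/(2p)}$, whereas the paper uses the equivalent integrated identity $\textnormal{E}[X^{2p}]^{1/p}-\textnormal{E}[X^{2}]=\int_1^p q^{-2}\textnormal{E}[X^{2q}]^{1/q-1}\textnormal{Ent}[X^{2q}]\,dq$ from \cite{BGL14}.
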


\begin{proof}
Without loss of generality, assume $\textnormal{E}[X[\tilde Z]]=0$. We follow the proof of Proposition 1.10 (i) in \cite{DG20ALEA}, using the fact that
\begin{align}
\label{Eq:Start2}
	\textnormal{E}[X[\tilde Z]^{2p}]^{\frac{1}{p}} - \textnormal{E}[X[\tilde Z]^2] = \int_1^p\frac{1}{q^2}
	\textnormal{E}[X[\tilde Z]^{2q}]^{\frac{1}{q}-1}\textnormal{Ent}[X[\tilde Z]^{2q}]dq,
\end{align}
cf.~\cite[p.254]{BGL14}. We estimate $\textnormal{Ent}[X[\tilde Z]^{2q}]$ for all $1\le q\le p$. Applying Lemma \ref{Lem:MultFuncIneq} to the random variable $|X[\tilde Z]|^q$ yields
$$
	\textnormal{Ent}[X[\tilde Z]^{2q}]\le
	   C_{BL}\sum_{h=1}^d
	 \textnormal{E}\left[\int_{\R^D}\int_{\R^D}
	 K^{(h)}(x-x')| \partial_h |X[\tilde Z]|^q(x)|  | \partial_h |X[\tilde Z] |^q (x')| dxdx'\right].
$$
By the chain rule,
$$
	 |\partial_h |X[\tilde Z]|^q| = q |X[\tilde Z]|^{q-1} |\partial_h |X[\tilde Z]|| 
	 = q |X[\tilde Z]|^{q-1} |\partial_h X[\tilde Z]|,
$$
so that by H\"older's inequality with exponents $(q/(q-1),q)$, 
\begin{align*}
	\textnormal{E}&\left[\int_{\R^D}\int_{\R^D}
	 K^{(h)}(x-x')
	 | \partial_h |X[\tilde Z]|^q(x)|  | \partial_h |X[\tilde Z] |^q (x')|  dxdx'\right]\\
	 &=\textnormal{E}\left[q^2|X[\tilde Z]|^{2(q-1)}\int_{\R^D}\int_{\R^D}
	 K^{(h)}(x-x')
	 | \partial_h X[\tilde Z](x)|  | \partial_h X[\tilde Z] (x')|  dxdx'\right]\\
	 &\le q^2\textnormal{E}\left[X[\tilde Z]^{2q}\right]^{1-\frac{1}{q}}
	 \textnormal{E}\left[\left(\int_{\R^D}\int_{\R^D}
	 K^{(h)}(x-x')| \partial_h X[\tilde Z](x)|  | \partial_h X[\tilde Z] (x')|  dxdx'\right)^q\right]^\frac{1}{q},
\end{align*}
implying that
\begin{align*}
	&\textnormal{Ent}[X[\tilde Z]^{2q}]\\
	&\le
	q^2C_{BL}\textnormal{E}\left[X[\tilde Z]^{2q}\right]^{1-\frac{1}{q}}
	\sum_{h=1}^d E\left[\left(\int_{\R^D}\int_{\R^D}
	K^{(h)}(x-x')| \partial_h X[\tilde Z](x)|  | \partial_h X[\tilde Z] (x')|  dxdx'\right)^q\right]^\frac{1}{q}.
\end{align*}
Replaced into \eqref{Eq:Start2}, this gives
\begin{align*}
	\textnormal{E}&[X[\tilde Z]^{2p}]^{\frac{1}{p}} \\
	&\le  \textnormal{E}[X[\tilde Z]^2] +  C_{BL}\sum_{h=1}^d\int_1^p
	\textnormal{E}\left[\left(\int_{\R^D}\int_{\R^D}K^{(h)}(x-x') 
	| \partial_h X[\tilde Z](x)|  | \partial_h X[\tilde Z] (x')|  dxdx'\right)^q\right]^\frac{1}{q}dq.
\end{align*}
We then use Lemma \ref{Lem:MultFuncIneq} and Jensen's inequality to bound $\textnormal{E}[X[\tilde Z]^2] = \textnormal{Var}[X[\tilde Z]]$ by
\begin{align*}
	 C_{BL}&\sum_{h=1}^d\textnormal{E}\left[\int_{\R^D}\int_{\R^D}
	 K^{(h)}(x-x')
	 | \partial_h X[\tilde Z](x) |  |  \partial_h X[\tilde Z](x') | dxdx'\right]\\
	 &\le C_{BL}\sum_{h=1}^d\textnormal{E}\left[\left(\int_{\R^D}\int_{\R^D}
	 K^{(h)}(x-x')
	 | \partial_h X[\tilde Z](x) |  |  \partial_h X[\tilde Z](x') | \right)^p dxdx'\right]^\frac{1}{p}.
\end{align*}
Similarly, for each $1\le q\le p$, $h=1,\dots,d$,
\begin{align*}
	\textnormal{E}&\left[\left(\int_{\R^D}\int_{\R^D}
	 K^{(h)}(x-x') | \partial_h X[\tilde Z](x)|  | \partial_h X[\tilde Z] (x')|  dxdx'\right)^q\right]^\frac{1}{q}\\
	 &\le
	 \textnormal{E}\left[\left(\int_{\R^D}\int_{\R^D}
	 K^{(h)}(x-x') | \partial_h X[\tilde Z](x)|  | \partial_h X[\tilde Z] (x')|  dxdx'\right)^p\right]^\frac{1}{p}.
\end{align*}
Combining the last three displays concludes the proof. 
\end{proof}

	The next lemma provides the partial Gateaux-derivatives of the functionals $X^{(n)}_f[\cdot]$ defining the empirical process \eqref{Eq:EmpProc}.

\begin{lemma}\label{Lem:PartDer}
Let $f\in W^{1,\infty}(\Zcal)$. Then, for all $\tilde z\in  L^\infty_{\textnormal{loc}}(\R^D;\R^d)$,
$$
	\partial_hX_f^{(n)}[\Phi \circ \tilde z](\cdot) = \frac{1}{n}1_{\Wcal_n}(\cdot)
	 \sum_{h'=1}^d\partial_{h'} f\left(\Phi\left(\tilde z(\cdot)\right)\right)\partial_h\Phi^{(h')}\left(\tilde z(\cdot)\right)
	\in L^1_{\textnormal{loc}}(\R^D).
$$
\end{lemma}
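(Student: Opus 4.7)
The plan is a direct identification via the pointwise chain rule and dominated convergence. Fix $\tilde z \in L^\infty_{\textnormal{loc}}(\R^D;\R^d)$ and a compactly supported $\zeta \in L^\infty(\R^D)$, and for $t\in\R$ write $\tilde z_t := (\tilde z^{(1)},\dots,\tilde z^{(h)} + t\zeta,\dots,\tilde z^{(d)})$. The relevant difference quotient takes the form
\[
\frac{X^{(n)}_f[\Phi\circ\tilde z_t] - X^{(n)}_f[\Phi\circ\tilde z]}{t}
= \frac{1}{n}\int_{\Wcal_n}\frac{f(\Phi(\tilde z_t(x))) - f(\Phi(\tilde z(x)))}{t}\,dx,
\]
so the task reduces to computing the pointwise limit of the integrand, passing it under the integral sign, and rewriting the result as an integral against $\zeta$ over $\R^D$.

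For the pointwise limit, the $C^1$-regularity of $\Phi$ together with the Lipschitz property of $f\in W^{1,\infty}(\Zcal)$ yields, via the almost everywhere chain rule for compositions of Lipschitz maps with smooth ones, the expression $\zeta(x)\sum_{h'=1}^d \partial_{h'}f(\Phi(\tilde z(x)))\,\partial_h\Phi^{(h')}(\tilde z(x))$ as the $t$-derivative at $t=0$, for almost every $x \in \Wcal_n$. For the interchange of limit and integral, I would observe that the image of $\textnormal{supp}(\zeta)\cap\Wcal_n$ under $\Phi\circ\tilde z_t$ stays in a compact subset of $\Zcal$ uniformly for $|t|\le 1$ (since $\tilde z$ is locally bounded, $\zeta$ is bounded and compactly supported, and $\Phi$ is continuous). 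The difference quotient is therefore uniformly bounded by $\|\nabla f\|_{L^\infty}\,\|\textnormal{J}\Phi\|_{L^\infty(\R^d;\R^{d,d})}\,\|\zeta\|_{L^\infty}\,\mathbf{1}_{\Wcal_n\cap\,\textnormal{supp}(\zeta)}(x)$, which is integrable. Dominated convergence then produces exactly the claimed integral of $\zeta$ against $\frac{1}{n}\mathbf{1}_{\Wcal_n}\sum_{h'} \partial_{h'}f(\Phi(\tilde z))\,\partial_h\Phi^{(h')}(\tilde z)$, and membership of the derivative in $L^1_{\textnormal{loc}}(\R^D)$ follows at once from these same $L^\infty$-bounds and the presence of the indicator $\mathbf{1}_{\Wcal_n}$.

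The only genuine subtlety is the almost-everywhere chain rule when $f$ is merely in $W^{1,\infty}$ rather than $C^1$, since $\nabla f$ is only defined up to null sets. I would handle this by mollification: approximate $f$ by $f_\varepsilon := f\ast\rho_\varepsilon \in C^\infty$ with $\|\nabla f_\varepsilon\|_{L^\infty}\le\|\nabla f\|_{L^\infty}$ and $\nabla f_\varepsilon\to\nabla f$ Lebesgue-a.e.\ on the compact image set, apply the classical smooth chain rule to $f_\varepsilon$ to obtain the analogue of the identity for each $\varepsilon$, and pass $\varepsilon\to 0$ by a second application of dominated convergence (dominating by the same uniform $L^\infty$-bound). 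This is expected to be the only delicate step; everything else is a routine direct computation.
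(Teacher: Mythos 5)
Your argument is essentially the paper's own: the paper also fixes a compactly supported direction $\zeta$, applies the chain rule to the Lipschitz-smooth composition $t\mapsto f(\Phi(\tilde z^{(1)},\dots,\tilde z^{(h)}+t\zeta,\dots,\tilde z^{(d)}))$ (phrased there as a $W^{1,\infty}$ weak-derivative statement rather than your difference-quotient-plus-dominated-convergence formulation), and then interchanges differentiation with the integral over $\Wcal_n$ to read off the Gateaux derivative. Your mollification remark addresses the same a.e.-chain-rule subtlety for $f\in W^{1,\infty}$ that the paper simply asserts, so the proposal is correct and matches the paper's route.
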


\begin{proof}
For any $h=1,\dots, d$, all compactly supported $\zeta\in L^\infty(\R^D)$, all $x\in \R^D$, and arbitrarily small $t_0>0$, the function
$$
	g_x : (-t_0,t_0)\to\R, \qquad g_x(t) 
	:=f\left(\Phi\left(\tilde z^{(1)}(x),
	\dots,\tilde z^{(h)}(x) + t\zeta(x),\dots,\tilde z^{(d)}(x)\right)\right),
$$
is in $W^{1,\infty}(-t_0,t_0)$, with (weak) derivative 
\begin{align*}
	g_x'(t) &= \sum_{h'=1}^d\partial_{h'} f\left(\Phi\left(\tilde z^{(1)}(x),
	\dots,\tilde z^{(h)}(x) + t\zeta(x),\dots,\tilde z^{(d)}(x)\right)\right)
	\\
	&\quad \times\partial_h\Phi^{(h')}\left(\tilde z^{(1)}(x),
	\dots,\tilde z^{(h)}(x) + t\zeta(x),\dots,\tilde z^{(d)}(x)\right)
	\zeta(x).
\end{align*}
Using this, the function
$$
	g:(-t_0,t_0) \to \R, 
	\qquad g(t) := X_f^{(n)}[z^{(1)},\dots, z^{(h)} + t\zeta,\dots,z^{(d)}]
	= \frac{1}{n}\int_{\Wcal_n} g_x(t)dx,
$$
is seen to be in $W^{1,\infty}(-t_0,t_0)$, with (weak) derivative
$$
	g'(t)
	= \frac{1}{n}\int_{\Wcal_n} g_x(t)dx.
$$
Thus,
\begin{align*}
	\lim_{t\to0}&
	\frac{X_f^{(n)}(z^{(1)},\dots, z^{(h)} + t\zeta,\dots,z^{(d)}) - X_f^{(n)}(z)}{t} \\
	& 
	= g'(0)
	=
	\int_{\R^D} \frac{1}{n}1_{\Wcal_n}(x) \sum_{h'=1}^d\partial_{h'} f\left(\Phi\left(\tilde z(x)\right)\right)\partial_h\Phi^{(h')}\left(\tilde z(x)\right)
	\zeta(x)
	dx,
	\end{align*}
whence the claim follows.
\end{proof}
	
%
%
%

\subsubsection{Inequalities for the suprema of spatial averages}
\label{Sec:EmpProcSup}

We now build on the sub-Gaussian concentration inequality provided in Proposition \ref{Prop:MultGaussConcIneq} to derive inequalities for the supremum of the empirical process defined in \eqref{Eq:EmpProc} over classes of functions $\Fcal_n\subseteq W^{1,\infty}(\Zcal^d)\cap L^1_\nu(\Zcal^d)$.

\begin{proposition}\label{Prop:MultGaussSupConc}
Let $Z$ be a stationary random field satisfying Condition \ref{Cond:GenGaussCov}. Let $\Fcal_n\subseteq W^{1,\infty}(\Zcal)\cap L^1_\nu(\Zcal^d)$ with $0\in \Fcal_n$. Then, for all $n\in\N$,
\begin{align}
\label{Eq:ExpectBound}
	\textnormal{E}\left[\sup_{f\in\Fcal_n} \left| \frac{1}{n}\int_{\Wcal_n}f(Z(x))dx\right| \right] 
	\le \frac{4\sqrt 2J_{\Fcal_n}}{\sqrt n},
\end{align}
where
$$
	J_{\Fcal_n}
	:=\int_0^{D_{\Fcal_n}}\sqrt{\log 2\Ncal(\tau;\Fcal_n,6C_{\Phi,K}
	\|\nabla[\cdot]\|_{L^\infty(\Zcal;\R^d)} ) }
	d\tau,
$$
with $D_{\Fcal_n}$ the diameter of $\Fcal_n$ with respect to the semi-metric $C_{\Phi,K}\|\nabla[\cdot]\|_{L^\infty(\Zcal;\R^d)}$, and where $C_{\Phi,K}:=\sqrt {2d^3eC_{BL}C_KC_\Phi}$ for $C_{BL}>0$ and $C_K,C_\Phi>0$ the constants appearing in the statements of Lemma \ref{Lem:MultFuncIneq} and Proposition \ref{Prop:MultGaussConcIneq}, respectively. Then, for all $r>0$ and all $n\in \N$,
\begin{equation}
\label{Eq:SuprConc}
	\Pr\left( \sup_{f\in\Fcal_n} \left|\frac{1}{n}\int_{\Wcal_n}f(Z(x))dx\right|
	\ge
	  \frac{196J_{\Fcal_n}}{\sqrt n}(1+r)  \right) \le \exp\left\{-\frac{r^2}{2}\right\}.
\end{equation}

\end{proposition}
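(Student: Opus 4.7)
The plan is to treat $(X_f^{(n)}[Z])_{f\in\Fcal_n}$ as a centred sub-Gaussian process indexed by $\Fcal_n$ and invoke standard chaining bounds. The key observation is that the functional $f\mapsto X_f^{(n)}[Z]$ is linear, so $X_f^{(n)}[Z]-X_g^{(n)}[Z]=X_{f-g}^{(n)}[Z]$. Applying Proposition \ref{Prop:MultGaussConcIneq} to $f-g\in W^{1,\infty}(\Zcal)\cap L^1_\nu(\Zcal)$ then yields the sub-Gaussian increment bound
\begin{equation*}
\Pr\bigl(|X_f^{(n)}[Z]-X_g^{(n)}[Z]|\ge r\bigr)
\le 4\exp\Bigl(-\tfrac{n r^2}{2\,C_{\Phi,K}^2\,\|\nabla(f-g)\|_{L^\infty(\Zcal;\R^d)}^2}\Bigr).
\end{equation*}
Hence the process has sub-Gaussian increments with respect to the pseudo-metric $d(f,g):=C_{\Phi,K}\|\nabla(f-g)\|_{L^\infty(\Zcal;\R^d)}/\sqrt n$, and $X_0^{(n)}[Z]=0$ provides an anchor since $0\in\Fcal_n$ by assumption.

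For the expected supremum \eqref{Eq:ExpectBound}, I would apply a classical Dudley-type entropy bound for sub-Gaussian processes (e.g.~Theorem 2.3.6 of \cite{GN16}), yielding an estimate of the form $E\sup_f|X_f^{(n)}[Z]|\le K\int_0^{\mathrm{diam}_d(\Fcal_n)}\sqrt{\log 2\Ncal(\tau;\Fcal_n,d)}\,d\tau$ for some universal constant $K$, and then change variables $\tau\mapsto\tau/\sqrt n$ to pull the factor $1/\sqrt n$ outside the integral. This reparametrises the covering number in the unscaled metric $C_{\Phi,K}\|\nabla[\cdot]\|_{L^\infty(\Zcal;\R^d)}$ and extends the upper limit to the diameter $D_{\Fcal_n}$ in that metric. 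The factor $6$ appearing inside the covering number in the definition of $J_{\Fcal_n}$ is then a slack factor absorbing the non-standard prefactor $4$ (instead of $2$) in the tail of Proposition \ref{Prop:MultGaussConcIneq}, via a mild inflation of the covering radii in the chaining step.

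For the tail inequality \eqref{Eq:SuprConc}, I would combine the expected bound with the corresponding high-probability chaining inequality for sub-Gaussian processes (e.g.~Talagrand's generic chaining tail bound, or results of Dirksen-type), which provides for all $u>0$
\begin{equation*}
\Pr\Bigl(\sup_{f\in\Fcal_n}|X_f^{(n)}[Z]|\ge c_1\!\int_0^{D}\!\!\sqrt{\log 2\Ncal(\tau;\Fcal_n,d)}\,d\tau+u\,D\Bigr)\le e^{-u^2/2},
\end{equation*}
with $D:=D_{\Fcal_n}/\sqrt n$. Choosing $u$ proportional to $r\cdot J_{\Fcal_n}/\sqrt n$ (after suitable rescaling of the metric as above) and bookkeeping the constants then recovers the stated form with multiplicative constant $196$.

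The main technical obstacle is the tracking of numerical constants rather than the conceptual framework: Proposition \ref{Prop:MultGaussConcIneq} has the non-standard prefactor $4$ (instead of $2$) in its exponential tail, which propagates through the chaining argument and is responsible both for the explicit factor $196$ in the tail bound and for the inflated factor $6$ inside the covering number in the definition of $J_{\Fcal_n}$. A secondary, standard subtlety is measurability of the supremum, handled by restriction to a countable $d$-dense subfamily of $\Fcal_n$.
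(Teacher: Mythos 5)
Your proposal is correct and follows essentially the same route as the paper: linearity of $f\mapsto X_f^{(n)}[Z]$ plus Proposition \ref{Prop:MultGaussConcIneq} give sub-Gaussian increments of $\sqrt n X_f^{(n)}[Z]$ in the semi-metric $C_{\Phi,K}\|\nabla[\cdot]\|_{L^\infty(\Zcal;\R^d)}$, Dudley's entropy bound (the paper cites Theorem 2.3.7 of \cite{GN16}) yields \eqref{Eq:ExpectBound}, and a chaining-type deviation inequality for the supremum around its mean (the paper invokes the argument of Lemma 1 in \cite{NR20}, with variance proxy $(196 J_{\Fcal_n})^2$) yields \eqref{Eq:SuprConc}; your alternative generic-chaining tail bound with the $uD$ term works equally well since $D_{\Fcal_n}\le J_{\Fcal_n}/\sqrt{\log 2}$.
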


\begin{proof}
By linearity, 
$$
	X_{f_1 - f_2}^{(n)}[Z] = \frac{1}{n}\int_{\Wcal_n}f_1(Z(x))-f_2(Z(x))dx
	 =X_{f_1}^{(n)}[Z] - X_{f_2}^{(n)}[Z].
$$
Hence, by Proposition \ref{Prop:MultGaussConcIneq}, for all $f_1,f_2\in \Fcal_n$, $f_1\neq f_2$,
\begin{align*}
	\Pr\left( \left | \sqrt nX^{(n)}_{f_1}[Z] - \sqrt nX^{(n)}_{f_2}[Z] \right| \ge r \right)
	&\le
	4\exp\left\{-\frac{r^2}{2C_{K,\Phi}^2 \|\nabla f_1 - \nabla f_2\|_{L^\infty(\Zcal;\R^d)}^2} \right\}.
\end{align*}
This shows that the centred process $\{\sqrt nX_f^{(n)}[Z], \ f\in \Fcal_n\}$ is sub-Gaussian with respect to the semi-metric $C_{K,\Phi} \|\nabla[\cdot]\|_{L^\infty(\Zcal;\R^d)} $. The chaining bound for sub-Gaussian processes (e.g., Theorem 2.3.7 of \cite{GN16}) then implies the first claim (since also $0\in\Fcal_n$).

	For the second, arguing as in the conclusion of the proof of Lemma 1 in \cite{NR20} gives, for any $r>0$,
$$
	\Pr\left( \sup_{f\in\Fcal_n} \sqrt n\left|X_f^{(n)}[Z]\right|
	\ge \textnormal{E}\left[\sup_{f\in\Fcal_n} \left|\sqrt nX_f^{(n)}[Z]\right| \right]
	+ r  \right) \le \exp\left\{-\frac{r^2}{2(196 J_{\Fcal_n})^2}\right\},
$$
whence the second claim follows using the expectation bound \eqref{Eq:ExpectBound} proved above.
\end{proof}

%

%
%
%

\subsection{Concentration inequalities for Poisson random tessellations}
\label{Subsec:PoissTessCov}

Throughout this section, let $Z$ be the univariate piecewise-constant process associated to a Poisson random tessellation arising as in Definition \ref{Cond:PoissTess}. Such random fields represent the primary example of processes satisfying `multiscale functional inequalities with oscillations' considered in \cite{DG20ALEA,DG20AHL}. For $\nu $ the stationary measure of $Z$, with support $\Zcal\subseteq\R$, recall the notation $L^1_\nu(\Zcal)=\{f\in L^1(\Zcal,\nu):\int_\Zcal f(z)d\nu(z)=0\}$. Given measurable sets $\Wcal_n\subset \R^D$ satisfying $\vol(\Wcal_n)=n$ and the shape-regularity condition \eqref{Eq:SpatialWn}, a combination of results in \cite{DG20ALEA,DG20AHL} yields the following concentration inequality for the centred spatial averages of $Z$,
$$
	X_f^{(n)}[Z] := \frac{1}{n}\int_{\Wcal_n}f(Z(x))dx, \qquad f\in L^1_\nu(\Zcal).
$$

\begin{lemma}\label{Lem:PoissConcIneq}
Let $Z$ be a stationary random field constructed as in Condition \ref{Cond:PoissTess}. Then, for each $f\in L^1_\nu(\Zcal)\cap L^\infty(\Zcal)$, all $r>0$ and all $n\in\N$,
$$
	\Pr\left(\left| \frac{1}{n} \int_{\Wcal_n} f(Z(x))dx\right| > r\right) 
	\le 2 \exp\left\{-\frac{n\min\{r,r^2\}}{1+C_Z+2\|f\|_{L^\infty(\Zcal)}} \right\},
$$
where $C_Z>0$ is a numerical constant.
\end{lemma}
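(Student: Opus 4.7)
\textbf{Plan of proof for Lemma \ref{Lem:PoissConcIneq}.} The strategy is to appeal to the multiscale functional inequalities for Poisson--Voronoi tessellations developed in \cite{DG20AHL}, combined with the moment-to-concentration transfer in \cite{DG20ALEA}. Write $X^{(n)}_f[Z] := n^{-1} \int_{\Wcal_n} f(Z(x)) dx$, which has mean zero since $f \in L^1_\nu(\Zcal)$ and $Z(x) \sim \nu$ for every $x$. The main step is to quantify how $X^{(n)}_f[Z]$ changes under local modifications of (i) the underlying Poisson point process $\Xi = (\xi_r)_{r \geq 1}$ generating the tessellation and (ii) the i.i.d.\ marks $(\zeta_r)_{r \geq 1}$.

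First, I would compute the two relevant discrete derivatives. For a ``vertical'' modification that resamples $\zeta_r$ while keeping $\Xi$ fixed, the contribution to $X^{(n)}_f[Z]$ changes by at most $2 \|f\|_{L^\infty(\Zcal)} \,\vol(\Ccal_r \cap \Wcal_n)/n$. For a ``horizontal'' modification that adds or removes a point $\xi_0$ from $\Xi$, the affected region is contained in the union of Voronoi cells neighbouring $\xi_0$, so the increment is bounded by $2 \|f\|_{L^\infty(\Zcal)} \vol(R(\xi_0))/n$ for a local stabilising radius $R(\xi_0)$; the key property, established in \cite{DG20AHL}, is that this stabilisation radius has exponential moments uniformly in $\xi_0$, a standard consequence of the fact that large Poisson--Voronoi cells are exponentially rare.

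Next, I would invoke the multiscale logarithmic Sobolev inequality for Poisson random tessellations from \cite[Section~3]{DG20AHL}, which handles the joint randomness in $\Xi$ and $(\zeta_r)$ and yields, via Proposition~1.10 of \cite{DG20ALEA}, moment bounds of Rosenthal type:
\begin{equation*}
    \E\bigl[(X^{(n)}_f[Z])^{2p}\bigr]^{1/p} \;\leq\; C\, p \cdot \frac{\|f\|_{L^\infty(\Zcal)}^2 (1+C_Z)}{n} \;+\; C\, p^2 \cdot \frac{\|f\|_{L^\infty(\Zcal)}^2}{n^2},
\end{equation*}
for all integers $p \geq 1$, where the first term captures the sub-Gaussian Poincar\'e-type contribution and the second the sub-exponential ``oscillation'' contribution from the point process. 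The constant $C_Z$ absorbs the exponential moments of $\vol(\Ccal_r)$ and of the stabilisation radius $R$.

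Finally, I would convert these two-regime moment bounds into the advertised Bernstein-type tail estimate by the standard exponential Markov argument: one shows $\E[\exp(\theta X^{(n)}_f[Z])] \leq \exp(C\theta^2 \|f\|_{L^\infty}^2/n)$ for $|\theta| \lesssim n/\|f\|_{L^\infty}$ and optimises over $\theta$, producing the $\min\{r, r^2\}$ exponent and absorbing all numerical factors into the denominator $1 + C_Z + 2\|f\|_{L^\infty(\Zcal)}$. The factor of $2$ in front of the exponential arises from handling the two tails separately by a union bound. \emph{The main obstacle} is not the concentration argument itself, which is routine once moment bounds are in hand, but the verification that the functional $X^{(n)}_f[Z]$ satisfies the multiscale functional inequality with the correct oscillation weights; this is where the specific geometry of Poisson--Voronoi cells enters, and one must carefully track that the $L^\infty$-norm of $f$ (and not a Lipschitz norm, since $f$ need not be smooth in $z$) suffices to bound the increments, which works here because $Z$ is piecewise constant so local modifications change $f \circ Z$ by at most $2\|f\|_{L^\infty(\Zcal)}$ on the affected cells.
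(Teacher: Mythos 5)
Your proposal is correct in spirit and draws on exactly the right sources, but it takes a more laborious route through them than the paper does. The paper's proof is essentially a two-line citation: Proposition 3.2 of \cite{DG20AHL} already provides the multiscale variance and entropy inequalities for the marked Poisson--Voronoi field $Z$, formulated in terms of the \emph{oscillation} derivative $\partial^{\textnormal{osc}}_{B_{\ell+1}(x)}$ with Gaussian weight $\omega(\ell)=C_Ze^{-\ell^2/C_Z}$; this single inequality already packages both sources of randomness (the Poisson points and the i.i.d.\ marks), so the horizontal/vertical decomposition and the exponential-moment control of the stabilisation radius $R(\xi_0)$ that you plan to carry out are precisely the content of the cited result and need not be redone. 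Likewise, instead of passing through Rosenthal-type moment bounds (Proposition 1.10 of \cite{DG20ALEA}) and a Chernoff argument, the paper writes $X^{(n)}_f[Z]=n^{-1}\int_{\Wcal_n}g_f[Z(\cdot+x)]\,dx$ with $g_f[z]=f(z(0))$, verifies the elementary locality condition $\sup_z\partial^{\textnormal{osc}}_{B_{\ell+1}(x)}g_f[z]\le(1+C_Z+2\|f\|_{L^\infty(\Zcal)})e^{-(|x|-\ell)_+/(1+C_Z+2\|f\|_{L^\infty(\Zcal)})}$ (trivial, since the oscillation vanishes unless $0\in B_{\ell+1}(x)$ and is otherwise at most $2\|f\|_{L^\infty(\Zcal)}$ --- the same piecewise-constancy observation you make at the end), and then applies the ready-made Bernstein-type concentration inequality of Proposition 1.7(iii) in \cite{DG20ALEA} for spatial averages of local functionals. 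One further point worth flagging: your moment bound carries $\|f\|_{L^\infty(\Zcal)}^2$ in the sub-Gaussian term, so the Chernoff optimisation would yield an exponent of the form $-cn\min\{r/\|f\|_\infty,r^2/\|f\|_\infty^2\}$, which is not literally the stated bound with the \emph{linear} denominator $1+C_Z+2\|f\|_{L^\infty(\Zcal)}$; this is harmless for the application in Theorem \ref{Theo:PoissTess}, where $\|f\|_{L^\infty(\Zcal)}$ is uniformly bounded, but if you want the inequality exactly as stated you should route the constant through the locality condition as the paper does.
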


\begin{proof}

Proposition 3.2 in \cite{DG20AHL} shows that $Z$ satisfies, for some constant $C_Z>0$, the following multiscale inequalities with weight function $\omega(\ell) := C_Ze^{-\frac{1}{C_Z}\ell^2}, \ \ell>0$: for all measurable $X : L^\infty(\R^D;\R)\to\R$,
$$
	\textnormal{Var}[X[Z]]
	\le \textnormal{E}\left[ \int_0^\infty \int_{\R^D}  \left(\partial^{\textnormal{osc}}_{B_{\ell+1}(x)}
	X[Z] \right)^2dx (\ell + 1)^{-2} \omega(\ell) d\ell\right]
$$
and
$$
	\textnormal{Ent}\left[ X[Z] \right] 
	\le \textnormal{E}\left[ \int_0^\infty \int_{\R^D}  \left(\partial^{\textnormal{osc}}_{B_{\ell+1}(x)}
	X[Z] \right)^2dx (\ell + 1)^{-2} \omega(\ell) d\ell\right].
$$
Above, $\partial^{\textnormal{osc}}_{B_{\ell + 1}(x)}X[Z]$ is the 'oscillation' of $X[Z]$ over $B_{\ell + 1}(x) := \{y\in \R^D  :  |y-x|\le\ell+1\}$, defined as the measurable envelope of
\begin{equation}
\label{Eq:OscDef}
\begin{split}
	\textnormal{sup}&
	\{X(Z_1), \ Z_1:\R^D\to\R\ \textnormal{measurable}: Z_1 = Z\
	 \textnormal{on}\ \R^D\backslash B_{\ell + 1}(x)\}\\
	 & \ \quad - \textnormal{inf}\{X(Z_2), \ Z_2:\R^D\to\R\ \textnormal{measurable} 
	 : Z_2 = Z\ \textnormal{on}\ \R^D\backslash B_{\ell + 1}(x)\},
\end{split}
\end{equation}
cf.~Section 1.1 in \cite{DG20ALEA}. The claim follows from a direct application of the results in Section 1.3 of \cite{DG20ALEA}. In the notation of the latter paper, write
$$
	X_f^{(n)}[Z]
	= \frac{1}{n} \int_{\Wcal_n} g_f[Z(\cdot+x)]dx,
$$
where $Z(\cdot + x) := (Z( y + x ), \ y\in \R^D)$ and $g_f[z]: = f(z(0))$, $z\in L^\infty(\R^D;\Zcal)$. Lemma \ref{Lem:1LocCond} below shows that $g_f$ satisfies the locality condition on p.138 of \cite{DG20ALEA}, in that
$$
	\underset{z\in L^\infty(\R^D;\Zcal)}{\textnormal{sup}}\ 
	\partial^{\textnormal{osc}}_{B_{\ell + 1}(x)}
	g_f[z]\le (1+C_Z+2\|f\|_{L^\infty(\Zcal)})
	e^{-\frac{1}{1+C_Z+2\|f\|_{L^\infty(\Zcal)}}(|x|-\ell)_+}
$$
for all $x\in\R^D$ and $\ell\ge1$. Recalling the shape-regularity condition \eqref{Eq:SpatialWn}, Proposition 1.7(iii) of \cite{DG20ALEA} then yields, for all $r>0$ and all $n\in\N$,
 \begin{align*}
	\Pr(X_f^{(n)}[Z]\ge r)
	\le \exp\left\{-\frac{n\min\{r,r^2\}}{1+C_Z+2\|f\|_{L^\infty(\Zcal)}}\right\}.
\end{align*}
Similarly, it also holds
 \begin{align*}
	\Pr(X_f^{(n)}[Z]\le - r)
	\le \exp\left\{-\frac{n\min\{r,r^2\}}{1+C_Z+2\|f\|_{L^\infty(\Zcal)}}\right\},
\end{align*}
which combined with the previous display proves the claim.
\end{proof}

	The next lemma provides the locality condition (cf.~p.138 of \cite{DG20ALEA}) used in the proof of Lemma \ref{Lem:PoissConcIneq}.

\begin{lemma}\label{Lem:1LocCond}
For $f\in L^\infty(\Zcal)$, let $g_f:L^\infty(\R^D;\Zcal)\to\R$ be given by
$g_f[z]=f(z(0))$. Then, for all $C>0$, all $x\in\R^D$ and and all $\ell\ge0$,
$$
	\underset{z\in L^\infty(\R^D;\Zcal) }{\textnormal{sup}}\ 
	\partial^{\textnormal{osc}}_{B_{\ell + 1}(x)}
	g_f[z]\le (1+C+2\|f\|_{L^\infty(\Zcal)})
	e^{-\frac{1}{1+C+2\|f\|_{L^\infty(\Zcal)}}(|x|-\ell)_+},
$$
where the oscillation $\partial^{\textnormal{osc}}_{B_{\ell + 1}(x)}g_f[\cdot]$ is defined as in \eqref{Eq:OscDef}.
\end{lemma}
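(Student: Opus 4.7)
The plan is to exploit the fact that $g_f[z] = f(z(0))$ depends on $z$ solely through its value at the origin, so the oscillation over any ball $B_{\ell+1}(x)$ reduces to a simple dichotomy according to whether the origin lies inside $B_{\ell+1}(x)$ or not.

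First, I would handle the case $|x| > \ell+1$, in which $0\notin B_{\ell+1}(x)$. Then every admissible modification $z_1$ in the definition \eqref{Eq:OscDef} satisfies $z_1(0) = z(0)$, hence $g_f[z_1] = g_f[z]$, and the oscillation equals $0$. The stated bound holds trivially since the right hand side is positive.

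Next, for $|x|\le \ell+1$, i.e.\ $0\in B_{\ell+1}(x)$, modifications of $z$ inside $B_{\ell+1}(x)$ can set $z(0)$ to any value in $\Zcal$, so the oscillation is at most $2\|f\|_{L^\infty(\Zcal)}$. Setting $A := 1+C+2\|f\|_{L^\infty(\Zcal)}$, it suffices to verify
$$2\|f\|_{L^\infty(\Zcal)} \le A\, e^{-(|x|-\ell)_+/A}$$
for $|x|\le \ell+1$. For $|x|\le \ell$ the exponent vanishes and the right hand side equals $A > 2\|f\|_{L^\infty(\Zcal)}$. For $\ell<|x|\le \ell+1$, the elementary bound $e^{-u}\ge 1-u$ gives
$$A\,e^{-(|x|-\ell)/A} \;\ge\; A - (|x|-\ell) \;\ge\; A-1 \;=\; C+2\|f\|_{L^\infty(\Zcal)} \;\ge\; 2\|f\|_{L^\infty(\Zcal)},$$
as required.

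There is no real obstacle here; the result is essentially a two-case analysis coupled with a one-line convexity estimate. The substantive observation is simply that the single-point locality of $g_f$ makes the oscillation dichotomous, so the exponential envelope required by Proposition 1.7(iii) of \cite{DG20ALEA} need only be checked in the narrow transition window $\ell<|x|\le \ell+1$, where the additive slack $1+C$ built into $A$ comfortably absorbs the linear decrement $|x|-\ell \le 1$.
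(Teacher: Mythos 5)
Your proof is correct and follows essentially the same route as the paper's: the same dichotomy on whether $0\in B_{\ell+1}(x)$, the trivial bounds $0$ and $2\|f\|_{L^\infty(\Zcal)}$ in the two cases, and an elementary estimate to absorb the factor $e^{-(|x|-\ell)_+/A}$ in the transition window $\ell<|x|\le\ell+1$ (the paper uses $u\le(1+u)e^{-1/(1+u)}$ where you use $e^{-u}\ge 1-u$, but these serve the identical purpose). Nothing to add.
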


%

\begin{proof}
For fixed $z\in L^\infty(\R^D,\Zcal)$, and any $x\in\R^D$, $\ell\ge0$, we bound
\begin{equation}
\label{Eq:QuantOfInt}
\begin{split}
	\textnormal{sup}&\{g_f[z_1], \ z_1:\R^D\to\Zcal\ 
	\textnormal{measurable} : z_1 = z\
	\textnormal{on}\ \R^D\backslash B_{\ell + 1}(x)\}\\
	&\ \ \quad - \textnormal{inf}\{g_f[z_2], 
	\ z_2:\R^D\to\Zcal\ \textnormal{measurable}
	: z_2 = z\ \textnormal{on}\ \R^D\backslash B_{\ell + 1}(x)\}.
\end{split}
\end{equation}
Note that if $|x|>\ell + 1$ then $0\in\R^D\backslash B_{\ell + 1}(x)$ and therefore \eqref{Eq:QuantOfInt} equals
$$
	f(z(0))-f(z(0))= 0.
$$
On the other hand, if $|x|\le \ell + 1$, \eqref{Eq:QuantOfInt} is trivially bounded by  $2\|f\|_{L^\infty(\Zcal)}$, showing that
$$
	|\partial^{\textnormal{osc}}_{B_{\ell + 1}(x)}g_f[z]|
	\le  2\|f\|_{L^\infty(\Zcal)}1_{B_{\ell + 1}}(x).
$$
Thus, if $|x|>\ell +1$ then $(|x| - \ell)_+>1$ and
$$
	\partial^{\textnormal{osc}}_{B_{\ell + 1}(x)}g_f[z] = 0 
	< (2\|f\|_{L^\infty(\Zcal)} + 1 + C)
	e^{-\frac{1}{1+C+2\|f\|_{L^\infty}}(|x|-\ell)_+}.
$$
If $\ell < |x|\le \ell+1$ then $(|x| - \ell)_+\in (0,1]$ and
\begin{align*}
	|\partial^{\textnormal{osc}}_{B_{\ell + 1}(x)}g_f[z]|
	&\le 2\|f\|_{L^\infty(\Zcal)}
	\le 
	(C+1+2\|f\|_{L^\infty(\Zcal)})
	e^{-\frac{1}{C+1+2\|f\|_{L^\infty(\Zcal)}}(|x|-\ell)_+},
\end{align*}
having used that $u\le (1+u)e^{-\frac{1}{1+u}}$ for all $u\ge0$.
 Finally, if $|x|\le \ell$ then $(|x| - \ell)_+=0$ and
$$
	|\partial^{\textnormal{osc}}_{Z,B_{\ell + 1}(x)}g_f(Z) | 
	\le 2\|f\|_{L^\infty}
	\le C+1+2\|f\|_{L^\infty(\Zcal)}.
$$
\end{proof}

%
%
%
%
%

\section{Proofs for Section \ref{sec:localrates}} \label{sec:prooflocal}

Recalling the notations $\rho(\eps)\equiv\rho(B_\eps)$, $\rho^*$, $y_\eps$ and $\eps_l^0$ from Section \ref{sec:localrates}, let $\rho_0(\eps)$, $\rho_0^*$, $y_\eps^0$ be similarly defined for $\rho=\rho_0$. Throughout this section we  will write, for any $1\le l\le L_n$,
\begin{equation}
\label{Eq:AuxNot}
	\rho_0^l(z_0)
	:=\frac{\rho_0(\epsltrue)}{\rho_0^* \mu_n(B_{\epsltrue})}
	=\prod\limits_{l'\le l}y^0_{\epsltrue},
\end{equation}
and, in slight abuse of abuse of notation $\rho_0^\mathcal L (z_0) := \prod_{l \in \mathcal L}y^0_{\epsltrue}$ for any $\mathcal L \subseteq \{0, \cdots, L_n\}$. Note that
$$
	\frac{\rho_0(\epsltrue)}{\mu_n(B_{\epsltrue})} =  \rho_0^* \rho_0^l(z_0).
$$

%
%
%

\subsection{Proof of Theorem \ref{Theo:LocalRates}}

Let $\Scal :=\left\{l:y_\epsltrue\neq 1\right\}$, which is a random set under the prior distribution, and denote by 
$$
	\mathcal L(\gamma) 
	:=\left \{1\le l \le  L_n:  |y_{\epsltrue}^0 - 1| 
	> \gamma \frac{ \sqrt{ \log n} }{ \sqrt{ n \rho_0(\epsltrue)}} \right\},
	\qquad \gamma>0,
$$
the set of true coefficients $y_{\epsltrue}^0$ that are `significantly different from $1$'.
Moreover,  for any $C>0$, let $L_n^*(C)\in\N$  be such that
\begin{equation*}
	2^{L_n^*(C)} \in   
	(C, 2C] \left(\frac{\log n}{n}\right)^{-\frac{d}{2\alpha_0+d}}. 
\end{equation*}
We first show that $|\rho_0(z_0) - \rho_0^* \rho_0^{L_n^*(C)} |\lesssim \epsilon_n$  and that for all $\gamma $, there exists $C_\gamma$ such that $	\mathcal L(\gamma)\subset \{1, \cdots,L_n^*(C_\gamma)\}$.   
We then show in Lemmas \ref{lem:1}  and \ref{lem:2} that, with large probability under the posterior distribution, the set $\Scal$ of coefficients $y_{\epsltrue}$ that are different from 1 is contained in  $\mathcal L(\underline\gamma)$ for some sufficiently small $\underline \gamma>0$, and that  it contains $\mathcal L(\bar\gamma)$ for  some $\bar\gamma >\underline \gamma$ large enough, so that the last step of the proof is to control $\Pi ( |\rho^S - \rho_0^S| > M_n' \epsilon_n| D^{(n)} )$ for any $S$ such that  $\mathcal L(\bar \gamma) \subset S \subset \mathcal L(\underline\gamma)$ and where
 $$
	\rho^{\Scal} = \prod_{l\in S} y_{\eps_0^l}  = \rho(z_0)/\rho^*. 
$$

We first bound $\rho_0(z_0) - \rho_0^* \rho_0^{L_n^*(C)} $   and that for all $\gamma $, there exists $C_\gamma$ such that $	\mathcal L(\gamma)\subset \{1, \cdots,L_n^*(C_\gamma)\}$
Due to the H\"{o}lder continuity and the assumption on the diameter of the sets $B_\eps$ (cf.~Conditions \ref{Cond:Diam} and \ref{Cond:Holder}) we have for all $l\geq l_0$ such that $2^{-l_0}C_d \le  \delta_0$, 
\begin{equation}\label{trunc:approx}\begin{split}
	\left|  y^0_{\epsltrue } - 1 \right| & =
 	\left| \frac{ \rho_0( B_{\epsltrue}) }{  \rho_0(B_{\eps_0^{l-1}})}  
  	\times\frac{ \mu_n(B_{\eps_0^{l-1}}) }{  \rho_0( B_{\epsltrue})}  - 1\right| 
  	\leq \frac{ C_0\text{diam} (B_{\eps_0^{l-1}})^{\beta_0}}{ \rho_0(z_0) } 
	\leq  \frac{ 2C_0 C_d^{\beta_0}}{ \rho_0(z_0)} 2^{-l \beta_0/d};\\
	\left|\rho_0(z_0)-\frac{\rho_0(\epsltrue)}{\mu_n(B_{\epsltrue})}\right|
	&= \left|\rho_0(z_0)- \rho_0^* \rho_0^{l}\right| 
	\le  C_0 C_d^{\beta_0} 2^{-\beta_0 l/d}.
\end{split}
\end{equation}

so that if $l\geq L_n^*(C)$, using that $\rho_0(B_{\epsltrue}) \geq c_02^{-l}/C_d$,
\begin{align*}
	\left|  y^0_{\epsltrue } - 1 \right|  
	&\leq  \frac{ 2C_0 C_d^{\beta_0}}{ \rho_0(z_0)} 2^{-\beta_0 l/d} \\
	&\leq \frac{ 2C_0 C_d^{\beta_0+1/2}  2^{-(\beta_0+d/2) l/d} }{ c_0^{3/2} } \sqrt{\rho_0(B_{\epsltrue})}  \\
  	& \leq  \frac{ 2C_0 C_d^{\beta_0+1/2} }{ c_0^{3/2} } \sqrt{\rho_0(B_{\epsltrue})}  
	C^{-(\beta_0+d/2)/d}  \sqrt{ \frac{ \log n}{ n} } 
	\leq  \gamma \frac{ \sqrt{ \log n} }{ \sqrt{ n \rho_0(\epsltrue)}}
\end{align*}
as soon as $C \geq \bar C_0 \gamma^{ \frac{ -2 d }{ 2\beta_0 + d}}:= C_\gamma $ with $\bar C_0^{2d/(2\beta_0+d)} =  2C_0 C_d^{\beta_0+1/2} /c_0^{3/2}$. Therefore  $\Lcal(\gamma)\subseteq\{1,\dots, $ $L_n^*(C_\gamma) \}$. 
We also have 
\begin{equation*}
	\left|\rho_0(z_0)-\rho_0^* \rho_0^{L_n^*(C)} \right|
	\lesssim  C^{-\beta_0/d} \left(\frac{\log n}n\right)^{-\frac{\beta_0}{2\beta_0+d}}
	= C^{-\beta_0/d} \epsilon_n.
\end{equation*}
Let $\gamma >0$; writing shorthand $L_n^*(\gamma) ;=  L_n^*(C_\gamma)$ we bound
\begin{equation}\label{decomp:rho1}
\begin{split}
	|\rho_0(z_0) - \rho&(z_0)|  \le |\rho_0(z_0) - \rho_0^*  \rho_0^{L_n^*(\gamma)}|  
	+  |\rho_0^*  - \rho^* | \rho^{\Scal}(z_0)  
 + \rho_0^* | 
	\rho_0^{L_n^*(\gamma) }(z_0)-\rho^{\Scal}(z_0)|  \\
	& \lesssim  \gamma^{2\beta_0/(2\beta_0+d)}  \epsilon_n  
	+  \rho_0^* | \rho_0^{L_n^*(\gamma) }(z_0)-\rho_0^{\Scal}(z_0)| 
	+ \rho_0^* | \rho_0^{\Scal}(z_0)-\rho^S(z_0)|+ |\rho_0^* -\rho^* |\rho^S(z_0).
	\end{split}
\end{equation}

We now bound each term in  the right hand side of \eqref{decomp:rho1}. We start with the fourth term. 
Recall from after \eqref{post} that  the posterior density of $\rho^*|D^{(n)}$ satisfies
$$
	\pi(\rho^* | D^{(n)}) \propto \pi_{\rho}(\rho^* ) (\rho^*)^{N_n}e^{-\rho^* n}.
$$
This coincides with posterior density arising from a Poisson likelihood with parameter $\rho^* n$ and the positive and continuous prior density $\pi_\rho$. Such posterior concentrates at the parametric rate $1/\sqrt{n}$ around $\rho_0^*$; thus, for any sequence $M_n\to\infty$,
\begin{equation} 
\label{post:rhostar}
	E_{\rho_0}^{(n)}\Pi( \rho^*:|\rho^* - \rho_0^* | > M_n /\sqrt{n} | D^{(n)}) \to 0,\qquad n\to\infty.
\end{equation}
It follows that the last term in the right hand side of \eqref{decomp:rho1} is bounded by $ M_n /\sqrt{n} = o(\epsilon_n)$ , with posterior probability tending to one.

The second term is bounded using Lemma \ref{lem:1} below. Indeed, let $\Omega:=\cap_{l=1}^{L_n}\Omega_\epsltrue(\kappa)$ with $\Omega_\epsltrue(\kappa)$ the event defined in Lemma \ref{lem:0}, satisfying
\begin{equation*}
	P^{(n)}_{\rho_0}(\Omega^c)
	\le 2L_n n^{-\frac12\kappa^2}\to0, \qquad n\to\infty, \qquad \forall \kappa >0.
\end{equation*}
Let $\gamma >0$ and $L_n^*(\gamma)$ be such that $ \mathcal L( \gamma) \subseteq \{1, \cdots , L_n^*(\gamma) \}$, and let $C\geq C_{\gamma}$. Then 
\begin{equation}
\begin{split} \label{diff:LnLgamma}
	|\log&\rho_0^{L_n^*(C) }(z_0)-\log\rho_0^{\Lcal(\gamma)}(z_0)|\\
	&\le \sum_{l: l\notin \Lcal(\gamma),l\le  L_n^*(C) }
	|\log(y^0_{\epsltrue})| \\
	&\le  \gamma\sqrt{\frac{\log n}n}\sum_{l: l\notin \Lcal(\gamma), l \le L_n^*(C) }
	\left(\rho_0(B\epsltrue)\right)^{-1/2} 
	\le 2 \gamma\left(\frac{2^{L_n^*(C)}\log n }{ c_0 n }\right)^{1/2}
	\le  \gamma \frac{ 2 \sqrt{C} }{ \sqrt{c_0}}
	\left(\frac{\log n}{n}\right)^{-\frac{\beta_0}{2\beta_0+d}}.
\end{split}
\end{equation}
Therefore, on the event $\mathcal L(\bar \gamma) \subseteq \Scal \subseteq \mathcal L(\underline \gamma)$, with $L_n^*  = L_n^* (\underline \gamma)$, 
\begin{align*}
 	| \rho_0^{L_n^* }(z_0)-\rho_0^{\Scal}(z_0)| 
	\le   \rho_0^{L_n^* }(z_0)\vee \rho_0^{\Scal}(z_0) 
	\left| \log \rho_0^{L_n^* }(z_0) -  \log \rho_0^{\mathcal L(\bar \gamma)}(z_0)\right| 
	\lesssim \frac{\bar \gamma }{ \underline \gamma^{\beta_0/(2\beta_0+d)}} \epsilon_n
\end{align*}
the last inequality following from \eqref{diff:LnLgamma}. From lemma \ref{lem:1}, there exists $\bar \gamma>0$ such that on the set $\Omega$ introduced above we have
\begin{align*}
	\Pi(\Scal^c\cap\mathcal L(\bar\gamma) \neq \emptyset |D^{(n)})\to0,
\end{align*}
while by Lemma \ref{lem:2} there exist $\underline \gamma < \bar \gamma$ such that on $\Omega$, 
\begin{align*}
	\Pi(\Scal\cap\mathcal L(\underline\gamma)^c \neq \emptyset |D^{(n)})\to0.
\end{align*}
Therefore, with $L_n^*  = L_n^* (\underline \gamma)$, for any $M_n \to\infty$, 
\begin{equation}
\label{term2}
\Pi(\rho: | \rho_0^{L_n^* }(z_0)-\rho_0^{\Scal}(z_0)|  > M_n \epsilon_n/4|D^{(n)}) 
	\to 0, \qquad n\to\infty .
\end{equation}

	We conclude bounding the third term in \eqref{decomp:rho1}
More precisely we first  bound for $M_n$ going to infinity arbitrarily slowly and $M_1>0$ to be determined later, 
\begin{align*}
\Pi( &|\rho_0^{\Scal}(z_0)	-\rho^{\Scal}(z_0)>M_n\epsilon_n/4|D^{(n)}) \\
& \leq 
\Pi\left( |\rho_0^{\Scal}(z_0)	-\rho^{\Scal}(z_0)>M_n\epsilon_n/4, \ \sum_{l\in \Scal} |  y_{\varepsilon_0^l}- y_{\varepsilon_0^l}^0| \leq M_1 M_n\epsilon_n/4 \Big |D^{(n)}\right) \\
 & \quad + \Pi\left( \sum_{l\in \Scal} |  y_{\varepsilon_0^l}- y_{\varepsilon_0^l}^0\Big| >M_1 M_n\epsilon_n/4  |D^{(n)}\right)
 \end{align*}
 To control the first term of the right hand side, note that 
 \begin{align*}
 |\rho^{\Scal} - \rho_0^{\Scal}| & \leq \rho_0^{\Scal}\sum_{S' \subset \Scal, S'\neq \emptyset}\prod_{l'\in S'} \frac{ |y_{\varepsilon_0^l}  -y_{\varepsilon_0^l}^0|}{y_{\varepsilon_0^l}^0} \\
 & \leq   \rho_0^{\Scal}\sum_{l \in \Scal } \frac{ |y_{\varepsilon_0^l}  -y_{\varepsilon_0^l}^0|}{y_{\varepsilon_0^l}^0}  +\rho_0^{\Scal}\sum_{S' \subset \Scal, |S'|\geq 2}\prod_{l'\in S'} \frac{ |y_{\varepsilon_0^l}  -y_{\varepsilon_0^l}^0|}{y_{\varepsilon_0^l}^0} .
  \end{align*}
  Since $y_{\varepsilon_0^l}^0\geq c_0$ and 
  $$\sum_{l\in \Scal} |  y_{\varepsilon_0^l}- y_{\varepsilon_0^l}^0| \leq\frac{ M_1 M_n\epsilon_n}{4} := c_0u_n$$
  then 
  \begin{align*}
  \sum_{S' \subset \Scal, |S'|\geq 2}\prod_{l'\in S'} \frac{ |y_{\varepsilon_0^l}  -y_{\varepsilon_0^l}^0|}{y_{\varepsilon_0^l}^0}  & \leq  \sum_{r=2}^{|\Scal|}\sum_{S' \subset \Scal, |S'|=r} \left(\frac{ M_1M_n\epsilon_n}{ 4c_0} \right)^r\\
   &   =  \sum_{r=2}^{\Scal|}\frac{ |\Scal | ! }{ r ! (|\Scal| -r)!} u_n^r  = (1 + u_n)^{|\Scal|} - 1 - |\Scal|u_n= O(|\Scal|^2 u_n^2 ).
  \end{align*}
Moreover $|\Scal|^2u_n \lesssim  \log n^2 \epsilon_n=o(1)$, so that 
  \begin{align}\label{diff:rhoS}
 |\rho^{\Scal} - \rho_0^{\Scal}| & \leq \frac{\rho_0^{\Scal}}{ c_0} \sum_{S' \subset \Scal, S'\neq \emptyset}\prod_{l'\in S'}  |y_{\varepsilon_0^l}  -y_{\varepsilon_0^l}^0| + o(\epsilon_n) 
  \end{align}
  which implies that if $M_1 \leq  \frac{ c_0}{2\rho_0^{\Scal}}$ 
 $$ \Pi( |\rho_0^{\Scal}(z_0)	-\rho^{\Scal}(z_0)>M_n\epsilon_n/4,\  \sum_{l\in \Scal} |  y_{\varepsilon_0^l}- y_{\varepsilon_0^l}^0| \leq M_1 M_n\epsilon_n/4  |D^{(n)})=0.$$
Let 
$$
	\hat{y}_{\epsltrue} 
	:=  \frac{ N_{\epsltrue} + \alpha_l\alpha_n(\epsltrue) }
	{ \alpha_n(\epsltrue)(N_{P(\varepsilon)}+\alpha_l) }, 
$$
then, on the event 
$$
	\Omega_{0,n}
	:= \left\{ \sum_{l \in \Lcal(\underline \gamma)} 
	|  y_{\epsltrue}^0-\hat{y}_{\epsltrue}|\le  M_n' \epsilon_n/2 \right\} 
	\cap \Omega,
$$
 writing $\alpha_n := \alpha_n(\epsltrue)$, $M_n' = M_1M_n/4$ we have for $n$ large enough and since $M_n \epsilon_n = o(1)$, 
\begin{align*}
	&\Pi( | \rho_0^{\Scal}(z_0)-\rho^{\Scal}(z_0)|  > M_n\epsilon_n/4|D^{(n)})  \le   
	\Pi\left( \Scal \subset \Lcal(\underline \gamma), \ \sum_{l \in \Scal} 
	| y_{\epsltrue}-y_{\epsltrue}^0|> M_n'\epsilon_n\Bigg|N^{(n)}\right)  +o_{P^{(n)}_{\rho_0}}(1)\\
	& \le   
	\Pi\left( \Scal \subset \Lcal(\underline \gamma), \ \sum_{l \in \Scal} 
	| y_{\epsltrue}-\hat y_{\epsltrue}|> M_n'\epsilon_n/2\Bigg|N^{(n)}\right)  +o_{P^{(n)}_{\rho_0}}(1)\\
	&\ \le  
	\sum_{l \in \Lcal(\underline \gamma)}
	\frac{2 E^\Pi\left[ 1_{\{l \in \Scal\}} | y_{\epsltrue}-\hat y_{\epsltrue}| \Big| N^{(n)} \right] }
	{ M_n' \epsilon_n } +o_{P^{(n)}_{\rho_0}}(1)\\
	&\ \le  
	\sum_{l \in \Lcal(\underline \gamma)} \frac{2 \int_0^1 | \bar y/\alpha_n -\hat y_{\epsltrue}|
	\bar y^{N_{\epsltrue}+\alpha_l\alpha_n -1} (1- \bar y)^{N_{P(\varepsilon_0^l)}
	-N_{\epsltrue}+\alpha_l(1-\alpha_n)-1}
	 d\bar{y}\Gamma(N_{P(\varepsilon_0^l)} +  \alpha_l )}{ \Gamma( N_{\epsltrue}+\alpha_l\alpha_n) 
	 \Gamma(N_{P(\varepsilon_0^l)}-N_{\epsltrue}+\alpha_l(1-\alpha_n)) M_n' \epsilon_n } 
	 +o_{P^{(n)}_{\rho_0}}(1)\\
	 &\ = 
	 \sum_{l \in \Lcal(\underline \gamma)}\alpha_n^{-1} \frac{2 \int_0^1 
	 | \bar y -\alpha_n\hat y_{\epsltrue}|
	 \bar y^{N_{\epsltrue}+\alpha_l\alpha_n -1} (1- \bar y)^{N_{P(\varepsilon_0^l)}-N_{\epsltrue}
	 +\alpha_l(1-\alpha_n)-1} d\bar{y}\Gamma(N_{P(\varepsilon_0^l)} +  \alpha_l )}
	 { \Gamma( N_{\epsltrue}+\alpha_l\alpha_n) 
	  \Gamma(N_{P(\varepsilon_0^l)}-N_{\epsltrue}+\alpha_l(1-\alpha_n)) M_n' \epsilon_n } 
	  +o_{P^{(n)}_{\rho_0}}(1),
\end{align*}
where we have used the expression of the posterior distribution of $\bar y_\eps|D^{(n)}$ in \eqref{post:polyaT} . Then, 
\begin{align*}
	&\frac{2}{ M_n' \epsilon_n } \sum_{l \in \Lcal(\underline \gamma)}
	\frac{ \alpha_n^{-1}}{ \sqrt{ N_{P(\varepsilon_0^l)}+\alpha_l }} 
	\sqrt{ \frac{ N_{\epsltrue} + \alpha_l\alpha_n }{ N_{P(\varepsilon_0^l)}+\alpha_l } 
	\left(1 -\frac{ N_{\epsltrue} + \alpha_l\alpha_n }{ N_{P(\varepsilon_0^l)}+\alpha_l }  \right)} 
	+o_{P^{(n)}_{\rho_0}}(1)\\
	&\quad \le  \frac{1}{ M_n' \epsilon_n } \sum_{l \in \Lcal(\underline \gamma)}
	\frac{ 1}{ \sqrt{ n \rho_0(P(\epsltrue))+\alpha_l - 2\kappa 
	\sqrt{n\log n \rho_0(P(\epsltrue))} }} +o_{P^{(n)}_{\rho_0}}(1)\\
	 &\quad \lesssim \frac{1}{ M_n' \epsilon_n } 
	 \sum_{l \in \Lcal(\underline \gamma)}\frac{ 1}{ \sqrt{ n2^{-l}+ \alpha_l  }}  
	 +o_{P^{(n)}_{\rho_0}}(1)
  	\lesssim \frac{1}{ M_n' \sqrt{n}\epsilon_n } 
  	2^{L_n^*(C_{\underline \gamma})/2} = O(1/M_n )+o_{P^{(n)}_{\rho_0}}(1)
	=o_{P^{(n)}_{\rho_0}}(1),
 \end{align*}
since $ \Lcal(\underline \gamma) \subset \{ l \le  L_n^*(C_{\underline \gamma})\}$ and 
$$
	\sqrt{n\log n \rho_0(P(\epsltrue))} = o( n \rho_0(P(\epsltrue)) ) .
$$
Finally, on $\Omega_{0,n}$,  
\begin{equation*}
\begin{split}
	\sum_{l \in \Lcal(\underline \gamma)} |  y_{\epsltrue}^0-\hat{y}_{\epsltrue}| 
	& \le  \sum_{l \in \Lcal(\underline \gamma)} \frac{ 2 \kappa }{ \alpha_n(\epsltrue)} 
	\times \frac{ \sqrt{\log n} }{ \sqrt{ n \rho_0 (P(\epsltrue)) } } 
	\le  \frac{ 2 \kappa \sqrt{ \log n} }{ c_0\sqrt{n} }\sum_{l \le  L_n^*(C_{\underline{\gamma}})}  2^{l/2}
	 \lesssim  \kappa \epsilon_n, 
\end{split}
\end{equation*}
so that for any $M_n\to\infty$, using Lemma \ref{lem:0}, $P_{\rho_0}^{(n)}( \Omega_{0,n}^c) \to 0$ and 
\begin{align*}
	\Pi\left(\rho: |\rho_0(z_0) - \rho(z_0)|> M_n \epsilon_n|D^{(n)}\right) 
	& \to 0
\end{align*}
in $P_{\rho_0}^{(n)}$-probability, concluding the proof.\qed

%
%
%

\subsection{Auxiliary Results for the proof of Theorem \ref{Theo:LocalRates}}

	The first two auxiliary results provide an upper bound for the probability of the event $\Omega$ appearing in the proof of Theorem \ref{Theo:LocalRates}. Recall that for $\varepsilon\in\Ecal_l$, $A(\varepsilon)$ denotes its twin bin.
	
\begin{lemma}\label{lem:0}
For $\kappa>0$ and $N_\varepsilon$ defined as after \eqref{likeli}, consider the event
\begin{equation}
\begin{split}
	&\Oomega\\
	&\ :=\left\{ |N_\eps-n\rho_0(B_\eps)|\le  \kappa\sqrt{n\rho_0(B_\eps)\log n }, \ 
	|N_\aeps-n\rho_0(B_\aeps)|\le  \kappa\sqrt{n\rho_0(B_\aeps)\log n }\right\}.
\end{split}
\end{equation}
Then, for all sufficiently large $n$,
\begin{align*}
	P^{(n)}_{\rho_0}(\Oomega^c)\le  4e^{-\frac12\kappa^2\log n}.
\end{align*}
\end{lemma}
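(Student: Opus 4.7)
The plan is to exploit the fact that, conditionally on the covariate field, $N_\eps$ is an ordinary Poisson random variable, and then apply the same Bernstein-type tail inequality for Poisson variables that was derived as \eqref{Eq:PO} in the proof of Lemma \ref{Lem:Tests}. Concretely, conditionally on $Z^{(n)}$ the count $N_\eps=\sum_{x\in N^{(n)}}1_{\{Z(x)\in B_\eps\}}$ is Poisson with parameter $\gamma_\eps:=\int_{\Wcal_n}\rho_0(Z(x))1_{\{Z(x)\in B_\eps\}}dx$, which in the paper's notation equals $n\rho_0(B_\eps)$; the same is true of $N_{A(\eps)}$ with parameter $\gamma_{A(\eps)}$.

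Applying \eqref{Eq:PO} (and its lower-tail analogue) conditionally on $Z^{(n)}$ with the choice $y=\kappa\sqrt{\gamma_\eps\log n}$ yields
\[
P^{(n)}_{\rho_0}\!\left(|N_\eps-\gamma_\eps|\ge \kappa\sqrt{\gamma_\eps\log n}\,\Big|\,Z^{(n)}\right)
\le 2\exp\!\left\{-\frac{\kappa^2\log n}{2\bigl(1+\kappa\sqrt{\log n/\gamma_\eps}\bigr)}\right\}.
\]
The same inequality holds for $N_{A(\eps)}$ with $\gamma_{A(\eps)}$ in place of $\gamma_\eps$. A union bound over the two events defining $\Omega_\eps(\kappa)^c$ then yields the claim once the right-hand side above is shown to be at most $e^{-\kappa^2\log n/2}$ (up to an adjustment of constants).

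For this reduction, it is enough to check that $\gamma_\eps,\gamma_{A(\eps)}$ are sufficiently large compared with $\log n$. Invoking Condition \ref{Cond:Holder}(ii) gives $\gamma_\eps = n\rho_0(B_\eps)\ge c_0 n\mu_n(B_\eps)$, and Condition \ref{Cond:Diam} provides the further lower bound $\mu_n(B_\eps)\ge C_d^{-1}2^{-l}$ (with probability tending to one under $P_{Z^{(n)}}$). Since $2^l\le 2^{L_n}\le \delta n/\log n$ by hypothesis, this gives $\gamma_\eps\ge (c_0/(C_d\delta))\log n$, so that upon taking $\delta$ sufficiently small (which is allowed by the hypotheses of Theorem \ref{Theo:LocalRates}) the correction factor $\kappa\sqrt{\log n/\gamma_\eps}$ can be made arbitrarily small, absorbing the loss and yielding exponent at most $-\kappa^2\log n/2$ for all sufficiently large $n$. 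Integrating the resulting conditional bound over $Z^{(n)}$, and using that the good event for Condition \ref{Cond:Diam} contributes negligibly, delivers the stated unconditional inequality.

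The only mild subtlety will be that both the Poisson parameters $\gamma_\eps,\gamma_{A(\eps)}$ and the threshold $\kappa\sqrt{\gamma_\eps\log n}$ are random through $Z^{(n)}$. This is, however, harmless: the exponent in the Poisson Bernstein bound depends monotonically on $\gamma_\eps$ through the ratio $\log n/\gamma_\eps$, so any (deterministic) lower bound on $\gamma_\eps$ yields a uniform conditional bound that can be integrated out. Thus the entire argument reduces to (i) the Poisson tail inequality \eqref{Eq:PO}, (ii) the bin-mass lower bounds in Conditions \ref{Cond:Diam} and \ref{Cond:Holder}(ii), and (iii) the smallness of $\delta$ assumed in Theorem \ref{Theo:LocalRates}; no new technology is required.
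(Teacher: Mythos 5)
Your proof is correct and follows essentially the same route as the paper: both rest on a Chernoff/Bernstein-type tail bound for the conditionally Poisson counts $N_\eps$ with parameter $n\rho_0(\eps)$, followed by a union bound over the two defining events. You are in fact somewhat more explicit than the paper about the conditioning on $Z^{(n)}$ and about why $n\rho_0(\eps)\gtrsim\log n$ (via Conditions \ref{Cond:Diam} and \ref{Cond:Holder}(ii) and the constraint on $L_n$) renders the Bernstein correction harmless, a point the paper compresses into a ``$(1+o(1))$'' and ``provided $\lambda$ is large enough''.
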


\begin{proof}
Note that if a random variable $N$ has a Poisson distribution with a parameter $\lambda>0$, then by Markov inequality, for any $t>0$,
\begin{align*}
	\Pr(N-\lambda>t\sqrt{\lambda})
	\le  e^{-t^2/2-t\sqrt{\lambda}+t\sqrt{\lambda}-\lambda}, \quad \Pr(- N+\lambda>t\sqrt{\lambda})
	\le  e^{-t^2/2},
\end{align*}
Therefore, provided that $\lambda$ is large enough, and $0 < t < \sqrt{\lambda }$
\begin{align*}
	\Pr\left(\frac{t}{\sqrt{\lambda}}|N-\lambda|>t\right) \le  2e^{-t^2/2},
\end{align*}
The claim then follows upon noting that $N_\eps$ has a Poisson distribution with the parameter $\lambda=n\rho_0(\eps)$.
\end{proof}

	An immediate application of Lemma \ref{lem:0} above and the union bound yields the following result.

\begin{corollary}\label{Cor:Omega}
Let $\Omega:=\cap_{l=1}^{L_n}\Omega_\epsltrue(\kappa)$, $\kappa>0$. Then, for all $\kappa>0$
\begin{align*}
	P^{(n)}_{\rho_0}(\Omega^c)\le  2L_n n^{-\frac12\kappa^2}.
\end{align*}
\end{corollary}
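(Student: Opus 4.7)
The plan is extremely short since, as announced in the text, the result is just a union bound over the levels combined with the per-level tail estimate proved in Lemma \ref{lem:0}. First I would write
\[
	\Omega^c = \bigcup_{l=1}^{L_n} \Omega_{\eps_l^0}(\kappa)^c,
\]
and then apply the subadditivity of $P^{(n)}_{\rho_0}$ to get
\[
	P^{(n)}_{\rho_0}(\Omega^c) \;\le\; \sum_{l=1}^{L_n} P^{(n)}_{\rho_0}\bigl(\Omega_{\eps_l^0}(\kappa)^c\bigr).
\]

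Next I would invoke Lemma \ref{lem:0}, applied at each level $l=1,\dots,L_n$ and at the corresponding index $\eps_l^0\in\Ecal_l$. The lemma gives, for each such $l$ and for all sufficiently large $n$,
\[
	P^{(n)}_{\rho_0}\bigl(\Omega_{\eps_l^0}(\kappa)^c\bigr) \;\le\; 2\,e^{-\frac{1}{2}\kappa^2 \log n} \;=\; 2\,n^{-\kappa^2/2}.
\]
Summing these $L_n$ identical bounds yields
\[
	P^{(n)}_{\rho_0}(\Omega^c) \;\le\; L_n \cdot 2\, n^{-\kappa^2/2} \;=\; 2 L_n\, n^{-\kappa^2/2},
\]
which is exactly the stated inequality.

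There is essentially no obstacle: the only thing to notice is that Lemma \ref{lem:0} was stated for a generic pair $(B_\eps, B_{A(\eps)})$ and we are applying it at $\eps=\eps_l^0$ (whose twin is $B_{A(\eps_l^0)}$, i.e.\ the sibling of the bin containing $z_0$ at level $l$), which is exactly the setting relevant for Theorem \ref{Theo:LocalRates}. The bound is uniform in $\eps$ and depends on $n$ only through the $\log n$-based Bernstein scaling embedded in the definition of $\Omega_{\eps}(\kappa)$, so the same $2n^{-\kappa^2/2}$ works for every level. Hence the proof reduces to the one displayed union bound above.
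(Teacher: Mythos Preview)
Your proposal is correct and matches the paper's approach exactly: the paper states that the corollary follows from ``an immediate application of Lemma \ref{lem:0} above and the union bound,'' which is precisely the argument you wrote out.
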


\begin{remark}[Towards uniform pointwise rates]\label{Rem:Unif}
Note that had we tried to obtain a result uniform in $z_0$, 
we could have followed \cite[Section 5]{RR23} and considered a net of $[0,1]^d$ with radius $\zeta (n/\log n)^{-1/(2\beta+d)}$, say $(z_i)_{i \leq I_n}$, $I_n \lesssim (n/\log n)^{d/(2\beta+d)}$. Then, for each such $z_i$, we would take a set $\Omega(z_i)$ as defined above and $\bar \Omega = \cap_{i=1}^{I_n} \Omega(z_i) $. This would lead to a constraint on $\kappa $ of the form $\kappa^2 >  2d /( 2 \beta+ d)$, which would then imply that in assumption (i) of Theorem \ref{Theo:LocalRates} we need $t>2d /( 2 \beta+ d)$ (as opposed to any $t>0$). Thus, if for all $z \in [0,1]^d$ we write $\beta(z)$ for the local H\"older smoothness at $z$ and set $\epsilon_n(z) := (n/\log n)^{-\beta(z)/(2\beta(z)+d)}$, under the stronger assumption  $t>2d /( 2 \beta+ d)$ it holds that
$$
	\Pi\left( \rho:\sup_{z\in [0,1]^d} \epsilon_n(z)^{-1}|\rho(z)- \rho_0(z)| > M_n \Bigg|D^{(n)} 
	\right) \to 0, 
$$
in $P_{\rho_0}^{(n)}$-probability as $n\to\infty$ for all $M_n$ going to infinity.  A result of this type was proved for instance in Theorem 1 in \cite{RR23}, in the case of univariate nonparametric regression. A direct consequence of this is that also
$$
	\Pi(\rho:\|\rho- \rho_0\|_{L^\infty([0,1]^d)} > M_n\epsilon_n |D^{(n)} ) =o_{P_{\rho_0}^{(n)}}(1), 
	\quad \epsilon_n := (n/\log n)^{-\beta/(2\beta+d)} \geq \epsilon_n(z),
$$
where $\beta\le\beta(z)$ is the global H\"older smoothness of $\rho_0$ from Condition \ref{Cond:Holder}.
\end{remark}

	The next two key lemmas show that the posterior distribution consistently identifies, in the large sample size limit, the set of true coefficients $y^0_{\varepsilon^l_0}$ that are significantly different from $1$. Recall the definition of the sets $\Scal$ and $\Lcal(\gamma), \ \gamma>0$, from the beginning of the proof of Theorem \ref{Theo:LocalRates}.

\begin{lemma}\label{lem:1}
Consider a P\'olya tree prior $\Pi $ constructed as in Theorem \ref{Theo:LocalRates}. Then, there
exists $\bar \gamma>0$ such that, on the event $\Omega$ defined in Corollary \ref{Cor:Omega}, 
\begin{align*}
	\Pi(\Scal^c\cap\mathcal L(\bar\gamma) \neq \emptyset |D^{(n)})\to0.
\end{align*}
\end{lemma}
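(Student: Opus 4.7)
The plan is to exploit the product structure of the posterior over the Pólya tree: by the factorization of the likelihood \eqref{likeli} into terms $\bar y_\eps^{N_\eps}$ and the a priori independence of the spike-and-slab draws at different parent nodes (each controlling one pair $\bar Y_{\eps\pm}$), the posterior marginals of the indicator events $\{l \in \Scal^c\}$ across different $l$ are mutually independent given the counts $\{N_\eps\}$. A union bound then reduces the lemma to showing that for every $l \in \Lcal(\bar\gamma)$ the posterior probability of selecting the atom at parent $\eps_{l-1}^0$ is $o(1/\log n)$ on the event $\Omega$, provided $\bar\gamma$ is large.

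For a single such $l$, set $N_- := N_{\eps_l^0}$, $N_+ := N_{A(\eps_l^0)}$, $N := N_- + N_+$, $a := \alpha_n(\eps_l^0)$, $\alpha := \alpha_{\eps_{l-1}^0}$ and $q := q_{\eps_{l-1}^0}$. Bayes' rule immediately yields
$$
\Pi(l \in \Scal^c \mid D^{(n)}) \le \frac{q}{1-q} \cdot \frac{a^{N_-}(1-a)^{N_+}\, B(\alpha a, \alpha(1-a))}{B(N_- + \alpha a, N_+ + \alpha(1-a))}.
$$
I would then apply Stirling's formula to the ratio of beta functions (which is legitimate because on $\Omega$, $N \asymp n\rho_0(\eps_{l-1}^0) \gtrsim n 2^{-l}$ while $\alpha 2^l = o(n)$ by Condition (ii), so that $\alpha = o(N)$) to rewrite the right-hand side as a polynomial-in-$N$ prefactor times $\exp(-N \cdot \mathrm{KL}((\tilde p, 1-\tilde p),(a, 1-a)))$, where $\tilde p := (N_- + \alpha a)/(N+\alpha)$; Pinsker's inequality then gives $N \cdot \mathrm{KL} \ge 2N(\tilde p - a)^2$.

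To convert the hypothesis $l \in \Lcal(\bar\gamma)$ into a quantitative lower bound on $N(\tilde p - a)^2$, I would use that on $\Omega$ one has $\hat p := N_-/N = \rho_0(\eps_l^0)/\rho_0(\eps_{l-1}^0) + O(\sqrt{\log n/(n\rho_0(\eps_{l-1}^0))})$, which by Condition \ref{Cond:Holder}(ii) and the definition of $y^0_{\eps_l^0}$ equals $a\, y^0_{\eps_l^0} + O(\sqrt{\log n/(n\rho_0(\eps_l^0))})$. When $|y^0_{\eps_l^0} - 1| > \bar\gamma\sqrt{\log n/(n\rho_0(\eps_l^0))}$ with $\bar\gamma$ sufficiently large, the deterministic piece $a(y^0_{\eps_l^0} - 1)$ dominates the noise, giving $N(\hat p - a)^2 \gtrsim c_1^2 \bar\gamma^2 \log n$; the additional discrepancy $\tilde p - \hat p = O(\alpha/N)$ is negligible. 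Each summand is therefore at most a polynomial factor in $n$ times $n^{-c\bar\gamma^2}$, and summing over $|\Lcal(\bar\gamma)| \le L_n = O(\log n)$ levels closes the proof once $\bar\gamma$ is taken large enough.

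The main obstacle is the careful bookkeeping of the polynomial prefactors arising both from the Stirling expansion with level-dependent hyperparameters and from the beta-function factor $B(\alpha a, \alpha(1-a))$, which can grow like $\alpha^{-1}$ when $\alpha a$ is small. The hypotheses $(1-q_{\eps_l^0})\alpha_{\eps_l^0} \le 2^{-lt}$ and $\alpha_{\eps_l^0} 2^l = o(n)$ in Conditions (i)--(ii) are calibrated precisely so that these prefactors are dominated by an $n^{O(1)}$ term that loses against the exponential $e^{-c\bar\gamma^2 \log n}$, yielding the claimed vanishing posterior probability.
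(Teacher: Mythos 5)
Your proposal is correct and follows essentially the same route as the paper's proof: both bound the per-level posterior spike probability by the ratio of the spike's marginal likelihood to the slab's beta--binomial marginal, extract $\exp\{-(N+\alpha)\,\mathrm{KL}(p_N,\alpha_n)\}$ via Stirling, lower-bound the KL term by $2N(\tilde p-a)^2\gtrsim c_1^2\bar\gamma^2\log n$ on $\Omega$ for $l\in\Lcal(\bar\gamma)$, and close with a union bound over the $O(\log n)$ relevant levels using the hyperparameter conditions to tame the polynomial prefactors. The only cosmetic imprecision is the claim that $\tilde p-\hat p=O(\alpha/N)$ is negligible against $\sqrt{\log n/N}$; the clean justification is the identity $\tilde p-a=(\hat p-a)\,N/(N+\alpha)$, so the perturbation is multiplicative and harmless since $\alpha=o(N)$.
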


\begin{proof}

Recalling the expression of the likelihood in \eqref{likeli}, decompose $( \eps \in \Ecal_l)$ into a set of distinct pairs $(\eps, A(\eps))$, with $\eps, A(\eps)$ the two children of $P(\eps)$ and denote by $\bar{\Ecal}_l$ the set of the obtained pairs. For fixed $\eps\in\mathcal L(\bar\gamma)$, the posterior density $\pi(y_\eps|D^{(n)})$ is then proportional to 
\begin{align*}
	 (\alpha_n(\eps)y_\eps)^{N_\eps}&(1-\alpha_n(\eps)y_\eps)^{N_\aeps}
	\\
	&\times \left(q_{\eps}\delta_1
	+ (1-q_{\eps})\frac{(\alpha_n(\eps) y_{\eps})^{\alpha_\eps \alpha_n(\varepsilon)-1}
	(1-\alpha_n(\eps)y_{\eps})^{\alpha_\eps(1-\alpha_n(\eps))-1}}{B(\alpha_\eps\alpha_n(\eps),\alpha_\eps(1-\alpha_n(\eps)))}\right).
\end{align*}
Thus,
\begin{align*}
	\pi&(y_\eps=1 |D^{(n)})\\
	&= q_\eps \alpha_n(\eps)^{N_\eps}(1-\alpha_n(\eps))^{N_\aeps}
	\Bigg(q_\eps\alpha_n(\eps)^{N_\eps}(1-\alpha_n(\eps))^{N_\aeps}
	+(1-q_\eps)\alpha_n(\eps)^{-1}\\
	&\quad\times B\big(N_\eps+\alpha_\eps\alpha_n(\eps),
	N_\aeps+\alpha_\eps(1-\alpha_n(\eps))\big)
	/B\big(\alpha_\eps\alpha_n(\eps),\alpha_\eps(1-\alpha_n(\eps))\big) \Bigg)^{-1}
	\\
	&=\left(1+\alpha_n(\eps)^{-N_\eps}
	(1-\alpha_n(\eps))^{-N_\aeps}\frac{1-q_\eps}{\alpha_n(\eps)q_\eps}
	\times\frac{B\big(N_\eps+\alpha_\eps\alpha_n(\eps),N_\aeps+\alpha_\eps(1-\alpha_n(\eps))\big)}
	{B\big(\alpha_\eps\alpha_n(\eps),\alpha_\eps(1-\alpha_n(\eps))\big)}\right)^{-1}\\
	&= \left( 1 +\frac{1-q_\eps}{q_\eps}\Gamma_\eps\right)^{-1},
\end{align*}
where, denoting by $\tilde N_\eps:= \nes$ and by $\tilde N_\aeps:=N_\aeps + \alpha_\eps(1-\alpha_n(\eps))$,
\begin{align*}
	\Gamma_\eps 
	&:=\frac{ \alpha_n(\eps)^{-N_\eps}(1-\alpha_n(\eps))^{-N_\aeps} }{\alpha_n(\eps)} 
	\times\frac{\Gamma(\tilde N_\eps)\Gamma(\tilde N_\aeps)}{\Gamma(N_\eps+N_\aeps+\alpha_\eps)}
	\times\frac{\Gamma(\alpha_\eps)}{\Gamma(\alpha_\eps\alpha_n(\eps))
	\Gamma(\alpha_\eps(1-\alpha_n(\eps))}\\
	&=\frac{ \alpha_n(\eps)^{\alpha_\eps\alpha_n(\eps)}(1-\alpha_n(\eps))^{\alpha_\eps(1-\alpha_n(\eps))} }
	{\alpha_n(\eps)}
	\times
	\frac{\Gamma(\alpha_\eps)}{\Gamma(\alpha_\eps\alpha_n(\eps))
	\Gamma(\alpha_\eps(1-\alpha_n(\eps))}
	\times\sqrt{\frac{2\pi \tilde N_\eps\tilde N_\aeps }{\nnes}}\\
	&\quad\
	\times \left(1+O\left(\frac{1}{N_\eps \wedge N_\aeps}\right)\right)
	\times \exp\Big\{(\tilde N_\eps-1)\log(\tilde N_\eps-1)+(\tilde N_\aeps  -1)\log(\tilde N_\aeps -1) \\
	&\quad - \tilde N_\eps \log \alpha_n(\eps) - \tilde N_\aeps\log (1-\alpha_n(\eps) )
	-(\nnes-1)\log(\nnes-1)\Big\},
\end{align*} 
 having used Stirling's formula. Also, letting
\begin{align*}
	p_N(\eps):=\frac{\tilde N_\eps}{\tilde N_\eps+\tilde N_\aeps},
\end{align*}
then,
\begin{equation}
\label{eq:a}
	\Gamma_\eps
	=  H_\eps
	\sqrt{\frac{1 }{p_N(\eps)  \tilde N_\aeps}}
	\exp\left((\tilde N_\eps+\tilde N_\aeps)\text{KL}(p_N(\eps),\alpha_n(\eps))\right)
	\left(1+o(1)\right),
\end{equation}
where 
$$ 
	H_\eps=  
	\frac{ \alpha_n(\eps)^{\alpha_\eps\alpha_n(\eps)}(1-\alpha_n(\eps))^{\alpha_\eps(1-\alpha_n(\eps))} }
	{\alpha_n(\eps)}
	\times
	\frac{\Gamma(\alpha_\eps)}{\Gamma(\alpha_\eps\alpha_n(\eps))
	\Gamma(\alpha_\eps(1-\alpha_n(\eps))} = \frac{\alpha_\eps}{\alpha_n(\eps) } ( 1 + O(1/\alpha_\eps ))
$$
and 
\begin{align*}
	\text{KL}(p_N(\eps),\alpha_n(\eps))
	=p_N(\eps)\log\left(\frac{ p_N(\eps)}{\alpha_n(\eps)}\right)+(1-p_N(\eps))
	\log\left(\frac{ 1-p_N(\eps)}{1-\alpha_n(\eps)} \right).
\end{align*}
Thus, if $\alpha_\eps = O(1)$, then 
\begin{equation}
\label{eq:a}
	\Gamma_\eps
	\simeq 
	\alpha_\eps (1-\alpha_n(\eps))\sqrt{\frac{1 }{p_N(\eps)  \tilde N_\aeps}}
	\exp\left((\tilde N_\eps+\tilde N_\aeps)\text{KL}(p_N(\eps),\alpha_n(\eps))\right).
\end{equation}
Further, by convexity, for $p,q \in (0,1)$, $\text{KL}(p,q)\geq (p-q)^2/2$, and if $p-q = o(1)$, then 
\begin{equation*}
	\text{KL}(p,q)=\frac{ (p-q)^2}{ 2q(1-q)} \Big(1+o(1)\Big).
\end{equation*}
Note that
\begin{equation*}
	\pne
	=\left(\alpha_n y_\eps^0+\frac{\alpha_n \alpha_\eps}{n\rho_0(P(\eps))}
	+\frac{\Delta_\eps}{n\rho_0(P(\eps))}\right)
	\times \left(1+\frac{\alpha_\eps}{n\rho_0(P(\eps))}
	+\frac{\Delta_{P(\eps)}}{n\rho_0(P(\eps))}\right)^{-1},
\end{equation*}
where we set
\begin{align*}
	\Delta_\eps&:=N_\eps-n\rho_0(\eps)=N_\eps-ny_\eps^0\rho_0(\peps);
	\qquad \Delta_{P(\eps)}:=N_\eps+N_\aeps-n\rho_0(\peps).
\end{align*}
Above, we have used that $\rho_0(\eps)=\rho_0(\peps)y_\eps^0\alpha_n(\eps)$. Note that on the set $\Oomega\cap \Omega_{P(\eps)}(\kappa)$ we have
\begin{align*}
	\frac{|\Delta_\eps|}{n\rho_0(\eps)}
	\le  \kappa \sqrt{\frac{\log n}{n\rho_0(\eps)}}; 
	\qquad \frac{|\Delta_{P(\eps)}|}{n\rho_0(P(\eps))}\le 
	 \kappa \sqrt{\frac{\log n}{n\rho_0(\peps)}}.
\end{align*}
Thus,
\begin{align*}
	\text{KL}(p_N(\eps),\alpha_n(\eps)) 
	&\geq \frac{(p_N(\eps)-\alpha_n(\eps))^2}{2} \\
	&= \frac{\alpha_n^2(\eps) \left( y_\eps^0- 1 +\frac{\Delta_\eps -\Delta_{P(\eps)} }
	{n\rho_0(P(\eps))\alpha_n(\eps)}\right)^2}{2
 	\left(1+\frac{\alpha_\eps}{n\rho_0(P(\eps))}+\frac{\Delta_{P(\eps)}}
	{n\rho_0(P(\eps))}\right)^{2} } 
 	\geq 
 	\frac{ \alpha_n^2(\eps) (\bar \gamma - 2\kappa/\alpha_n(\eps))^2 \log n }{ 2n\rho_0(P(\eps))
	\left(1 +\frac{\alpha_\eps}{n\rho_0(P(\eps))} 
	+ \frac{\kappa \sqrt{\log n}}{\sqrt{n\rho_0(P(\eps))} }\right)^{2} },
 \end{align*}
so that on $\Oomega\cap \Omega_{P(\epsilon)}$, provided that $\kappa < \alpha_n(\eps)\bar \gamma/4$, we can lower bound the argument of the exponential in $\Gamma_\eps$ in \eqref{eq:a} by
\begin{align*}
	\frac{[ n \rho_0(P(\eps)) +  \alpha_\eps- \kappa \sqrt{n\log n\rho_0(P(\eps)) }]
	\alpha_n^2(\eps)\bar\gamma^2\log n }{ 2 n\rho_0(P(\eps))\left(1 +\frac{\alpha_\eps}{n\rho_0(P(\eps))} 
	+ \frac{\kappa \sqrt{\log n}}{\sqrt{n\rho_0(P(\eps))} }\right)^{2}}
	\geq \frac{ \alpha_n^2(\eps)\bar \gamma^2 \log n }{3 }.
 \end{align*}
Plugging this into \eqref{eq:a} and using Stirling formula together with thet fact that $\alpha_n(\eps) \in [c_1, 1-c_1]$ leads to the upper bound
\begin{align*}
	\pi(y_\eps=1/2|D^{(n)})
	&\le  C\frac{q_\eps}{(1-q_\eps)\alpha_\eps } e^{-\bar\gamma^2c_1^2 \log n/3}
	\sqrt{\tilde N_\eps\wedge\tilde N_\aeps}\Big(1+o(1)\Big),
\end{align*}
holding, for some $C>0$ independent of $\eps$ and $n$,  for any $\eps\in\mathcal L(\bar\gamma)$. Note that 
\begin{align*}
	\pi(S^c\cap \mathcal L(\bar\gamma)|D^{(n)})
	\le \sum_{\eps\in\mathcal L(\bar\gamma)}\pi(\eps\in S^c|D^{(n)}).
\end{align*}
Also, on $\Oomega$,
\begin{align*}
	{\tilde N_\eps\wedge\tilde N_\aeps}
	\le  n\rho_0(P(\eps))+\kappa\sqrt{n\rho_0(P(\eps))\log n}+\alpha_\eps.
\end{align*}
Therefore,
\begin{align*}
	\Pi(S^c\cap \mathcal L(\bar\gamma)|D^{(n)})
	\lesssim  n^{-\bar\gamma^2c_1^2/3}\sum_{\eps\in \mathcal L(\bar\gamma)} 
	 \frac{ q_\eps}{\alpha_\eps(1-q_\eps)}\left[ \sqrt{n \rho_0(\peps) } + \sqrt{\alpha_\eps}\right] = o(1) 
\end{align*}
as long as $\bar \gamma$ is large enough and
$$
	\alpha_\eps\gtrsim n^{-H}, \qquad 1-q_\eps \gtrsim n^{-H}, \qquad l\le  L_n
$$
for some $H>0$.
\end{proof} 

\begin{lemma} \label{lem:2}
Consider a P\'olya tree prior $\Pi $ constructed as in Theorem \ref{Theo:LocalRates}. Then, there exists $\kappa>0$ such that on the event $\Omega$ (cf.~Lemma \ref{lem:0}),
\begin{align*}
	\Pi(S\cap \mathcal L(\underline\gamma)^c|D^{(n)})\to0.
\end{align*}
\end{lemma}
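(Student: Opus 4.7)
The plan is to mirror the derivation in the proof of Lemma \ref{lem:1}, but to obtain an upper bound on the spike-versus-slab odds ratio $\Gamma_\eps$ rather than a lower bound. Using the closed-form expression from Lemma \ref{lem:1},
\[
	\pi(y_{\eps_0^l} \neq 1 \mid D^{(n)})
	\,=\, \frac{(1-q_{\eps_0^l})\Gamma_{\eps_0^l}/q_{\eps_0^l}}{1+(1-q_{\eps_0^l})\Gamma_{\eps_0^l}/q_{\eps_0^l}}
	\,\le\, \frac{1-q_{\eps_0^l}}{q_{\eps_0^l}}\,\Gamma_{\eps_0^l},
\]
together with a union bound, the claim reduces to proving that $\sum_{l \in \mathcal{L}(\bar\gamma)^c,\; L_0\le l \le L_n} (1-q_{\eps_0^l})\Gamma_{\eps_0^l}/q_{\eps_0^l} \to 0$ on the good event $\Omega$.

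The main step is to upper bound the exponential factor $\exp((\tilde N_\eps+\tilde N_\aeps)\textnormal{KL}(p_N(\eps),\alpha_n(\eps)))$ appearing in the Stirling expansion of $\Gamma_\eps$ derived in the proof of Lemma \ref{lem:1}. For $l\in \mathcal{L}(\bar\gamma)^c$ we have by definition $|y_{\eps_0^l}^0 - 1| \le \bar\gamma\sqrt{\log n/(n\rho_0(\eps_0^l))}$, and on $\Omega$ the concentration bounds yield $|\Delta_\eps|, |\Delta_{P(\eps)}| \lesssim \kappa\sqrt{n\rho_0(P(\eps))\log n}$. Substituting into the decomposition of $p_N(\eps)-\alpha_n(\eps)$ (see the proof of Lemma \ref{lem:1}) gives
\[
	|p_N(\eps)-\alpha_n(\eps)| \,\lesssim\, (\bar\gamma+\kappa)\,\sqrt{\log n/(n\rho_0(P(\eps_0^l)))},
\]
which, since $\alpha_n(\eps)\in[c_1,1-c_1]$, is $o(1)$. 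The sharp local bound $\textnormal{KL}(p,q)\le (p-q)^2/(2q(1-q))\cdot(1+o(1))$ then delivers $(\tilde N_\eps+\tilde N_\aeps)\textnormal{KL}(p_N(\eps),\alpha_n(\eps)) \le C(\bar\gamma^2+\kappa^2)\log n$, so combining with the polynomial prefactor $\sqrt{1/(p_N(\eps)\tilde N_\aeps)} \lesssim \sqrt{2^l/n}$ one obtains
\[
	\Gamma_{\eps_0^l} \,\lesssim\, \alpha_{\eps_0^l}\,\sqrt{2^l/n}\; n^{\,C(\bar\gamma^2+\kappa^2)}.
\]

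Inserting this into the union bound and invoking the hypotheses $(1-q_{\eps_0^l})\alpha_{\eps_0^l}\le 2^{-lt}$ and $q_{\eps_0^l}\ge c_2$ produces a per-index contribution of order $2^{-l(t-1/2)}\,n^{-1/2+C(\bar\gamma^2+\kappa^2)}$; since only $O(\log n)$ levels contribute and $2^{L_n}\le \delta n/\log n$, summation yields $o(1)$ provided $\bar\gamma$ and $\kappa$ are taken small enough relative to $t$. The principal obstacle is the quantitative tension between this smallness requirement on $\bar\gamma$ and the largeness requirement on $\bar\gamma$ arising in Lemma \ref{lem:1}: reconciling them requires a careful accounting of the leading-order terms in the Stirling expansion of $\Gamma_\eps$, retaining the factor $\alpha_\eps/\alpha_n(\eps)$ intact rather than absorbing it into a looser constant, so that the exponent of $n$ above is truly $(\bar\gamma^2+\kappa^2)/(2c_1(1-c_1))$. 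This is the sharper bound alluded to in the discussion following Theorem \ref{Theo:LocalRates}, and it is precisely what allows the mild sparsity condition $(1-q_\eps)\alpha_\eps\le 2^{-lt}$ to suffice here, in place of the exponential decay $1-q_\eps\le e^{-\kappa l}$ imposed in \cite{CM21}.
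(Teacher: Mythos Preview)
Your overall strategy matches the paper's: bound $\pi(y_{\eps_0^l}\neq 1\mid D^{(n)})\le (1-q_{\eps_0^l})\Gamma_{\eps_0^l}/q_{\eps_0^l}$, control the exponential factor in $\Gamma_\eps$ via the quadratic Taylor expansion of the KL divergence, and sum over levels using the sparsity condition $(1-q_\eps)\alpha_\eps\le 2^{-lt}$. The computations you sketch are essentially the paper's.

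The gap is in your final paragraph. You correctly identify the tension: Lemma~\ref{lem:1} forces $\bar\gamma$ large, while your bound on $\Gamma_\eps$ needs the threshold parameter small. But your proposed resolution---tracking the constants more carefully so that the exponent becomes $(\bar\gamma^2+\kappa^2)/(2c_1(1-c_1))$---does not work. From the proof of Lemma~\ref{lem:1} one needs $\bar\gamma^2 c_1^2/3$ to dominate a positive power of $n$, so $\bar\gamma^2$ is bounded below by a fixed constant depending on $c_1$; plugging this lower bound into $(\bar\gamma^2+\kappa^2)/(2c_1(1-c_1))$ gives an exponent that exceeds $1/2$, and your sum diverges regardless of how sharp the prefactors are.

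The paper resolves this not by constant-tracking but by decoupling the two thresholds: the statement of the lemma contains a typo and should read $\mathcal L(\underline\gamma)^c$ for a \emph{separate} small parameter $\underline\gamma$ (this is how the lemma is actually invoked in the proof of Theorem~\ref{Theo:LocalRates}, where the event of interest is $\mathcal L(\bar\gamma)\subseteq\Scal\subseteq\mathcal L(\underline\gamma)$). With $\underline\gamma$ free, the paper first fixes $\kappa>0$ small enough that the exponent $(9\kappa^2-1)/2$ coming from the prefactor $n^{-1/2}$ and the KL bound is sufficiently negative, and then chooses $\underline\gamma$ small enough relative to $\kappa$ so that $\underline\gamma(1+o_l(1))+2\kappa\le 3\kappa$. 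Your argument goes through verbatim once you replace $\bar\gamma$ by $\underline\gamma$ throughout and drop the attempt to reconcile it with Lemma~\ref{lem:1}.
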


\begin{proof}
Using throughout the notation introduced in the proof of Lemma \ref{lem:1}, recall that
\begin{align*}
	\pi(y_\eps \neq 1|D^{(n)})
	=\frac{ (1-q_\eps)\Gamma_\eps /q_\eps}
	{1+\frac{1-q_\eps}{q_\eps}\Gamma_\eps},
\end{align*}
and that 
\begin{align*}
	\Gamma_\eps
	\simeq
	\alpha_\eps \sqrt{\frac{1 }{ \tilde N_\aeps \vee \tilde N_\eps }}
	\exp\left((\tilde N_\eps+\tilde N_\aeps)
	\text{KL}(p_N(\eps),\alpha_n(\eps))\right).
\end{align*}
By Taylor's expansion,
\begin{align*}
	\text{KL}(p_N(\eps),\alpha_n(\eps))
	\le  
	\frac{ (p_N(\eps)-\alpha_n(\eps))^2}{ 2\alpha_n(\eps) 
	(1 - \alpha_n(\eps) )} \left(1+O(|p_N(\eps)-\alpha_n(\eps)|)\right).
\end{align*}
Also, since $\rho_0(\eps)-\rho_0(\aeps)=\rho_0(P(\eps))(y^0_\eps-(1-y^0_\eps))$, we have
\begin{align*}
	\frac{ (p_N(\eps)-\alpha_n(\eps))^2}{ 2\alpha_n(\eps) (1 - \alpha_n(\eps) )}
	 (\tilde N_\eps+\tilde N_\aeps)
	&= \frac{ (N_\eps-\alpha_n(\eps)N_{P(\eps)})^2}
	{ 2\alpha_n(\eps) (1 - \alpha_n(\eps) )(\tilde N_\eps+\tilde N_\aeps) } \\
	&=\frac{n\rho_0(P(\eps))\left( \alpha_n(\eps)(y^0_\eps-1)
	+\frac{\Delta_\eps-\alpha_n \Delta_{P(\eps)}}{n\rho_0(P(\eps))}\right)^2}
	{2\alpha_n(\eps) (1 - \alpha_n(\eps) ) \left(1 +\frac{\alpha_\eps + \Delta_{P(\eps)}} { n\rho_0(P(\eps)) } \right)}.
\end{align*}
On $\mathcal L(\underline \gamma)^c$, we have
\begin{align*}
	|y^0_\eps-1|& \le \underline \gamma\sqrt{\frac{\log n}{n\rho_0(\eps)}},
\end{align*}
as well as
\begin{align*}
	\frac{ \rho_0(\eps) }  { \alpha_n(\eps) \rho_0(P(\eps)) }
	&= y^0_\eps = 1+ o(1) ; 
	\qquad  \frac{ \rho_0(\aeps) }{ (1-\alpha_n(\eps)) \rho_0(P(\eps)) }  = y^0_{\aeps}  = 1 + o(1) .
\end{align*}
Therefore, on $\Omega$,
\begin{align*}
	\frac{|\Delta_\eps-\alpha_n(\eps) \Delta_{P(\eps)} |}{n\rho_0(B_{P(\eps)})}
	& = \frac{ |(1-\alpha_n(\eps))\Delta_\eps - \alpha_n(\eps)  \Delta_{\aeps}| }{n\rho_0(B_{P(\eps)})} \\
	& \leq  \frac{ \kappa \sqrt{\log n} }{\sqrt{n\rho_0(B_\peps)}}
	\left( (1 - \alpha_n(\eps))\sqrt{\alpha_n(\eps) y_\eps^0} 
	+\alpha_n(\eps)\sqrt{(1 - \alpha_n(\eps) ) y_{aesp}^0} \right)( 1 + o(1) ) \\
	& \le   \kappa \sqrt{\alpha_n(\eps)( 1 - \alpha_n(\eps)) } 
	\sqrt{\frac{\log n}{n\rho_0(B_\peps)}} \left( \sqrt{\alpha_n(\eps)} + \sqrt{1 - \alpha_n(\eps)}\right)(1+o(1))  
	=o(1). 
\end{align*}
This implies
\begin{align*}
	1 +\frac{\alpha_\eps + \Delta_{P(\eps)}} { n\rho_0(P(\eps)) }  
	\geq  1 +\frac{\alpha_\eps } { n\rho_0(P(\eps)) } +o(1).
\end{align*}
Thus, we have, choosing $\underline \gamma $ small enough, 
\begin{align*}
	(\tilde N_\eps+\tilde N_\aeps)\text{KL}(p_N(\eps),\alpha_n(\eps))
	&\le   \frac{1}{2} 
	\left(( \frac{ \underline \gamma }{ \sqrt{1 - \alpha_n(\eps)} } 
	+ \kappa(    \sqrt{\alpha_n(\eps)} + \sqrt{1 - \alpha_n(\eps)} ) \right)^2 \log n \\
 	& \le \frac{\log n }{2}\left( 2\frac{  \underline \gamma }{ 1 - c_1} 
	+\kappa^2 ( 1 + 2\sqrt{\alpha_n(\eps)(1-\alpha_n(\eps)} )\right)(1 + o(1))\\
 	&  \le \frac{\log n }{2}\left( 2\frac{  \underline \gamma }{ 1 - c_1} +2\kappa^2 \right)(1+ o(1)) .
\end{align*}
It  follows that on $\Omega$, with $\underline \gamma $ arbitrarily small, for some $\tilde c_1>0$,
\begin{align}\label{UB:Gammaeps}
	\Gamma_\eps\le  \frac{\alpha_\eps }
	{\sqrt{c_0}2^{-l/2}  }  e^{ \frac{\log n [\tilde c_1 \underline \gamma + 2 \kappa^2-1] }{2}}. 
\end{align}
This implies that, choosing $\underline  \gamma, \kappa^2$ such that $\tilde c_1 \underline \gamma + 2 \kappa^2\leq 2t $ (with $t$ the quantity in condition (i) in the statement of Theorem \ref{Theo:LocalRates})
\begin{align*}
	\Pi(\Scal\cap \mathcal L^c(\underline\gamma)|D^{(n)})
	\le \sum_{l\in \mathcal L^c(\underline\gamma)}\Pi(\eps_0^l \in \Scal|D^{(n)})
	\le  \frac1{\sqrt{c_0}}n^{\frac{ \tilde c_1\underline \gamma 
	+2\kappa^2-1}{2}}
	\sum_{l\in\mathcal L^c(\underline\gamma)}\frac{\alpha_{\eps_0^l}(1-q_{\eps_0^l})}{q_{\eps_0^l}}
	2^{l/2}.
\end{align*}
The desired result then follows provided that
\begin{align*}
	\sum_{l\in\mathcal L^c(\underline\gamma)}
	\frac{\alpha_{\eps_0^l}(1-q_{\eps_0^l})}{q_{\eps_0^l}} \le  n^{1/2-\tilde t},
\end{align*}
for some $\tilde t>0$ . In particular, the above is implied since by assumption (i) in the statement of Theorem \ref{Theo:LocalRates} there exist $t,c_2>0$ such that
$$
	\alpha_\eps (1-q_\eps) \le  2^{-lt} , \qquad q_\eps> c_2 , 
	\qquad \forall \eps \in \mathcal E_l.
$$
\end{proof}

%
%
%

\subsection{Proof of Proposition \ref{prop:ass:Z:diam}}


%

We have
\begin{align*}
	\text{Var} \left( \int_{ \Wcal_n} 1_{\{Z(x)\in B_\eps\}}d\mu_n(x)   \right) 
	&= 
	\frac{ \int_{\Wcal_n\times \Wcal_n}\text{Cov}( 1_{B_\eps}(Z(0)),1_{ B_\eps}
	(Z(x_2-x_1)))dx_1dx_2 }{n^2} \\
	& \le \frac{ \int_{\overline \Wcal_n}  \text{Cov}( 1_{B_\eps}(Z(0)), 
	1_{ B_\eps}(Z(x)))dx }{n},
\end{align*}
where $\overline \Wcal_n := \{ x\in\R^D: x+y \in \Wcal_n \ \text{for some}\ y\in\Wcal_n\}$. Note that $|\bar \Wcal_n| \lesssim n$. Thus, we obtain
\begin{align*}
	\text{Var} \left( \int_{ \Wcal_n} 1_{\{Z(x)\in B_\eps\}}d\mu_n(x) \right) 
	&\le \nu(B_\eps) 
	\frac{ \int_{\bar \Wcal_n}\text{Corr}( 1_{B_\eps}(Z(0)), 1_{ B_\eps}(Z(x)))dx }{n} 
 \lesssim \frac{  \nu(B_\eps)  }{n},
\end{align*}
having used assumption \eqref{mixing}.\qed

%
%
%

\subsection{Proof of Theorem \ref{thm:L1:polya}} \label{sec:L1polya}

Let $\bar  \epsilon_n = (n/\log n^2)^{-\beta /(2 \beta+d)}$. We start deriving posterior contraction rates in the empirical $L^1$-distance, verifying the assumptions of  Theorem \ref{Theo:GenRandCov}, starting with the small ball lower bound \eqref{Eq:SmallBall}. Let $L_{1,n}\in\N$ be such that $2^{L_{1,n}} \in [ L_1n \bar \epsilon_n^2 , 2L_1 n \bar \epsilon_n^2]L_1/(\log n)^q$ where $q=1$ in case (i)  and $q=2$ in case (ii) and $L_1$ is some positive constant. Recalling \eqref{Eq:AuxNot} and that if $\rho\sim \Pi$, then $\rho(z) = \rho(  B_{\varepsilon_{L_{1,n}}(z)} ) / \mu( B_{\varepsilon_{L_{1,n}}(z)} )$ for all $z \in [0,1]^d$, define

$$ 
	\qquad  \rho_{0,L_{1,n}}(z):= \rho_0^* \rho_{0}^{L_{1,n}}(z) 
	= \frac{ \rho_0(  B_{\varepsilon_{L_{1,n}}(z)} ) }{ \mu_n( B_{\varepsilon_{L_{1,n}}(z)} )},
$$ 
and  set 
$$
	\rho^*_{0,\nu} := \sum_{ \varepsilon \in \mathcal E_{L_{1,n}} }
	\frac{ \rho_0(  B_{\varepsilon_{L_{1,n}}(z)} )\nu( B_{\varepsilon_{L_{1,n}}(z)} )}
	{ \mu_n( B_{\varepsilon_{L_{1,n}}(z)})}.
$$ 
Similarly define  $\rho_{\nu}^*$, so that $\bar \rho = \rho/\rho^*_{\nu}$. Note that  $\rho_{0,L_{1,n}} $ is a piecewise constant approximation of $\rho_0$.  Using \eqref{trunc:approx} and the triangle inequality,  we have that 
$$ 
	\|  \rho - \rho_0 \|_\infty \leq \| \rho - \rho_{0,L_{1,n}}\|_\infty 
	+  \| \rho_0 - \rho_{0,L_{1,n}}\|_\infty \leq \| \rho - \rho_{0,L_{1,n}}\|_\infty + C_1 2^{-L_{1,n} \beta/d}.
$$
Consider $y_\eps$ and $\rho^*$ such that
\begin{equation*}
	\sum_{l \leq L_{1,n}}\sum_{ \varepsilon \in \mathcal E_l}| y_{\varepsilon} - y_{\varepsilon}^0| 
	\leq \frac{ \bar \epsilon_n }{ 4 C_0\rho_0^* }; 
	\qquad |\rho^* - \rho_0^*|\leq \frac{ \bar \epsilon_n }{ 4C_0 \|  \bar \rho_0\|_\infty};
\end{equation*}
then using \eqref{diff:rhoS} with $S =  \cup_{l \leq L_{1,n}} \mathcal E_l$,
$$
	\| \rho - \rho_{0,L_{1,n}}\|_\infty \leq 2|\rho^* - \rho_0^*| \|  \bar \rho_0\|_\infty  + \rho_0^* 
	\sum_{l \leq L_{1,n}}\sum_{ \varepsilon \in \mathcal E_l}| y_{\varepsilon} - y_{\varepsilon}^0| 
	\leq \bar \epsilon_n.
$$
This implies in particular that writing $\rho_{0L_{1,n}; \nu}^*= \int_{\mathcal Z} \rho_{0,L_{1,n}}(z)d\nu(z)$, then 
$ |\rho_\nu^* - \rho_{0L_{1,n}; \nu}^*| \leq \bar \epsilon_n $ and combining with $\rho_0 \geq c_0$, we obtain
\begin{align*} 
	\| \bar \rho - \bar \rho_{0,L_{1,n}}\|_\infty 
	& = \left\| \frac{ \rho}{\rho_\nu^*} - \frac{ \rho_0}{\rho_{0L_{1,n}; \nu}^* }\right\|_\infty 
	\leq \frac{ 2|\rho_\nu^* - \rho_{0,\nu}^*| \| \rho_0\|_\infty }{ c_0^2}
	+ \frac{ 1 }{ c_0} \| \rho - \rho_{0,L_{1,n}}\|_\infty  
	\leq  \frac{ 2 \bar \epsilon_n}{ c_0^2 } (\| \rho_0\|_\infty+ c_0).
\end{align*}
Finally, setting $\epsilon_n = \bar \epsilon_n a_1$ for some $a_1>0$ and writing $q_l $ for $q_{\varepsilon}$ when $\varepsilon \in \mathcal E_l$ to unburden the notation,
\begin{align*}
 	\Pi( B_{n,2}(\rho_0) ) 
	&\geq 
	\Pi_S(  S = \cup_{l \leq L_{1,n}} \mathcal E_l ) \Pi\left( \max_{l \leq L_{1,n}} 
	\max_{\varepsilon \in \mathcal E_l} |   y_{\varepsilon} - y_{\varepsilon}^0| 
	\leq \bar \epsilon_n 2^{-L_{1,n}} \right) 
	\Pi( |\rho^* - \rho_0^*| \leq \bar \epsilon_n ) \\
 	& \geq
  	\prod_{l\leq L_{1,n}} (1-q_l)^{2^l}\prod_{l=L_{1,n}+1}^{L_n} q_l^{2^l}  e^{ - C 2^{L_{1,n}} \log n }\\
  	&
  	\geq 2^{ - 2t_1 L_{1,n}2^{L_{1,n}}  }e^{ -2^{L_n( 1-t_2) + 1 }} e^{ - C 2^{L_{1,n}} \log n } 
	\geq e^{ - C n \bar \epsilon_n^2 - 2 (n/\log n)^{(1-t_2)_+}}
\end{align*}
for $C>0$. Hence, Condition \eqref{Eq:SmallBall} is verified as soon as $t_2 \geq 2\beta/(2\beta+d)$, as then
$$
	\Pi( B_{n,2}(\rho_0) )\geq e^{ - 2C n \bar \epsilon_n^2}.
$$

	Moving onto the construction of the sieves \eqref{Eq:Sieves}, note that $\Pi( | \rho^* - \rho_0^*| > M_n /\sqrt{n} | D^{(n)}) = o_{P^{(n)}_{\rho_0}}(1) $ so that we can restrict $\rho^*$ to be bounded by $2 \rho_0^*$. Then for $L_{2,n}\in\N$ such that $ 2^{L_{2,n} } \in  n\bar \epsilon_n^2 (L_2, 2L_2 )/\log n$ for some $L_2>0$,
let 
$$
	\mathcal R_n := \{ \rho  = \rho^* \rho^S : \rho^* \in (\rho_0^*/2, 2 \rho_0^*),\ |S| \leq 2^{L_{2,n} }  \}.
$$
Then with $S_l = S \cap \mathcal E_l$, 
\begin{align*}
	\Pi( \mathcal R_n^c \cap \{\rho  = \rho^* \rho^S : \rho^* \in (\rho_0^*/2, 2 \rho_0^*)\} ) 
	& \leq 
	\Pi\Big( \sum_{ l\geq L_{2,n} - \log L_n/\log 2 } |S_l| > 2^{L_{2,n} -1}\Big ) \\
	& \leq \Pi\Big( \sum_{ l\geq L_{2,n} - \log L_n/\log 2 } \tilde S_l> 2^{L_{2,n} -1} \Big)
\end{align*}
where $\tilde S_l\overset{\text{ind}}{\sim} \text{Binomial}(2^l , 2^{-lt_2})$. Chernoff inequality implies that for all $s >0$, with $L_{2,n}' :=  L_{2,n} - \log L_n/\log 2$, 
\begin{align*}
 	\Pi\Big( \sum_{ l\geq L_{2,n}' } |\tilde S_l| > 2^{L_{2,n} -1} \Big) 
	\leq e^{ -s2^{L_{2,n} -1}} e^{ \sum_{l \geq L_{2,n}'} 2^{l(1-t_2)}(e^s-1) }.
\end{align*}
Provided that $t_2\geq  1$, we have  choosing $s = (t_2-1) L_{2,n} \log 2 - \log L_n$,
 \begin{align*}
 	\Pi\Big( \sum_{ l\geq L_{2,n}' } |\tilde S_l| > 2^{L_{2,n} -1} \Big) 
	\leq e^{ -\frac{ (t_2-1) L_{2,n}\log 2^{L_{2,n}+1}}{ 4} } 
	\leq e^{- C n\bar \epsilon_n^2}, 
\end{align*}
as soon as $2^{L_{2,n} } \geq L_2 n \bar \epsilon_n^2/\log n$ with $L_2$ large enough.  If $t_2 < 1$, 
\begin{align*}
 	\Pi\Big( \sum_{ l\geq L_{2,n}' } |\tilde S_l| > 2^{L_{2,n} -1} \Big) 
	\leq e^{ -s2^{L_{2,n} -1}} e^{ 2^{L_n(1-t_2)+1} (e^s -1) }. 
\end{align*}
Moreover, $2^{L_n(1-t_2)} \leq (n/\log n)^{1-t_2} $  and $t_2 \geq 2\beta / (2\beta +d)$ so that 
$$ 
	2^{L_n(1-t_2)}\leq n^{d/(2+d)} (\log n)^{-d/(2+d)} = o ( 2^{L_{2,n}}),
	\quad 2^{L_{2,n}} \geq L_2 n^{d/(2\beta+d)} (\log n)^{2\beta/(2\beta+d)}/2,
$$
so choosing $s = 4$, we verify the sieves condition \eqref{Eq:Sieves},
\begin{align*}
 	\Pi\Big( \sum_{ l\geq L_{2,n}' } |\tilde S_l| > 2^{L_{2,n} -1}\Big ) 
	\leq e^{ - L_{2,n}  2^{L_{2,n} } }\leq e^{ -Cn\bar \epsilon_n^2 }.
\end{align*}
by choosing $L_2$ large enough.

	Finally, we derive the complexity bound \eqref{Eq:MetricEntropy}. Let $\rho_1,\rho_2\in\Rcal_n$ such that $\sum_{\eps \in S} |y_{1\eps} - y_{2\eps} | \leq n^{-2}$ with $\rho_j = \rho_j^*\prod_{\eps \in S} y_{j\eps}$.  Using \eqref{diff:rhoS},
 $$ 
 	\|\rho_1^S - \rho_2^S \|_\infty  \leq 2\|\rho_1^S\|_\infty \sum_{\eps \in S} |y_{1\eps} - y_{2\eps} |,
$$
 and since 
 $$ 
 	\|\rho_1^S\|_\infty 
	\leq \sup_{z\in[0,1]^d} \prod_{\eps \in S,\  z \in B_\eps} \alpha_n(\eps)^{-1}  
	\leq \frac{ 1 }{ \mu_n( B_{\eps_{L_n}(z)}) } \leq n,
$$
we have $ \|\rho_1^S - \rho_2^S \|_\infty  = o( \bar \epsilon_n) $. Thus, given that $y_\eps \leq 1/\alpha_n(\eps)\leq 1/c_1$, there follows 
 $$ 
 	N( \zeta \bar \epsilon_n , \mathcal R_n , \|\cdot \|_\infty ) 
	\lesssim  2^{L_{2,n}} \log n \lesssim L_2 n \bar \epsilon_n^2. 
$$

	An application of Theorem \ref{Theo:GenRandCov} then proves the first claim of Theorem \ref{thm:L1:polya}.
For the posterior contraction rates in $L^1([0,1]^d,\nu)$-norm, note that 
$$
	\|\rho - \rho_0 \|_{L^1([0,1]^d,\nu)}
	= \sum_{\eps \in \mathcal E_{L_n}}\int_{B_\eps}\left| \frac{\rho(B_\eps)}{ \mu_n(B_\eps)}  - \rho_0(z) \right| 
	d\nu(z).
$$
Let $\rho_{0, L_{1,n}}$ be the above piecewise constant approximation of $\rho_0$.  We have $\|\rho_0 - \rho_{0, L_{1,n}}\|_\infty = O(2^{-\beta L_{1,n}/d})$,  and
\begin{align*}
	\|\lambda^{(n)}_{\rho_0}-\lambda_{\rho_{0, L_{1,n}}} \|_{L^1([0,1]^d,\nu)} 
	& =  \sum_{\eps \in \mathcal E_{L_n}}\int_{\mathcal W_n} 1_{\{ Z(x) \in B_\eps\}} 
	|\rho(z(x)) - \rho_{0, L_{1,n}}(z(x))| d\mu_n(x) \\
	&= \sum_{\eps \in \mathcal E_{L_n}}\int_{B_\eps}|\rho(z) - \rho_{0,L_{1,n}}(z)| \mu_n(B_\eps)\\ 
	&= \sum_{\eps \in \mathcal E_{L_n}}\int_{B_\eps}
	|\rho(z) - \rho_{0,L_{1,n}}(z)| \frac{ \mu_n(B_\eps) }{ \nu( B_\eps)}\nu( B_\eps))\\
	& \gtrsim \sum_{\eps \in \mathcal E_{L_n}}\int_{B_\eps}|\rho(z) - \rho_{0,L_{1,n}}(z)| \nu( B_\eps) 
	= \| \rho - \rho_{0, L_{1,n}}\|_{L^1([0,1]^d,\nu)}
\end{align*}
where the last inequality comes from Condition \eqref{cond:split}, together with the upper bound on $\nu$. It follows that $\|\rho - \rho_0 \|_{L^1([0,1]^d,\nu)} \lesssim \|\lambda^{(n)}_{\rho_0}-\lambda^{(n)}_{\rho_{0,L_{1,n}}} \|_1$
which jointly with the first claim terminates the proof. \qed

\end{document}